\patchcmd{\@citex}{\if@filesw}{\getcitekey\@citeb \if@filesw}%
    {\typeout{*** SUCCESS ***}}{\typeout{*** FAIL ***}}
\patchcmd{\nocite}{\if@filesw}{\getcitekey\@citeb \if@filesw}%
    {\typeout{*** SUCCESS ***}}{\typeout{*** FAIL ***}}
\newenvironment{aenumerate}{%
	\begin{enumerate}[label=(\alph{*}), ref=(\alph{*})]
}{%
	\end{enumerate}%
}
\newcommand{\derR}{\mathbf{R}}
\newcommand{\norm}[1]{\lVert#1\rVert}
\newcommand{\abs}[1]{\lvert #1 \rvert}
\newcommand{\bigabs}[1]{\bigl\lvert #1 \bigr\rvert}
\newcommand{\eps}{\varepsilon}
\newcommand{\tensor}{\otimes}
\newcommand{\del}{\partial}
\newcommand{\dbar}{\bar{\partial}}
\newcommand{\dz}{\mathit{dz}}
\newcommand{\dzb}{d\bar{z}}
\newcommand{\shHom}{\mathcal{H}\hspace{-1pt}\mathit{om}}
\newcommand{\NN}{\mathbb{N}}
\newcommand{\QQ}{\mathbb{Q}}
\newcommand{\RR}{\mathbb{R}}
\newcommand{\CC}{\mathbb{C}}
\newcommand{\PP}{\mathbb{P}}
\newcommand{\menge}[2]{\bigl\{ \thinspace #1 \thinspace\thinspace \big\vert%
\thinspace\thinspace #2 \thinspace \bigr\}}
\newcommand{\Menge}[2]{\Bigl\{ \thinspace #1 \thinspace\thinspace \Big\vert%
\thinspace\thinspace #2 \thinspace \Bigr\}}
\newcommand{\MENGE}[2]{\left\{ \thinspace #1 \thinspace\thinspace \middle\vert%
\thinspace\thinspace #2 \thinspace \right\}}
\DeclareMathOperator{\id}{id}
\DeclareMathOperator{\Supp}{Supp}
\DeclareMathOperator{\codim}{codim}
\DeclareMathOperator{\End}{End}
\DeclareMathOperator{\Hom}{Hom}
\DeclareMathOperator{\Alb}{Alb}
\DeclareMathOperator{\Pic}{Pic}
\newcommand{\define}[1]{\emph{#1}}
\newcommand{\shf}[1]{\mathscr{#1}}
\newcommand{\OX}{\shf{O}_X}
\newcommand{\restr}[1]{\big\vert_{#1}}
\newcommand{\argbl}{-}
\def\overbar#1#2#3{{%
	\setbox0=\hbox{\displaystyle{#1}}%
	\dimen0=\wd0
	\advance\dimen0 by -#2 
	\vbox {\nointerlineskip \moveright #3 \vbox{\hrule height 0.3pt width \dimen0}%
		\nointerlineskip \vskip 1.5pt \box0}%
}}
\newcommand{\into}{\hookrightarrow}
\newcommand{\inner}[2]{\langle #1, #2 \rangle}
\newcommand{\fu}{f^{\ast}}
\newcommand{\fl}{f_{\ast}}
\newcommand{\pu}{p^{\ast}}
\newcommand{\pl}{p_{\ast}}
\newcommand{\tl}{t_{\ast}}
\newcommand{\gu}{g^{\ast}}
\newcommand{\shF}{\shf{F}}
\newcommand{\shG}{\shf{G}}
\newcommand{\shO}{\shf{O}}
\let\@@seccntformat\@seccntformat
\renewcommand*{\@seccntformat}[1]{%
  \expandafter\ifx\csname @seccntformat@#1\endcsname\relax
    \expandafter\@@seccntformat
  \else
    \expandafter
      \csname @seccntformat@#1\expandafter\endcsname
  \fi
    {#1}%
}
\newcommand*{\@seccntformat@subsection}[1]{%
  \textbf{\csname the#1\endcsname.}
}
\let\@paragraph\paragraph
\renewcommand*{\paragraph}[1]{%
	\vspace{0.3\baselineskip}%
	\@paragraph{\textit{#1}}%
}
\newtheorem{theorem}[equation]{Theorem}
\newtheorem*{theorem*}{Theorem}
\newtheorem{lemma}[equation]{Lemma}
\newtheorem*{lemma*}{Lemma}
\newtheorem{corollary}[equation]{Corollary}
\newtheorem*{corollary*}{Corollary}
\newtheorem{proposition}[equation]{Proposition}
\newtheorem*{proposition*}{Proposition}
\newtheorem*{conjecture*}{Conjecture}
\theoremstyle{definition}
\newtheorem{definition}[equation]{Definition}
\newtheorem*{definition*}{Definition}
\theoremstyle{remark}
\newtheorem*{remark}{Remark}
\newtheorem{example}[equation]{Example}
\newtheorem*{example*}{Example}
\newtheorem*{problem*}{Problem}
\newtheorem*{note}{Note}
\theoremstyle{plain}
\newcommand{\theoremref}[1]{\hyperref[#1]{Theorem~\ref*{#1}}}
\newcommand{\lemmaref}[1]{\hyperref[#1]{Lemma~\ref*{#1}}}
\newcommand{\definitionref}[1]{\hyperref[#1]{Definition~\ref*{#1}}}
\newcommand{\propositionref}[1]{\hyperref[#1]{Proposition~\ref*{#1}}}
\newcommand{\conjectureref}[1]{\hyperref[#1]{Conjecture~\ref*{#1}}}
\newcommand{\corollaryref}[1]{\hyperref[#1]{Corollary~\ref*{#1}}}
\newcommand{\exampleref}[1]{\hyperref[#1]{Example~\ref*{#1}}}
\newcommand{\exerciseref}[1]{\hyperref[#1]{Exercise~\ref*{#1}}}
\let\old@caption\caption
\renewcommand*{\caption}[1]{%
	\setcounter{figure}{\value{equation}}%
	\stepcounter{equation}%
	\old@caption{#1}\relax%
}
\newcounter{intro}
\newtheorem{intro-conjecture}[intro]{Conjecture}
\newtheorem{intro-corollary}[intro]{Corollary}
\newtheorem{intro-theorem}[intro]{Theorem}
\newcommand{\OA}{\mathscr{O}_A}
\newcommand{\Ah}{\hat{A}}
\newcommand{\omY}{\omega_Y}
\newcommand{\OY}{\shO_Y}
\newcommand{\parref}[1]{\hyperref[#1]{\S\ref*{#1}}}
\newcommand{\chapref}[1]{\hyperref[#1]{Chapter~\ref*{#1}}}
\newcommand*\if@single[3]{%
  \setbox0\hbox{${\mathaccent"0362{#1}}^H$}%
  \setbox2\hbox{${\mathaccent"0362{\kern0pt#1}}^H$}%
  \ifdim\ht0=\ht2 #3\else #2\fi
  }
\newcommand*\rel@kern[1]{\kern#1\dimexpr\macc@kerna}
\newcommand*\widebar[1]{\@ifnextchar^{{\wide@bar{#1}{0}}}{\wide@bar{#1}{1}}}
\newcommand*\wide@bar[2]{\if@single{#1}{\wide@bar@{#1}{#2}{1}}{\wide@bar@{#1}{#2}{2}}}
\newcommand*\wide@bar@[3]{%
  \begingroup
  \def\mathaccent##1##2{%
%If there's more than a single symbol, use the first character instead (see below):
    \if#32 \let\macc@nucleus\first@char \fi
%Determine the italic correction:
    \setbox\z@\hbox{$\macc@style{\macc@nucleus}_{}$}%
    \setbox\tw@\hbox{$\macc@style{\macc@nucleus}{}_{}$}%
    \dimen@\wd\tw@
    \advance\dimen@-\wd\z@
%Now \dimen@ is the italic correction of the symbol.
    \divide\dimen@ 3
    \@tempdima\wd\tw@
    \advance\@tempdima-\scriptspace
%Now \@tempdima is the width of the symbol.
    \divide\@tempdima 10
    \advance\dimen@-\@tempdima
%Now \dimen@ = (italic correction / 3) - (Breite / 10)
    \ifdim\dimen@>\z@ \dimen@0pt\fi
%The bar will be shortened in the case \dimen@<0 !
    \rel@kern{0.6}\kern-\dimen@
    \if#31
      \overline{\rel@kern{-0.6}\kern\dimen@\macc@nucleus\rel@kern{0.4}\kern\dimen@}%
      \advance\dimen@0.4\dimexpr\macc@kerna
%Place the combined final kern (-\dimen@) if it is >0 or if a superscript follows:
      \let\final@kern#2%
      \ifdim\dimen@<\z@ \let\final@kern1\fi
      \if\final@kern1 \kern-\dimen@\fi
    \else
      \overline{\rel@kern{-0.6}\kern\dimen@#1}%
    \fi
  }%
  \macc@depth\@ne
  \let\math@bgroup\@empty \let\math@egroup\macc@set@skewchar
  \mathsurround\z@ \frozen@everymath{\mathgroup\macc@group\relax}%
  \macc@set@skewchar\relax
  \let\mathaccentV\macc@nested@a
%The following initialises \macc@kerna and calls \mathaccent:
  \if#31
    \macc@nested@a\relax111{#1}%
  \else
%If the argument consists of more than one symbol, and if the first token is
%a letter, use that letter for the computations:
    \def\gobble@till@marker##1\endmarker{}%
    \futurelet\first@char\gobble@till@marker#1\endmarker
    \ifcat\noexpand\first@char A\else
      \def\first@char{}%
    \fi
    \macc@nested@a\relax111{\first@char}%
  \fi
  \endgroup
}
\newcommand{\omX}{\omega_X}
\newcommand{\omXm}{\omX^{\tensor m}}
\newcommand{\omXY}{\omega_{X/Y}}
\newcommand{\omXYm}{\omXY^{\tensor m}}
\newcommand{\shI}{\mathcal{I}}
\newcommand{\shFd}{\shF^{\ast}}
\DeclareMathOperator{\Mat}{Mat}
\newcommand{\wbar}[1]{\widebar{\vphantom{#1}#1}}
\newcommand{\Vfin}{V_{\mathit{fin}}}
\newcommand{\Vd}{V^{\ast}}
\newcommand{\hd}{h^{\ast}}
\newcommand{\Hd}{H^{\ast}}
\newcommand{\Ed}{E^{\ast}}
\newcommand{\shMs}{\mathscr{M}}
\newcommand{\dx}{\mathit{dx}}
\newcommand{\dy}{\mathit{dy}}
\newcommand{\dt}{\mathit{dt}}
\newcommand{\dtb}{d\bar{t}}
\newcommand{\Dbcoh}{D_{\mathit{coh}}^{\mathit{b}}}
\renewcommand{\restr}[1]{\vert_{#1}}
\DeclareMathOperator{\var}{var}
\newtheorem*{iitaka-conj}{Iitaka's conjecture}
\begin{document}

%========================================================
\title[Algebraic fiber spaces over abelian varieties]{%
	Algebraic fiber spaces over abelian varieties: \\
	around a recent theorem by Cao and P\u{a}un}
	
\dedicatory{To Lawrence Ein, on his $60$th birthday.}	

\author[C.~D.~Hacon]{Christopher Hacon}
\address{Department of Mathematics \\  
University of Utah\\  
Salt Lake City, UT 84112, USA}
\email{hacon@math.utah.edu}
\author[M.~Popa]{Mihnea Popa}
\address{Department of Mathematics, Northwestern University,
2033 Sheridan Road, Evanston, IL 60208, USA} 
\email{mpopa@math.northwestern.edu}

\author[Ch.~Schnell]{Christian Schnell}
\address{Department of Mathematics, Stony Brook University, Stony Brook, NY 11794-3651}
\email{cschnell@math.sunysb.edu}
\begin{abstract} 
We present a simplified proof for a recent theorem by
Junyan Cao and Mihai P\u{a}un, confirming a special case of Iitaka's
$C_{n,m}$ conjecture: if $f \colon X\to Y$ is an algebraic fiber space, and if the
Albanese mapping of $Y$ is generically finite over its image, then
we have the inequality of Kodaira dimensions $\kappa (X)\geq \kappa (Y)+\kappa (F)$,
where $F$ denotes a general fiber of $f$. We include a detailed survey of the main
algebraic and analytic techniques, especially the construction of singular hermitian
metrics on pushforwards of adjoint bundles (due to Berndtsson, P\u{a}un, and
Takayama).
\end{abstract}
\date{\today}
\maketitle
%========================================================

\section{Introduction}

\subsection{Main result}

In the classification of algebraic varieties up to birational equivalence, the most fundamental
invariants of a smooth projective variety $X$ are the spaces of global sections of
the pluricanonical bundles $\omXm$. The rate of growth of the plurigenera $P_m(X) =
\dim H^0(X, \omXm)$ is measured by the \define{Kodaira dimension}
\[
	\kappa(X) = \limsup_{m \to +\infty} \frac{\log P_m(X)}{\log m} 
		\in \{-\infty, 0, 1, \dotsc, \dim X\}.
\]
The following conjecture by Iitaka and Viehweg predicts the behavior of the Kodaira
dimension in families. Recall that an \define{algebraic fiber space} is a surjective morphism
with connected fibers between two smooth projective varieties.

\begin{conjecture*}
Let $f \colon X \to Y$ be an algebraic fiber space with general fiber $F$. Provided
that $\kappa(Y) \geq 0$, the Kodaira dimension of $X$ satisfies the inequality
\[
	\kappa(X) \geq \kappa(F) + \max \{ \kappa(Y), \var(f) \},
\]
where $\var(f)$ measures the birational variation in moduli of the fibers.
\end{conjecture*}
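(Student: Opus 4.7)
The full Iitaka--Viehweg conjecture is open in general; I will propose a plan for the $\kappa(Y)$ half of the right-hand side under the paper's running hypothesis that the Albanese map $a_Y \colon Y \to A = \Alb(Y)$ is generically finite onto its image, following the strategy of Cao--Păun. The plan is to produce enough pluricanonical sections of $X$ by first analyzing the positivity of the pushforwards $f_\ast \omXYm$ on $Y$, and then transferring this positivity to $A$ via the composition $g = a_Y \circ f \colon X \to A$.

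First I would equip $f_\ast \omXYm$ with a singular Hermitian metric of semipositive curvature, using the Berndtsson--Păun--Takayama construction surveyed earlier in the paper. The metric is built from relative Bergman kernels on the fibers of $f$ and from the Ohsawa--Takegoshi $L^2$ extension theorem, and it encodes both the positivity of $\omega_F^{\otimes m}$ on a general fiber and the variation of those fibers in the family. This analytic input is what distinguishes the approach from purely algebraic strategies, because it allows one to treat $f_\ast \omXYm$ as if it were a weakly positive vector bundle, even when positivity in the Viehweg sense is hard to access directly.

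Next I would push the sheaf further down to $A$ and apply a Chen--Jiang-type decomposition for pushforwards of pluricanonical sheaves to abelian varieties. The generic vanishing made possible by the singular metric should force $g_\ast \omXm$ to split as a finite direct sum of sheaves of the form $p^\ast \shF \otimes L$, where $p \colon A \to B$ is a quotient of abelian varieties, $\shF$ is an M-regular sheaf on $B$, and $L \in \Pic^0(A)$ has finite order. Since $a_Y$ is generically finite onto its image, sections of the twists $g_\ast \omXm \otimes P$ for generic $P \in \Pic^0(A)$ pull back to genuine pluricanonical sections of $\omXm$ on $X$, so the question reduces to producing many such sections on $A$.

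The last step is a rank count: the rank of $f_\ast \omXYm$ grows like $m^{\kappa(F)}$ for $m$ sufficiently divisible, while the M-regular pieces $\shF$ of the decomposition contribute to $h^0$ of their twists at the rate $m^{\dim B}$, with $\dim B$ bounded below by $\kappa(Y)$. Combining the two growth rates should give $P_m(X) \gtrsim m^{\kappa(F)+\kappa(Y)}$, which is exactly the $\kappa(F)+\kappa(Y)$ bound. The main obstacle will be the coupling between these two growths: one must ensure that the positivity of $f_\ast \omXYm$ does not collapse along directions that contribute to $\kappa(Y)$, and that no piece of the Chen--Jiang decomposition is supported on a proper subtorus that absorbs the variation of fibers. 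This is precisely what the Berndtsson--Păun--Takayama metric is designed to prevent, and it is the analytic input on which the whole argument rests; handling the $\var(f)$ term in full would require a genuinely stronger statement and is not pursued here.
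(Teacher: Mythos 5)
You correctly treat the statement as the conjectural inequality and restrict to the paper's special case (\theoremref{main}), but the route you sketch is genuinely different from the paper's, and as it stands it has gaps that the paper's approach is specifically designed to avoid. The key step you skip is the reduction in \lemmaref{kodaira_reduction}: using the Iitaka fibration of $X$, Kawamata's characterization of varieties with $\kappa=0$ and maximal Albanese dimension, and the easy addition formula, the whole problem is reduced to showing that if $f\colon X\to A$ is a fiber space over an abelian variety with $\kappa(X)=0$, then $\kappa(F)=0$. After this reduction there is no growth estimate to prove: one only needs $P_m(F)=1$ whenever $P_m(X)=1$, which follows once $\shF_m=\fl\omXm$ is shown to be $\shO_A$ (\theoremref{t-pushforward}). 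Without this reduction you are pushed into precisely the quantitative comparison that Viehweg-style weak positivity finds hard. Your last paragraph asserts, but does not argue, that the rank estimate $\rk \fl\omXYm \sim m^{\kappa(F)}$ converts into $h^0(X,\omXm)\gtrsim m^{\kappa(F)+\kappa(Y)}$; there is no mechanism in the sketch turning rank into global sections, and a weakly positive sheaf of large rank need not have any. There is also a sign error in the transfer step: a twist by generic $P\in\Pic^0(A)$ yields sections of $\omXm\otimes g^{\ast}P$, not of $\omXm$, and passing from one to the other is exactly where the torsion structure of $V^0$ (\theoremref{thm:Lai}) is needed.

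The second gap is the Chen--Jiang-type decomposition for $\fl\omXm$, $m\geq 2$, on which your plan rests. The $m=1$ decomposition was known, but its pluricanonical analogue is a later result that in fact uses \theoremref{t-pushforward} of this very paper as an ingredient, so invoking it here is circular. What the paper uses instead is weaker and unconditional: the GV-property of $\shF_m$ (\theoremref{pluricanonical_GV}), the torsion-translate structure of $V^0(A,\shF_m)$ (\theoremref{thm:Lai}), and the hypothesis $\kappa(X)=0$ force $V^0(A,\shF_m)=\{\shO_A\}$, after which \propositionref{unipotent} shows $\shF_m$ is a unipotent vector bundle with $\det\shF_m\simeq\shO_A$. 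It is the vanishing of $c_1(\det\shF_m)$ --- not a growth estimate nor a direct-sum decomposition --- that triggers the analytic input (\theoremref{CP}, \corollaryref{metric_summary}) and upgrades the unipotent bundle to the trivial one. You have correctly identified the Berndtsson--P\u{a}un--Takayama metric and its flatness criterion as the analytic engine; the problem is that you have coupled it to the wrong algebraic preparation.
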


Using analytic techniques, Cao and P\u{a}un \cite{CP} have recently proved the
conjectured subadditivity of the Kodaira dimension in the case where $Y$ is an
abelian variety; as $\kappa(Y) = 0$, this amounts to the inequality
\[
	\kappa(X) \geq \kappa(F).
\]
With very little extra work, one can deduce the subadditivity of the Kodaira
dimension in any algebraic fiber space whose base $Y$ has maximal Albanese
dimension, meaning that the Albanese mapping $Y\to \Alb(Y)$ is generically finite
over its image. This includes of course the case where $Y$ is a curve of genus $\geq
1$, where the following result was first proved by Kawamata
\cite[Theorem~2]{Ka-curve}.

\begin{theorem}\label{main}
%Let $A$ be an abelian variety, and let $f \colon X \to A$ be an algebraic fiber space with general fiber $F$. Then one has $\kappa(X) \geq \kappa(F)$. 
Let $f \colon X \to Y$ be an algebraic fiber space with general fiber $F$. Assume
that $Y$ has maximal Albanese dimension, then  $\kappa(X) \geq \kappa(F)+\kappa (Y)$. 
\end{theorem}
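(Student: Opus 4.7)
The plan is to reduce the maximal Albanese dimension case to the case where the base is an abelian variety, which is the theorem of Cao and P\u{a}un \cite{CP}, by factoring $f$ through the Iitaka fibration of $Y$ and combining Cao-P\u{a}un with Viehweg's classical subadditivity theorem for fiber spaces over a base of general type.

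First, I would invoke the structural result of Kawamata and Chen-Hacon for varieties of maximal Albanese dimension: after suitable birational modifications of $f$, the Iitaka fibration of $Y$ can be realized as an algebraic fiber space $h \colon Y \to V$ in which $V$ is smooth projective of general type with $\dim V = \kappa(Y)$, and the general fiber $Y_v$ is birational to an abelian variety $B_v$. Indeed, restricting the Albanese map of $Y$ to a general $Y_v$ remains generically finite, so $Y_v$ still has maximal Albanese dimension; since $\kappa(Y_v) = 0$ by definition of the Iitaka fibration, Kawamata's theorem then forces $Y_v$ to be birational to its Albanese variety. That $V$ itself is of general type is the deeper content of this structural theorem, and is obtained by analyzing the image of $Y$ in $\Alb(Y)$ via Ueno's fibration.

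Next, form the composition $g = h \circ f \colon X \to V$ and consider a general smooth fiber $X_v = g^{-1}(v)$, which inherits an algebraic fiber space structure $X_v \to Y_v$ whose general fiber is still $F$. Replacing $Y_v$ by the birational abelian variety $B_v$ and performing a corresponding birational base change on $X_v$, Cao-P\u{a}un applied to the resulting fiber space $X_v \to B_v$ yields
\[
\kappa(X_v) \geq \kappa(F).
\]

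Finally, since $V$ is of general type, Viehweg's theorem --- which asserts that the Iitaka conjecture $C_{n,m}$ holds whenever the base of the fibration is of general type, and is proved via weak positivity of $f_{*}\omega_{X/Y}^{\otimes m}$ --- applies to $g \colon X \to V$ and gives
\[
\kappa(X) \geq \kappa(X_v) + \dim V \geq \kappa(F) + \kappa(Y),
\]
which is the desired inequality. The main obstacle in this plan is the first step: the structural theorem for the Iitaka fibration of a variety of maximal Albanese dimension rests on nontrivial generic vanishing techniques and is the key nonformal ingredient. The subsequent applications of Cao-P\u{a}un and of Viehweg's subadditivity over a general type base are then essentially formal, modulo careful tracking of the birational modifications on $X$ and $X_v$.
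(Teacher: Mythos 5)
Your overall strategy is the same as the paper's: reduce the maximal Albanese dimension case to the Cao--P\u{a}un theorem over an abelian base via the structure theory of varieties of maximal Albanese dimension, and then use subadditivity over a general type base. In the paper this is implemented in \lemmaref{kodaira_reduction}, and the logical skeleton agrees with yours. There are, however, two differences worth flagging.

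First, your structural claim is asserted more strongly than what is directly available. You take the Iitaka fibration $h\colon Y\to V$ of $Y$ itself and assert that $V$ is of general type, appealing to ``Ueno's fibration.'' Ueno's theorem concerns subvarieties of abelian varieties, and while $a_Y(Y)\subseteq\Alb(Y)$ is such a subvariety, the Iitaka fibration of $Y$ need not coincide with that of $a_Y(Y)$ once $a_Y$ has degree $>1$, since $\kappa(Y)$ can strictly exceed $\kappa\bigl(a_Y(Y)\bigr)$. The paper sidesteps this by first replacing $Y$ with a resolution of an \'etale cover (and $X$ by the corresponding fiber product), where Kawamata's Theorem~13 in \cite{Kawamata:abelian} gives the clean product decomposition $Y=Z\times K$ with $Z$ of general type and $K$ abelian; only then is the subadditivity $\kappa(X)=\kappa(Z)+\kappa(E)$ over the general type base invoked, with $E$ the general fiber of $X\to Z$. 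Passing through the \'etale cover matters: the Galois group may act on the $Z$-factor with fixed points, so it is not formal that the Kodaira dimension of the base of the Iitaka fibration of $Y$ itself equals $\dim V$. If you want to run your argument as stated, you should at minimum import the \'etale-cover step, or cite a precise reference for the ``base of Iitaka fibration is of general type'' statement for maximal Albanese dimension varieties (this is in the Chen--Hacon/Hacon--Pardini circle, but not simply Ueno).

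Second, the paper does additional bookkeeping that you omit. It separately handles $\kappa(X)=-\infty$ using generic vanishing (via \theoremref{pluricanonical_GV} and \theoremref{thm:Lai}), and in the case $\kappa(Y)=0$ it further reduces to $\kappa(X)=0$ by pulling back along the Iitaka fibration of $X$, which is needed because the core analytic statement actually proved is the stronger \theoremref{t-pushforward}, not the full Cao--P\u{a}un inequality as a black box. Since you quote Cao--P\u{a}un as a black box, your omission is logically fine, but it means your reduction is not the one that feeds into the paper's proof of \theoremref{t-pushforward}. Finally, you cite Viehweg where the paper cites Kawamata (Theorem~3 of \cite{Kawamata:abelian}) for the subadditivity over a general type base; these are interchangeable here.
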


\begin{remark} 
A proof of this result is also claimed in \cite{CH-Kod}, however  the proof given there
is incomplete because of a serious mistake in \S4. 
\end{remark}

The purpose of this paper is to explain a simplified proof of the Cao-P\u aun theorem
that combines both analytic and algebraic techniques. We first reduce to the case
when $\kappa (X) = 0$ and $Y$ is an abelian variety, where we then prove a more
precise statement (Theorem \ref{t-pushforward}).  This is done in \chapref{scn:main}.
We then take the opportunity to provide a detailed survey of the results that are
used in the proof, for the benefit of those readers who are more familiar with one or
the other side of the story. 

In \chapref{GV}, we discuss the main algebraic tools, contained mostly in the papers
\cite{CH3,Hacon,Lai,PP3,pluricanonical}, namely results from generic vanishing
theory. The upshot of the discussion is that when $f\colon X \to A$ is a fiber space over
an abelian variety, with $\kappa (X) = 0$, then for all $m$ sufficiently large and
divisible, $\fl \omXm$ is a unipotent vector bundle on $A$, meaning a successive
extension of copies of $\shO_A$. This is as far as the algebraic techniques seem to
go at present.
While we recall the basic generic vanishing  and Fourier-Mukai tools involved, as this topic is well-established 
in the literature, we provide fewer background details. Sources where a comprehensive
treatment can be found include the lecture notes \cite{Schnell}, as well as 
\cite{Pareschi,Popa}.

In \chapref{analytic} and \chapref{chap:NS} we discuss the main analytic tools,
contained mostly in the papers \cite{BP,PT,CP}, namely the existence of
singular metrics with semi-positive curvature (in a suitable sense) on pushforwards
of pluricanonical bundles, and a very surprising criterion for such a  metric to be
smooth and flat. This time, the upshot is that when $f \colon X \to A$ is a fiber
space onto an abelian variety, with $\kappa (X) = 0$, then $\fl \omXm$ is a vector
bundle with a flat hermitian metric. Because a unipotent vector bundle with a flat
hermitian metric must be trivial, the algebraic and analytic results together lead to
the conclusion in \theoremref{main}. Since the analytic results are still new, and
are likely to be less familiar to algebraic geometers, we decided to include as many
details as possible. For another survey of these and related results, we recommend \cite{Paun2}.

\begin{remark}
For the sake of exposition, we present only the simplest version of the result by Cao
and P\u{a}un. One can tweak the proof of \theoremref{main} to show that 
the inequality in \theoremref{main} still holds when $X$ is replaced by a klt pair $(X,
\Delta)$, and $F$ by the pair $(F, \Delta_F)$; this is done in \cite[Theorem 4.22]{CP}.
\end{remark}

\subsection{What is new?}

The presentation in \chapref{analytic} contains various small improvements compared
to the original papers \cite{BP,PT,CP}. We briefly summarize the main points here.
Let $f \colon X \to Y$ be a projective and surjective holomorphic mapping between two
complex manifolds. Given a holomorphic line bundle $L$ on $X$, and a singular
hermitian metric $h$ on $L$ with semi-positive curvature, we construct a singular
hermitian metric on the torsion-free coherent sheaf
\[
	\shF = \fl \bigl( \omXY \tensor L \tensor \shI(h) \bigr),
\]
and show that this metric has semi-positive curvature (in the sense that the
logarithm of the norm of every local section of the dual sheaf is plurisubharmonic).
In \cite{PT}, P\u{a}un and Takayama constructed a singular hermitian metric with
semi-positive curvature on the larger sheaf $\fl \bigl( \omXY \tensor L \bigr)$,
under the additional assumption that the restriction of $(L, h)$ to a general fiber
of $f$ has trivial multiplier ideal. Another difference with \cite{PT} is that we do
not use approximation by smooth metrics or results about Stein manifolds; instead,
both the construction of the metric, and the proof that it has semi-positive
curvature, rely on the Ohsawa-Takegoshi extension theorem with sharp estimates, recently
proved by B{\l}ocki and Guan-Zhou \cite{Blocki,GZ}. This approach was suggested to
us by Mihai P\u{a}un.

\begin{note}
Berndtsson and Lempert \cite{BL} explain how one can use the curvature properties of
pushforwards of adjoint bundles to get a relatively short proof of (one version of)
the Ohsawa-Takegoshi theorem with sharp estimates. This suggests that the two results
are not so far from each other. That said, we hope that using the
Ohsawa-Takegoshi theorem as a black box will make the proof of the main result more
accessible to algebraic geometers than it would otherwise be.
\end{note}

We introduce what we call the ``minimal extension property'' for singular hermitian
metrics (see \parref{par:MEP}), and show that, as a consequence of the Ohsawa-Takegoshi theorem with sharp
estimates, the singular hermitian metric on $\shF$ always has this
property. We then use the minimal extension property, together with some basic
inequalities from analysis, to give an alternative proof for the following result by
Cao and P\u{a}un: when $Y$ is projective, $\shF$ is a hermitian flat bundle if and
only if the line bundle $\det \shF$ has trivial first Chern class in $H^2(Y, \RR)$.
The original argument in \cite{CP} relied on some results by Raufi about
curvature tensors for singular hermitian metrics \cite{Raufi}. We also show that when
$Y$ is projective, every nontrivial morphism of sheaves $\shF \to \OY$ is split
surjective; this result is new.

In \chapref{chap:NS}, we apply these results to construct canonical singular
hermitian metrics with semi-positive curvature on the sheaves $\fl \omXYm$ for $m
\geq 2$. Here, one small improvement over \cite{PT} is the observation that these
metrics are continuous on the Zariski-open subset of $Y$ where $f \colon X \to Y$ is
submersive.

Our discussion of generic vanishing theory in \chapref{GV} is fairly standard, but 
includes (in \parref{application}) a new result relating the structure of the
cohomological support loci $V^0(\omXm)$ for $m \geq 2$ to the Iitaka fibration of
$X$. Here the main \theoremref{t-pushforward} is one of the crucial ingredients.

\subsection{Acknowledgements}

We thank Mihai P\u{a}un for encouraging us to write this paper, and for many useful
discussions and advice about its contents. We also thank Dano Kim and Luigi Lombardi
for reading and commenting on a draft version. During the preparation of the paper, CH
was partially supported by NSF grants DMS-1300750 and DMS-1265285 and by a grant from
the Simons Foundation (Award \#256202). MP was partially supported by
NSF grant DMS-1405516 and by a Simons Fellowship. CS was partially supported by NSF
grants DMS-1404947 and DMS-1551677, and by a Centennial Fellowship from the American
Mathematical Society.

\section{Proof of the main statement}
\label{scn:main}

\subsection{Main analytic and algebraic input}\label{input}

In this section we lay out the tools neeed to prove the main result. We also give a
brief sketch of the proof, which is presented in more detail in \parref{par:proof}.
The rest of the paper will be devoted to a detailed survey of the results stated
here.

\medskip

We first note that one can reduce \theoremref{main} to the special case when  $\kappa(X) = 0$ and $Y = A$ is an abelian variety, 
with the help of the Iitaka fibration; the argument for this is recalled in \parref{par:proof} below. 
We will therefore make these assumptions in the remainder of this section. The condition $\kappa (X) = 0$ is 
equivalent to saying that $P_m(X)
\leq 1$ for all $m \in \NN$, with equality for $m$ sufficiently large and divisible.
%Indeed, some power of $\omX$ must have sections, because $\kappa(X) \geq 0$; on the
%other hand, we cannot have $P_m(X) \geq 2$ for any $m$, because then $P_{km}(X) \geq
%k+1$ by multiplying together sections, and this would contradict $\kappa(X) = 0$.  
Let $F$ be the general fiber of $f \colon X \to A$. Our goal is to prove that $\kappa(F) =
0$. What we will actually show is that $P_m(F) = 1$ whenever $P_m(X) = 1$; 
 this is enough to conclude that $\kappa(F) = 0$.

Fix now an integer $m \in \NN$ such that $P_m(X) = 1$, and consider the pushforward of the
$m$-th pluricanonical bundle
\[
	\shF_m = \fl \omXm.
\]
This is a torsion-free coherent sheaf on $A$, whose rank at the generic point of $A$ is
equal to $P_m(F)$. (In fact, this holds for every smooth fiber of $f$, by invariance
of plurigenera.) The space of global sections of $\shF_m$ has dimension
\[
	h^0\bigl( A, \fl \omXm \bigr) = \dim H^0(X, \omXm) = P_m(X) = 1.
\]
To obtain the conclusion, it is enough to show that $\shF_m$ has rank $1$ generically; we will in fact prove the 
stronger statement that $\shF_m \simeq \shO_A$. This uses both algebraic and analytic properties of 
$\shF_m$.

\medskip

\noindent
{\bf Generic vanishing and unipotency.}
We first explain the algebraic input.  We borrow an idea from generic
vanishing theory, initiated in \cite{GL1,GL2}, namely to consider the locus
\begin{align*}
	V^0(A, \shF_m) &= \menge{P \in \Pic^0(A)}{H^0(A, \shF_m \tensor P) \neq 0} \\	
		&= \menge{P \in \Pic^0(A)}{H^0 \bigl( X, \omXm \tensor \fu P \bigr) \neq 0}	
		\subseteq \Pic^0(A).
\end{align*}
The following result by Chen-Hacon \cite[\S3]{CH3}, Lai \cite[Theorem~3.5]{Lai} and Siu
\cite[Theorem~2.2]{Siu-ext} describes the
structure of $V^0(A, \shF_m)$; it is a generalization of a famous theorem
by Simpson \cite{Simpson}. The proof by Simpson (which applies when $m = 0, 1$)
relies on Hodge theory and the Gelfond-Schneider theorem in transcendence theory; the cited 
works use a construction with cyclic coverings, originally due to Viehweg, to reduce the
general case to the case $m = 1$. We review the argument in \parref{par:Lai}.

\begin{theorem} \label{thm:Lai}
Let $X$ be a smooth projective variety. For each $m \in \NN$, the locus
\[
	\menge{P \in \Pic^0(X)}{H^0 \bigl( X, \omXm \tensor P \bigr) \neq 0}
		\subseteq \Pic^0(X)
\]
is a finite union of abelian subvarieties translated by points of finite order.
\end{theorem}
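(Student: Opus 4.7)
The plan is to reduce the case of general $m$ to the case $m=1$, which is Simpson's theorem, via a cyclic covering construction originally due to Viehweg.

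The cases $m=0,1$ serve as the base. The case $m=0$ is trivial, since the only $P \in \Pic^0(X)$ admitting a nonzero section is $P = \shO_X$ itself, which is a torsion point. For $m=1$, this is Simpson's theorem: Green and Lazarsfeld's Hodge-theoretic study of the derivative complex shows that $V^0(\omX)$ is a finite union of translates of abelian subvarieties of $\Pic^0(X)$, and Simpson's refinement, which uses the Gelfond-Schneider transcendence theorem applied to periods of holomorphic one-forms, shows that the translates are by points of finite order.

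For $m \geq 2$, I proceed as follows. Fix $P_0$ in an irreducible component of $V^0(\omXm)$ and a nonzero section $s \in H^0(X, \omXm \tensor P_0)$, and let $D = \mathrm{div}(s)$. After passing to an \'etale cover of $X$ in order to extract an $m$-th root of $P_0$, one obtains a line bundle $N$ with $N^{\tensor m} \simeq \omXm \tensor P_0 \simeq \shO_X(D)$; the section $s$ then defines a degree-$m$ cyclic cover $\pi \colon Y \to X$, which we may assume smooth after resolving singularities along $D$. From the standard splitting $\pi_{\ast} \shO_Y \simeq \bigoplus_{i=0}^{m-1} N^{\tensor -i}$ together with Grothendieck duality, the line bundle $\omXm \tensor P_0$ appears as a direct summand of $\pi_{\ast} \omY$. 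Twisting by $Q \in \Pic^0(X)$ and using the projection formula, sections of $\omXm \tensor P_0 \tensor Q$ correspond to sections of $\omY \tensor \pi^{\ast} R(Q)$ for a suitable $R(Q) \in \Pic^0(X)$, and the assignment $Q \mapsto \pi^{\ast} R(Q)$ defines a morphism of abelian varieties $\Pic^0(X) \to \Pic^0(Y)$ with finite kernel.

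This morphism carries the irreducible component of $V^0(\omXm)$ through $P_0$ into $V^0(\omY)$. Applying Simpson's theorem to $Y$, the target is a finite union of torsion translates of abelian subvarieties, and preimages of such loci under a finite-kernel morphism of abelian varieties are again finite unions of torsion translates. Hence the same structure is inherited by $V^0(\omXm) \subset \Pic^0(X)$. The main obstacle I anticipate is the bookkeeping required to set up the cyclic cover cleanly: passing to an \'etale cover to extract $m$-th roots of $P_0$, resolving singularities of $Y$, and verifying precisely how the correspondence between sections varies with the parameter $Q$ all require care; moreover one must check that the (a priori infinitely many) components of $V^0(\omXm)$ can all be produced from finitely many such covers, so that the overall locus is a \emph{finite} union.
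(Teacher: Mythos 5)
Your strategy---reducing to Simpson's theorem via a cyclic cover branched along the divisor of an $m$-canonical form---is indeed the one the paper follows, but two steps in your execution fail as stated. First, once you resolve singularities, $\omXm\tensor P_0$ is \emph{not} a direct summand of $\pi_{\ast}\omega_Y$: the splitting $\pi_{\ast}\shO_Y\simeq\bigoplus_i N^{\otimes -i}$ holds for the ramified cover before desingularization, but after taking a log resolution $\mu\colon X'\to X$ of $(X,D)$ and resolving the induced cover, the Esnault--Viehweg formula gives the summand $\omX\tensor N^{\otimes(m-1)}\tensor\shI$, where $\shI=\mu_{\ast}\shO_{X'}\bigl(K_{X'/X}-\bigl\lfloor\tfrac{m-1}{m}\mu^{\ast}D\bigr\rfloor\bigr)$ is a genuine multiplier-ideal correction. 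Your section $s$ does happen to survive this twist (since $\bigl\lfloor\tfrac{m-1}{m}D\bigr\rfloor\le D$, so $|s|^{2/m}$ is locally integrable), but that controls only the one section $s$ at the one point $P_0$.

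The second, more serious gap is your claim that the induced map $\Pic^0(X)\to\Pic^0(Y)$ carries the \emph{entire} irreducible component through $P_0$ into $V^0(\omega_Y)$. Once the correction $\shI$---which depends on the specific $D=\operatorname{div}(s)$ chosen at $P_0$---is in place, there is no reason that for nearby $Q$ in the component a nonzero section of $\omXm\tensor Q$ should lie in $\omXm\tensor Q\tensor\shI$. A corrected version of your argument only yields the paper's Lemma~\ref{lem:Lai}: through every point of $V^0(\omXm)$ there passes a torsion subvariety \emph{contained in} $V^0(\omXm)$. To conclude that each irreducible component \emph{is} a torsion subvariety, the paper then runs a Baire-category argument: there are only countably many torsion subvarieties in $\Pic^0(X)$, and an irreducible complex variety is not a countable union of proper closed subsets, so a component in which every (general) point lies on one of these countably many subvarieties must itself be one. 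This final step is entirely absent from your outline. Two further remarks: the paper replaces your single divisor $D$ by the asymptotic multiplier ideal $\shI(\|\cdot\|)$, giving the clean containment $\shI_m\subseteq\shI_{m-1}$ that makes the survival of sections transparent, and it uses a careful lemma (Lemma~\ref{lem:summand}) relating irreducible components of $V^0_k$ of a direct summand to those of the ambient sheaf---being a direct summand alone does not immediately make a component of the smaller $V^0$ into a component of the larger one. Finally, your worry about infinitely many components is moot: $V^0(\omXm)$ is closed by semicontinuity, hence automatically has finitely many irreducible components.
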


This theorem implies that $V^0(A, \shF_m)$ is also a finite union of abelian
subvarieties translated by points of finite order. The reason is that, as $f \colon X \to A$ has
connected fibers, the pullback morphism $\fu \colon \Pic^0(A) \to \Pic^0(X)$ is
injective. Since we are assuming that $P_m(X) = 1$, we have $\OA \in
V^0(A, \shF_m)$; let $s_0 \in H^0(X, \omXm)$ be any nontrivial section. Now we
observe that $\kappa(X) = 0$ forces
\[
	V^0(A, \shF_m) = \{\OA\}.
\]
To see why, suppose that we had $P \in V^0(A, \shF_m)$ for some nontrivial line
bundle $P \in \Pic^0(A)$. By \theoremref{thm:Lai}, we can assume that $P$ has finite order
$d \neq 1$. Let
\[
	s_1 \in H^0(A, \shF_m \tensor P) 
		= H^0 \bigl( X, \omXm \tensor \fu P \bigr)
\]
be any nontrivial section; then $s_0^{\tensor d}$ and $s_1^{\tensor d}$ are two
linearly independent sections of $\omX^{\tensor dm}$, contradicting the fact that
$P_{dm}(X) = 1$.

Knowing the locus $V^0(A, \shF_m)$ gives us a lot of information about $\shF_m$, due
to the following result \cite[Theorem~1.10]{pluricanonical}. It is based on a
vanishing theorem for pushforwards of pluricanonical bundles, which is again proved
using Viehweg's cyclic covering construction; we review the argument in
\parref{par:GVm}.

\begin{theorem} \label{pluricanonical_GV}
Let $f \colon X \to A$ be a morphism from a smooth projective variety to an abelian
variety. For every $m \in \NN$, the sheaf $\shF_m = \fl \omXm$ is a GV-sheaf on $A$.
\end{theorem}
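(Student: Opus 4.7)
My plan is to apply the cohomological criterion for GV-sheaves on an abelian variety due to Hacon and Pareschi--Popa: a coherent sheaf $\shF$ on $A$ is a GV-sheaf if and only if, for every sufficiently positive locally free sheaf $\shA$ on $A$ arising as a Fourier--Mukai transform of an ample line bundle on the dual abelian variety $\hat A$, one has the vanishing $H^i(A, \shF \tensor \shA) = 0$ for all $i > 0$. Via the projection formula, applying this criterion to $\shF_m = \fl \omXm$ reduces the theorem to showing
\[
	H^i \bigl( X, \omXm \tensor \fu \shA \bigr) = 0 \quad \text{for all } i > 0
\]
whenever $\shA$ is sufficiently positive in the sense above.

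For $m = 1$ this is essentially Hacon's original theorem, which follows from Kollár's vanishing and decomposition theorem for the higher direct images $R^j \fl \omX$: one has $H^i(A, R^j \fl \omX \tensor \shA) = 0$ for $i > 0$, and a standard spectral sequence argument delivers the desired conclusion. For the case $m \geq 2$, I would reduce to the case $m = 1$ via Viehweg's cyclic covering construction. After possibly twisting $\omXm$ by a sufficiently positive pullback line bundle from $A$ (a twist that can later be absorbed into the Fourier--Mukai positivity of $\shA$), one can produce a smooth finite cover $g \colon Y \to X$, together with a line bundle $\shf{L}$ on $X$, such that $\omXm$ appears as a direct summand of $\gl \omY \tensor \shf{L}^{-1}$. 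Applying the $m=1$ case to the composite morphism $f \circ g \colon Y \to A$ and restricting to this summand then yields the required vanishing for $\omXm \tensor \fu \shA$.

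The main obstacle I expect is making the cyclic covering construction compatible with the Hacon criterion. One has to arrange the branch divisor (which comes from a high multiple of $\omX$, possibly twisted by an ample pullback from $A$) so that $Y$ is smooth---or at least has rational singularities, so that Kollár's vanishing and torsion-freeness theorems still apply to $f \circ g$---and, more delicately, so that the auxiliary twists needed both to produce the ramified section and to split off $\omXm$ as a summand of $\gl \omY$ can be absorbed into the positivity of $\shA$ without breaking the criterion. This bookkeeping is the technical heart of \cite[Theorem 1.10]{pluricanonical} and is what distinguishes the pluricanonical case from the classical canonical case.
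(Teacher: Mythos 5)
Your broad plan --- the Fourier--Mukai cohomological criterion for GV-sheaves, Koll\'ar vanishing for $m = 1$, and Viehweg's cyclic covering for $m \geq 2$ --- is the same as the paper's, but as written it has two gaps that would prevent the argument from closing.

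First, the projection formula does not reduce the problem to the vanishing $H^i(X, \omXm \tensor \fu \shf{A}) = 0$ on $X$. What it gives is $H^i(A, \shF_m \tensor \shf{A}) = H^i \bigl( A, \fl(\omXm \tensor \fu \shf{A}) \bigr)$, which is the $E_2^{i,0}$ term of the Leray spectral sequence for $f$, not its abutment. For $m = 1$ one can pass between the two because Koll\'ar's decomposition theorem forces the Leray spectral sequence to degenerate, but for $m \geq 2$ such degeneration is not known, and vanishing of $H^i$ on $X$ simply does not imply vanishing of $E_2^{i,0}$. The paper avoids this entirely by never leaving the base: it exhibits the relevant twist of $\shF_m$ as a direct summand of $\varphi_* \omega_Z$ \emph{as a sheaf on the base}, and applies Koll\'ar vanishing to $\varphi_* \omega_Z$ directly.

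Second, the criterion calls for twisting by the \emph{vector bundle} $\shf{A} = R^g \Psi_P(M^{-1})$, while Koll\'ar vanishing and the cyclic covering construction both require an ample \emph{line bundle}. There is no version of Serre or Koll\'ar vanishing that applies to tensoring a fixed sheaf with an ample vector bundle of unbounded rank, so one cannot simply take $\shf{A}$ ``sufficiently positive.'' The paper converts to the line bundle setting by pulling back along the isogeny $\varphi_M \colon \widehat A \to A$, under which $\shf{A}$ becomes a direct sum of copies of $M$. This base change is missing from your proposal, and it is essential --- but it also creates the real difficulty: after base change the morphism $h \colon X_1 \to \widehat A$ depends on $M$, so the statement to be proved moves with $M$ and one cannot pass to the limit. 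Breaking this circularity is exactly what the effective bound $k \le \dim A + m$ of \propositionref{summand} achieves, uniformly over all morphisms to $\widehat A$. Labeling this as ``bookkeeping'' understates its role: it is the central point of the pluricanonical case, and without producing such a bound the argument does not close. You correctly sense the difficulty, but stop short of the idea that resolves it.
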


Recall that a coherent sheaf $\shF$ on an abelian variety $A$ is called a
\define{GV-sheaf} if its cohomology support loci
\[
	V^i(A, \shF) = \menge{P \in \Pic^0(A)}{H^i(A, \shF \tensor P) \neq 0}
\]
satisfy the inequalities $\codim V^i(A, \shF) \geq i$ for every $i \in \NN$. This property can be seen as
a variant of (semi-)positivity on abelian varieties; in fact every GV-sheaf on $A$ is nef, see 
\cite[Theorem 4.1]{PP2}. 

\begin{note}
A more conceptual description involves the Fourier-Mukai transform
\[
	\derR \Phi_P \colon \Dbcoh(\OA) \to \Dbcoh \bigl( \shO_{\Pic^0(A)} \bigr),
\]
which is an equivalence between the bounded derived categories of coherent sheaves on
$A$ and the dual abelian variety $\Pic^0(A)$. In terms of the Fourier-Mukai
transform, $\shF$ is a GV-sheaf if and only if the complex of sheaves
\[
	\derR \shHom \Bigl( \derR \Phi_P(\shF), \shO_{\Ah} \Bigr)
		\in \Dbcoh \bigl( \shO_{\Pic^0(A)} \bigr)
\]
is concentrated in degree $0$, and is therefore again a coherent sheaf $\hat{\shF}$
on $\Pic^0(A)$. By the base change theorem, the support of $\hat{\shF}$ is precisely
the locus $V^0(A, \shF)$.
\end{note}

In the case at hand, we have $V^0(A, \shF_m) = \{\OA\}$; consequently, $\hat{\shF_m}$
is a successive extension of skyscraper sheaves supported at the origin in
$\Pic^0(A)$.  We will use this via the following elementary consequence; see
\parref{par:FS-abelian} for details. Recall first from \cite{Mukai:duality} that a
vector bundle $U$ on $A$ is called \emph{unipotent} if it has a filtration $$0 = U_0
\subset U_1 \subset \cdots \subset U_n = U$$ such that $U_i / U_{i-1} \simeq \shO_A$
for all $i = 1, \ldots, n$. Note in particular that $\det U \simeq \shO_A$.  More
generally, $U$ is called \emph{homogeneous} if it has a filtration $$0 = U_0 \subset
U_1 \subset \cdots \subset U_n = U$$ such that $U_i / U_{i-1}$ is isomorphic to a
line bundle in $\Pic^0(A)$  for all $i = 1, \ldots, n$.  A homogeneous vector bundle
$U$ is called \emph{decomposable} if $U=U_1\oplus U_2$, where the $U_i$ are non-zero vector
bundles, and \emph{indecomposable} if this is not the case.

\begin{corollary}\label{pluricanonical_unipotent}
Let $X$ be a smooth projective variety with $\kappa (X) = 0$, and let $f\colon X \rightarrow A$ be an algebraic fiber space over an abelian variety.
\begin{aenumerate} 
\item If $\shF_m \neq 0$ for some $m \in \NN$, then the coherent sheaf $\shF_m$ is an
indecomposable homogeneous vector bundle.
\item If $H^0(X, \omXm) \neq 0$ for some $m \in \NN$, then the coherent sheaf
$\shF_m$ is an indecomposable unipotent vector bundle.
\end{aenumerate}
\end{corollary}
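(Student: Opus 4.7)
The plan is to combine three inputs: the structural result \theoremref{thm:Lai}, the GV-sheaf property from \theoremref{pluricanonical_GV}, and the Fourier-Mukai transform $\derR \Phi_P$ on $A$, all together with the standing hypothesis $\kappa(X) = 0$.

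First I would show that $V^0(A, \shF_m)$ consists of at most a single torsion point. Applying \theoremref{thm:Lai} to $X$, the locus
\[
W_m = \menge{Q \in \Pic^0(X)}{H^0(X, \omXm \tensor Q) \neq 0}
\]
is a finite union of torsion translates of abelian subvarieties of $\Pic^0(X)$. Since $f$ has connected fibers, the projection formula gives $\fl \fu L \simeq L$ for every $L \in \Pic^0(A)$, so $\fu \colon \Pic^0(A) \into \Pic^0(X)$ is injective. Projection formula and the Leray spectral sequence also identify $V^0(A, \shF_m)$ with $(\fu)^{-1}(W_m)$. Now I invoke $\kappa(X) = 0$: if $P_0, P_1 \in V^0$ were two distinct torsion points, with sections $s_i \in H^0(X, \omXm \tensor \fu P_i)$, and if $N$ is a common multiple of their orders, then $s_0^{\tensor N}$ and $s_1^{\tensor N}$ lie in $H^0(X, \omX^{\tensor Nm})$. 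A relation $s_0^N = c\, s_1^N$ would force $\Div(s_0) = \Div(s_1)$ and hence $\fu P_0 \simeq \fu P_1$, contradicting $P_0 \neq P_1$; so these two sections are linearly independent, giving $P_{Nm}(X) \geq 2$ and contradicting $\kappa(X) = 0$. The same argument, applied to the dense torsion points of any translate of an abelian subvariety, rules out any positive-dimensional component of $V^0$.

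Next I would invoke the Fourier-Mukai transform. By \theoremref{pluricanonical_GV} and the \emph{Note} following it, $\hat{\shF_m}$ is a coherent sheaf on $\Pic^0(A)$ whose support equals $V^0(A, \shF_m)$. In case (a), $\shF_m \neq 0$ forces $\hat{\shF_m} \neq 0$ since $\derR \Phi_P$ is an equivalence, and the previous step then gives $V^0 = \{P_\alpha\}$ for a single torsion point $P_\alpha$; in case (b), the identification $H^0(A, \shF_m) = H^0(X, \omXm) \neq 0$ places $\OA$ in $V^0$, so $V^0 = \{\OA\}$. A coherent sheaf of finite length supported at a single point $P_\alpha \in \Pic^0(A)$ inverse-transforms (via Mukai's calculation) to a successive extension of copies of the line bundle $P_\alpha$, that is, to a homogeneous vector bundle with ``character'' $\alpha$; when $P_\alpha = \OA$ this is a unipotent vector bundle, which yields (b), while the general case gives the homogeneity in (a).

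Finally, for indecomposability, the same power-of-sections argument shows $h^0(A, \shF_m \tensor P_\alpha) = h^0(X, \omXm \tensor \fu P_\alpha) = 1$. If $\shF_m = \shF' \oplus \shF''$, then $V^i(\shF_m) = V^i(\shF') \cup V^i(\shF'')$ makes both summands GV-sheaves, with $V^0$ inside $\{P_\alpha\}$. The decomposition $h^0(\shF' \tensor P_\alpha) + h^0(\shF'' \tensor P_\alpha) = 1$ forces one summand, say $\shF'$, to have $V^0(\shF') = \emptyset$; GV-ness then gives $\hat{\shF'} = 0$, and hence $\shF' = 0$ by the Fourier-Mukai equivalence. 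The main obstacle is the Fourier-Mukai step of the third paragraph: one must use the precise correspondence between finite-length coherent sheaves at $P_\alpha$ and homogeneous bundles with character $\alpha$. Once this is in hand, the rest is careful bookkeeping with vanishing loci and orders of torsion points.
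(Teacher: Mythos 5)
Your proposal is correct and follows essentially the same strategy as the paper's own proof: use \theoremref{thm:Lai} (and injectivity of $\fu$) together with the power-of-sections argument to show $V^0(A,\shF_m)$ is a single torsion point, then deduce (un)homogeneity from \propositionref{unipotent}, i.e.\ from the Fourier--Mukai description of GV-sheaves whose $V^0$ is a point. The only small difference is that you spell out the indecomposability step via GV-ness of the summands and \lemmaref{GV_nonzero}, where the paper simply declares it ``clear'' from $h^0(A,\shF_m)=1$ and the unipotent structure.
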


\noindent
{\bf Singular hermitian metrics on pushforwards of pluricanonical bundles.}
We now come to the analytic input.  
To motivate it, recall that the space of global sections of $\shF_m$ has dimension $P_m(X) = 1$.
In order to show that $P_m(F) = 1$, we therefore need to argue that the unipotent
vector bundle $\shF_m$ is actually the trivial bundle $\OA$. 
For the moment this seems quite hopeless with algebraic methods, so it is at this point that the analytic methods
take over.

The crucial development that allows us to proceed is recent work on the notion of a
singular hermitian metric on a torsion-free sheaf; the highlight of this study is the
following remarkable result by P\u{a}un and Takayama \cite[Theorem~3.3.5]{PT}.  In
order to state it, recall that to a singular hermitian metric $h$ on a line bundle
$L$, one associates the multiplier ideal sheaf $\mathcal{I}(h) \subseteq \shO_X$,
consisting of those functions that are locally square-integrable with respect to $h$.

\begin{theorem}\label{PT}
Let $f\colon X \to Y$ be a projective morphism of smooth varieties, and let $(L, h)$ be a line bundle on $X$ with a 
singular hermitian metric of semi-positive curvature. Then the torsion-free sheaf
$\fl \big(\omXY \otimes L \otimes \shI(h)\big)$ has a canonical singular hermitian
metric with semi-positive curvature.
\end{theorem}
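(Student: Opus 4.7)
The plan is to construct the metric on the Zariski-open subset $Y^\circ \subseteq Y$ over which $f$ is submersive, to verify semi-positive curvature there via the Ohsawa-Takegoshi theorem with sharp estimates, and then to extend across the degenerate locus using classical extension properties of plurisubharmonic functions. On $Y^\circ$, the fiber of $\shF = \fl(\omXY \tensor L \tensor \shI(h))$ at a point $y$ is naturally identified with the space $H^0(X_y, \omega_{X_y} \tensor L|_{X_y} \tensor \shI(h|_{X_y}))$ of square-integrable sections, and I will equip it with the $L^2$-inner product
\[
	\|s\|_y^2 = c_n \int_{X_y} s \wedge \bar{s} \cdot h, \qquad c_n = i^{n^2}, \ n = \dim X_y.
\]
Dualizing, I obtain a (possibly singular) norm on local sections $\xi$ of $\shF^* = \shHom(\shF, \OY)$, characterized by $\|\xi\|_y^{*,2} = \sup_{s \neq 0} |\xi(s)|^2 / \|s\|_y^2$. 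The task is to show that $\log \|\xi\|^*$ is plurisubharmonic for every such $\xi$, which is the semi-positive curvature condition for $(\shF, h_\shF)$.

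The key mechanism is what I would call the \emph{minimal extension property}, which is a consequence of the sharp Ohsawa-Takegoshi theorem of B{\l}ocki and Guan-Zhou: for every $y_0 \in Y^\circ$, every small polydisk $\Delta \ni y_0$, and every $s_0 \in \shF_{y_0}$ of finite $L^2$-norm, there is a holomorphic extension $S \in \shF(\Delta)$ of $s_0$ satisfying
\[
	\frac{1}{\vol(\Delta)} \int_\Delta \|S(y)\|_y^2 \, d\lambda(y) \leq (1 + \eps(\Delta)) \cdot \|s_0\|_{y_0}^2,
\]
with $\eps(\Delta) \to 0$ as $\Delta$ shrinks to $y_0$. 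This is the input suggested by P\u{a}un, and it replaces the smoothing and Stein-manifold arguments used in the original proof of \cite{PT}.

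Granting this, plurisubharmonicity will follow from a standard duality trick. Fix a local section $\xi$ of $\shF^*$, a point $y_0$, and $\eps > 0$; pick $s_0 \in \shF_{y_0}$ with $|\xi(s_0)|^2 \geq (\|\xi\|_{y_0}^{*,2} - \eps)\|s_0\|_{y_0}^2$, and extend it to $S$ on a small polydisk $\Delta$ as above. The pairing $\xi(S)$ is holomorphic on $\Delta$ and satisfies the pointwise bound $|\xi(S)(y)| \leq \|\xi\|_y^* \cdot \|S(y)\|_y$. Taking logarithms, integrating over $\Delta$, applying the sub-mean-value inequality for the subharmonic function $\log |\xi(S)|$ at $y_0$, and using Jensen's inequality on the $\|S\|_y^2$-integral, I obtain after letting $\eps \to 0$ the inequality
\[
	\log \|\xi\|_{y_0}^* \leq \frac{1}{\vol(\Delta)} \int_\Delta \log \|\xi\|_y^* \, d\lambda(y),
\]
which is the sub-mean-value property, so $\log \|\xi\|^*$ is plurisubharmonic on $Y^\circ$. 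Since it is locally bounded above (by comparison with any smooth reference metric on the reflexive hull $\shF^{**}$), it extends uniquely as a plurisubharmonic function across the degenerate locus $Y \setminus Y^\circ$, giving the canonical singular hermitian metric on all of $Y$.

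I expect the principal obstacle to be establishing the minimal extension property in the exact form needed: one must ensure that the constant in the Ohsawa-Takegoshi estimate is asymptotically $1$, that the extension genuinely lies in $\shI(h)$ (not just in $L^2_{\mathrm{loc}}$), and that the construction is uniform in the parameter $y_0$ so that the averaged fiberwise inequality survives. A secondary, more technical, difficulty is the clean extension of the metric across the bad locus $Y \setminus Y^\circ$ at points where $\shF$ fails to be locally free; reducing to the reflexive hull $\shF^{**}$ and invoking Hartogs-type extension for plurisubharmonic functions should handle this.
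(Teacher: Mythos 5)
Your plan follows the same route as the paper's proof: construct the $L^2$ metric over the locus where $f$ is submersive, derive plurisubharmonicity of $\log\|\xi\|^*$ from the sharp Ohsawa--Takegoshi estimate combined with Jensen's inequality, and then extend across the bad locus. The minimal extension property, the duality trick, and the mean-value argument are all present in the paper in essentially the form you describe (the paper uses the exact sharp constant $\mu(B)$ rather than your asymptotic $(1+\eps(\Delta))$, which makes the Jensen step cleaner, but either works).

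The genuine gap is your justification of the local boundedness above of $\log\|\xi\|^*$ as you approach $Y\setminus Y^\circ$. You assert this follows ``by comparison with any smooth reference metric on the reflexive hull $\shF^{**}$,'' but this does not hold up: first, $\shF^{**}$ need not be locally free, so there is no smooth metric to compare with; second, even when $\shF^{**}$ is a bundle, nothing prevents the $L^2$ metric $H$ from degenerating arbitrarily relative to a fixed smooth metric (the fiberwise $L^2$ norm can tend to zero as the fiber acquires worse singularities or the multiplier ideal shrinks), so comparison gives no upper bound on the dual norm. This boundedness is in fact one of the two substantive technical steps of the theorem. The paper handles it by a separate argument (Proposition~\ref{prop:boundedness} and Lemma~\ref{lem:compactness}): one extends norm-$1$ vectors from nearby fibers to sections on a fixed open set using Ohsawa--Takegoshi with a uniform bound, then invokes a Montel/Fr\'echet-space compactness argument for $H^0(V,\shF)$ to show that the holomorphic functions $\xi(S)$ are uniformly bounded on a fixed compact. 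Note also that the bad locus has codimension $1$ in general, so Hartogs extension of plurisubharmonic functions (codimension $\geq 2$) does not apply; the boundedness is exactly what is needed to invoke the codimension-$1$ extension criterion, and it cannot be finessed. A secondary omission is the upper semi-continuity of $\log\|\xi\|^*$ on $Y^\circ$ itself, which the paper proves separately (Proposition~\ref{prop:USC}) using Fatou's lemma together with lower semi-continuity of $e^{-\varphi}$; this is less serious since one could take the upper-semicontinuous regularization, but it should not be passed over silently.
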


The relevant definitions and the proof are described in \chapref{analytic} and
\chapref{chap:NS}, where we also present another key statement. Indeed, Cao and P\u
aun \cite[Corollary~2.9 and Theorem~5.23]{CP} show that their singular hermitian
metrics behave much like smooth metrics with Griffiths semi-positive curvature: if
the determinant line bundle $\det \shF$ has trivial first Chern class, then $\shF$ is
actually a hermitian flat bundle. This is (a) below; part (b) is new.

\begin{theorem}\label{CP}
Let $f\colon X \to Y$ be a surjective morphism of smooth projective varieties. Let
$(L, h)$ be a line bundle on $X$ with a singular hermitian metric of semi-positive
curvature, and define $\shF = \fl \big(\omXY \otimes L \otimes \shI(h)\big)$.
\begin{aenumerate}
\item If $c_1(\det \shF) = 0$ in $H^2(Y, \RR)$, then the torsion-free sheaf $\shF$ is
locally free, and
the singular hermitian metric in \theoremref{PT} is smooth and flat.
\item Every nonzero morphism $\shF \to \OY$ is split surjective.
\end{aenumerate}
\end{theorem}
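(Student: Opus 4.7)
My plan for part (a) follows the strategy underlying the Cao--P\u{a}un argument, but routed through the minimal extension property (MEP) introduced earlier in the paper. First, I would show that the Griffiths semi-positive singular metric $h_{\shF}$ induces a singular hermitian metric with semi-positive curvature on the line bundle $\det \shF$: indeed, from MEP one can pass to a local orthonormal-like frame and verify that the corresponding weight function $\log \det(h_{\bar j i})^{-1}$ is plurisubharmonic on the Zariski-open locus where the sheaf is locally free and $h_{\shF}$ is smooth. The curvature current $T$ of this metric on $\det \shF$ is then a closed positive $(1,1)$-current representing $c_1(\det\shF)$ in $H^2(Y,\RR)$. Since $Y$ is compact K\"ahler and $c_1(\det\shF)=0$, pairing $T$ with $\omega^{\dim Y-1}$ gives $0$, forcing $T\equiv 0$; so the metric on $\det\shF$ is smooth and flat.

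The heart of part (a) is then to propagate this flatness from $\det\shF$ to $\shF$. At a good point $y_0\in Y$, I would exploit MEP: for any $v_0\in\shF_{y_0}$, there is a local holomorphic section $s$ with $s(y_0)=v_0$ whose $L^2$ norm on a polydisc $D$ saturates (up to $|D|$) the value $\|v_0\|^2$. Combined with the Griffiths semi-positivity inequality, the vanishing of the trace curvature (equivalent to flatness of $\det\shF$) forces each non-negative curvature eigenvalue to vanish. This gives smoothness and flatness of $h_{\shF}$ on the regular locus $Y^\circ\subseteq Y$. Because $\shF$ is torsion-free and $Y\setminus Y^\circ$ has codimension $\geq 2$, a flat hermitian bundle on $Y^\circ$ extends uniquely across, and the extension is automatically locally free. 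This step of converting a global zero-trace estimate into pointwise vanishing of all eigenvalues via MEP is where I expect the real work to be.

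For part (b), I would start by viewing a nonzero morphism $\phi\colon \shF\to\OY$ as a section of the dual sheaf $\shF^*$. By the definition of semi-positive curvature used in the paper, the function $\log\|\phi\|^2_{h_{\shF}^*}$ is plurisubharmonic on the regular locus; it extends to a global plurisubharmonic function on the compact K\"ahler manifold $Y$ and is therefore constant, say $\|\phi\|^2\equiv c>0$. In particular $\phi$ is surjective everywhere, yielding an exact sequence $0\to K\to \shF\xrightarrow{\phi}\OY\to 0$ that I want to split.

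To produce the splitting, pick at a good point $y_0$ the unique dual vector $v_0\in\shF_{y_0}$ with $\phi(v_0)=c$ and $\|v_0\|^2=c$, and extend $v_0$ via MEP to a local holomorphic section $s$ of $\shF$ on a polydisc $D$ with $\int_D\|s\|^2\leq |D|\cdot c$. The Cauchy--Schwarz inequality $|\phi(s)|^2\leq c\|s\|^2$ combined with the mean-value inequality for the holomorphic function $\phi(s)$ gives the chain
\[
c^2=|\phi(s)(y_0)|^2\leq \frac{1}{|D|}\int_D|\phi(s)|^2 \leq \frac{c}{|D|}\int_D\|s\|^2\leq c^2,
\]
forcing all inequalities to be equalities; in particular $\phi(s)\equiv c$ on $D$. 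Uniqueness of the minimal-norm extension in MEP then ensures that these local sections are compatible on overlaps, assembling a global section $\psi\in H^0(Y,\shF)$ with $\phi(\psi)=c$. Rescaling, $\psi/c$ splits $\phi$. I expect the main technical obstacle here to be the global patching: verifying that the equality case in the Ohsawa--Takegoshi sharp estimate forces enough rigidity on the extremal local sections that they agree on overlaps, rather than merely producing independent local splittings.
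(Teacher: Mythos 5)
Your overall strategy is the right one and follows the paper's structure in both parts, but each part has a concrete gap.

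For part (a), the plan to deduce flatness of $h$ from flatness of $\det h$ by ``forcing each non-negative curvature eigenvalue to vanish'' is not available in this setting: the whole point of working with singular hermitian metrics on vector bundles is that the curvature tensor is \emph{not} a well-defined object, even as a distribution (this is Raufi's theorem, cited in the text). The paper deliberately avoids curvature tensors. Instead, after noting $\det h$ is smooth and flat, it uses the minimal extension property to produce a local frame $s_1,\dotsc,s_r$ with $s_i(0)=e_i$ orthonormal and $\frac{1}{\mu(B)}\int_B \abs{s_i}_h^2 \leq 1$, then compares $\frac{1}{\mu(B)}\int_B\log\abs{g}^2$ (where $s_1\wedge\dotsb\wedge s_r = g\,\delta$ for a flat trivialization $\delta$ of $\det\shF$, $g(0)=1$) against $\sum_i \frac{1}{\mu(B)}\int_B \log\abs{s_i}_h^2$ via \emph{Hadamard's determinant inequality} and \emph{Jensen's inequality}. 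The mean-value inequality for $\log\abs{g}^2$ pins the left side at $\geq 0$ while Jensen pins the right side at $\leq 0$, so everything collapses and the Gram matrix is a.e.\ the identity. Extension across the bad locus, and local freeness, then follow by Hartogs applied to the inverse of $\sigma\colon\OY^{\oplus r}\to\shF$. You should replace the curvature-eigenvalue heuristic with this elementary integral argument; without it, part (a) does not go through.

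For part (b), your chain of inequalities forcing $\phi(s)\equiv c$ is exactly the paper's computation (normalized to $c=1$ there). But the gluing step is where you leave a genuine gap: the minimal extension property as defined in the paper is purely an \emph{existence} statement and carries no built-in uniqueness, so you cannot simply invoke ``uniqueness of the minimal-norm extension in MEP.'' What the paper actually proves is that the section $s$ with $f\circ s\equiv 1$ and $\abs{s}_h\equiv 1$ a.e.\ is unique, by a separate argument using the \emph{parallelogram law} for singular hermitian inner products: if $s'$ is another such section, then almost everywhere
\[
\abs{s'-s}_h^2 + \abs{s'+s}_h^2 = 2\abs{s}_h^2 + 2\abs{s'}_h^2 = 4,
\]
while $f(s'+s)=2$ and $\abs{f}_{h^{\ast}}\equiv 1$ force $\abs{s'+s}_h^2\geq 4$, hence $s'=s$. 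This uniqueness is what makes the locally constructed extremal sections agree on overlaps and glue to a global section. Supplying this parallelogram-law argument is the missing ingredient you need to close your proof.
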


The application of these results to \theoremref{main} stems from the fact that the
bundles $\shF_m$ with $m \ge 2$ naturally fit into this framework. Let us briefly
summarize how this works when $f \colon X \to Y$ is an algebraic fiber space with
general fiber $F$. For every $m \in \NN$ such that $P_m(F) \neq 0$, the spaces
of $m$-canonical forms on the smooth fibers of $f$ induce in a canonical way a
singular hermitian metric with semi-positive curvature on $\omXY$, called the
\define{$m$-th Narasimhan-Simha metric}. (For $m = 1$, the Narasimhan-Simha metric is
of course just the Hodge metric.) Denote by $h$ the induced singular hermitian
metric on the line bundle $L = \omXY^{\tensor(m-1)}$. Pretty much by construction,
the inclusion
\[
	\fl \bigl( \omXY \tensor L \tensor \shI(h) \bigr) \subseteq
		\fl \bigl( \omXY \tensor L \bigr) = \fl \omXYm
\]
is generically an isomorphism, and so \theoremref{PT} and \theoremref{CP} apply.

\begin{corollary}\label{metric_summary}
Let $f \colon X \to Y$ be an algebraic fiber space.
\begin{aenumerate}
\item For any $m \in \NN$, the torsion-free sheaf $\fl \omXYm$ has a canonical
singular hermitian metric with semi-positive curvature.
\item If $c_1(\det \fl \omXYm) = 0$ in $H^2(Y, \RR)$, then $\fl \omXYm$ is locally
free, and the
singular hermitian metric on it is smooth and flat. 
\item Every nonzero morphism $\fl \omXYm \to \OY$ is split surjective.
\end{aenumerate}
\end{corollary}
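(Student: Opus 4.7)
The proposed strategy is to deduce all three assertions from Theorems \theoremref{PT} and \theoremref{CP} applied to a carefully chosen pair $(L,h)$ on $X$, and then transfer the conclusions from the smaller sheaf produced by \theoremref{PT} to $\fl\omXYm$. We may assume $P_m(F) > 0$, since otherwise the torsion-free sheaf $\fl \omXYm$ is generically zero and the three assertions are trivial. The key construction, carried out in detail in \chapref{chap:NS}, is the $m$-th Narasimhan--Simha metric on $\omXY$, built from the spaces of $m$-canonical forms on the smooth fibers of $f$; its $(m-1)$-st tensor power is a singular hermitian metric $h$ with semi-positive curvature on $L := \omXY^{\tensor(m-1)}$. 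Setting $\shF := \fl\bigl(\omXY \tensor L \tensor \shI(h)\bigr)$, the inclusion $\shF \hookrightarrow \fl \omXYm$ is an isomorphism over the Zariski-open locus of $Y$ where $f$ is smooth.

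For part (a), \theoremref{PT} provides a canonical singular hermitian metric with semi-positive curvature on $\shF$, and since $\shF$ and the torsion-free $\fl \omXYm$ agree on a dense open set, the metric transfers canonically to $\fl \omXYm$, still with semi-positive curvature.

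For parts (b) and (c), the remaining work is to compare $\shF$ with $\fl\omXYm$. The torsion quotient $\fl \omXYm / \shF$ yields an isomorphism $\det \fl \omXYm \simeq \det \shF \tensor \OY(D)$ for some effective divisor $D$ on $Y$, while the semi-positive metric on $\shF$ makes $c_1(\det \shF)$ pseudo-effective in $H^2(Y,\RR)$. Under the hypothesis $c_1(\det \fl \omXYm) = 0$, the relation $c_1(\det \shF) = -[D]$ then has both sides pseudo-effective, and strictness of the pseudo-effective cone on the projective variety $Y$ forces $D = 0$ and $c_1(\det \shF) = 0$. Applying \theoremref{CP}(a) to $\shF$, we conclude that $\shF$ is a flat vector bundle; being locally free and therefore reflexive, while agreeing with the torsion-free sheaf $\fl\omXYm$ off a codimension-$2$ locus, it must coincide with $\fl\omXYm$, giving (b). For (c), any nonzero $\varphi\colon\fl\omXYm \to \OY$ restricts via $\shF \hookrightarrow \fl\omXYm$ to a morphism $\shF \to \OY$ that is still nonzero (the two sheaves coincide at the generic point, where $\varphi$ is nonzero by torsion-freeness); \theoremref{CP}(b) provides a splitting $\OY \to \shF$, and composition with the inclusion yields the desired splitting of $\varphi$.

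The main obstacle I anticipate is the verification that $\shI(h)$ is trivial at the generic point of a general smooth fiber, so that the inclusion $\shF \hookrightarrow \fl \omXYm$ really is a generic isomorphism; this amounts to an $L^{2/m}$-type computation for the Narasimhan--Simha metric on $\omXY^{\tensor(m-1)}$, and is the technical heart of \chapref{chap:NS}. Once this is granted, the determinant comparison and the reflexive-hull manipulation used to pass from $\shF$ to $\fl\omXYm$ are formal.
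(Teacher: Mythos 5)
Your argument is correct, but it takes a detectably different route from the paper's, and it is worth seeing where the two diverge. Both begin the same way: the Narasimhan--Simha metric on $\omXY$ gives a singular hermitian metric $h$ of semi-positive curvature on $L=\omXY^{\tensor(m-1)}$, the sheaf $\shF=\fl(\omXY\tensor L\tensor\shI(h))$ has a metric with semi-positive curvature and the minimal extension property by \theoremref{PT} (i.e.\ \theoremref{thm:pushforward}), and the inclusion $\shF\into\fl\omXYm$ is an isomorphism over the smooth locus of $f$ by \theoremref{thm:pluricanonical}. The paper then makes one simple observation (\corollaryref{cor:pushforward}): since every section of $\shF$ is also a section of $\fl\omXYm$, the minimal extension property transfers \emph{verbatim} to the induced metric on $\fl\omXYm$; this lets it apply \theoremref{thm:det-trivial} and \theoremref{thm:quotient-split} directly to $\fl\omXYm$, with no further bridging needed. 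You instead apply \theoremref{CP} to the smaller sheaf $\shF$ and argue back to $\fl\omXYm$ afterward. For part (b) this costs you an extra step: you need the determinant comparison $\det\fl\omXYm\simeq\det\shF\tensor\OY(D)$ and the salience of the pseudo-effective cone to conclude $D=0$ from $c_1(\det\fl\omXYm)=0$ and the pseudo-effectivity of $c_1(\det\shF)$, before you can invoke \theoremref{CP}(a); you then use reflexivity of the resulting bundle $\shF$ and agreement in codimension one to force $\shF=\fl\omXYm$. All of these steps are correct (the cone argument reduces to $D\cdot H^{n-1}\le 0$ for $D$ effective forcing $D=0$), but they are exactly what the paper's observation about the minimal extension property lets one avoid. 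For part (c) your composition argument is essentially the same length as the paper's and equally clean. The one point you flag as an anticipated obstacle---that $\shI(h)$ imposes no condition on the general fiber, so that $\shF\into\fl\omXYm$ is a generic isomorphism---is indeed the technical input, settled in the paper by \lemmaref{lem:NS-shI} and \theoremref{thm:pluricanonical}(b). In short: your proof is correct, but you can shorten part (b) considerably by noting that the minimal extension property is inherited by $\fl\omXYm$ and applying \theoremref{thm:det-trivial} there directly, rather than routing through $\shF$.
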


In our case, $\shF_m = \fl \omXm$ is a unipotent vector bundle by
\corollaryref{pluricanonical_unipotent}, and so the hypothesis in (b) is satisfied;
after this point, the proof of \theoremref{main} becomes straightforward.

\subsection{Proof of Theorem~\ref*{main}}\label{par:proof}
We now explain how \theoremref{main} follows quickly by combining the results outlined in the previous section. Recall that we are starting with an algebraic fiber space $f\colon X \rightarrow Y$, where $X$ is a smooth projective variety, and $Y$ is of maximal Albanese dimension. 
Let us note right away that one can perform a useful reduction, following in part the argument
in \cite[Theorem 4.9]{CH2}. 

\begin{lemma}\label{kodaira_reduction}
To prove \theoremref{main}, it is enough to assume that $\kappa (X) = 0$ and that $Y$ is an abelian variety. \end{lemma}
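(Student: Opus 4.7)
The plan is a standard two-step Iitaka-style reduction, following \cite[Theorem 4.9]{CH2}: first use the Albanese morphism of $Y$ to reduce the base to an abelian variety, then use the Iitaka fibration of $X$ to reduce the total space to the case $\kappa(X) = 0$.

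\textbf{Step 1: Reducing $Y$ to an abelian variety.} The Albanese morphism $a_Y \colon Y \to A := \Alb(Y)$ is generically finite onto its image $Y_0 \subseteq A$, since $Y$ has maximal Albanese dimension. By Ueno's fibration theorem, $Y_0$ is (birationally) fibered by translates of an abelian subvariety $K \subseteq A$ over a subvariety $W \subseteq A/K$ of general type, and $\kappa(Y) = \kappa(Y_0)$ decomposes accordingly as $\dim W$ plus the Kodaira dimension contribution coming from the $K$-fibers. Composing with $f$ and replacing $X$ by a suitable smooth birational model produces a tower $X \to Y \to W$ whose general fibers $X_w$ are algebraic fiber spaces over (a translate of) $K$. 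The base-of-general-type case of Iitaka's conjecture, due to Viehweg, handles the $W$-contribution, and reduces the subadditivity inequality for $f$ to the corresponding inequality for $X_w \to K$. Hence we may assume $Y = A$ is itself an abelian variety, so that $\kappa(Y) = 0$.

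\textbf{Step 2: Reducing to $\kappa(X) = 0$.} If $\kappa(X) = -\infty$, a standard argument using invariance of plurigenera together with the fact that $H^0(A, \omega_A^m) \neq 0$ shows that $\kappa(F) = -\infty$ as well, making the inequality trivial. So assume $\kappa(X) \geq 0$ and let $\phi \colon X \to Z$ be a morphism realizing the Iitaka fibration of $X$ (after suitable birational modifications), so $\dim Z = \kappa(X)$ and the general fiber $X_z$ has $\kappa(X_z) = 0$. Let $V$ be the image of $(\phi, f) \colon X \to Z \times A$, take a resolution and Stein factorize $X \to V$ to obtain an algebraic fiber space $g \colon X \to \widetilde V$ with a surjection $\widetilde V \to A$. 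Applying Ueno's structure theorem to the image $f(X_z) \subseteq A$, together with the base-of-general-type case applied to $X_z \to f(X_z)/(\text{stabilizer})$ and the vanishing $\kappa(X_z) = 0$, one concludes that $f(X_z)$ is (after modification) a translate of an abelian subvariety $B \subseteq A$. The assumed special case of \theoremref{main} now applies to the fiber space $X_z \to B$ (total space of Kodaira dimension $0$, abelian base), yielding $\kappa(G) = 0$ for the general fiber $G$ of $g$. Finally, easy addition applied to the restriction $F \to \text{image of } F \text{ in } \widetilde V$, combined with the bound $\dim(\text{image}) \leq \dim Z = \kappa(X)$ and $\kappa(G) = 0$, gives $\kappa(F) \leq \kappa(X) = \kappa(X) + \kappa(A)$, as required.

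\textbf{Main obstacle.} The technically delicate part is Step 2: ensuring via birational modifications that the image $f(X_z) \subseteq A$ can be taken to be a translate of an abelian subvariety, so that the assumed case really applies to $X_z \to f(X_z)$. This is precisely the point at which Ueno's theorem interacts with the Iitaka fibration, and is the step that requires the reduction to an abelian base from Step 1 to click into place.
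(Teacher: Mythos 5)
Your Step 2 is broadly parallel to the paper's treatment of the case $\kappa(Y)=0$: reduce to the Iitaka fibration of $X$, use the structure of the image of the general Iitaka fiber in $A$, invoke the assumed special case on that piece, and close with easy addition. (The paper gets the fact that $f(G)$ is a translate of an abelian subvariety directly from Kawamata's characterization of abelian varieties; your detour through Ueno plus the general-type base case is a longer route to the same conclusion, but not wrong.) Your treatment of the $\kappa(X)=-\infty$ case is under-specified --- invariance of plurigenera only gives $\fl\omega_X^{\tensor m}\neq 0$, and a nonzero torsion-free sheaf on an abelian variety need not have a global section; the paper's argument via the GV-property and \theoremref{thm:Lai} is what actually produces a section of a power of $\omega_X$.

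The genuine gap is in Step 1. You write that for $Y$ of maximal Albanese dimension with Albanese image $Y_0\subseteq A$, one has $\kappa(Y)=\kappa(Y_0)$ and so the Ueno fibration of $Y_0$ over a general-type base $W$ with $\dim W = \kappa(Y_0)$ encodes $\kappa(Y)$. This is false: for a generically finite $a_Y\colon Y\to Y_0$ one only has $\kappa(Y)\geq\kappa(Y_0)$, and the inequality can be strict. For instance, let $Y$ be a smooth double cover of an abelian surface $A$ branched over a smooth ample curve; then $Y$ has maximal Albanese dimension, $Y_0 = A$, $\kappa(Y_0)=0$, but $\kappa(Y)=2$. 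In this situation your Step 1 reduces the target inequality to $\kappa(X)\geq\kappa(F)+\dim W = \kappa(F)$, while the theorem demands $\kappa(X)\geq\kappa(F)+2$. So your reduction proves a strictly weaker statement whenever $a_Y$ is not \'etale. The paper avoids this by first replacing $Y$ with a resolution of an \'etale cover of the Stein factorization of $a_Y$ (and $X$ by the corresponding fiber product) --- an operation which preserves $\kappa(X)$, $\kappa(Y)$, and $F$ --- and only then invoking Kawamata's Theorem~13 of \cite{Kawamata:abelian} to get a product decomposition $Y=Z\times K$ with $Z$ of general type and $K$ abelian, so that $\kappa(Y)=\dim Z$ on the nose. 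Your argument needs this \'etale cover step; as written it does not establish the lemma.
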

\begin{proof}
We begin by showing that if $\kappa (X) = - \infty$, then $\kappa (F) = - \infty$. If
this were not the case, then we could pick some $m>0$ such that $P_m(F)>0$ and hence $f_*\omega _X^{\otimes m}\ne 0$. Let $Y\to A$ be the Albanese morphism of $Y$, and 
$g\colon X\to A$ the induced morphism. Since $F$ is an irreducible component of the
general fiber of $X\to g(X)\subseteq A$, it follows that $g_*\omega _X^{\otimes m}\ne 0$. By \theoremref{pluricanonical_GV}, $g_*\omega _X^{\otimes m}$ is a GV-sheaf, and in particular by the general \lemmaref{GV_nonzero} below,  the set 
\[
	V^0(g_*\omega _X^{\otimes m})= \menge{P \in \Pic^0(A)}%
		{H^0(A, g_*\omega _X^{\otimes m}\otimes P )\ne 0}
\] 
is non-empty.
Now by \theoremref{thm:Lai} and the comments immediately after, 
$V^0(g_*\omega _X^{\otimes m})$ contains a torsion point $P \in \Pic^0(A)$, i.e.
there is an integer $k>0$ such that $P^{\otimes k}\simeq \shO _A$. But then 
$h^0(X, \omega ^{\otimes m}_X\otimes g^* P )=h^0(A, g_*\omega _X^{\otimes m}\otimes P )\ne 0$ and so 
$$h^0(X, \omega ^{\otimes km}_X )=h^0\big(X, (\omega ^{\otimes m}_X\otimes P )^{\otimes k}\big)\ne 0.$$ 
This contradicts the assumption  $\kappa (X) = - \infty$.

Assume now that $\kappa (X) \ge 0$. We will first prove the statement in the case that $\kappa (Y)=0$.
By Kawamata's theorem \cite[Theorem 1]{Kawamata:abelian},
since $Y$ is of maximal Albanese dimension, it is in fact birational to its Albanese variety and so we may assume that $Y$ is an abelian variety.
Let
$h\colon X \rightarrow Z$ the Iitaka fibration of $X$. 
 Since we are allowed to work birationally, we can assume that $Z$ is smooth.
We denote by $G$ its general fiber, so that in particular $\kappa (G) = 0$. By  the same result of Kawamata, the 
Albanese map of $G$ is surjective, so we deduce that $B = f(G) \subseteq Y$ is an abelian subvariety. If $G\to B'\to B$ is the Stein factorization, then $B'\to B$ is an \'etale map of abelian varieties.
We thus have an induced fiber space 
$$ G \longrightarrow B'$$
over an abelian variety, with $\kappa (G) = 0$, and whose general fiber is $H  = F
\cap G$. Assuming that \theoremref{main} holds for algebraic fibers spaces of Kodaira
dimension zero over abelian varieties, we obtain $\kappa (H) = 0$. Note however that $H$ is also an irreducible component of the general fiber of 
$$h_{|F} \colon F \longrightarrow h (F).$$
Considering the Stein factorization of this morphism, the easy addition formula  \cite[Corollary 2.3]{Mori}, implies that 
$$\kappa (F) \le \kappa (H) + \dim h(F) = \dim h(F).$$ 
(Note that we can assume that $g(F)$ is smooth, by passing to a birational model.)
Since  $\dim h(F) \le \dim Z = \kappa (X)$, we obtain the required inequality $\kappa (F) \le \kappa (X)$.

Finally we prove the general case. Since $Y$ has maximal Albanese dimension, after replacing it by a resolution of singularities of an \'etale cover of its Stein factorization, and 
$X$ by a resolution of the corresponding fiber product, by \cite[Theorem 13]{Kawamata:abelian} we may assume that $Y=Z\times K$ where $Z$ is of general type 
and $K$ is an abelian variety. In particular $\kappa (Y)=\dim Z=\kappa (Z)$.
If $E$ is the general fiber of the induced morphism $X\to Z$, then the induced morphism $E\to K$ has general fiber isomorphic to $F$. By what we have proven above, we deduce that $\kappa (E)\geq \kappa (F)$. We then have the required inequality
$$\kappa (X) =  \kappa (Z)+\kappa (E)\geq \kappa (Y)+\kappa (F),$$  
where the first equality is \cite[Theorem 3]{Kawamata:abelian}, since $Z$ is of general type.
\end{proof}

We may therefore assume that $f \colon X \to A$ is a fiber space onto an abelian variety, and $\kappa( X) = 0$. Note that this last condition means that we have $h^0(X, \omega_X^{\otimes m})=1$ for all sufficiently divisible integers  $m>0$. The task at hand is to show that $\kappa (F) = 0$. (It is a well known consequence of the easy addition formula \cite[Corollary 2.3]{Mori} that if $\kappa (F) = - \infty$, then $\kappa (X) = - \infty$.) 
We show in fact the following more precise statement:

\begin{theorem}\label{t-pushforward}
If $f\colon X \rightarrow A$ is an algebraic fiber space over an abelian variety,
with $\kappa (X) = 0$, then we have 
\[
	\shF_m = f_* \omega_X^{\otimes m} \simeq \shO_A
\]
for every $m \in \NN$ such that $H^0(X, \omega_X^{\otimes m}) \neq 0$.
\end{theorem}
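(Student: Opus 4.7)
The plan is to combine the algebraic structure theorem from Corollary~\ref{pluricanonical_unipotent} with the analytic rigidity result in Corollary~\ref{metric_summary}. First I would observe that because $Y = A$ is an abelian variety, $\omega_A \simeq \shO_A$, so the relative and absolute dualizing sheaves agree and $\shF_m = \fl \omXm = \fl \omega_{X/A}^{\tensor m}$. Both corollaries therefore apply to the same sheaf.

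Second, I would invoke the algebraic input. Since $\kappa(X) = 0$ and $H^0(X, \omXm) \neq 0$, Corollary~\ref{pluricanonical_unipotent}(b) says that $\shF_m$ is an indecomposable unipotent vector bundle on $A$. In particular, a unipotent filtration forces $\det \shF_m \simeq \shO_A$, so $c_1(\det \shF_m) = 0$ in $H^2(A, \RR)$.

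Third, I would feed this vanishing into the analytic input. By Corollary~\ref{metric_summary}(b), the canonical singular hermitian metric on $\shF_m$ is smooth and flat, so $\shF_m$ comes from a unitary representation $\rho \colon \pi_1(A) \to U(r)$, where $r = \rk \shF_m$. Such a representation is semisimple, so $\shF_m$ decomposes as an orthogonal direct sum of irreducible hermitian flat bundles. On the other hand, the unipotent filtration has all graded pieces isomorphic to $\shO_A$, so every Jordan--H\"older constituent of $\shF_m$ is $\shO_A$. The two pieces of information together force $\shF_m \simeq \shO_A^{\oplus r}$, and indecomposability then yields $r = 1$, i.e.\ $\shF_m \simeq \shO_A$. (Equivalently, one could use part (c) of Corollary~\ref{metric_summary}: the unipotent filtration produces a nonzero surjection $\shF_m \to \shO_A$, which must then be split, and indecomposability kills the kernel.)

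There is no real obstacle left once the ingredients are lined up; the content of the theorem is exactly that the algebraic half (unipotency, hence trivial determinant) meets the hypothesis of the analytic half (curvature-flatness from $c_1 = 0$), and that ``unipotent plus hermitian flat'' leaves only the trivial bundle---the analogue of the elementary fact that a unipotent unitary matrix is the identity. The only step requiring a small amount of care is the passage from indecomposability to rank one, which is immediate from either the semisimple-plus-unipotent argument or the splitting property in Corollary~\ref{metric_summary}(c).
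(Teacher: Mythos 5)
Your proof is correct and follows essentially the same route as the paper: unipotency from Corollary~\ref{pluricanonical_unipotent} gives $\det \shF_m \simeq \shO_A$, Corollary~\ref{metric_summary} upgrades this to a hermitian flat bundle, and then ``unipotent and flat implies trivial.'' The only cosmetic difference is at the very end, where you invoke indecomposability to get $r=1$, while the paper uses $h^0(A, \shF_m) = h^0(X, \omXm) = 1$ directly; these are interchangeable since indecomposability in Corollary~\ref{pluricanonical_unipotent} is itself deduced from $h^0 = 1$.
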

\begin{proof}
From \corollaryref{pluricanonical_unipotent}, we know that $\shF_m$ is an indecomposable unipotent vector bundle on $A$. In particular, 
$$ \det  \shF_m  \simeq \shO_A.$$
\corollaryref{metric_summary} implies then that $\shF_m$ has a smooth hermitian metric that is flat. Thus $\shF_m$ is a 
successive extension of trivial bundles $\shO_A$  that can be split off as direct summands with the help of the flat metric. 
It follows that in fact $\shF_m\simeq \shO_A^{\oplus r}$, the trivial bundle of some rank $r \ge 1$. But then, since 
$$h^0(A, f_* \omega_X^{\otimes m} )= h^0 (X, \omega_X^{\otimes m}) = 1,$$ 
we obtain that $r =1$, which is the statement of the theorem.
\end{proof}

In the remaining chapters, we explain the material in \parref{input} in more detail.

\section{Generic vanishing}
\label{GV}

\subsection{Canonical bundles and their pushforwards}

As explained above, the algebraic tools used in this paper revolve around the topic of generic vanishing. This study was initiated by Green and Lazarsfeld 
\cite{GL1, GL2}, in part as an attempt to provide a useful weaker version of Kodaira Vanishing for the canonical bundle, in the absence of twists by positive line bundles. 
An important addition was provided in work of Simpson \cite{Simpson}.
The results of Green-Lazarsfeld were extended to include higher direct images of canonical bundles in \cite{Hacon}. From the point of view of this paper, the main statements to keep in mind are summarized in the following theorem. 
Recall that for any coherent sheaf $\shF$ on an abelian variety $A$, we consider for all $k \ge 0$ 
the \emph{cohomological support loci}
$$V^k (\shF) = \{ \, P \in \Pic^0(A) \, \mid \, H^k(X, \shF \otimes P) \neq
0 \, \}$$
They are closed subsets of $\Pic^0 (A)$, by the semi-continuity theorem for cohomology.

\begin{theorem}
If $f\colon X \to A$ is a morphism from a smooth projective variety to an abelian variety, then for any $j, k \ge 0$ 
we have
\begin{enumerate}
\item \cite{Hacon} $\codim_{\Pic^0 (A)}  V^k (R^j f_* \omega_X) \geq k$.
\item \cite{GL2,Simpson}
 Every irreducible component of $V^k (R^j f_* \omega_X)$ is a translate of an abelian subvariety of
$A$ by a point of finite order.
\end{enumerate}
\end{theorem}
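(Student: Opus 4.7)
The plan is to treat the two assertions separately while using the Fourier--Mukai transform as the common bridge. Write $\Ah = \Pic^0(A)$ and let $\derR \Phi_P \colon \Dbcoh(\OA) \to \Dbcoh(\shO_{\Ah})$ be the transform induced by the Poincar\'e line bundle.

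For (1), I would follow Hacon's approach: translate the codimension bound into a cohomological vanishing statement, and then verify the vanishing via Koll\'ar--Kodaira. Concretely, a coherent sheaf $\shF$ on $A$ satisfies $\codim V^k(\shF) \geq k$ for every $k$ if and only if, for every sufficiently positive line bundle $L$ on $\Ah$, the sheaf $\shF \tensor \hat{L}^{\vee}$ has no higher cohomology, where $\hat{L}$ is the Fourier--Mukai transform of $L^{-1}$; this equivalence is extracted from base change for $\derR \Phi_P$ together with Grothendieck duality on $\Ah$. To verify the vanishing for $\shF = R^j f_* \omega_X$, pull back along an isogeny $\pi \colon A' \to A$ associated to $L$: one has $\pi^* \hat{L}^{\vee} \simeq M^{\oplus N}$ for an ample line bundle $M$ on $A'$, and for the induced morphism $f' \colon X' = X \times_A A' \to A'$ the required vanishing reduces to
\[
H^i\!\bigl(A', R^j f'_* \omega_{X'} \tensor M\bigr) = 0 \quad \text{for } i > 0,
\]
which is Koll\'ar's vanishing theorem for higher direct images of canonical sheaves twisted by ample line bundles. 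Descending via the splitting $\shO_A \hookrightarrow \pi_* \shO_{A'}$ recovers the vanishing on $A$.

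For (2), the proof splits into two steps. First, every irreducible component of $V^k(R^j f_* \omega_X)$ is a translate of an abelian subvariety; this is established by the deformation-theoretic method of Green--Lazarsfeld. Infinitesimal deformations of a class $P \in V^k$ are controlled by the cup-product complex built from $T_0 \Ah = H^1(A, \OA)$, and Hodge-theoretic considerations going back to Deligne force the tangent cone at each point of $V^k$ to be a union of linear subspaces, which upon integration produce subtori of $\Pic^0(A)$. Second, the translating point must be of finite order; this is Simpson's theorem, whose proof goes through the nonabelian Hodge correspondence between rank-one local systems and Higgs line bundles, coupled with the Gelfond--Schneider theorem in transcendence theory to rule out non-torsion characters of $\pi_1(A)$. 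The passage from $\omega_X$ itself to the higher direct images $R^j f_* \omega_X$ can be handled either by Koll\'ar's decomposition theorem, which realizes $R^j f_* \omega_X$ as a direct summand of a pushforward of a canonical sheaf on a semistable model, or via the cyclic cover construction reviewed later in \parref{par:Lai}.

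The hard part, by a wide margin, is the torsion-point refinement in (2). The codimension bound in (1) and the ``translate of a subtorus'' statement in (2) admit essentially formal proofs once the Fourier--Mukai dictionary and the Koll\'ar package are in place, but promoting subtori to translates \emph{by points of finite order} relies on transcendence-theoretic input that sits well outside the sheaf-theoretic toolkit developed in the rest of the paper. I would therefore quote Simpson's theorem as a black box and concentrate the exposition on the Fourier--Mukai side, since this is what directly feeds into \theoremref{pluricanonical_GV} and the structural analysis of $V^0(A, \shF_m)$ driving the main argument.
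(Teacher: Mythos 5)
The paper does not prove this theorem: it is stated as a summary of known results, with citations to Hacon, Green--Lazarsfeld, and Simpson, and no argument is given. Your sketch is therefore being measured not against a proof in the paper but against the cited literature, and on that score it is essentially correct. For part (1) you reproduce Hacon's argument: translate the GV-condition into the vanishing criterion of \theoremref{vanishing_GV}, pass through the isogeny $\varphi_M \colon \Ah \to A$, use flat base change (here one needs that $\varphi_M$ is \'etale, so that $X' = X \times_A \Ah$ is again smooth and $\varphi_M^* R^j f_* \omega_X \simeq R^j f'_* \omega_{X'}$), and apply Koll\'ar vanishing. This is precisely the strategy the paper \emph{does} spell out in \parref{par:GVm} for the analogous statement about $f_* \omXm$ (\theoremref{pluricanonical_GV}), so your route matches both Hacon's paper and the template the authors follow elsewhere. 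Two small notational quibbles: what you call $\hat L^{\vee}$ should be $R^g \Psi_P(M^{-1})$ (the only nonvanishing transform of the anti-ample bundle, often written $\widehat{M^{-1}}$), and the splitting $\OA \hookrightarrow \pi_* \shO_{A'}$ is already packaged into the second equivalence of \theoremref{vanishing_GV}, so it is not a separate step. For part (2) you correctly split the claim into the Green--Lazarsfeld deformation-theoretic argument (linearity of the tangent cone giving subtori) and Simpson's transcendence argument (torsion translates), and you are right that the latter sits outside the sheaf-theoretic toolkit of the paper and is best quoted as a black box; the passage from $\omega_X$ to $R^j f_* \omega_X$ via Koll\'ar's splitting or the cyclic covering trick of \parref{par:Lai} is the standard device. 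Nothing here is a gap; it is a faithful reconstruction of the proofs the paper is implicitly relying on when it cites these sources.
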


What we use in this paper are (partial) extensions of these results to pushforwards
of pluricanonical bundles $f_* \omega_X^{\otimes m}$, for $m \ge 2$. We describe
these  in the following sections, beginning with an abstract study in  the next.

\subsection{The $GV$ property and unipotency}\label{GV-background}
Let $A$ be an abelian variety of dimension $g$.  The generic vanishing property (1) in the theorem above
can be formalized  into the following:

\begin{definition}\label{GV_definition}
The sheaf $\shF$ is called a \define{GV-sheaf} on $A$ if it satisfies
\[
	\codim_{\Pic^0 (A)} V^k (\shF) \geq k \qquad \text{for all $k\geq 0.$}
\]
\end{definition}

We will identify $\Pic^0 (A)$ with the dual abelian variety $\widehat A$, and denote by $P$ the normalized Poincar\'e bundle on $A \times \widehat A$. It induces the integral transforms
\[
	\derR \Phi_P \colon  \Dbcoh(\OA) \longrightarrow \Dbcoh \bigl( \shO_{\widehat A} \bigr), 
 	\quad
	\derR \Phi_P \shF = \derR {p_2}_* (p_1^* \shF \otimes P).
\]
and
\[
	\derR \Psi_P \colon \Dbcoh \bigl( \shO_{\widehat A} \bigr) \longrightarrow  \Dbcoh(\OA), 
		\quad \derR \Psi_P \shG = \derR {p_1}_* (p_2^* \shG \otimes P).
\]
These functors are known from \cite[Theorem 2.2]{Mukai:duality} to be equivalences of derived categories, usually called the Fourier-Mukai transforms;  moreover, 
\begin{equation}\label{FM-formulas}
\derR \Psi_P \circ \derR \Phi_P=(-1_A)^*[-g] \qquad \text{and}
\qquad  \derR \Phi_P \circ \derR \Psi_P=(-1_{\widehat A})^*[-g],
\end{equation}
where $[-g]$ denotes shifting $g$ places to the right.

Standard applications of base change (see e.g. \cite[Lemma 2.1]{PP2} and \cite[Proposition 3.14]{Pareschi+Popa:GV}) 
lead to the following basic properties of $GV$-sheaves:

\begin{lemma}\label{inclusion_chain}
Let $\shF$ be a coherent sheaf on $A$. Then:
\begin{enumerate}
\item
$\shF$  is a GV-sheaf if and only if 
$$\codim_{\widehat A} \Supp R^k \Phi_P \shF \ge k  \,\,\,\,\,\,{\rm  for~ all}\,\,\,\, k \geq 0.$$
%As such, the definition extends to arbitrary objects in $ \Dbcoh(\OA)$, called $GV$-objects.
\item
If $\shF$ is a GV-sheaf, then 
$$V^g (\shF) \subseteq \cdots \subseteq V^1 (\shF) \subseteq V^0 (\shF).$$
\end{enumerate}
\end{lemma}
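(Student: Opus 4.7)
The plan is to run both parts through the cohomology and base change theorem for the Fourier-Mukai functor $\derR \Phi_P$, supplemented by Grothendieck-Serre duality on the regular variety $\widehat A$.

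For part~(1), the ``only if'' direction I would get essentially for free: base change plus semicontinuity give the containment $\Supp R^k \Phi_P \shF \subseteq V^k(\shF)$, so the codimension bound on $V^k$ immediately implies the one on $\Supp R^k \Phi_P \shF$. The reverse is the more substantive direction, and for it I would use Grothendieck-Serre duality for $\derR \Phi_P$ combined with Mukai's inversion formula $\derR \Psi_P \circ \derR \Phi_P = (-1_A)^*[-g]$. The codimension condition on the supports of $R^k \Phi_P \shF$ is equivalent, via a standard local-to-global Ext argument on the smooth variety $\widehat A$, to the dualized complex $\derR \shHom(\derR \Phi_P \shF, \shO_{\widehat A})$ being concentrated in a single cohomological degree (namely $g$); applying $\derR \Psi_P$ with the shift and involution that inversion requires, and then a second round of base change, translates this dual concentration condition back into the codimension bound on $V^k(\shF)$.

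For part~(2), I would exploit the structural output of part~(1). If $\shF$ is GV, then $\derR \shHom(\derR \Phi_P \shF, \shO_{\widehat A})$ is a single coherent sheaf on $\widehat A$, which I will call $\hat \shF$. The Fourier-Mukai image of $\shF$ is therefore completely determined by $\hat \shF$, and base change identifies $V^k(\shF)$, up to the $(-1_{\widehat A})^*$ involution, with the support of the appropriate hyper-Ext of $\hat \shF$. The chain $V^g \subseteq \cdots \subseteq V^0$ then reduces to a purely local statement on $\widehat A$ about a single coherent sheaf, which one can extract from the local-to-global Ext spectral sequence combined with the Mukai inversion constraint (i.e.\ the fact that $\derR \Psi_P$ applied to the dual of $\hat \shF$ must again be a single sheaf, placed in the appropriate degree, in order for $\shF$ to exist).

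The main obstacle I expect lies in part~(1): one has to thread signs, shifts, and $(-1)^*$ involutions correctly through the combined use of Grothendieck-Serre duality and Mukai's inversion formula, and it is easy to get the degree conventions wrong. Once part~(1) is properly in place, part~(2) should come out as a pointwise consequence of the dictionary it sets up, with no further input from $\shF$ beyond the coherence of $\hat \shF$.
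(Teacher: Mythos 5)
Your plan for part~(1) is much heavier than what the paper actually does, and as written it contains a genuine gap. The paper's proof uses no duality at all: a fiberwise application of cohomology and base change, descending from the top degree $g$ (where base change is automatic), gives the identity
\[
\bigcup_{k\ge m}\Supp R^k\Phi_P\shF \;=\; \bigcup_{k\ge m}V^k(\shF)
\qquad\text{for every } m\ge 0,
\]
and both implications in~(1) then fall out of this by descending induction on $m$: each side has codimension $\ge m$ if and only if the other does. Your ``only if'' direction is fine (the containment $\Supp R^k\Phi_P\shF\subseteq V^k(\shF)$ follows just as you say), but you should notice that once you have the displayed identity, the converse is \emph{also} elementary; there is no need to invoke Grothendieck--Serre duality, Mukai inversion, or anything about $\derR\shHom(\derR\Phi_P\shF,\shO_{\widehat A})$.

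The gap in your ``if'' direction is the claimed equivalence between the support-codimension condition on the $R^k\Phi_P\shF$ and the dual complex $\derR\shHom(\derR\Phi_P\shF,\shO_{\widehat A})$ being concentrated in one degree. For an arbitrary bounded complex $K$ on $\widehat A$ in degrees $[0,g]$ with $\codim\Supp\cohH^k(K)\ge k$, the dual $\derR\shHom(K,\shO_{\widehat A})$ can certainly have cohomology spread over several degrees (already for $g=1$: take $\cohH^0(K)$ to have torsion — the codimension condition says nothing about $\cohH^0$, yet $\shExt^1(\cohH^0(K),\shO_{\widehat A})$ then contributes). The equivalence only becomes true once one exploits the hidden constraint that $K$ \emph{is} the Fourier--Mukai transform of a sheaf; that is precisely the content of the ``commutative algebra lemma'' of Hacon and Pareschi--Popa, and making it work requires Auslander--Buchsbaum and syzygies, not just the local-to-global Ext spectral sequence. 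Calling it ``a standard argument'' glosses over the hardest step in that route. Moreover, even granting the equivalence, the final move in your sketch — applying $\derR\Psi_P$ and ``a second round of base change'' — would, by Mukai inversion, simply return you to the support bound on $R^k\Phi_P\shF$; it does not produce the bound on $V^k(\shF)$, so the chain you describe does not close.

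For part~(2), the paper cites Pareschi--Popa and gives no argument, so you are on your own there; but what you wrote is too vague to assess. ``A purely local statement on $\widehat A$ about a single coherent sheaf, extracted from the local-to-global Ext spectral sequence combined with the Mukai inversion constraint'' names the tools without saying which local statement about $\hat\shF$ yields $V^{k+1}(\shF)\subseteq V^k(\shF)$, nor how the inversion constraint enters. In fact, the chain of inclusions for the supports of the $\shExt$ sheaves of a single coherent sheaf on a smooth variety is \emph{not} generally nested, so the translation to $V^k$ has to use more than generic commutative algebra. This missing step is where the actual content of~(2) lies, and it needs to be spelled out.
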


To give a sense of what is going on,  here is a sketch of the proof of part (1): note
that the restriction of ${p_1}^*\shF \otimes P$ to a fiber $A \times \{\alpha\}$ of
$p_2$ is isomorphic to the sheaf $\shF \otimes \alpha$ on $A$, and so fiberwise we
are looking at the cohomology groups $H^k (A, \shF\otimes \alpha)$. A simple
application of the theorem on cohomology and base change then shows for every $m \ge
0$ that $$\bigcup_{k \ge m} \Supp R^k \Phi_P \shF =  \bigcup_{k \ge m} V^k
(\shF).$$ This implies the result by descending induction on $k$.

\begin{lemma}\label{GV_nonzero}
If $\shF$ is a GV-sheaf on $A$, then $\shF = 0$ if and only if $V^0 (\shF) =
\emptyset$.
\end{lemma}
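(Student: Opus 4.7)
The ``only if'' direction is trivial: if $\shF = 0$ then $H^0(A, \shF \otimes P) = 0$ for every $P \in \Pic^0(A)$, so $V^0(\shF) = \emptyset$. The real content is in the converse, and I would prove it by chaining together the two parts of \lemmaref{inclusion_chain} with the fact that $\derR \Phi_P$ is an equivalence.

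Assume $\shF$ is a GV-sheaf with $V^0(\shF) = \emptyset$. By part (2) of \lemmaref{inclusion_chain}, we have the inclusion chain $V^g(\shF) \subseteq \cdots \subseteq V^1(\shF) \subseteq V^0(\shF) = \emptyset$, so $V^k(\shF) = \emptyset$ for every $k \geq 0$. Now invoke the identity $\bigcup_{k \geq m} \Supp R^k \Phi_P \shF = \bigcup_{k \geq m} V^k(\shF)$ established in the proof sketch of \lemmaref{inclusion_chain}(1) (taking $m = 0$) to conclude that $\Supp R^k \Phi_P \shF = \emptyset$ for every $k$, hence $R^k \Phi_P \shF = 0$ for every $k$. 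In other words, $\derR \Phi_P \shF = 0$ in $\Dbcoh(\shO_{\widehat{A}})$.

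Finally, apply $\derR \Psi_P$ to both sides and use the inversion formula \eqref{FM-formulas}: this gives $(-1_A)^* \shF \decal{-g} = 0$, and therefore $\shF = 0$. There is no real obstacle; the argument is just a matter of assembling \lemmaref{inclusion_chain} with the invertibility of the Fourier-Mukai transform, and the GV-hypothesis is used precisely to guarantee that the vanishing of $V^0$ propagates to all higher $V^k$ via the inclusion chain.
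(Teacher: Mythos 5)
Your proof is correct and follows essentially the same route as the paper: use the inclusion chain from \lemmaref{inclusion_chain}(2) to propagate $V^0(\shF)=\emptyset$ to all $V^k(\shF)$, convert this via base change to $\derR\Phi_P\shF=0$, and conclude $\shF=0$ by Mukai's equivalence. You simply spell out the intermediate step (the union identity and the explicit inversion formula) more fully than the paper's terse version.
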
 
\begin{proof}
By \lemmaref{inclusion_chain}, we see that $V^0 (\shF) = \emptyset$ is equivalent to $V^k (\shF) = \emptyset$ for all $k\geq 0$, which 
by base change is in turn equivalent to $\derR \Phi_P \shF = 0$. By Mukai's derived equivalence, this is equivalent to
$\shF = 0$.
\end{proof}

The following proposition is the same as \cite[Corollary 3.2(4)]{Hacon}, since it can be seen that the assumption
on $\shF$  imposed there is equivalent to that of being a GV-sheaf. This is the main way in which generic vanishing is used in this paper;  for the definition of a unipotent vector bundle see \S \ref{input}.

\begin{proposition}\label{unipotent}
Let $\shF$ be a GV-sheaf on an abelian variety $A$. If $V^0 (\shF) = \{0\}$, 
then $\shF$ is a unipotent vector bundle.
\end{proposition}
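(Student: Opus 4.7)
The plan is to run the Fourier--Mukai machinery set up in \parref{GV-background}. Since $\shF$ is a GV-sheaf and $V^0(\shF) = \{0\}$, \lemmaref{inclusion_chain}(2) gives the chain
\[
V^g(\shF) \subseteq \cdots \subseteq V^1(\shF) \subseteq V^0(\shF) = \{0\},
\]
so every $V^k(\shF)$ is either empty or the origin. Combined with the identity $\bigcup_{j\geq k}\Supp R^j\Phi_P\shF = \bigcup_{j\geq k} V^j(\shF)$ from the discussion immediately following \lemmaref{inclusion_chain}, this forces every cohomology sheaf $R^k\Phi_P\shF$ to be a finite-length coherent sheaf supported set-theoretically at the origin $0 \in \widehat{A}$.

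Next I would compute $\derR\Psi_P$ on coherent sheaves supported at $\{0\}$. Because the Poincar\'e bundle satisfies $P|_{A \times \{0\}} \simeq \shO_A$, the definition of $\derR\Psi_P$ immediately yields $\derR\Psi_P(k(0)) \simeq \shO_A$, concentrated in degree $0$. Any coherent sheaf $\shG$ supported at $\{0\}$ admits a finite filtration with graded pieces $k(0)$; inducting along this filtration and using the associated distinguished triangles, one sees that $\derR\Psi_P$ stays concentrated in degree $0$ at each step, because the outer terms $\shO_A$ already lie in degree $0$ and any connecting map in $\Ext^1(\shO_A, \shO_A) = H^1(A, \shO_A)$ merely records the extension class rather than producing a shift. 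Therefore $\derR\Psi_P\shG$ is a single coherent sheaf which is an iterated extension of copies of $\shO_A$, i.e., a unipotent vector bundle.

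With these preparations, the argument concludes by Fourier inversion \eqref{FM-formulas}. The spectral sequence
\[
E_2^{p,q} = R^p\Psi_P\bigl(R^q\Phi_P\shF\bigr) \;\Longrightarrow\; R^{p+q}\bigl(\derR\Psi_P\derR\Phi_P\shF\bigr) \simeq R^{p+q}\bigl((-1_A)^*\shF[-g]\bigr)
\]
has $E_2^{p,q} = 0$ for every $p \neq 0$ by the previous step, so it degenerates and yields $\Psi_P(R^g\Phi_P\shF) \simeq (-1_A)^*\shF$ together with $\Psi_P(R^q\Phi_P\shF) = 0$ for $q \neq g$. Since $\derR\Psi_P$ is an equivalence of derived categories, the latter vanishings force $R^q\Phi_P\shF = 0$ for $q \neq g$, while the former identification presents $(-1_A)^*\shF$ as an iterated extension of trivial line bundles; pulling back by $(-1_A)^*$ then identifies $\shF$ itself as a unipotent vector bundle.

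The main obstacle I anticipate is the inductive computation of $\derR\Psi_P$ on sheaves supported at the origin: one must verify that at every stage of the filtration the resulting object is honestly a coherent sheaf in degree $0$ rather than a genuine complex. Once that is in place, the Fourier--Mukai formalism carries out the rest mechanically.
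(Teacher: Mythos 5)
Your proposal is correct and follows essentially the same route as the paper: constrain the support of every $R^k\Phi_P\shF$ to the origin, feed the spectral sequence of the composition $\derR\Psi_P\circ\derR\Phi_P = (-1_A)^*[-g]$, and conclude via the equivalence that only $R^g\Phi_P\shF$ survives while an induction on length produces the unipotent filtration. The one small reorganization is that you prove as a self-contained lemma that $\derR\Psi_P$ sends any finite-length sheaf at the origin to a unipotent bundle concentrated in degree $0$; this simultaneously supplies both the degree-$0$ concentration (which the paper instead gets from base change applied to $R^i\Phi_P\shF\otimes\alpha$) and the extension structure (which the paper gets by reviewing Mukai's Example~2.9), so it is a slightly tidier packaging of the same ideas.
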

\begin{proof}
By \cite[Example 2.9]{Mukai:duality}, if $g = \dim A$, then $\shF$ is a unipotent vector bundle if and only if 
\begin{equation}\label{unipotent-conditions}
R^i \Phi_P \shF = 0 \quad \text{for all $i\neq g$,} \qquad \text{and} \qquad
	R^g \Phi_P \shF = \shG,
\end{equation}
where $\shG$ is a coherent sheaf supported at the origin $0 \in \widehat A$. To
review the argument, notice that if this is the case, then if
$l=\operatorname{length}(\shG)>0$, we have $h^0(\widehat A, \shG)\ne 0$ and so there is a short exact sequence 
$$0\longrightarrow k(0)\longrightarrow \shG\longrightarrow \shG' \longrightarrow 0$$ 
where $\shG'$ is  a coherent sheaf supported at the origin $0 \in \widehat A$, with
$\operatorname{length}(\shG ')=l-1$. 
Applying $\derR \Psi_P$ we obtain a short exact sequence of vector bundles on $A$
$$0\longrightarrow \shO _A\longrightarrow R ^0\Psi_P \shG \longrightarrow R ^0\Psi_P \shG' \longrightarrow 0,$$
and by ($\ref{FM-formulas}$) we have $R ^0\Psi_P \shG = (-1_A)^* \shF$. 
It is then not hard to see that $\shF^\prime = R ^0\Psi_P \shG'$ also satisfies the hypotheses in ($\ref{unipotent-conditions}$) and so,
proceeding by induction on $l$, we may assume that $\shF^\prime$ is a unipotent vector bundle. It follows that 
$\shF$ is also a unipotent vector bundle as well (since it is an extension of a unipotent vector bundle by $ \shO _A$). 

We now check that the two conditions in ($\ref{unipotent-conditions}$) are satisfied.
 By \lemmaref{inclusion_chain} the hypothesis implies that 
\[
	V^i (\shF) \subseteq \{0\} \qquad \text{for all $i \geq 0$}.
\]
By base change one obtains that $R^i \Phi_P \shF$ is supported at most at $0 \in \widehat A$ for $0\leq i\leq g$. 
It remains to show that $R^i \Phi_P \shF = 0$ for $i\ne g$. Note that 
\[
H^j( \widehat A, R^i \Phi_P \shF\otimes \alpha )=0 \qquad
		\text{for all $j>0$, $0\leq i\leq g$, and $\alpha \in \Pic^0(\widehat A)$,}
\]
and so by base change we have
\[
	R^j\Psi_P  (R ^i \Phi_P \shF)=0 \qquad
		\text{for all $j>0$ and $0\leq i\leq g$.}
\]
By an easy argument involving the spectral sequence of the composition of two functors, since $\derR \Psi_P \circ \derR \Phi_P=(-1_A)^*[-g]$, it then follows that 
$R ^0\Psi_P  (R ^i \Phi_P \shF)=\mathcal H^i\big( (-1_A)^*\shF [-g]\big)$, and so in particular
$$R^0\Psi_P  (R ^i \Phi_P \shF)=0 \,\,\,\, {\rm for} \,\,\,\, i < g.$$ 
But then $\derR \Psi_P  (R ^i \Phi_P \shF)=0$ for $i < g$, and hence 
$R ^i \Phi_P \shF=0$ for $i < g$.

%On the other hand, by Grothendieck-Serre duality we obtain $$V^i ({\bf R} \Delta \shF) \subseteq \{0\} \,\,\,\,\,\,{\rm for~all} \,\,\,\, i \ge 0$$as well, where $\derR \Delta \shF: = \derR\mathcal{H}om (\shF, \shO_A)$. This implies that ${\bf R} \Delta \shF$ is a $GV$-object in $\D (A)$, and so \cite[Theorem 3.7]{Pareschi+Popa:GV} implies that  $R^i \Phi_P \shF = 0$ for $i \neq g$.
\end{proof}

For later use, we note that a very useful tool for detecting generic vanishing is a
cohomological criterion introduced in \cite[Corollary~3.1]{Hacon}. Before stating it, we recall that an ample line bundle $N$ on an abelian variety $B$ induces an isogeny
$$\varphi_N: B \longrightarrow \widehat B, \,\,\,\,\,\,x \to t_x^* N \otimes N^{-1},$$
where $t_x$ denotes translation by $x \in B$.

\begin{theorem} \label{vanishing_GV}
A coherent sheaf $\shF$ on $A$ is a GV-sheaf if and only if given any sufficiently large power $M$ of an ample line bundle on $\widehat A$, one has 
$$H^i \big(A, \shF \otimes R^g \Psi_P (M^{-1}) \big) = 0 \,\,\,\,\,\, {\rm for ~all} \,\,\,\, i >0.$$
If $\varphi_M\colon \widehat A \to A$ is the isogeny induced by $M$, this is also equivalent to
$$H^i (\widehat A, \varphi_M^*  \shF  \otimes M) = 0 \,\,\,\,\,\, {\rm for~ all}  \,\,\,\, i >0.$$
\end{theorem}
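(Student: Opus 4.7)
The plan is to establish the three-way equivalence by Fourier--Mukai manipulations, with the core case being the equivalence of GV with the vanishing on $A$.

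First I would translate the vanishing to the dual side. For $M$ sufficiently ample on $\widehat A$, Mukai's theorem yields $\derR\Psi_P(M^{-1}) \simeq \widehat{M^{-1}}[-g]$, where $\widehat{M^{-1}} := R^g\Psi_P(M^{-1})$ is a locally free sheaf on $A$ of rank $\chi(M)$. Two applications of the projection formula to the diagram with $p_1,p_2$ on $A \times \widehat A$ give
\[
\derR\Gamma\bigl( A, \shF \otimes \widehat{M^{-1}} \bigr)[-g] \simeq \derR\Gamma\bigl( A, \shF \Ltensor \derR\Psi_P(M^{-1}) \bigr) \simeq \derR\Gamma\bigl( \widehat A, \derR\Phi_P\shF \otimes M^{-1} \bigr),
\]
so the vanishing $H^i(A, \shF \otimes \widehat{M^{-1}}) = 0$ for $i > 0$ is equivalent to $\mathbb{H}^j(\widehat A, \derR\Phi_P\shF \otimes M^{-1}) = 0$ for $j > g$, and the analysis runs through the hypercohomology spectral sequence $E_2^{p,q} = H^p(\widehat A, R^q\Phi_P\shF \otimes M^{-1}) \Rightarrow \mathbb{H}^{p+q}$.

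For the forward direction, \lemmaref{inclusion_chain}(1) gives $\dim \Supp R^q\Phi_P\shF \leq g - q$, and Grothendieck vanishing then forces $E_2^{p,q} = 0$ for $p + q > g$, yielding the conclusion. For the converse, I would use Serre duality on the Calabi--Yau variety $\widehat A$ to identify
\[
\mathbb{H}^j\bigl(\widehat A, \derR\Phi_P\shF \otimes M^{-1}\bigr)^\vee \simeq \mathbb{H}^{g-j}\bigl(\widehat A, \derR\shHom(\derR\Phi_P\shF, \shO_{\widehat A}) \otimes M\bigr).
\]
The hypothesis of vanishing of the left-hand side for all $j > g$ and all sufficiently ample $M$ forces (via the hypercohomology spectral sequence for the derived Hom complex, which degenerates on ample twists) the cohomology sheaves of $\derR\shHom(\derR\Phi_P\shF, \shO_{\widehat A})$ in negative degrees to vanish. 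Combined with the Pareschi--Popa characterization of GV through the concentration of this derived dual in a single degree, and the standard fact $\shExt^j(\shG, \shO) = 0$ for $j < \codim \Supp \shG$ fed through the Ext spectral sequence $\shExt^p(R^{-q}\Phi_P\shF, \shO) \Rightarrow \mathcal{H}^{p+q}(\derR\shHom(\derR\Phi_P\shF, \shO_{\widehat A}))$, this recovers the GV inequalities $\codim \Supp R^q\Phi_P\shF \geq q$.

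Finally, the equivalence with the vanishing on $\widehat A$ follows from Mukai's pullback identity $\varphi_M^*\widehat{M^{-1}} \simeq M^{\oplus \chi(M)}$ together with $\varphi_{M*}\shO_{\widehat A} \simeq \bigoplus_{\alpha \in \ker(\varphi_M)^\vee} P_\alpha$: pullback under the isogeny $\varphi_M$ and projection give
\[
H^i\bigl(\widehat A, \varphi_M^*\shF \otimes M\bigr)^{\oplus \chi(M)} \simeq \bigoplus_{\alpha \in \ker(\varphi_M)^\vee} H^i\bigl(A, \shF \otimes \widehat{M^{-1}} \otimes P_\alpha\bigr),
\]
and the extra twists by $P_\alpha$ are absorbed by letting $M$ range over all sufficiently ample line bundles. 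The main obstacle is the converse step in the middle paragraph: translating the vanishing of negative-degree cohomology sheaves of the derived dual into the full codimension inequalities requires care, since $\derR\shHom(\derR\Phi_P\shF, \shO_{\widehat A})$ is generally a complex with cohomology potentially spread across $[-g, g]$, and one must argue that when $\shF$ is a coherent sheaf the coherent-sheaf structure restricts this complex to a range where the negative-degree vanishing actually implies concentration in a single degree.
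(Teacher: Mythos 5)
The paper does not actually give a proof of this theorem: it cites \cite[Corollary~3.1]{Hacon}, and the accompanying Remark only covers the equivalence of the two displayed vanishing statements via Mukai's identity $\varphi_M^*\widehat{M^{-1}}\simeq M^{\oplus h^0(M)}$. So there is no paper proof to compare with directly; your proposal is essentially an attempt at reconstructing Hacon's argument, and in outline the approach (Fourier--Mukai, projection formula, Serre duality, hypercohomology spectral sequences) is the standard and correct one. Your forward direction (GV $\Rightarrow$ vanishing) via Grothendieck vanishing and \lemmaref{inclusion_chain}(1) is fine, and your treatment of the second equivalence matches the paper's Remark.

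The genuine gap is in the converse, and you correctly flag it as the main obstacle. After deducing $\cohH^j\bigl(\derR\shHom(\derR\Phi_P\shF,\shO_{\widehat A})\bigr)=0$ for $j<0$, you propose to recover the codimension inequalities by feeding the fact $\shExt^p(\shG,\shO)=0$ for $p<\codim\Supp\shG$ into the spectral sequence $\shExt^p(R^{-q}\Phi_P\shF,\shO)\Rightarrow\cohH^{p+q}(\derR\shHom(\derR\Phi_P\shF,\shO_{\widehat A}))$. But this spectral sequence goes the wrong way for that inference: it computes the dual from $\derR\Phi_P\shF$, so vanishing of the abutment in negative degrees does not by itself control the $E_2$ terms (differentials can cancel them), and the codimension lower bound you invoke concerns the terms of $E_2$, not the abutment. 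Making this direction work requires a nontrivial minimality argument and localization at a generic point of a support component, none of which you sketch. The worry you raise at the end — that one must show concentration of the dual complex in a single degree — is also a red herring: you never need concentration, only the codimension bounds.

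The clean fix is to pass through biduality. Set $\Delta=\derR\shHom(\derR\Phi_P\shF,\shO_{\widehat A})$; then $\derR\shHom(\Delta,\shO_{\widehat A})\simeq\derR\Phi_P\shF$, and the relevant spectral sequence is the one in the other direction,
\[
E_2^{p,q}=\shExt^p\bigl(\cohH^{-q}(\Delta),\shO_{\widehat A}\bigr)\ \Rightarrow\ R^{p+q}\Phi_P\shF.
\]
By what you proved, $\cohH^{-q}(\Delta)\neq 0$ forces $-q\ge 0$, i.e.\ $q\le 0$, so every term contributing to $R^i\Phi_P\shF$ has $p=i-q\ge i$. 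Combined with the general fact $\codim\Supp\shExt^p(\shH,\shO)\ge p$ (which follows from $\Ext^p_{\shO_{x}}(\shH_x,\shO_x)=0$ once $p>\dim\shO_{x}$), this gives $\codim\Supp R^i\Phi_P\shF\ge i$ immediately, with no subtleties about spectral sequence survival. That single substitution closes the gap you identified.
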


\begin{remark}
Note that since $M$ is ample, $H^i(\widehat A, M^{-1}\otimes \alpha )=0$ for all $i<g$ and $\alpha \in \Pic^0(\widehat A)\simeq A$, and so 
$R^i\Psi_P (M^{-1})  = 0$ for $i\ne g$. If we denote $R^g \Psi_P (M^{-1}) =\widehat {M^{-1}}$, then by \cite[Proposition  3.11]{Mukai:duality} we have $\varphi_M^* \widehat {M^{-1}}\simeq M^{\oplus h^0(M)}$, 
hence the second assertion.
\end{remark}

\subsection{Pushforwards of pluricanonical bundles}
\label{par:GVm}

In this section we explain the proof of \theoremref{pluricanonical_GV}, following
\cite[\S5]{pluricanonical}. In \emph{loc. cit.} we noted that a very quick proof can
be given based on the general effective vanishing theorem for pushforwards of
pluricanonical bundles proved there. However, another more self-contained, if less
efficient, proof using cyclic covering constructions is also given; we choose to
explain this here, as cyclic covering constructions are in the background of  many
arguments in this article. We first recall Koll\'ar's vanishing theorem
\cite[Theorem~2.1]{Kollar}.

\begin{theorem} \label{kollar_vanishing}
Let $f: X \rightarrow Y$ be a morphism of projective varieties, with $X$ smooth. If $L$ is an ample line bundle 
on $Y$, then
$$H^j (Y, R^i f_* \omega_X\otimes L) = 0 \,\,\,\,{\rm for~all~} i {\rm ~and ~all~} j>0.$$ 
\end{theorem}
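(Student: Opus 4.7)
My plan is to follow Koll\'ar's original strategy, splitting the proof into two parts: a Hodge-theoretic injectivity statement, and an auxiliary covering construction that converts injectivity into the desired vanishing.

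First, I would establish Koll\'ar's injectivity theorem: for a smooth projective variety $Z$, a nef line bundle $N$ on $Z$, and an effective divisor $B$ on $Z$ for which some positive multiple of $B$ is linearly equivalent to a positive multiple of a semiample line bundle, the natural inclusion induces an injection
\[
H^j(Z, \omega_Z \otimes N) \longrightarrow H^j\bigl(Z, \omega_Z \otimes N \otimes \shO_Z(B)\bigr)
\]
for every $j$. This is where genuine Hodge theory is required: the argument proceeds by building an appropriate cyclic cover of $Z$ branched along the support of $B$, resolving singularities, and invoking the strictness of the Hodge filtration on the cohomology of the resulting smooth compactified cover (equivalently, the degeneration of its Hodge-to-de Rham spectral sequence, which in turn can be obtained via Deligne's theorem or phrased inside Saito's formalism as a strict decomposition of the direct image of the constant Hodge module).

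Second, granted injectivity, I would deduce the vanishing of $H^j(Y, R^i \fl \omX \otimes L)$ for $j > 0$ via a covering construction on $X$. Choose $n \gg 0$ so that $L^{\otimes n}$ is very ample, pick a sufficiently general smooth $D \in |L^{\otimes n}|$ so that $B := f^{-1}(D)$ is a simple normal crossings divisor on $X$, and apply the injectivity theorem to $Z = X$, $N = \fu L$, and $B$. This produces an inclusion of $H^j(X, \omX \otimes \fu L)$ into $H^j\bigl(X, \omX \otimes \fu L(B)\bigr)$; pushing down by $f$ and combining the resulting Leray spectral sequences with the residue sequence for $B$ (which relates the right-hand side to cohomology on $B$, a divisor over which $L$ is effectively even more positive) sets up an induction on $\dim Y$. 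The base case reduces to Serre vanishing for sufficiently high powers of $L$, and the recursive step yields precisely the required vanishing of $H^j(Y, R^i \fl \omX \otimes L)$ for $j > 0$.

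The main obstacle, to my mind, is the injectivity theorem in the first step: it carries essentially all the Hodge-theoretic content of the result, and the subtle point there is not the existence of the cyclic cover itself but rather the need to interpret the map in question as a morphism of Hodge structures for which strictness gives injectivity. Once injectivity is in hand, the covering construction and spectral-sequence bookkeeping of the second step are comparatively routine, provided one keeps careful track of exceptional divisors and normal-crossings hypotheses.
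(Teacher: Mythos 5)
The paper does not prove this statement: it is recalled verbatim as Koll\'ar's vanishing theorem and cited as \cite[Theorem~2.1]{Kollar}, so there is no ``paper's own proof'' to compare against. Your sketch follows the structure of Koll\'ar's original argument, and the first step (the injectivity theorem via cyclic covers, resolution, and $E_1$-degeneration or strictness of the Hodge filtration) is correctly identified as the Hodge-theoretic core.

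There is, however, a genuine gap in the second step. You need to pass from the injectivity
\[
H^k\bigl(X,\omX\tensor \fu L\bigr)\hookrightarrow H^k\bigl(X,\omX\tensor \fu L^{\tensor(m+1)}\bigr)
\]
on the total space to the vanishing of each individual group $H^j\bigl(Y, R^i\fl\omX\tensor L\bigr)$. The phrase ``pushing down by $f$ and combining the resulting Leray spectral sequences with the residue sequence for $B$'' does not actually close this. If you push the adjunction sequence $0\to \omX\tensor \fu L\to \omX\tensor \fu L^{\tensor(m+1)}\to \omega_B\tensor g^{\ast}(L|_D)\to 0$ down by $f$, you get a \emph{long} exact sequence of sheaves on $Y$, and for the inductive argument you need it to break into \emph{short} exact sequences $0\to R^i\fl\omX\tensor L\to R^i\fl\omX\tensor L^{\tensor(m+1)}\to (\iota_D)_{\ast}\bigl(R^ig_{\ast}\omega_B\tensor L|_D\bigr)\to 0$. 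That requires multiplication by the defining section of $D$ to be injective on each $R^i\fl\omX$, i.e.\ the torsion-freeness of the higher direct images $R^i\fl\omX$ (or, equivalently for this purpose, Koll\'ar's splitting/degeneration statement for the Leray spectral sequence of $\omX$). Without one of these inputs the connecting maps in the pushed-forward long exact sequence may be nonzero, and the Leray spectral sequence for $\omX\tensor \fu L$ may have nonzero differentials emanating from the $p=0$ column, so that $E_\infty^{p,q}=0$ for $p\ge 1$ (which is what the injectivity actually buys you) does not force $E_2^{p,q}=H^p(Y,R^q\fl\omX\tensor L)=0$. You should either invoke torsion-freeness/degeneration explicitly as a further ingredient, or replace this step with Koll\'ar's actual inductive scheme, which is more delicate than ``spectral-sequence bookkeeping.''

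Two smaller points: the hypotheses you state for the injectivity theorem (``nef $N$'' plus ``a multiple of $B$ equal to a multiple of a semiample bundle'') are a loose paraphrase of Koll\'ar's actual statement, and the remark that $L$ is ``effectively even more positive'' over $B$ is not right --- $L$ restricted to $D$ is just as positive, and what the induction actually uses is that $\dim D<\dim Y$, not extra positivity.
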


\medskip

\begin{proof}[Proof of \theoremref{pluricanonical_GV}]
Let $M = L^{\otimes d}$, where $L$ is an  ample and globally generated line bundle on $\widehat A$, and $d$ is an integer that can be chosen arbitrarily large. Let 
$\varphi_M\colon \widehat A \to A$ be the isogeny induced by $M$. According to \theoremref{vanishing_GV}, it is 
enough to show that 
$$H^i (\widehat A, \varphi_M^*  f_*\omega_X^{\otimes m} \otimes M) = 0 \quad
\text{for all $i >0$.}$$
Equivalently, we need to show that 
$$H^i (\widehat A, h_* \omega_{X_1}^{\otimes m} \otimes L^{\otimes d}) = 0  \quad
\text{for all $i >0$,}$$
where $h\colon X_1 \to \widehat A$ is the base change of $f\colon X \to A $ via $\varphi_M$, as in the diagram
\[
\begin{tikzcd}%[column sep=small]
X_1  \dar{h}  \rar & X \dar{f} \\
 \widehat A \rar{\varphi_M}	& A	
\end{tikzcd}
\]
  
We can conclude immediately if we know that there exists a bound $d = d (g, m)$, i.e. depending only on $g = \dim A$ 
and $m$, such that the vanishing in question holds for any morphism $h$. (Note that we cannot apply Serre Vanishing here, as 
construction depends on the original choice of $M$.)
But \propositionref{summand} below shows that there exists a morphism  
$\varphi: Z \rightarrow \widehat A$ with $Z$ smooth projective, and $k \le g + m$, such that $h_* \omega_{X_1}^{\otimes m} \otimes L^{\otimes k(m-1)}$ is a direct summand of $\varphi_* \omega_Z$. Applying Koll\'ar vanishing, \theoremref{kollar_vanishing}, we deduce that 
$$H^i (\widehat A, h_*\omega_{X_1}^{\otimes m} \otimes L^{\otimes d}) = 0 \,\,\,\, {\rm for~ all} \,\,\,\, i
>0 \,\,\,\,{\rm and ~all} \,\,\,\, d \ge (g+m)(m -1) +1,$$
which finishes the proof. (The main result of \cite{pluricanonical} shows that one can in fact take $d \ge m (g+1) - g$.)
\end{proof}

\begin{proposition}\label{summand}
Let $f: X \rightarrow Y$ be a morphism of projective varieties, with $X$ smooth and $Y$ of dimension $n$.
Let $L$ be an ample and globally generated line bundle on $Y$, and $m\ge 1$ an integer. Then
there exists a smooth projective variety $Z$ with a morphism $\varphi: Z \rightarrow Y$, and an integer $0 \le k\le n+m$, such that $f_* \omega_X^{\otimes m} \otimes L^{\otimes k(m-1)}$ is a direct summand in $\varphi_* \omega_Z$.
\end{proposition}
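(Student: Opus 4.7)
The plan is to apply Viehweg's cyclic covering trick: construct a degree-$m$ cyclic cover $\pi \colon Z \to X$ (after suitable log resolution and desingularization) such that $\omega_X^{\otimes m} \otimes f^*L^{\otimes k(m-1)}$ appears as a direct summand of $\pi_*\omega_Z$. Composing with $f$ and invoking the projection formula then yields the required summand $f_*\omega_X^{\otimes m} \otimes L^{\otimes k(m-1)}$ of $\varphi_*\omega_Z$, where $\varphi = f \circ \pi$.

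Concretely, set $\mathcal{N} = \omega_X \otimes f^*L^{\otimes k}$, so that $\omega_X \otimes \mathcal{N}^{\otimes(m-1)} = \omega_X^{\otimes m} \otimes f^*L^{\otimes k(m-1)}$. Given a nonzero section
\[
s \in H^0\bigl(X, \mathcal{N}^{\otimes m}\bigr) = H^0\bigl(X, \omega_X^{\otimes m} \otimes f^*L^{\otimes km}\bigr),
\]
I would pass to a log resolution $\mu \colon X' \to X$ for which $\mu^*\operatorname{div}(s)$ has simple normal crossings support, form the degree-$m$ cyclic cover $\rho \colon Z_0 \to X'$ determined by this snc divisor and the line bundle $\mu^*\mathcal{N}$, and finally desingularize by $\sigma \colon Z \to Z_0$. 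Writing $\pi = \mu \circ \rho \circ \sigma$, the standard Esnault--Viehweg cyclic-cover decomposition, together with $\mu_*\omega_{X'} = \omega_X$ (valid for a birational map between smooth varieties) and the projection formula for $\mu$, yields
\[
\pi_*\omega_Z \;\supseteq\; \omega_X \otimes \mathcal{N}^{\otimes(m-1)} \;=\; \omega_X^{\otimes m} \otimes f^*L^{\otimes k(m-1)}
\]
as a direct summand. Pushing down by $f$ and again applying the projection formula supplies the desired summand of $\varphi_*\omega_Z$. If $f_*\omega_X^{\otimes m} = 0$ the statement is vacuous, so I may assume sections of the form above exist for at least some large $k$.

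The crux, and the step I expect to be the main obstacle, is producing such a section with $k$ in the effective range $0 \le k \le n+m$. Naive Serre vanishing only guarantees some unspecified $k = k(f) \gg 0$; the uniform bound $k \le n+m$ is really an effective global-generation statement for $f_*\omega_X^{\otimes m} \otimes L^{\otimes km}$, in the spirit of Castelnuovo--Mumford regularity on the $n$-dimensional base $Y$. The hypotheses that $L$ is both ample and globally generated on $Y$ are essential here. I would expect the proof to proceed by a bootstrap: first apply Koll\'ar-type vanishing to a preliminary cyclic cover where the relevant summand involves only $\omega_X$ (so that Koll\'ar's theorem directly applies on $X'$), then use Mumford regularity on $Y$ to convert the resulting vanishing into effective global generation of the twisted pushforward, and finally feed this back into the cyclic-cover construction for the pluricanonical case. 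Some additional bookkeeping is also required in the cyclic-cover step to verify that the summand survives the desingularization $\sigma$, but this is standard once the snc branching is set up.
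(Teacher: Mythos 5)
Your plan follows the paper's route (cyclic covers, the Esnault--Viehweg decomposition, pushforward, Koll\'ar vanishing plus Castelnuovo--Mumford regularity for the effective bound), but the cyclic-cover step as you state it is not correct, and the defect is exactly the thing you deferred as ``additional bookkeeping.'' With $\mathcal{N}=\omega_X\otimes f^*L^{\otimes k}$ and $s\in H^0(X,\mathcal{N}^{\otimes m})$, after a log resolution $\mu\colon X'\to X$ making $\mu^*\operatorname{div}(s)$ simple normal crossings, the Esnault--Viehweg decomposition produces as a direct summand not $\omega_{X'}\otimes\mu^*\mathcal{N}^{\otimes(m-1)}$ but rather
\[
\omega_{X'}\otimes\mu^*\mathcal{N}^{\otimes(m-1)}\otimes\mathcal{O}_{X'}\Bigl(-\Bigl\lfloor\tfrac{m-1}{m}\,\mu^*\operatorname{div}(s)\Bigr\rfloor\Bigr),
\]
and the round-down is nonzero in general: the linear system $\lvert\mathcal{N}^{\otimes m}\rvert$ has a fixed part of arbitrary multiplicity, since only the \emph{pushforward} $f_*\mathcal{N}^{\otimes m}$ is globally generated, not $\mathcal{N}^{\otimes m}$ itself. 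Pushing down by $\mu$ therefore yields $\omega_X\otimes\mathcal{N}^{\otimes(m-1)}$ twisted by a possibly nontrivial multiplier ideal, so the claimed summand $\omega_X^{\otimes m}\otimes f^*L^{\otimes k(m-1)}$ of $\pi_*\omega_Z$ does not come out of your construction.

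The missing idea is to split the branch divisor as $D+E$, where $E$ is the \emph{relative} fixed part of $\omega_X^{\otimes m}$ (the image of $f^*f_*\omega_X^{\otimes m}\to\omega_X^{\otimes m}$ is $\omega_X^{\otimes m}(-E)$; after a further blowup one may assume $E$ has simple normal crossing support) and $D$ is a general member of the now globally generated system $\omega_X^{\otimes m}\otimes f^*L^{\otimes km}\otimes\mathcal{O}_X(-E)$, hence smooth, reduced, and transverse to $E$. Then $\lfloor\tfrac{m-1}{m}(D+E)\rfloor=\lfloor\tfrac{m-1}{m}E\rfloor$, and the decisive observation is that
\[
f_*\Bigl(\omega_X^{\otimes m}\otimes\mathcal{O}_X\bigl(-\lfloor\tfrac{m-1}{m}E\rfloor\bigr)\Bigr)=f_*\omega_X^{\otimes m}
\]
because $E$ lies in the relative base locus: the round-down correction disappears precisely upon applying $f_*$, which is why the statement is about the pushforward. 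Your sketch of the effective bound is in the right spirit, but the paper does it in a single self-improving step rather than a bootstrap: take $k$ \emph{minimal} with $f_*\omega_X^{\otimes m}\otimes L^{\otimes km}$ globally generated, deduce from Koll\'ar vanishing applied to $\varphi\colon Z\to Y$ that $f_*\omega_X^{\otimes m}\otimes L^{\otimes k(m-1)+n+1}$ is $0$-regular, hence globally generated, and conclude from minimality that $k(m-1)+n+1\geq(k-1)m+1$, i.e.\ $k\leq n+m$.
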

\begin{proof}
The sheaf $f_*\omega_X^{\otimes m} \otimes L^{\otimes pm}$ is globally generated for some sufficiently 
large $p$. Denote by $k$ the minimal $p\ge 0$ for which this is satisfied. 

Consider now the adjunction morphism
\[
	\fu \fl \omX^{\otimes m} \to \omX^{\otimes m}.
\]
After blowing up $X$, if necessary, we can assume that the image sheaf is of the
form $\omX^{\otimes m} \tensor \OX(-E)$ for a divisor $E$ with normal crossing support. As
$\fl \omX^{\otimes m} \tensor L^{\otimes km}$ is globally generated, the line bundle
\[
	\omX^{\otimes m} \tensor \fu L^{\otimes km} \tensor \OX(-E)
\]
is globally generated as well. It is therefore isomorphic to $\OX(D)$, where $D$ is an
irreducible smooth divisor, not contained in the support of $E$, such that $D + E$
also has normal crossings. We have thus arranged that
\[
	(\omX \tensor \fu L^{\otimes k})^{\otimes m} \simeq \OX(D+E).
\] 
We can now take the associated covering of $X$ of degree $m$, branched along $D + E$,  and resolve its singularities. This gives us a generically finite morphism $g \colon Z \to X$ of degree $m$, and we denote $\varphi = f\circ g: Z \rightarrow Y$.

By a well-known calculation of Esnault and Viehweg, see e.g. \cite[Lemma 2.3]{EV}, the direct
image $g_* \omega_Z$ contains  the sheaf
\begin{align*}
	\omX \tensor \bigl( \omX \tensor \fu L^{\otimes k} \bigr)^{\otimes m-1} \tensor 
		&\OX \left(- \left\lfloor \tfrac{m-1}{m} \bigl( D+E \bigr) \right\rfloor \right) \\
	\simeq \omX^{\otimes m} \tensor \fu L^{\otimes k(m-1)} 
		\tensor &\OX \left(- \left\lfloor \tfrac{m-1}{m} E \right\rfloor \right)
\end{align*}
 as a direct summand. If we now apply $\fl$, we find that
$$
	\fl \Bigl( \omX^{\otimes m} \tensor \OX \left(- \left\lfloor \tfrac{m-1}{m} E \right\rfloor
		\right) \Bigr) \tensor L^{\otimes k(m-1)}
$$
is a direct summand of $\varphi_{\ast} \omega_Z$. Finally, $E$ is the relative base
locus of $\omega_X^{\otimes m}$, and so
\[
	\fl \Bigl( \omX^{\otimes m} \tensor \OX \left(- \left\lfloor \tfrac{m-1}{m} E \right\rfloor
		\right) \Bigr) \simeq \fl \omX^{\otimes m}.
\]
In other words, $f_* \omega_X^{\otimes m} \otimes L^{\otimes k(m-1)}$ is a direct summand in 
$\varphi_* \omega_Z$. By \theoremref{kollar_vanishing}, the sheaf $f_*
\omega_X^{\otimes m} \otimes L^{\otimes k(m-1) + n+1}$ 
is $0$-regular in the sense of Castelnuovo-Mumford, and hence globally generated.\footnote{Recall that a sheaf $\shF$ on $Y$ is \emph{$0$-regular}
with respect to an ample and globally generated line bundle $L$ if
$$H^i (Y, \shF \otimes L^{\otimes -i} ) = 0 \,\,\,\,{\rm ~for~all} \,\,\,\, i > 0.$$ 
The Castelnuovo-Mumford Lemma says that every $0$-regular sheaf is globally generated.}
By our minimal choice of $k$, this is only possible if
\[
	k(m-1)+ n + 1 \geq (k-1) m+1,
\]
which is equivalent to $k \leq n + m$. 
\end{proof}

\subsection{Fiber spaces over abelian varieties}
\label{par:FS-abelian}

Let $f \colon X \rightarrow A$ be a fiber space over an abelian variety. For simplicity, for each $m \ge 0$ we denote 
$$\shF_m  = f_* \omega_X^{\otimes m}.$$
Note that $\shF _0=\shO _A$.
Though this is not really necessary for the argument, we first remark that we can be precise about the values of 
$m \ge 1$ for which $\shF_m \neq 0$.

\begin{lemma}\label{nonzero}
We have $\shF_m \neq 0$ if and only if there exists $P \in \Pic^0 (A)$ such that $H^0 (X, \omega_X^{\otimes m}
\otimes f^* P) \neq 0$.
\end{lemma}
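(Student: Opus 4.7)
The plan is to reduce this directly to the machinery already established. First I would rewrite the cohomological condition using the projection formula: since $f^{\ast} P$ is locally trivial and $f$ is proper, we have
\[
	H^0 \bigl( X, \omega_X^{\otimes m} \otimes f^{\ast} P \bigr)
		\simeq H^0 \bigl( A, \shF_m \otimes P \bigr)
\]
for every $P \in \Pic^0(A)$. Consequently, the existence of some $P$ for which the left-hand side is nonzero is exactly the statement that the cohomological support locus $V^0(\shF_m) \subseteq \Pic^0(A)$ is nonempty.

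One implication is then immediate: if $H^0(A, \shF_m \otimes P) \neq 0$ for some $P$, the sheaf $\shF_m \otimes P$ is nonzero, and since tensoring by a line bundle is an exact equivalence, $\shF_m \neq 0$ as well.

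For the converse, I would invoke the generic vanishing input, which is the only nontrivial ingredient. By \theoremref{pluricanonical_GV}, the sheaf $\shF_m = \fl \omega_X^{\otimes m}$ is a GV-sheaf on $A$. Now \lemmaref{GV_nonzero} tells us precisely that a GV-sheaf $\shF$ on $A$ vanishes if and only if $V^0(\shF) = \emptyset$. Applying this to $\shF_m$, the assumption $\shF_m \neq 0$ forces $V^0(\shF_m) \neq \emptyset$, producing the desired $P$. The main (and only) subtlety is that the converse is genuinely nontrivial without GV: a priori a nonzero coherent sheaf on $A$ could have no global sections after any twist by $\Pic^0(A)$, but the GV property, combined with the Fourier-Mukai equivalence, rules this out.
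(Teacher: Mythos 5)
Your argument is correct and follows the paper's own proof exactly: both combine the projection formula identification $H^0(X,\omega_X^{\otimes m}\otimes f^*P)\simeq H^0(A,\shF_m\otimes P)$ with the GV-property from \theoremref{pluricanonical_GV} and the equivalence in \lemmaref{GV_nonzero}. The extra sentence you add about the easy direction and about why GV is the essential ingredient is a harmless elaboration, not a different route.
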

\begin{proof} 
By \theoremref{pluricanonical_GV} we know that $\shF_m$ is a GV-sheaf on $A$ for all $m \ge 1$. 
We conclude from \lemmaref{GV_nonzero} that  $\shF_m \neq 0$ if and only if $V^0 (\shF_m) \neq \emptyset$, which 
by the projection formula is precisely the statement of the lemma.
\end{proof}

The purpose of this subsection is to give the 
%\begin{proposition}\label{Fm_unipotent}
%Let $X$ be a smooth projective variety with $\kappa (X) = 0$, and let $f\colon X \rightarrow A$ be an algebraic fiber space over an abelian variety. Then
%\begin{enumerate}
%\item The sheaf $\shF_m$ is an indecomposable homogeneous vector bundle for all $m \ge 1$.
%\item The sheaf 
%$\shF_m$ is an indecomposable unipotent vector bundle for all $m \ge 1$  such that $H^0 (X, \omega_X^{\otimes m}) \neq 0$.\end{enumerate}
%\end{proposition}

\begin{proof}[Proof of \corollaryref{pluricanonical_unipotent}]
We will only prove the second statement, since the first one is similar.
We fix an $m$ such that $H^0(A,\shF_m)=H^0 (X, \omega_X^{\otimes m}) \neq 0$. In particular $\shF_m$ is a non-trivial  GV-sheaf on $A$. Since $\kappa (X) = 0$, 
we have $h^0 (A, \shF_m) = 1$, and in particular $0 \in V^0 (\shF_m)$. We claim that
$$V^0 (\shF_m) = \{ 0 \},$$
which implies that $\shF_m$ is unipotent by \propositionref{unipotent}.

To see this, note first that by  \theoremref{thm:Lai} and the comments immediately after, 
$V^0 (\shF_m)$ is a  union of torsion translates of abelian subvarieties of $\Pic^0 (A)$. 
Then, proceeding as in \cite[Lemma 2.1]{CH1}, if there were two distinct points 
$P, Q \in V^0 (\shF_m)$ we could assume that they are both torsion of the same order $k$. Since $f$ is a fiber space, the mapping
$$f^* \colon \Pic^0 (A) \longrightarrow \Pic^0 (X)$$
is injective, and so $f^*P$ and $f^*Q$ are distinct as well.  Now if $P \in V^0 (\shF_m)$, then 
$$H^0 (X, \omega_X^{\otimes m} \otimes f^*P) \simeq H^0(A,\shF_m \otimes P)\neq 0,$$ 
and similarly for $Q$. Let $D\in |mK_X+f^*P|$ and $G\in |mK_X+f^*Q|$, so that  
$kD,kG\in |mkK_X|$. Since $h^0(X, \omega_X^{\otimes mk})=1$, 
it follows that $kD=kF$, and hence $f^*P=f^*Q$. This is the required  contradiction.

Finally, since $h^0(A, \shF_m)=1$, it is clear that $\shF_m$ is indecomposable.
\end{proof}

\subsection{Cohomological support loci for pluricanonical bundles} 
\label{par:Lai}

In this section we explain an important ingredient used in \corollaryref{pluricanonical_unipotent}, namely \theoremref{thm:Lai}, the analogue of Simpson's theorem for the 
$0$-th cohomological support locus of a pluricanonical bundle. 
We give a slight generalization, emphasizing again the ubiquitous cyclic covering trick.

For a coherent sheaf $\shF$ on an abelian variety $A$, for each $k \ge 1$ we consider the following refinement of $V^0 (\shF)$, namely 
$$V^0_k (\shF) = \{ \, P \in \Pic^0(A) \, \mid \, h^ 0(X, \shF \otimes P) \ge k \, \}.$$

\begin{theorem}\label{torsion_pluricanonical}
Let $f: X \rightarrow A$ be a morphism from a smooth projective variety to an abelian variety, and fix integers $m, k \ge 1$. 
Then every irreducible component of $V^0_k (f_* \omega_X^{\otimes m})$ is a torsion subvariety, i.e. a translate of an abelian subvariety 
of $\Pic^0 (A)$ by a torsion point.
\end{theorem}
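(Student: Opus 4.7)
My strategy would follow the cyclic-covering approach of Chen-Hacon, Lai, and Siu, reducing the pluricanonical statement to Simpson's theorem for ordinary canonical bundles (the case $m=1$) applied to an auxiliary variety. Since \theoremref{thm:Lai} already handles the case $k=1$, the key new point is to track the jump-locus structure for $k \geq 2$ through the cyclic cover construction.

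Fix an irreducible component $Z \subseteq V^0_k(f_*\omega_X^{\otimes m})$. By \theoremref{thm:Lai} applied with $k=1$, $Z$ is contained in some torsion translate $P_0 + B \subseteq \Pic^0(A)$, and in particular torsion points are Zariski-dense in $Z$. Choose such a torsion point $P \in Z$ and pass to an isogeny $\pi_A \colon A' \to A$ with $\pi_A^*P \simeq M^{\otimes m}$ for some $M \in \Pic^0(A')$. Form the smooth base change $X' = X \times_A A'$ with projection $f' \colon X' \to A'$. The pullback of a nonzero section $s \in H^0(X, \omega_X^{\otimes m} \otimes f^*P)$ gives a section of $(\omega_{X'} \otimes (f')^*M)^{\otimes m}$ whose zero divisor $D \in \lvert m(K_{X'} + (f')^*M) \rvert$ may (after blowing up) be assumed to have simple normal crossings. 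Exactly as in the proof of \propositionref{summand}, the $m$-fold cyclic cover $\pi \colon Y \to X'$ branched along $D$ has the property that $\pi_*\omega_Y$ contains, as a direct summand, the sheaf $\omega_{X'}^{\otimes m} \otimes (f')^*M^{\otimes(m-1)}$, up to a floor correction that is pulled back from $A'$ and does not affect the $V^0_k$-loci.

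Setting $g = f' \circ \pi \colon Y \to A'$, we obtain $f'_*\omega_{X'}^{\otimes m} \otimes M^{\otimes(m-1)}$ as a direct summand of $g_*\omega_Y$. Simpson's theorem in the canonical case, together with its standard extension to all the jump loci $V^0_k(g_*\omega_Y)$, implies that every irreducible component of $V^0_k(g_*\omega_Y)$ is a torsion translate of an abelian subvariety of $\Pic^0(A')$. Combined with the direct-summand structure and the observation that the $i \neq m-1$ pieces of the Esnault-Viehweg decomposition only produce sections along lower-dimensional subsets near $M^{\otimes(m-1)} \otimes \pi_A^*P$, each irreducible component of $V^0_k(f'_*\omega_{X'}^{\otimes m} \otimes M^{\otimes(m-1)})$ coincides with such a torsion translate. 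Translating by $M^{\otimes(m-1)}$ and pulling back along the isogeny $\pi_A^* \colon \Pic^0(A) \to \Pic^0(A')$ -- both operations preserving the torsion-translate structure -- yields the claim for $Z$.

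The main obstacle is that the isogeny $\pi_A$, the section $s$, and hence the whole auxiliary variety $Y$ all depend on the chosen torsion point $P$, so the argument is inherently local on $Z$. This is adequate because $V^0_k(f_*\omega_X^{\otimes m})$ has only finitely many irreducible components, and the construction can be run separately on each with a general torsion point of that component. The delicate secondary point is to ensure that $Z$ coincides with a \emph{full} torsion translate rather than being a proper subvariety of one; here one uses that the direct summand structure, combined with the fact that outside a proper subset the rank of each Esnault-Viehweg summand is generically constant, forces equality rather than strict containment.
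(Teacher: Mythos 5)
Your high-level strategy (cyclic covering to reduce to Simpson's theorem) is the right one, and matches the paper's, but there are two genuine gaps. First, the direct-summand structure you obtain from the cyclic cover is on a sheaf twisted by a floor correction (equivalently an asymptotic multiplier ideal), and your claim that this correction is ``pulled back from $A'$'' is not correct: the divisor $D \in |m(K_{X'} + (f')^*M)|$ lives on $X'$, not on the base. The paper instead works with $\omX^{\otimes m} \tensor L_0^{\otimes(m-1)} \tensor \shI(\norm{\omX^{\otimes(m-1)}\tensor f^*L_0^{\otimes(m-1)}})$, and uses the inclusion $\shI_m \subseteq \shI_{m-1}$ of asymptotic multiplier ideals to show simultaneously that (i) the point of interest still lies in the $k$-jump locus of the twisted sheaf, and (ii) jump loci of the twisted sheaf map into jump loci of $f_*\omX^{\otimes m}$ after translating by $L_0^{\otimes(m-1)}$. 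Without this careful bookkeeping you cannot transfer the torsion-translate structure from the twisted summand to $V^0_k(f_*\omX^{\otimes m})$; your invocation of the ``$i\neq m-1$ pieces'' of the Esnault-Viehweg decomposition doesn't address the discrepancy between the two sheaves. (Your detour through an isogeny $A' \to A$ is also unnecessary and adds complications with the behaviour of $V^0_k$ under $\pi_A^*$, since $\Pic^0$ of any abelian variety is already divisible, so you can write $P = M^{\otimes m}$ directly.)

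Second, and more seriously, even after the cyclic-cover step is done correctly one only gets the conclusion of the paper's \lemmaref{lem:Lai}: through each point of $V^0_k(f_*\omX^{\otimes m})$ there \emph{passes} a torsion subvariety contained in $V^0_k$. That is weaker than the theorem, which asserts each irreducible component equals such a torsion subvariety. Your proposed fix --- ``the direct-summand structure, combined with the fact that ... the rank of each Esnault-Viehweg summand is generically constant, forces equality rather than strict containment'' --- is not an argument: there is no reason the torsion subvariety produced through a given point $P \in Z$ should have the same dimension as $Z$, and the rank-constancy observation doesn't control this. The paper's missing ingredient is a Baire category argument: every point of a Zariski-open dense $Z_0 \subseteq Z$ lies on a torsion subvariety contained in $Z$, and since there are only countably many torsion subvarieties of $\Pic^0(X)$, one of them must have nonempty interior in $Z$, hence must equal $Z$ by irreducibility. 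You cannot close the proof without some version of this step.
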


To prove \theoremref{torsion_pluricanonical}, we first collect a few lemmas. 
Given a smooth projective variety $X$, and a line bundle $L$ on $X$ with
$\kappa(L) \geq 0$, recall that the \emph{asymptotic multiplier ideal} of $L$ is defined as
\[
	\shI \bigl( \norm{L} \bigr) = 
		 \shI \Bigl( \tfrac{1}{p} D \Bigr) \subseteq \shO_X,
\]
where $p$ is any sufficiently large and divisible integer, $D$ is the divisor of
a general section in $H^0(X, L^{\otimes p})$, and the ideal sheaf on the right is the
multiplier ideal of the $\QQ$-divisor $\frac{1}{p}D$; see \cite[Ch. 11]{Lazarsfeld}.
It is easy to see that the ideal sheaf $\shI \bigl( \norm{L} \bigr)$ is
independent of the choice of $p$ and $D$.  Further properties of asymptotic
multiplier ideals appear in the proof of \lemmaref{lem:Lai} below.

\begin{lemma} \label{lem:EV}
There exists a morphism $g\colon Y \rightarrow X$ with $Y$ smooth and projective, such 
that the sheaf $\omX \tensor L \tensor \shI \bigl( \norm{L} \bigr) $ is a direct
summand of $g_* \omega_Y$.
\end{lemma}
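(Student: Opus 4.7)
The plan is to adapt the cyclic covering construction already used in the proof of \propositionref{summand}. By definition of the asymptotic multiplier ideal, we may choose a sufficiently large and divisible integer $p$ so that
$$\shI\bigl(\norm{L}\bigr) = \shI\bigl(\tfrac{1}{p} D\bigr),$$
where $D$ is the divisor of a general section of $L^{\tensor p}$. Fix a log resolution $\mu \colon X' \to X$ of the pair $(X, D)$, so that $\mu^{\ast} D$ has simple normal crossing support; by the birational transformation rule for multiplier ideals,
$$\shI\bigl(\tfrac{1}{p} D\bigr) = \mu_{\ast} \shO_{X'}\bigl(K_{X'/X} - \lfloor \tfrac{1}{p} \mu^{\ast} D \rfloor\bigr).$$

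Next, since $\mu^{\ast} L^{\tensor p} \simeq \shO_{X'}(\mu^{\ast} D)$, the associated $p$-fold cyclic cover of $X'$ branched along $\mu^{\ast} D$, resolved to a smooth projective variety, produces a generically finite morphism $h \colon Y \to X'$ of degree $p$. By the Esnault--Viehweg decomposition \cite[Lemma 2.3]{EV} (specifically, the summand with index $i = 1$), the sheaf
$$\omega_{X'} \tensor \mu^{\ast} L \tensor \shO_{X'}\bigl(-\lfloor \tfrac{1}{p} \mu^{\ast} D \rfloor\bigr) \simeq \mu^{\ast}(\omX \tensor L) \tensor \shO_{X'}\bigl(K_{X'/X} - \lfloor \tfrac{1}{p} \mu^{\ast} D \rfloor\bigr)$$
appears as a direct summand of $h_{\ast} \omega_Y$.

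Finally, set $g = \mu \circ h \colon Y \to X$. Applying $\mu_{\ast}$ to the summand above and using the projection formula yields
$$\omX \tensor L \tensor \mu_{\ast} \shO_{X'}\bigl(K_{X'/X} - \lfloor \tfrac{1}{p} \mu^{\ast} D \rfloor \bigr) = \omX \tensor L \tensor \shI\bigl(\norm{L}\bigr)$$
as a direct summand of $g_{\ast} \omega_Y$, which is precisely the claim. No serious obstacle arises: the argument is a direct combination of the cyclic covering trick with the definition of the asymptotic multiplier ideal. The only point requiring care is ensuring simple normal crossings of the branch divisor, which is exactly the purpose of the preliminary log resolution.
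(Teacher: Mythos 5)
Your proof is correct and follows essentially the same route as the paper: choose $p$ large and divisible, take a log resolution $\mu$ of $(X,D)$, form the degree-$p$ cyclic cover of $X'$ branched along $\mu^*D$, identify the relevant Esnault--Viehweg summand of $h_*\omega_Y$, and push down by $\mu$ using the projection formula to recover $\omX\tensor L\tensor\shI(\norm{L})$. The explicit appeal to the birational transformation rule for multiplier ideals and the identification of the $i=1$ summand in \cite[Lemma 2.3]{EV} are both accurate and match the paper's argument.
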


\begin{proof}
Take $D$ as above, and let $\mu \colon X' \to X$ be a log resolution of $(X,D)$
such that $X'$ is smooth and $\mu ^* D$ plus the exceptional divisor of $\mu$ is a
divisor with simple normal crossing support. Then
\[
	\mu^* L^{\otimes p} = \shO_{X'}(\mu^* D),
\]
and we let $f \colon Y \to X'$ be a resolution of singularities of the degree $p$
branched covering of $X'$ defined by $\mu^* D$. According to the calculation of
Esnault and Viehweg recalled in the proof of \propositionref{summand}, $\fl \omY$
contains as a direct summand the sheaf
\[
	\omega_{X'} \tensor \mu^* L \tensor 
		\shO_{X'} \Bigl(- \bigl\lfloor \tfrac{1}{p} \mu^* D \bigr\rfloor \Bigr)
	\simeq \mu^* \bigl( \omX \tensor L \bigr) \tensor \shO_{X'} 
		\Bigl( K_{X'/X} - \bigl\lfloor \tfrac{1}{p} \mu^* D \bigr\rfloor \Bigr),
\]
and so $\mu_* \fl \omY$ contains as a direct summand the sheaf
\[
	\omX \tensor L \tensor \mu_* \shO_{X'}
		\Bigl( K_{X'/X} - \bigl\lfloor \tfrac{1}{p} \mu^* D \bigr\rfloor \Bigr)	
	= \omX \tensor L \tensor \shI \bigl( \norm{L} \bigr).
\]
We can therefore take $g = \mu \circ f$.
\end{proof}

\begin{lemma} \label{lem:summand}
If $\shF$ and $\shG$ are two coherent sheaves on an abelian variety $A$, and $\shF$
is a direct summand of $\shG$, then every irreducible component of $V_k^0 (\shF)$ is
also an irreducible component of $V_{\ell}^0 ( \shG)$ for some $\ell \geq k$.
\end{lemma}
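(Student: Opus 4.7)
The plan is to write $\shG = \shF \oplus \shF'$ and use the additivity
\[
	h^0(A, \shG \otimes P) = h^0(A, \shF \otimes P) + h^0(A, \shF' \otimes P)
\]
which holds for every $P \in \Pic^0(A)$. Let $Z$ be an irreducible component of $V_k^0(\shF)$. First I would replace $k$ by the generic value $k'$ of $h^0(A, \shF \otimes P)$ on $Z$: by upper semicontinuity, $h^0(A, \shF \otimes P) = k'$ on a dense open subset of $Z$, and since $V_{k'}^0(\shF) \subseteq V_k^0(\shF)$ still contains $Z$, the maximality of $Z$ forces it to remain an irreducible component of $V_{k'}^0(\shF)$. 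Next, let $m \geq 0$ be the minimum of $h^0(A, \shF' \otimes P)$ as $P$ varies over $Z$; by upper semicontinuity this minimum is attained on a dense open subset of $Z$, where then $h^0(A, \shG \otimes P) = k' + m$. Setting $\ell = k' + m \geq k$, this shows $Z \subseteq V_\ell^0(\shG)$.

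The remaining step, which I expect to require the most care, is to argue that $Z$ is actually an irreducible component of $V_\ell^0(\shG)$, not merely contained in one. Suppose for contradiction that $Z \subsetneq Z'$ for some irreducible $Z' \subseteq V_\ell^0(\shG)$. Since $Z$ is maximal among irreducible subsets of $V_{k'}^0(\shF)$, we cannot have $Z' \subseteq V_{k'}^0(\shF)$, so the subset $U = \{\, P \in Z' \mid h^0(A, \shF \otimes P) \leq k' - 1 \,\}$ is open and nonempty in $Z'$, hence dense. On $U$ the additivity forces $h^0(A, \shF' \otimes P) \geq \ell - (k' - 1) = m + 1$, and then upper semicontinuity on $Z'$ upgrades this to $Z' \subseteq V_{m+1}^0(\shF')$. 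In particular $Z \subseteq V_{m+1}^0(\shF')$, contradicting the fact that the generic value of $h^0(A, \shF' \otimes P)$ on $Z$ is $m$. The subtle point to watch is the initial reduction to the case where $k$ equals the generic cohomology jump number on $Z$ rather than merely a lower bound for it; without that reduction, the $\shF'$-side semicontinuity argument at the end does not quite produce a contradiction.
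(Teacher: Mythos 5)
Your proof is correct and follows essentially the same approach as the paper: reduce to $k$ being the generic value of $h^0(\shF \otimes P)$ on $Z$, set $\ell = k$ plus the generic value of $h^0(\shF' \otimes P)$ on $Z$, and then combine additivity of $h^0$ over the direct sum with semicontinuity. The paper establishes that $Z$ is a component of $V^0_\ell(\shG)$ by restricting to an open neighborhood of the generic point of $Z$ rather than arguing by contradiction as you do, but the two formulations of the maximality step are interchangeable.
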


\begin{proof}
Let $Z \subseteq V_k^0 (\shF)$ be an irreducible component. We can assume without
loss of generality that $k = \min \menge{ h^0(X, \shF \tensor \alpha)}{\alpha \in
Z}$. By assumption, we have
a decomposition $\shG \simeq \shF \oplus \shF'$. We define
\[
	\ell = k + \min \menge{ h^0(X, \shF' \tensor \alpha)}{\alpha \in Z}.
\]
By the semicontinuity of $h^0(A, \shF '\otimes \alpha )$ and $h^0(A, \shF \otimes \alpha )$,  
it follows that there is a neighborhood $U$ of the generic point of $Z$ such that 
$h^0(\shF '\otimes \alpha )\leq \ell -k$  and $h^0(\shF \otimes \alpha )\leq k$ for
any $\alpha \in U$. Since $h^0(\shF \otimes \alpha )< k$ for any $\alpha \in U
\setminus (U\cap Z)$ it is easy to see that $Z$ is an irreducible component of
$V_{\ell}^0 ( \shG)$.
\end{proof}

\begin{lemma} \label{lem:Lai}
If $V^0_k (f_* \omega_X^{\otimes m})$ contains a point, then it also contains a torsion subvariety
through that point. 
\end{lemma}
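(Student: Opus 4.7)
The plan is to reduce the assertion to the classical structure theorem of Green--Lazarsfeld--Simpson for $V^0_\ell(h_*\omega_Y)$ on an auxiliary cyclic cover, in the spirit of the argument used to prove \lemmaref{lem:EV} and \propositionref{summand}. First, suppose $P_0 \in V^0_k(\shF_m)$, so that by the projection formula $h^0\bigl(X, \omega_X^{\otimes m} \otimes f^*P_0\bigr) \geq k$. Since $\Pic^0(A)$ is divisible, choose $R_0 \in \Pic^0(A)$ with $R_0^{\otimes m} \simeq P_0$, and set $L = \omega_X \otimes f^*R_0$; then $L^{\otimes m} \simeq \omega_X^{\otimes m} \otimes f^*P_0$ has nonzero sections, and in particular $\kappa(L) \geq 0$.

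Next, I would take a general section of a sufficiently divisible power of $L^{\otimes m}$, log-resolve its zero divisor, and form the associated cyclic cover of $X$, resolving singularities to obtain $g \colon Y \to X$. Setting $h = f \circ g \colon Y \to A$, the Esnault--Viehweg calculation used in the proof of \lemmaref{lem:EV} produces a direct-summand decomposition on $A$ of the shape
\[
h_*\omega_Y \supseteq f_*\bigl(\omega_X^{\otimes m} \otimes \shJ\bigr) \otimes P_0 \otimes R_0^{-1},
\]
where $\shJ$ is an ideal coming from the rounding terms in the cover, closely related to the asymptotic multiplier ideal $\shI\bigl(\|\omega_X^{\otimes m} \otimes f^*P_0\|\bigr)$. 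By Simpson's theorem (and its refinement to higher jumping loci), every $V^0_\ell(h_*\omega_Y)$ is a union of torsion translates of abelian subvarieties of $\Pic^0(A)$; combined with \lemmaref{lem:summand}, each irreducible component of $V^0_k$ of the summand is an irreducible component of some $V^0_\ell(h_*\omega_Y)$ with $\ell \geq k$, and hence is a torsion translate. Translating back by $R_0 \otimes P_0^{-1}$, the irreducible component of $V^0_k\bigl(f_*(\omega_X^{\otimes m} \otimes \shJ)\bigr)$ through the origin is a torsion translate, so the component of $V^0_k(\shF_m)$ through $P_0$ is a torsion translate through $P_0$.

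The main technical obstacle is ensuring that the ideal $\shJ$ introduced by the cover does not lose cohomology: one needs, for every $P$ in a neighborhood of $P_0$ in $\Pic^0(A)$, the inclusion
\[
H^0\bigl(X, \omega_X^{\otimes m} \otimes f^*P \otimes \shJ\bigr) \hookrightarrow H^0\bigl(X, \omega_X^{\otimes m} \otimes f^*P\bigr)
\]
to be an equality, so that the loci $V^0_k$ of the summand and of $\shF_m$ agree near $P_0$. This is arranged by passing to a sufficiently divisible power of the section (so that $\shJ$ becomes an asymptotic multiplier ideal) and exploiting the defining property $H^0(X, L \otimes \shI(\|L\|)) = H^0(X, L)$ for effective $L$; making this identification \emph{uniform} in $P$ near $P_0$ is the delicate point, and requires a stability argument for the asymptotic multiplier ideal under small $\Pic^0$-deformations of the underlying line bundle.
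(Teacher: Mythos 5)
Your overall strategy matches the paper exactly: pull back by an $m$-th root $R_0$ of the given point $P_0$, apply the cyclic-cover construction of \lemmaref{lem:EV} to realize $f_*\bigl(\omega_X^{\otimes m}\otimes\shJ\bigr)\otimes R_0^{\otimes(m-1)}$ (with $\shJ$ an asymptotic multiplier ideal) as a direct summand of $h_*\omega_Y$, and then invoke Simpson's structure theorem via \lemmaref{lem:summand}. Up to that point the two arguments coincide.

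However, the ``main technical obstacle'' you identify at the end is not a real obstacle, and the proof you are sketching does not require it. You want to show that $H^0\bigl(X,\omega_X^{\otimes m}\otimes f^*P\otimes\shJ\bigr)=H^0\bigl(X,\omega_X^{\otimes m}\otimes f^*P\bigr)$ for all $P$ in a neighborhood of $P_0$, i.e.\ that the two $V^0_k$ loci agree near $P_0$, and you propose a ``stability argument for the asymptotic multiplier ideal under small $\Pic^0$-deformations.'' No such uniform statement is needed, and it would in fact be hard to prove. The lemma only asserts the existence of a torsion subvariety \emph{inside} $V^0_k(\shF_m)$ through $P_0$; it does not require you to identify $V^0_k(\shF_m)$ with $V^0_k$ of the summand on a neighborhood. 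What suffices is a one-sided containment plus a statement at the single point $R_0$: (1) the equality $H^0\bigl(X,N\otimes\shI(\|N\|)\bigr)=H^0(X,N)$ for the specific line bundle $N=(\omega_X\otimes f^*R_0)^{\otimes m}$, combined with the monotonicity $\shI(\|N\|)\subseteq\shI(\|N^{\otimes(m-1)/m}\|)$, already gives $R_0\in V^0_k$ of the twisted summand; and (2) once you have a torsion subvariety $W$ through $R_0$ in $V^0_k$ of the summand, the inclusion $\shJ\subseteq\shO_X$ gives the \emph{one-way} inequality $h^0\bigl(X,\omega_X^{\otimes m}\otimes R_0^{\otimes(m-1)}\otimes L\bigr)\geq h^0\bigl(X,\omega_X^{\otimes m}\otimes R_0^{\otimes(m-1)}\otimes\shJ\otimes L\bigr)\geq k$ for every $L\in W$, hence $R_0^{\otimes(m-1)}\otimes W\subseteq V^0_k(\shF_m)$. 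This torsion translate contains $R_0^{\otimes m}=P_0$, which is all that is required. The paper's proof is organized precisely around this one-sided reasoning, which eliminates the uniformity issue you were worried about.
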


\begin{proof}
Take any point in $V^0_k (f_* \omega_X^{\otimes m})$. Since $\Pic^0(X)$ is divisible, we may assume
that our point is of the form $L_0^{\otimes m}$ for some $L_0 \in \Pic^0(X)$. This means that
\[
	h^0 \bigl( X,   \omega_X^{\otimes m} \tensor f^* L_0^{\otimes m} \bigr) \geq k.
\]
For $r \geq 0$, set $\shI_r = \shI \bigl( \norm{\omX^{\otimes r} \tensor f^*
L_0^{\otimes r}} \bigr)$. According to \lemmaref{lem:EV}, there exists a morphism $g
\colon Y \rightarrow X$ such that 
\[
	\omX \tensor (\omX \tensor f^* L_0)^{\otimes (m-1)} \tensor \mathcal{I}_{m-1} = 
	\omX^{\otimes m} \tensor L_0^{\otimes (m-1)} \tensor \mathcal{I}_{m-1}
\]
is a direct summand of $g_* \omega_Y$. Consequently, $f_* (\omX^{\otimes m} \tensor L_0^{m-1} \tensor \mathcal{I}_{m-1})$ is a 
direct summand of $h_* \omega_Y$, where $h = f\circ g\colon Y \rightarrow A$. 
By Simpson's theorem we know that, for any $\ell$, every irreducible component of $V^0_{\ell} (h_* \omega_Y)$ is a
torsion subvariety. Together with \lemmaref{lem:summand}, this shows that every irreducible component of 
\[
	V_k^0 \bigl( f_* (\omX^{\otimes m} \tensor L_0^{\otimes (m-1)} \tensor \mathcal{I}_{m-1}) \bigr)
\]
is a torsion subvariety. We observe that this set contains $L_0$: the reason is that since
$ \mathcal{I}_m\subseteq  \mathcal{I}_{m-1}$ (see \cite[Theorem 11.1.8]{Lazarsfeld}), we have
\begin{align*}
	H^0 \bigl( X, (\omX \tensor f^*L_0)^{\otimes m} \tensor \mathcal{I}_m \bigr) &\subseteq
	H^0 \bigl( X, (\omX \tensor f^*L_0)^{\otimes m} \tensor \mathcal{I}_{m-1} \bigr) \\
	&\subseteq H^0 \bigl( X, (\omX \tensor f^*L_0)^{\otimes m}\bigr),
\end{align*}
and the two spaces on the outside are equal because the subscheme defined by $\mathcal{I}_m$
is contained in the base locus of $(\omX \tensor f^*L_0)^{\otimes m}$ by
\cite[Theorem 11.1.8]{Lazarsfeld}.

Now let $W$ be an irreducible component of $V_k^0 \bigl( f_* (\omX^{\otimes m} \tensor L_0^{\otimes (m-1)}
\tensor \mathcal{I}_{m-1}) \bigr)$ passing through the point $L_0$. For every $L \in W$, we
have 
\[
	h^0 \bigl( X, \omX^{\otimes m} \tensor L_0^{\otimes (m-1)} \tensor L \bigr) \geq
	h^0 \bigl( X, \omX^{\otimes m} \tensor L_0^{\otimes (m-1)} \tensor \mathcal{I}_{m-1} \tensor L \bigr)
	\geq k,
\]
and so $L_0^{\otimes (m-1)} \tensor W \subseteq V_k^0 ( f_*\omX^{\otimes m})$. As noted above, $L_0^{\otimes (m-1)}
\tensor W$ contains the point $L_0^{\otimes m}$; it is also a torsion subvariety, because $W$
is a torsion subvariety and $L_0 \in W$.
\end{proof}

\begin{proof}[Proof of \theoremref{torsion_pluricanonical}]
Let $Z \subseteq V^0_k (f_* \omega_X^{\otimes m})$
be an irreducible component; we have to show that $Z$ is a torsion
subvariety. In case $Z$ is a point, this follows directly from \lemmaref{lem:Lai}, so
let us assume that $\dim Z \geq 1$. Let $Z_0 \subseteq Z$ denote the Zariski-open
subset obtained by removing the intersection with the other irreducible components of
$V^0_k (f_* \omega_X^{\otimes m})$. Then again by \lemmaref{lem:Lai}, every point of $Z_0$ lies on a
torsion subvariety that is contained in $Z$. Because there are only countably many
torsion subvarieties in $\Pic^0(X)$, Baire's theorem implies that $Z$ itself must be a
torsion subvariety.  
\end{proof}

\subsection{Iitaka fibration and cohomological support loci}
\label{application}
In this section, we use \theoremref{t-pushforward} to give a precise description of
the cohomological support loci
\[
	V^0(\omega _X ^{\otimes m}) = \menge{P \in \Pic^0(X)}%
		{H^0(X, \omXm \tensor P) \neq 0}
\]
for all $m\geq 2$, in terms of the Iitaka fibration of $X$. After a birational
modification of $X$, the Iitaka fibration can be realized as a morphism $f \colon X
\to Y$, where $Y$ is smooth projective of dimension $\kappa(X)$. By the universal
property of the Albanese mapping, we obtain a commutative diagram
\[
\begin{tikzcd}
X \rar{a_X} \dar{f} & A_X \dar{a_f} \\
Y \rar{a_Y} & A_Y
\end{tikzcd}
\]
where $A_X = \Alb(X)$ and $A_Y = \Alb(Y)$ are the two Albanese varieties. The following
simple lemma appears in \cite[Lemma~2.6]{Chen+Hacon:Iitaka}.

\begin{lemma}\label{l-iitaka} 
With notation as above, the following things are true:
\begin{aenumerate} 
\item The homomorphism $a_f$ is surjective with connected fibers.
\item Setting  $K = \ker(a_f)$, we have a short exact sequence
$$0\to \Pic^0(Y)\to \Pic^0(X)\to \Pic^0(K)\to 0.$$
\item If $F$ is a general fiber of $f$, then the kernel of the natural homomorphism
$\Pic^0(X)\to \Pic^0(F)$ is a finite union of torsion translates of  $\Pic^0(Y)$.
\end{aenumerate}
\end{lemma}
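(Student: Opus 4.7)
The plan is to treat the three parts in order: (a) uses the universal property of the Albanese variety; (b) is pure duality applied to (a); and (c) uses an argument parallel to (a) together with duality.

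For (a), surjectivity of $a_f$ follows because $a_f(A_X) \supseteq a_f(a_X(X)) = a_Y(Y)$ generates $A_Y$, so $a_f(A_X)$, being also a subgroup of $A_Y$, must equal $A_Y$. For the connectedness of fibers, I would let $K = \ker a_f$ with identity component $K^0$ and examine the morphism $X \to A_X/K^0$. Each fiber of $f$ maps into a translate of the finite group $K/K^0$, hence to a single point, so this morphism factors through $Y$ and then, by the universal property of $a_Y$, through a homomorphism $\psi\colon A_Y \to A_X/K^0$. Since $a_X(X)$ generates $A_X$, the composition $\psi \circ a_f$ equals the quotient $A_X \to A_X/K^0$, which forces $\psi$ to be a surjective isogeny; combined with the natural isogeny $A_X/K^0 \to A_Y$, this forces $K/K^0 = 0$.

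For (b), applying duality to the short exact sequence $0 \to K \to A_X \to A_Y \to 0$ from (a) yields $0 \to \widehat{A_Y} \to \widehat{A_X} \to \widehat{K} \to 0$, and the standard identifications of dual abelian varieties with Picard varieties (using that $a_X^* \colon \Pic^0(A_X) \to \Pic^0(X)$ is an isomorphism, and similarly for $Y$) give the stated sequence, with the first arrow being $f^*$.

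For (c), let $\phi\colon F \to K$ be the morphism from a general fiber $F$ obtained by translating $a_X|_F$ to send a chosen basepoint to the origin, and let $\tilde{K} \subseteq K$ be the abelian subvariety generated by $\phi(F)$. The key step is to show $\tilde{K} = K$: if not, repeating the argument of (a) with $\pi\colon A_X \to A_X/\tilde{K}$, the composition $X \to A_X/\tilde{K}$ contracts general fibers of $f$ to points (using that $\tilde{K}$ is independent of the generic fiber), so it factors through $Y$ and, by universality of $a_Y$, gives a homomorphism $\psi\colon A_Y \to A_X/\tilde{K}$ with $\psi \circ a_f = \pi$. Surjectivity of $\pi$ would force $\psi$ to be surjective, which is impossible because $\dim A_Y < \dim A_X/\tilde{K}$. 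Once $\tilde{K} = K$, the induced morphism $\Alb(F) \to K$ is surjective, so its dual $\Pic^0(K) \to \Pic^0(F)$ has finite kernel consisting of torsion points; pulling back through the surjection $\Pic^0(X) \to \Pic^0(K)$ from (b) and using the divisibility of $\Pic^0(X)$ to produce torsion representatives yields the desired description.

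The most delicate point is the step in (c) showing that $\tilde{K}$ is independent of the general fiber and that $X \to A_X/\tilde{K}$ genuinely factors through $Y$. Both should follow from a standard semicontinuity argument for the subgroup generated by $\phi(F_y)$ as $y$ varies, together with the fact that rational maps from smooth projective varieties to abelian varieties extend to morphisms.
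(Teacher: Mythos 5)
The paper does not prove this lemma; it simply cites \cite[Lemma~2.6]{Chen+Hacon:Iitaka}, so there is no in-paper argument to compare against. That said, your proof is essentially the standard one (and essentially what Chen--Hacon do): Albanese functoriality and rigidity for (a), Cartier duality for abelian varieties for (b), and a combination of both for (c).

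A few places deserve one more sentence of justification, though none of them is a real gap. In (a), after producing $\psi \colon A_Y \to A_X/K^0$ with $\psi \circ a_f$ agreeing with the quotient $\pi$ on $a_X(X)$, you should note that two homomorphisms of abelian varieties agreeing on a subset that generates the source agree everywhere (choosing basepoints so that all maps are group homomorphisms); then $\psi\circ a_f = \pi$ gives $K = \ker a_f \subseteq \ker\pi = K^0$ directly. For the factoring of $X \to A_X/K^0$ through $Y$ in (a), all fibers (not just general ones) are contracted, so the rigidity lemma for fiber spaces with $f_*\OX = \OY$ applies; in (c) you only know general fibers are contracted, and there you do need the extension of rational maps from a smooth variety to an abelian variety, as you say. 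In (c), the constancy of $\tilde K$ for general $F$ is most cleanly seen by noting that there are only countably many abelian subvarieties of $K$, so for very general $y$ the subvariety generated by $\phi(F_y)$ is a fixed $\tilde K$, and then the locus of $y$ where $a_X(F_y)$ lies in a single coset of $\tilde K$ is constructible and dense, hence contains an open set. Finally, the passage from $\ker\bigl(\widehat{K} \to \widehat{\Alb(F)}\bigr)$ being finite to the torsion translates of $\Pic^0(Y)$ is exactly as you say, using divisibility of $\Pic^0(X)$ to lift torsion elements to torsion elements. With these small expansions, the argument is complete and correct.
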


Using this lemma and the results of Green-Lazarsfeld \cite{GL1,GL2} and Simpson
\cite{Simpson}, one can prove the following results about the locus $V^0(\omX)$:
\begin{enumerate}
\item There are finitely many quotient abelian varieties $\Alb(X) \to B_i$ and
finitely many torsion points $\alpha_i \in \Pic^0(X)$ such that
\[
	V^0(\omega _X) = \bigcup_{i=1}^n \bigl( \alpha_i + \Pic^0(B_i) \bigr).
\]
This is proved in \cite[Theorem~0.1]{GL2} and \cite{Simpson}. Note that
$V^0(\omega_X)$ may be empty; in that case, we take $n = 0$.
\item We have $\Pic^0(B_i) \subseteq \Pic^0(Y)$, where $f \colon X\to Y$ is the Iitaka
fibration; when $X$ is of maximal Albanese dimension, then the union of the
$\Pic^0(B_i)$ generates $\Pic^0(Y)$. This is proved in
\cite{Chen+Hacon:Iitaka,Chen+Hacon:Pluricanonical}.  
\item At a general point $P$ of the $i$-th irreducible component $\alpha_i +
\Pic^0(B_i)$, one has $s \cup v = 0$ for all $s \in H^0(X, \omX \tensor P)$ and all $v \in H^1(B_i, \shO_{B_i})$;
conversely, if $s \in H^0(X, \omX \tensor P)$ is nonzero and $s \cup v = 0$ for some $v
\in H^1(X, \OX)$, then necessarily $v \in H^1(B_i, \shO_{B_i})$.
\end{enumerate}

\begin{note}
One can interpret property (3) as follows. If $P\in \Pic^0(X)$ is a general point of
a component of $V^0(\omX)$, and  we identify the tangent space to $\Pic^0(X)$ at
the point $P$ with the vector space $H^1(X, \OX)$, then $s \cup v=0\in H^1(X,\omega
_X\otimes P)$ if and only if $s$ deforms to first order in the direction of $v$.
Property (3) then says that if $s$ deforms to first order in the direction of $v$,
then it deforms to arbitrary order.
\end{note}

It turns out that the cohomology support loci $V^0(\omXm)$ for $m \geq 2$ are
governed by the Iitaka fibration $f \colon X \to Y$: in contrast to the case $m = 1$,
every irreducible component is now simply a torsion translate of $\Pic^0(Y)$.

\begin{theorem} Let $X$ be a smooth complex projective variety, and let $F$ be a
general fiber of the Iitaka fibration $f \colon X \to Y$. Let $m \geq 2$.
\begin{aenumerate}
\item For every torsion point $\alpha \in \Pic^0(X)$, and every $\beta \in
\Pic^0(Y)$, we have
\[
	h^0 (X, \omega_X^{\otimes m} \otimes \alpha) = 
		h^0 (X, \omega_X^{\otimes m} \otimes \alpha \otimes \fu \beta).
\]
\item There exist finitely many torsion points $\alpha_1,\ldots , \alpha_n\in
\Pic^0(X)$ such that 
\[
	V^0(\omega _X ^{\otimes m}) 
		= \bigcup_{i=1}^n \bigl( \alpha _i+\Pic^0(Y) \bigr).
\]
\item At every point $\alpha \in V^0(\omega _X ^{\otimes m})$, one has $s \cup v = 0$
for all $s\in H^0(X,\omega _X^{\otimes m}\otimes \alpha)$ and all $v
\in H^1(Y, \OY)$; conversely, if $s \in H^0(X, \omXm \tensor \alpha)$ is nonzero and
$s \cup v = 0$ for some $v\in  H^1(X, \OX)$, then necessarily $v \in H^1(Y, \OY)$.
\end{aenumerate}
\end{theorem}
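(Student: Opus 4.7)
My plan is to prove (b) using a fiber restriction argument, then (a) using the analytic tools of Chapter~\chapref{analytic} (especially the minimal extension property from Corollary~\corollaryref{metric_summary}), and finally (c) as a tangent space calculation derived from (a) and (b). For (b), by Theorem~\theoremref{thm:Lai} every irreducible component $Z \subseteq V^0(\omega_X^{\otimes m})$ has the form $\alpha_0 + T$ with $\alpha_0$ torsion and $T \subseteq \Pic^0(X)$ an abelian subvariety. For general $\gamma \in T$ and any nonzero $s \in H^0(X, \omega_X^{\otimes m} \otimes \alpha_0 \otimes \gamma)$, the restriction $s|_F$ to a general fiber $F$ of $f$ is nonzero (otherwise $s$ would vanish on the dense open set $f^{-1}(U) \subseteq X$), so $(\alpha_0 + \gamma)|_F \in V^0(\omega_F^{\otimes m})$. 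Since $\kappa(F) = 0$, the very same pluricanonical argument used in the proof of Corollary~\corollaryref{pluricanonical_unipotent} shows that $V^0(\omega_F^{\otimes m})$ consists of finitely many torsion points: two distinct torsion points $P \ne Q$ of order $d$ would yield linearly independent sections $s_P^{\otimes d}$ and $s_Q^{\otimes d}$ of $\omega_F^{\otimes md}$, contradicting $P_{md}(F) \le 1$. Since $T$ is connected and $\gamma \mapsto \gamma|_F$ lands in this finite set, the map is constant, so $T \subseteq \ker(\Pic^0(X) \to \Pic^0(F))$, whose identity component by Lemma~\lemmaref{l-iitaka}(c) is $\Pic^0(Y)$. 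Hence $T \subseteq \Pic^0(Y)$; the reverse containment is exactly (a).

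For (a), I would first reduce to $\alpha = 0$ via the \'etale cyclic cover $\pi \colon \tilde X \to X$ of order $d = \operatorname{ord}(\alpha)$ trivializing $\alpha$. The decomposition $\pi_{\ast}\omega_{\tilde X}^{\otimes m} = \bigoplus_{i=0}^{d-1} \omega_X^{\otimes m} \otimes \alpha^i$ yields
\[
h^0\bigl(\tilde X, \omega_{\tilde X}^{\otimes m} \otimes \tilde f^{\ast}\tilde\beta\bigr) = \sum_{i=0}^{d-1} h^0\bigl(X, \omega_X^{\otimes m} \otimes \alpha^i \otimes f^{\ast}\beta\bigr),
\]
so constancy of the left-hand side combined with upper semicontinuity of each summand forces each summand to be constant. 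For $\alpha = 0$ the projection formula reduces the claim to constancy of $h^0(Y, \mathcal{G} \otimes \beta)$ for $\beta \in \Pic^0(Y)$, where $\mathcal{G} = f_{\ast}\omega_X^{\otimes m} = f_{\ast}\omega_{X/Y}^{\otimes m} \otimes \omega_Y^{\otimes m}$. The key ingredient is Corollary~\corollaryref{metric_summary}: $f_{\ast}\omega_{X/Y}^{\otimes m}$ carries the Narasimhan-Simha singular hermitian metric with semi-positive curvature, and by the minimal extension property (part (c) of the corollary) every nonzero morphism from it to $\OY$ is split surjective. A section of $\mathcal{G}$ dualizes into such a morphism toward $\omega_Y^{\otimes m}$; combining a splitting extracted from the minimal extension property with the pushforward along $a_Y$ to the Albanese $A_Y$, one is in a position to apply Theorem~\theoremref{t-pushforward} and Corollary~\corollaryref{pluricanonical_unipotent} on $A_Y$ to force $\mathcal{G}$ to be rigid enough that its $\Pic^0(Y)$-twists all have the same space of global sections. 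The main obstacle is making this analytic-to-algebraic bridge rigorous, which is precisely where the machinery of Chapter~\chapref{analytic} is indispensable.

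For (c), by (b) the component of $V^0(\omega_X^{\otimes m})$ through $\alpha$ is a torsion translate of $\Pic^0(Y)$, so its Zariski tangent space at $\alpha$ equals $H^1(Y, \OY) \subseteq H^1(X, \OX)$ via $f^{\ast}$. The forward direction follows because (a) provides honest deformations of $s$ along $\Pic^0(Y)$-directions, so the standard Green-Lazarsfeld derivative criterion forces the first-order obstruction $s \cup v$ to vanish for every $v \in H^1(Y, \OY)$. For the converse, $s \cup v = 0$ means $s$ deforms to first order in direction $v$; the cyclic-cover argument from Lemma~\lemmaref{lem:Lai}, together with Simpson's theorem applied on the appropriate cyclic cover of $X$, lifts this first-order deformation to all orders, placing $v$ in the tangent space to $V^0(\omega_X^{\otimes m})$ at $\alpha$, which by (b) is exactly $H^1(Y, \OY)$.
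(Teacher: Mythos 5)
Your argument for (b) is essentially the paper's: restrict a general section to a fiber $F$, use that $\kappa(F)=0$ forces $V^0(\omega_F^{\otimes m})$ to be a finite set of torsion points, and conclude via \lemmaref{l-iitaka}(c). The problems are in (a) and in the converse half of (c).

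For (a), the sketch you give is not a proof, as you yourself note, and the proposed bridge cannot be built as described. A global section of $\fl\omXYm \tensor \omega_Y^{\otimes m}$ gives a morphism $\omega_Y^{\otimes -m} \to \fl\omXYm$, not a morphism $\fl\omXYm \to \OY$, so \corollaryref{metric_summary}(c) does not apply to it. More seriously, both \theoremref{t-pushforward} and \corollaryref{pluricanonical_unipotent} require $\kappa(X)=0$ and an abelian base, hypotheses absent here; and $Y$ has dimension $\kappa(X)$, which is typically positive. The paper's proof of (a) is purely algebraic and does not use the analytic machinery at all: one pushes $\omXm \tensor \alpha \tensor \shI \bigl( \norm{\omX^{\tensor(m-1)}} \bigr)$ forward to $A_Y$, uses the Iitaka relation $dK_X \sim g^{\ast}H + E$ to build a klt pair on a log resolution, invokes Koll\'ar's vanishing theorem for $\QQ$-divisors to kill the higher cohomology of the pushforward twisted by any $\beta \in \Pic^0(Y)$, and concludes from constancy of the Euler characteristic together with semicontinuity.

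For the converse in (c), the proposed lifting of first-order deformations to all orders via cyclic covers and Simpson's theorem is essentially the statement being proved, not an available input: the all-orders lifting (property (3) after \lemmaref{l-iitaka}) is recorded only for $m=1$ at a general point of a component, and transferring it through a cyclic cover of $X$ requires identifying the relevant component on the cover with $\Pic^0(Y)$, which is not controlled by anything established earlier. The paper instead restricts to a general fiber $F$: since $\alpha \restr{F}$ is torsion of some order $k$ and $\kappa(F)=0$, \theoremref{t-pushforward} gives $(a_F)_{\ast}\omega_F^{\otimes km} \simeq \shO_{A_F}$; under the resulting identifications, $s^{\otimes k} \restr{F}$ corresponds to a nonzero scalar and $v \restr{F}$ to some $u\in H^1(A_F,\shO_{A_F})$, so $s^{\otimes k}\cup v = 0$ forces $u=0$ and hence $v\in H^1(Y,\OY)$. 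This is the one place in the theorem where the analytic input via \theoremref{t-pushforward} is genuinely needed, and your proposal does not locate it.
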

\begin{proof}
We begin by proving (a), following Jiang's version \cite[Lemma~3.2]{Jiang} of
the original argument in \cite[Proposition~2.12]{HP}.  Let $g\colon X\to A_Y$ be the
morphism induced by composing $f$ with the Albanese map of $Y$. 
\[
\begin{tikzcd}
X \rar{a_X} \dar{f} \arrow{dr}{g} & A_X \dar{a_f} \\
Y \rar{a_Y} & A_Y
\end{tikzcd}
\]
Let $H$ be an ample divisor on $A_Y$. By construction, $g$ factors through the
Iitaka fibration of $X$, and so there is an integer $d \gg 0$ such that 
\begin{equation} \label{eq:equiv-1}
	d K_X \sim \gu H + E
\end{equation}
for some effective divisor $E$ on $X$. In particular, all sufficiently large and
divisible powers of $\omX$ have nontrivial global sections.

Now consider the torsion-free coherent sheaf
\[
 	\shF = g_* \Bigl( \omXm \tensor \alpha \tensor
		\shI \bigl( \norm{\omX^{\tensor(m-1)}} \bigr) \Bigr)
\]
on the abelian variety $A_Y$. From our discussion of asymptotic multiplier ideals in
\parref{par:Lai}, it is easy to see that we have inclusions
\[
	\shI \bigl( \norm{\omXm \tensor \alpha} \bigr) =
	\shI \bigl( \norm{\omXm} \bigr) \subseteq
	\shI \bigl( \norm{\omX^{\tensor(m-1)}} \bigr),
\]
by choosing the integer $p \in \NN$ in the definition of the asymptotic multiplier
ideal as a multiple of the order of the torsion point $\alpha \in \Pic^0(X)$. Since
\[
	H^0 \Bigl( X, \omXm \tensor \alpha 
		\tensor \shI \bigl( \norm{\omXm \tensor \alpha} \bigr) \Bigr) 
	= H^0 \bigl( X, \omXm \tensor \alpha \bigr),
\]
this shows that $H^0(A_Y, \shF) = H^0(X, \omXm \tensor \alpha)$. For $p \in \NN$
sufficiently large and divisible, we have
\[
	\shI \bigl( \norm{\omX^{\tensor(m-1)}} \bigr) 
		= \mu_{\ast} \shO_{X'} \Bigl( 	
			K_{X'/X} - \bigl\lfloor \tfrac{1}{p} F \bigr\rfloor \Bigr).
\]
Here $\mu \colon X'\to X$ is a log resolution of the complete linear system of
$\omX^{\tensor p(m-1)}$: the divisor $F + D$ has simple normal crossing support, the
linear system $\abs{D}$ is base point free, and 
\begin{equation} \label{eq:equiv-2}
	p(m-1) \mu^{\ast} K_X \sim F + D.
\end{equation}
(see \cite[9.2.10]{Lazarsfeld}). We may also assume that the larger divisor $F + D +
\mu^{\ast} E$ has simple normal crossing support.

Now $\shF$ is the pushforward, via the mapping $g \circ \mu \colon X' \to A_Y$, of
the line bundle 
\[
% \mu^{\ast} \omXm \tensor \mu^{\ast} \alpha \tensor \shO_{X'} \Bigl( K_{X'/X} - \bigl\lfloor \tfrac{1}{p} F \bigr\rfloor \Bigr)
	\mu^{\ast} \alpha \tensor \shO_{X'} \Bigl( K_{X'} + (m-1) \mu^{\ast} K_X 
		- \bigl\lfloor \tfrac{1}{p} F \bigr\rfloor \Bigr),
\]
and for any $\eps \in \QQ$, we have the $\QQ$-linear equivalence of $\QQ$-divisors
\[
	(m-1) \mu^{\ast} K_X - \bigl\lfloor \tfrac{1}{p} F \bigr\rfloor
	\sim _\QQ \tfrac{1-\eps}{p}(F + D) + \tfrac{\eps(m-1)}{d} \bigl( 
		\mu^{\ast} g^{\ast} H + \mu^{\ast} E) - \bigl\lfloor \tfrac{1}{p} F \bigr\rfloor
\]
by combining \eqref{eq:equiv-1} and \eqref{eq:equiv-2}. This allows us to write
\[
	K_{X'} + (m-1) \mu^{\ast} K_X - \bigl\lfloor \tfrac{1}{p} F \bigr\rfloor
		\sim _\QQ  K_{X'} + \Delta + \tfrac{\eps(m-1)}{d} \mu^{\ast} g^{\ast} H,
\]
where $\Delta$ is the $\QQ$-divisor on $X'$ given by the formula
\[
	\Delta = \tfrac{1-\eps}{p} D + \tfrac{\eps(m-1)}{d} \mu^{\ast} E
		+ \tfrac{1-\eps}{p} F - \bigl\lfloor \tfrac{1}{p} F \bigr\rfloor.
\]
By construction, the support of $\Delta$ is a divisor with simple normal crossings;
and if we choose $\eps > 0$ sufficiently small, then $\Delta$ is a boundary divisor,
meaning that the coefficient of every irreducible component belongs to the interval
$[0,1)$. To see that $\Delta \geq 0$ it suffices to observe that $p(m-1)\mu ^*E\geq dF$ and to check that $\lfloor \Delta \rfloor =0$ it suffices to observe that the coefficients of $ \frac 1 p D +\{\frac  1 p F\}$ are $<1$ and apply continuity.
In particular, the pair $(X', \Delta)$ is klt. We can now apply the version
for $\QQ$-divisors of Koll\'ar's vanishing theorem \cite[\S10]{Kollar95} and conclude
that the pushforward of 
\[
	\mu^{\ast} (\alpha \tensor g^{\ast} \beta) \tensor 
	\shO_{X'} \Bigl( K_{X'} + (m-1) \mu^{\ast} K_X 
		- \bigl\lfloor \tfrac{1}{p} F \bigr\rfloor \Bigr)
\]
under the map $g \circ \mu \colon X' \to A_Y$ has vanishing higher cohomology for
all $\beta \in \Pic^0(Y)$. Together with the projection formula, this shows that 
\[
	H^i \bigl( A_Y, \shF \tensor \beta \bigr) = 0
\]
for every $i > 0$ and every $\beta \in \Pic^0(Y)$. It follows that
\[
	h^0\bigl( A_Y, \shF \tensor \beta \bigr) = 
		\chi \bigl( A_Y, \shF \tensor \beta \bigr)
\]
has the same value for every $\beta \in \Pic^0(Y)$. But then
\begin{align*}
	h^0\bigl( X, \omXm \tensor \alpha \bigr) &= h^0\bigl( A_Y, \shF \bigr)  
		= h^0\bigl( A_Y, \shF \tensor g^{\ast} \beta \bigr) \\
	&= h^0\Bigl( X, \omXm \tensor \alpha \tensor g^{\ast} \beta \tensor
		\shI \bigl( \norm{\omX^{\tensor(m-1)}} \bigr) \Bigr) \\
	&\leq h^0\bigl( X, \omXm \tensor \alpha \tensor g^{\ast} \beta \bigr),
\end{align*}
and by semicontinuity, we conclude that in fact 
\[
	h^0\bigl( X, \omXm \tensor \alpha \bigr) = 
	h^0\bigl( X, \omXm \tensor \alpha \tensor g^{\ast} \beta \bigr).
\]

We next prove (b).
Assuming that  $V^0(\omega _X ^{\otimes m})$ is nonempty, there are by
\theoremref{torsion_pluricanonical} finitely many distinct torsion elements 
$\alpha _1,\ldots , \alpha _s\in \Pic^0(X)$, and abelian subvarieties $B_i\subset \Pic^0(X)$,  such that  
\[
	V^0(\omega _X ^{\otimes m})=\bigcup_{i=1}^{s} \bigl( \alpha _i+B_i \bigr).
\]
By (a) we know that $\Pic^0(Y) \subseteq B_i$; indeed, (a) implies that for every
torsion point $\alpha \in V^0 (\omega_X^{\otimes m})$ we have $\alpha + \Pic^0 (Y)
\subseteq V^0 (\omega_X^{\otimes m})$, and this applies of course to 
$\alpha_i$. To prove (b), it is therefore enough to show that $B_i \subseteq
\Pic^0(Y)$. Take an arbitrary element $\alpha \in V^0(\omXm)$, and let $s \in H^0(X,
\omXm \tensor \alpha)$ be a nonzero global section. The restriction of $s$ to a
general fiber $F$ of the Iitaka fibration $f \colon X \to Y$ is then a nonzero global
section of 
\[
	\omega_F^{\tensor m} \tensor \alpha \restr{F},
\]
and because $\kappa(F) = 0$, it follows that $\alpha \restr{F}$ is torsion in
$\Pic^0(F)$. According to \lemmaref{l-iitaka}, a nonzero multiple of $\alpha$
therefore belongs to $\Pic^0(Y)$. This is enough to conclude that $B_i \subseteq
\Pic^0(Y)$, and so (b) is proved.

To prove (c), note first that standard arguments (see for instance \cite[Lemma
12.6]{EV}) imply the first half, namely that if $v\in  H^1(Y, \shO _{Y})$ then 
\[
	s \cup v = 0 \qquad 
		\text{for all $s\in H^0 \bigl( X,\omega _X^{\otimes m} \otimes \alpha \bigr)$.}
\]
For the second half, suppose that $s \in H^0(X, \omXm \tensor \alpha)$ is a nonzero
global section such that $s \cup v = 0$ for some $v \in H^1(X, \OX)$. Restricting to
a general fiber $F$ of the Iitaka fibration $f \colon X \to Y$, we get
\[
	0= s \restr{F} \cup v \restr{F} \in 
		H^1 \bigl( F,\omega _F^{\otimes m}\otimes \alpha \restr{F} \bigr),
\]
where $s \restr{F} \in H^0 \bigl( F,\omega_F^{\otimes m}\otimes \alpha \restr{F}
\bigr)$ is nonzero, and $v \restr{F} \in H^1(F, \shO _F)$. Since $\alpha \in
V^0(\omXm)$, we have $\alpha \restr{F} = \alpha_i \restr{F}$ for some $i = 1, \dotsc,
s$, as a consequence of \lemmaref{l-iitaka} and (b). In particular, $\alpha
\restr{F}$ is torsion, say of order $k$, and so 
\[
	s^{\tensor k} \restr{F} \in H^0 \bigl( F, \omega_F^{\tensor km} \bigr).
\]
Let $a_F \colon F \to A_F$ denote the Albanese mapping of $F$. Recalling that
$\kappa(F) = 0$, we get from \theoremref{t-pushforward} that $(a_F)_{\ast}
\omega_F^{\tensor km} = \shO_{A_F}$. Under the isomorphism
\[
	H^0 \bigl( F, \omega_F^{\tensor km} \bigr) = H^0 \bigl( A_F, \shO_{A_F} \bigr),
\]
our nonzero section $s^{\tensor k} \restr{F}$ therefore corresponds to a nonzero
constant $\sigma \in \CC$; likewise, under the isomorphism
\[
	H^1(F, \shO_F) = H^1 \bigl( A_F, \shO_{A_F} \bigr),
\]
the vector $v \restr{F}$ corresponds to a vector $u \in H^1 \bigl( A_F, \shO_{A_F}
\bigr)$. It is not hard to see that the two isomorphisms are compatible with cup
product; consequently, $s^{\tensor k} \cup v = 0$ implies that $\sigma u = 0$, and
hence that $u = 0$. By the infinitesimal version of \lemmaref{l-iitaka}, this means
that $v \in H^1(Y, \shO_Y)$, as asserted.
\end{proof}

\section{Singular metrics on pushforwards of adjoint line bundles}
\label{analytic}

\subsection{Plurisubharmonic functions}

Let $X$ be a complex manifold. We begin our survey of the analytic
techniques by recalling the following important definition; see for example
\cite[I.5]{Demailly} for more details.

\begin{definition}
A function $\varphi \colon X \to [-\infty, +\infty)$ is called
\define{plurisubharmonic} if it is upper semi-continuous, locally integrable, and
satisfies the mean-value inequality
\begin{equation} \label{eq:MVI-Delta}
	(\varphi \circ \gamma)(0) 
		\leq \frac{1}{\pi} \int_{\Delta} (\varphi \circ \gamma) \, d\mu
\end{equation}
for every holomorphic mapping $\gamma \colon \Delta \to X$ from the open unit disk
$\Delta \subseteq \CC$.
\end{definition}

Suppose that $\varphi$ is plurisubharmonic. From \eqref{eq:MVI-Delta}
one can deduce, by integrating over the space of lines through a given point, that
the mean-value inequality
\[
	(\varphi \circ \iota)(0) \leq
		\frac{1}{\mu(B)} \int_B (\varphi \circ \iota) \, d\mu
\]
also holds for any open embedding $\iota \colon B \into X$ of the open unit ball $B
\subseteq \CC^n$; here $n$ is the local dimension of $X$ at the point $\iota(0)$.
In other words, every plurisubharmonic function is also \define{subharmonic}.
Together with local integrability, this implies that $\varphi$ is locally bounded
from above. 

\begin{lemma} \label{lem:constant}
Every plurisubharmonic function on a compact complex manifold is locally constant.
\end{lemma}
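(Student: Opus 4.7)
The plan is to reduce to the connected case and then apply a maximum principle argument based on the mean-value inequality that defines plurisubharmonicity. Since any connected component of $X$ is itself a compact connected complex manifold, it suffices to show that a plurisubharmonic function $\varphi$ on a compact connected complex manifold is constant.

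First I would exploit upper semi-continuity together with compactness of $X$ to conclude that $\varphi$ attains its supremum, say $M = \max_X \varphi$, and that the level set
\[
    S = \menge{x \in X}{\varphi(x) = M}
\]
is a nonempty closed subset of $X$ (the closedness is immediate from upper semi-continuity). The heart of the argument is to show that $S$ is also open, since then connectedness forces $S = X$ and we are done.

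To prove openness, fix $x_0 \in S$ and choose an open embedding $\iota \colon B \into X$ of the unit ball $B \subseteq \CC^n$ sending $0$ to $x_0$, where $n = \dim_{x_0} X$. As noted in the text preceding the lemma, plurisubharmonicity implies the ball mean-value inequality
\[
    M = \varphi(x_0) = (\varphi \circ \iota)(0)
        \leq \frac{1}{\mu(B)} \int_B (\varphi \circ \iota) \, d\mu.
\]
Since $\varphi \circ \iota \leq M$ pointwise and its integral average is $\geq M$, we conclude that $\varphi \circ \iota = M$ almost everywhere on $B$. The remaining step is to upgrade this almost-everywhere equality to pointwise equality: for any $y \in B$, the points where $\varphi \circ \iota$ equals $M$ accumulate at $y$, so $\limsup_{x \to y}(\varphi \circ \iota)(x) \geq M$; combined with the upper semi-continuity inequality $\limsup_{x \to y}(\varphi \circ \iota)(x) \leq (\varphi \circ \iota)(y)$ and the bound $(\varphi \circ \iota)(y) \leq M$, this forces $(\varphi \circ \iota)(y) = M$. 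Thus the image of $\iota$ lies in $S$, proving $S$ is open.

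The only subtle point is this last upgrade from almost-everywhere to pointwise equality; it is not a real obstacle, but it is where the hypothesis of upper semi-continuity (rather than mere local integrability) is essential, and it is worth writing out carefully so the reader sees why the value $-\infty$ cannot occur on $S$ in a pathological way. Everything else is a routine application of the maximum principle for subharmonic functions adapted to the complex manifold setting.
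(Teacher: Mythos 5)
Your proof is correct and follows exactly the maximum-principle argument that the paper compresses into two sentences: existence of a maximum from upper semi-continuity plus compactness, then the ball mean-value inequality forcing the maximum set to be open, hence all of each connected component. You have simply spelled out the connectedness reduction and the almost-everywhere-to-everywhere upgrade via upper semi-continuity, both of which the paper leaves implicit.
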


\begin{proof}
Let $\varphi$ be a plurisubharmonic function on a compact complex manifold $X$.
As $\varphi$ is upper semi-continuous and locally bounded from above, it achieves
a maximum on every connected component of $X$. The mean-value inequality then forces
$\varphi$ to be locally constant.
\end{proof}

Observe that a plurisubharmonic function is uniquely determined by its values on any
subset whose complement has measure zero. Indeed, the mean-value inequality provides
an upper bound on the value at any point $x \in X$, and the upper semi-continuity a
lower bound. One also has the following analogue of the Riemann and Hartogs extension
theorems for holomorphic functions \cite[Theorem~I.5.24]{Demailly}; by
what we have just said, there can be at most one extension in each case.

\begin{lemma} \label{lem:Riemann-Hartogs}
Let $Z \subseteq X$ be a closed analytic subset, and let $\varphi$ be a
plurisubharmonic function on $X \setminus Z$. 
\begin{aenumerate}
\item If $\codim Z \geq 2$, then $\varphi$ extends to a plurisubharmonic
function on $X$.
\item If $\codim Z = 1$, then $\varphi$ extends to a plurisubharmonic function on $X$
if and only if it is locally bounded near every point of $Z$.
\end{aenumerate}
\end{lemma}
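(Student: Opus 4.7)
The plan is to prove (b) first and then reduce (a) to a local boundedness question.

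For (b), the ``only if'' direction is immediate, since every plurisubharmonic function is upper semicontinuous and locally bounded above, as observed in the excerpt. For the ``if'' direction, I will define the candidate extension
\[
\tilde\varphi(x) = \limsup_{y\to x,\, y\notin Z} \varphi(y),
\]
which coincides with $\varphi$ on $X\setminus Z$ and is finite on $Z$ by the boundedness hypothesis; by construction $\tilde\varphi$ is upper semicontinuous. For local integrability, I work in a coordinate chart and choose a holomorphic function $f$ with $Z\subseteq\{f=0\}$: the function $\varphi+\epsilon\log\abs{f}$ tends to $-\infty$ at $Z$ for every $\epsilon>0$ (because $\varphi$ is locally bounded above), and extending it by $-\infty$ on $Z$ yields a plurisubharmonic, hence locally integrable, function; subtracting the locally integrable correction $\epsilon\log\abs{f}$ and letting $\epsilon\to 0^+$ gives $\varphi\in L^1_{\mathit{loc}}$ near $Z$.

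It remains to check the mean-value inequality for $\tilde\varphi$ along every holomorphic disk $\gamma\colon\Delta\to X$. If $\gamma(\Delta)\not\subseteq Z$, then $\gamma^{-1}(Z)$ is a discrete subset of $\Delta$, $\varphi\circ\gamma$ is subharmonic on the complement and locally bounded above, so the classical removable-singularity theorem for one-variable subharmonic functions extends it subharmonically across $\gamma^{-1}(Z)$ in agreement with the limsup defining $\tilde\varphi$. If $\gamma(\Delta)\subseteq Z$, I select a sequence $y_n\to\gamma(0)$ with $y_n\notin Z$ realizing $\varphi(y_n)\to\tilde\varphi(\gamma(0))$, form in a local chart the translated disks $\gamma_n(t)=\gamma(t)+(y_n-\gamma(0))$, which are not contained in $Z$ since in a local equation $Z\subseteq\{f=0\}$ we have $f(\gamma_n(0))=f(y_n)\neq 0$, apply the previous case to each $\gamma_n$, and let $n\to\infty$. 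The upper bound on $\varphi$ on a compact neighborhood of $\gamma(\overline\Delta)$ justifies the reverse Fatou lemma, which combined with the upper semicontinuity of $\tilde\varphi$ yields the required mean-value inequality for $\gamma$.

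For (a), the essential new step is to show that $\codim Z\geq 2$ forces $\varphi$ to be automatically locally bounded above near every point of $Z$. Given $x_0\in Z$, I choose a coordinate polydisk $P$ around $x_0$ and observe that, because $Z$ has complex codimension at least two, for any point $y$ near $x_0$ a generic complex line through $y$ meets $Z$ in at most a discrete set inside $P$. Applying the sub-mean-value inequality along small holomorphic disks in such lines, whose closures are contained in a fixed compact subset of $X\setminus Z$ on which $\varphi$ is already bounded above, produces a uniform upper bound on $\varphi$ in a punctured neighborhood of $x_0$. Once local boundedness above is in hand, the construction of $\tilde\varphi$ via the limsup and the verification that it is plurisubharmonic proceed exactly as in (b). The main obstacle is the second case of the mean-value verification in (b): choosing the perturbing family of disks and justifying the passage to the limit via reverse Fatou is where the boundedness hypothesis enters decisively, and the analogous boundedness step in (a) requires a careful choice of transversal disks but no new principle beyond the mean-value inequality.
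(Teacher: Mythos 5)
The paper does not give its own proof of this lemma; it simply cites it as \cite[Theorem~I.5.24]{Demailly}. Your argument follows the standard line (define $\tilde\varphi$ as the upper limit from $X\setminus Z$, then verify the axioms of plurisubharmonicity), and most of it is sound, but two steps are not correct as written.

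In (b), your case split for the mean-value inequality is not the right one. When $\gamma(\Delta)\not\subseteq Z$ but $\gamma(0)\in Z$, the subharmonic extension $e$ of $\varphi\circ\gamma$ across the discrete set $\gamma^{-1}(Z)$ takes at $0$ the value $\limsup_{t\to 0}\varphi(\gamma(t))$, and this is in general only $\leq\tilde\varphi(\gamma(0))$: the limsup of $\varphi$ along the one curve $\gamma$ can be strictly smaller than the limsup of $\varphi$ over all of $X\setminus Z$. So the assertion that the removable-singularity extension is ``in agreement with the limsup defining $\tilde\varphi$'' is not justified, and the inequality $e(0)\leq\frac{1}{\pi}\int_\Delta\tilde\varphi\circ\gamma\, d\mu$ that this argument produces does not bound $\tilde\varphi(\gamma(0))$. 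The perturbation argument you use for $\gamma(\Delta)\subseteq Z$ (translate the disk off $Z$, apply the inequality for the shifted disk with center off $Z$, pass to the limit via reverse Fatou and upper semicontinuity) is exactly what is needed here too; it should be invoked whenever $\gamma(0)\in Z$, not only when $\gamma(\Delta)\subseteq Z$.

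In (a), the boundedness step is stated incorrectly. You claim the small disks through $y$ have closures contained in a fixed compact subset of $X\setminus Z$; but these disks pass through $y$, and $y$ is arbitrarily close to $Z$, so their closures cannot all lie in a single compact set disjoint from $Z$. The correct geometric statement is different: choose coordinates at $x_0$ in which a slice $\{0\}\times\CC^2$ meets $Z$ only at the origin (possible because $\codim Z\geq 2$), and a compact collar $K$ in the slice directions that is disjoint from $Z$; then for $y$ in a full-measure subset of a punctured neighborhood of $x_0$, pick a complex line through $y$ that misses $Z$ entirely and whose intersection with the polydisk contains a disk centered at $y$ whose \emph{boundary circle} lies in $K$. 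The sub-mean-value inequality on that disk bounds $\varphi(y)$ by $\sup_K\varphi$. This gives the uniform bound only on a full-measure set, so a final application of the sub-mean-value inequality on balls (for $\varphi$ on $X\setminus Z$) is needed to upgrade it to every point. So the reduction of (a) to (b) via local boundedness is the right idea, but the argument as written -- in particular the phrase ``whose closures are contained in a fixed compact subset of $X\setminus Z$'' -- would not go through and needs to be replaced by the statement about boundary circles.
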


A plurisubharmonic function $\varphi$ determines a coherent sheaf of ideals
$\shI(\varphi) \subseteq \OX$, called the \define{multiplier ideal sheaf},
whose sections over any open subset $U \subseteq X$
consist of those holomorphic functions $f \in H^0(U, \OX)$ for which the function
$\abs{f}^2 e^{-\varphi}$ is locally integrable. We use the convention that the value
of the product is $0$ at points $x \in X$ where $f(x) = 0$ and $\varphi(x) = -\infty$.

Since plurisubharmonic functions are locally bounded from above, Montel's theorem in
several variables implies the following compactness property.

\begin{proposition} \label{prop:Montel}
Let $\varphi \colon B \to [-\infty, +\infty)$ be a plurisubharmonic function on the
open unit ball $B \subseteq \CC^n$. Consider the collection of holomorphic functions
\[
	H_K(\varphi) = \MENGE{f \in H^0(B, \shO_B)}%
		{\int_B \abs{f}^2 e^{-\varphi} \, d\mu \leq K}.
\]
Any sequence of functions in $H_K(\varphi)$ has a subsequence that converges
uniformly on compact subsets to an element of $H_K(\varphi)$.
\end{proposition}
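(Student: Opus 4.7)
The plan is to reduce to the classical Montel theorem for holomorphic functions, and then use Fatou's lemma to show that the weighted $L^2$ bound passes to the limit. The key observation is that plurisubharmonic functions are locally bounded above, which lets the weighted integral control the unweighted one on any compact subset.

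First I would establish a local uniform bound on $H_K(\varphi)$. Fix a point $z_0 \in B$ and a small radius $r > 0$ such that the closed ball $\overline{B(z_0,r)}$ is contained in $B$. Since $\varphi$ is plurisubharmonic, it is upper semi-continuous and locally bounded above, so there exists a constant $M = M(z_0,r)$ with $\varphi \leq M$ on $B(z_0,r)$. For any $f \in H_K(\varphi)$, the function $\abs{f}^2$ is plurisubharmonic (hence subharmonic), so the mean-value inequality over the ball gives
\[
	\abs{f(z_0)}^2 \leq \frac{1}{\mu(B(z_0,r))} \int_{B(z_0,r)} \abs{f}^2 \, d\mu
		\leq \frac{e^M}{\mu(B(z_0,r))} \int_{B(z_0,r)} \abs{f}^2 e^{-\varphi} \, d\mu
		\leq \frac{e^M \cdot K}{\mu(B(z_0,r))}.
\]
This uniform pointwise bound on compact subsets of $B$ is exactly the hypothesis needed for the classical Montel theorem in several complex variables.

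Next I would apply that classical theorem: any sequence $\{f_n\} \subseteq H_K(\varphi)$ is locally uniformly bounded, so a normal family argument extracts a subsequence $\{f_{n_k}\}$ converging uniformly on compact subsets of $B$ to a holomorphic function $f \in H^0(B, \shO_B)$.

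Finally, I would verify that $f \in H_K(\varphi)$. Since $f_{n_k} \to f$ locally uniformly, $\abs{f_{n_k}}^2 \to \abs{f}^2$ pointwise on $B$. Because $\varphi$ is locally integrable, $\varphi > -\infty$ almost everywhere, and therefore $\abs{f_{n_k}}^2 e^{-\varphi} \to \abs{f}^2 e^{-\varphi}$ pointwise almost everywhere. Fatou's lemma then yields
\[
	\int_B \abs{f}^2 e^{-\varphi} \, d\mu
		\leq \liminf_{k \to \infty} \int_B \abs{f_{n_k}}^2 e^{-\varphi} \, d\mu \leq K,
\]
so $f \in H_K(\varphi)$, completing the proof.

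There is no real obstacle here: everything is standard once one realizes that the local upper bound on $\varphi$ converts the weighted $L^2$ bound into an ordinary $L^2$ bound, after which the sub-mean-value property of $\abs{f}^2$ gives pointwise control. The only point requiring mild attention is making sure that the set $\{\varphi = -\infty\}$ has measure zero (which is immediate from local integrability of $\varphi$) so that Fatou's lemma applies to the pointwise convergence of $\abs{f_{n_k}}^2 e^{-\varphi}$.
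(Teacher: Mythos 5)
Your proof is correct and follows essentially the same route as the paper: use the local upper bound on $\varphi$ together with the sub-mean-value property of $\abs{f}^2$ to get local uniform bounds, apply Montel to extract a locally uniformly convergent subsequence, and conclude via Fatou's lemma. The only cosmetic difference is that you state the bound pointwise at a center $z_0$ rather than uniformly over a closed ball of radius $R < 1$, but the uniform version over compact sets follows immediately by the same estimate, exactly as in the paper.
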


\begin{proof}
The mean-value inequality for holomorphic functions implies that all functions in
$H_K(\varphi)$ are uniformly bounded on every closed ball of radius $R < 1$. Let us
briefly review the argument. Because $\varphi$ is locally bounded from above, there
is a constant $C \geq 0$ such that $\varphi \leq C$ on the closed ball of radius
$(R+1)/2$. Fix a point $z \in \wbar{B}_R(0)$ in the closed ball of radius $R$, and a
holomorphic function $f \in H_K(\varphi)$. By the mean-value inequality,
\[
	\abs{f(z)}^2 
		\leq \frac{1}{r^n \mu(B)} \int_{B_r(z)} \abs{f}^2 \, d\mu
		\leq \frac{e^C}{r^n \mu(B)} \int_{B_r(z)} \abs{f}^2 e^{-\varphi} \, d\mu
		\leq \frac{K \cdot e^{C}}{r^n \mu(B)},
\]
where $r = (1-R)/2$. By the $n$-dimensional version of Montel's theorem
\cite[Theorem~I.A.12]{GR}, this uniform
bound implies that any sequence $f_0, f_1, f_2, \dotsc \in H_K(\varphi)$ has a
subsequence that converges uniformly on compact subsets to a holomorphic function $f
\in H^0(B, \shO_B)$. By Fatou's lemma,
\[
	\int_B \abs{f}^2 e^{-\varphi} \, d\mu 
	\leq \liminf_{k \to +\infty} \int_B \abs{f_k}^2 e^{-\varphi} \, d\mu
	\leq K,
\]
which means that $f \in H_K(\varphi)$.
\end{proof}

\begin{note}
The example of an orthonormal sequence in $H_K(\varphi)$ shows that the convergence
need not be with respect to the $L^2$-norm.
\end{note}

\subsection{Singular hermitian metrics on line bundles}

Many of the newer applications of analytic techniques in algebraic geometry -- such as
Siu's proof of the invariance of plurigenera -- rely on the notion of
\define{singular hermitian metrics} on holomorphic line bundles. The word
``singular'' here means two things at once: first, that the metric is not necessarily
$C^{\infty}$; second, that certain vectors in the fibers of the line bundle are
allowed to have either infinite length or length zero.

Let $X$ be a complex manifold, and let $L$ be a holomorphic line bundle on $X$ with a
singular hermitian metric $h$.  In any local trivialization of $L$, such a metric is
represented by a ``weight function'' of the form $e^{-\varphi}$, where $\varphi$ is a
measurable function with values in $[-\infty, +\infty]$.  More precisely,
suppose that the restriction of $L$ to an open subset $U \subseteq X$
is trivial, and that $s_0 \in H^0(U, L)$ is a nowhere vanishing holomorphic section.
Then any other holomorphic section $s \in H^0(U, L)$ can be written as
$s = f s_0$ for a unique holomorphic function $f$ on $U$, and the length squared of
$s$ with respect to the singular hermitian metric $h$ is 
\begin{equation} \label{eq:product}
	\abs{s}_h^2 = \abs{f}^2 e^{-\varphi}.
\end{equation}
The points where $\varphi$ is not finite correspond to singularities of the metric:
at points where $\varphi(x) = -\infty$, the metric becomes infinite; at points where
$\varphi(x) = +\infty$, the metric stops being positive definite.

\begin{note}
At points $x \in U$ where $\varphi(x) = -\infty$, we use the following convention:
the product in \eqref{eq:product} equals $0$ if $f(x) = 0$; otherwise, it
equals $+\infty$. With this rule in place, $\abs{s}_h$ is a well-defined
measurable function on $U$ with values in $[0, +\infty]$. 
\end{note}

We say that a singular hermitian metric $h$ is \define{continuous} if the local
weight functions $\varphi$ are continuous functions with values in $[-\infty,
+\infty]$.  This is equivalent to asking that, for every open subset $U \subseteq X$
and every section $s \in H^0(U, L)$, the function $\abs{s}_h \colon U \to [0,
+\infty]$ should be continuous.

We say that the pair $(L, h)$ has \define{semi-positive curvature} if the local
weight functions $\varphi$ are plurisubharmonic. In that case, $\varphi$ is locally
integrable, and the curvature current of $(L, h)$ can be defined, in the
sense of distributions, by the formula
\[
	\Theta_h = \frac{\sqrt{-1}}{2 \pi} \del \dbar \varphi.
\]
It is easy to see that $\Theta_h$ is a well-defined closed positive $(1,1)$-current
on $X$; its cohomology class in $H^2(X, \RR)$ equals the first Chern class
$c_1(L)$. Conversely, if the current $\Theta_h$ is positive, then one can make the
local weight functions $\varphi$ plurisubharmonic by modifying them on a set of
measure zero \cite[Theorem~I.5.8]{Demailly}.

\begin{note}
Most authors include the condition of local integrability into the definition of a
singular hermitian metric. We use a different convention, so as to be consistent
with the definition of singular hermitian metrics on vector bundles later on.
\end{note}

A singular hermitian metric of semi-positive curvature is automatically positive
definite at every point. Indeed, $\varphi$ is locally bounded from above, and so 
the factor $e^{-\varphi}$ in the local expression for $h$ may equal $+\infty$ at
certain points, but has to be locally bounded from below by a positive
constant. Moreover, $\varphi$ is upper semi-continuous, and so the function $\abs{s}_h
\colon U \to [0, +\infty]$ is not just measurable, but even lower semi-continuous,
for every holomorphic section $s \in H^0(U, L)$ on some open subset $U \subseteq X$.

\begin{lemma} \label{lem:trivial}
Suppose that $X$ is compact, and that $h$ is a singular hermitian metric with
semi-positive curvature on a holomorphic line bundle $L$. If $c_1(L) = 0$ in $H^2(X,
\RR)$, then $h$ is actually a smooth metric with zero curvature.
\end{lemma}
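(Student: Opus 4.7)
The overall strategy is to compare $h$ to a smooth flat reference metric via a global plurisubharmonic function, and then invoke \lemmaref{lem:constant}, which says that every plurisubharmonic function on a compact complex manifold is locally constant. The topological hypothesis $c_1(L) = 0$ enters only in the construction of the reference metric.

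First I would choose any smooth hermitian metric $h_1$ on $L$ (obtained by partition of unity). Its Chern curvature form $\Theta_{h_1}$ is a smooth closed real $(1,1)$-form whose de Rham class is $c_1(L) = 0$ in $H^2(X, \RR)$. Assuming $X$ is compact Kähler (as is always the case in the geometric applications in this paper), the $\partial\bar\partial$-lemma produces a smooth real-valued function $u$ on $X$ with
\[
  \Theta_{h_1} = \frac{\sqrt{-1}}{2\pi}\,\del\dbar\, u,
\]
so that $h_0 \defeq h_1 e^{u}$ is a smooth hermitian metric on $L$ whose curvature form vanishes identically. The two metrics are then related by $h = h_0 \cdot e^{-\psi}$ for a globally defined function $\psi \colon X \to [-\infty, +\infty)$, obtained in any local trivialization by subtracting the smooth weight of $h_0$ from the (possibly singular) weight of $h$. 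Computing curvature currents gives
\[
  \frac{\sqrt{-1}}{2\pi}\,\del\dbar\,\psi \;=\; \Theta_h - \Theta_{h_0} \;=\; \Theta_h \;\geq\; 0,
\]
so $\psi$ is plurisubharmonic on the compact manifold $X$. By \lemmaref{lem:constant}, $\psi$ is locally constant; hence on each connected component $h$ is a positive real multiple of $h_0$, and so $h$ is itself a smooth hermitian metric with zero curvature form.

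The one delicate point is the appeal to the $\partial\bar\partial$-lemma to produce the smooth flat comparison metric $h_0$. If one prefers to avoid this, one can argue directly that $\Theta_h = 0$: fixing a Kähler form $\omega$ on $X$, the integral $\int_X \Theta_h \wedge \omega^{n-1}$ equals the topological pairing $c_1(L) \cdot [\omega]^{n-1} = 0$, whereas $\Theta_h \wedge \omega^{n-1}$ is a nonnegative measure, so the integrand vanishes identically and therefore $\Theta_h = 0$ as a current. Then the local weights $\varphi$ are pluriharmonic, hence real-analytic, so $h$ is smooth and flat. The real content of the lemma is thus the interplay between the positivity of $\Theta_h$ and the vanishing of its cohomology class on a compact Kähler manifold; once that forces $\Theta_h = 0$, everything else is formal.
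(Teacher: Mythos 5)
Your first argument is essentially the paper's own proof, spelled out more carefully. The paper asserts directly that a closed positive $(1,1)$-current with trivial cohomology class is globally $\tfrac{\sqrt{-1}}{2\pi}\del\dbar\psi$ for some plurisubharmonic $\psi$, and then invokes \lemmaref{lem:constant}; you construct that $\psi$ explicitly by comparing $h$ to a smooth flat reference metric $h_0$ built via the smooth $\del\dbar$-lemma, which is the standard way to see the assertion, and you get the slightly stronger conclusion that $h$ is a positive constant multiple of $h_0$. Your remark about the K\"ahler hypothesis is correct and worth making: the lemma as stated says only ``compact,'' but the paper's step producing $\psi$ already uses the $\del\dbar$-lemma for currents (or, in your second variant, the existence of a K\"ahler form), which is harmless here because every $X$ to which the lemma is applied in this paper is projective. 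Your second argument is genuinely different and arguably the most economical: pairing $\Theta_h$ with $\omega^{n-1}$ gives a nonnegative measure whose total mass is the topological number $c_1(L)\cdot[\omega]^{n-1}=0$, so the measure vanishes, positivity forces $\Theta_h=0$ as a current, and the local weights are then pluriharmonic, hence real-analytic and smooth. This route bypasses \lemmaref{lem:constant} entirely, trading the maximum principle for a Stokes-type mass computation, and isolates what is really doing the work, namely that a nonnegative mass with zero total integral must vanish.
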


\begin{proof}
The cohomology class of the closed positive $(1,1)$-current $\Theta_h$ equals zero in
$H^2(X, \RR)$, and so there is a globally defined plurisubharmonic function $\psi$
in $X$ such that $\Theta_h = \frac{\sqrt{-1}}{2\pi} \del \dbar \psi$. By
\lemmaref{lem:constant}, $\psi$ is locally constant, and so $\Theta_h = 0$. Now all
the local weight functions $\varphi$ satisfy $\del \dbar \varphi = 0$, and are
therefore smooth functions; but this means exactly that $h$ is a smooth metric.
\end{proof}

The curvature assumption implies that the \define{multiplier ideal sheaf}
$\shI(h) \subseteq \OX$ is a coherent sheaf of ideals on $X$; in the notation from
above, a holomorphic function $f \colon U \to \CC$ is a section of $\shI(h)$ if and
only if the function $\abs{f}^2 e^{-\varphi}$ is locally integrable. Consequently,
the subspace
\[
	H^0 \bigl( X, L \tensor \shI(h) \bigr)
		\subseteq H^0(X, L)
\]
consists of all global holomorphic sections of $L$ for which the lower
semi-continuous function $\abs{s}_h^2 \colon X \to [0, +\infty]$ is locally
integrable.

\subsection{The Ohsawa-Takegoshi extension theorem}

It is known that a line bundle on a projective complex manifold admits a singular
hermitian metric with semi-positive curvature if and only if it is pseudo-effective.
The power of the metric approach to positivity comes from fact that one can extend
holomorphic sections from submanifolds with precise bounds on the norm of the
extension. The most important result in this direction is the famous
\define{Ohsawa-Takegoshi theorem}.\footnote{We use the name ``Ohsawa-Takegoshi
theorem'' for convenience only; in reality, there is a large collection of different
$L^2$-extension theorems in complex analysis, of which \theoremref{thm:OT} below is
an important but nevertheless special case. For more on this topic, see for example
\cite{Demailly-extension}.}

Let $X$ be a complex manifold of dimension $n$, and let $(L, h)$ be a
holomorphic line bundle with a singular hermitian metric of semi-positive curvature.
What we actually need is the ``adjoint version'' of the Ohsawa-Takegoshi theorem,
which is about extending sections of the adjoint bundle $\omX \tensor L$, or
equivalently, holomorphic $n$-forms with coefficients in $L$. Before we can state the
theorem, we first have to introduce some notation. 

Given $\beta \in H^0(X, \omX \tensor L)$, we define a nonnegative measurable
$(n,n)$-form $\abs{\beta}_h^2$ as
follows: view $\beta \wedge \wbar{\beta}$ as a smooth $(n,n)$-form with coefficients
in $L \tensor \wbar{L}$, compose with the singular hermitian metric $h$, and then
multiply by the factor $c_n = 2^{-n} (-1)^{n^2/2}$. Locally, we can write $\beta =
f s_0 \tensor \dz_1 \wedge \dotsm \wedge \dz_n$, and then 
\begin{equation} \label{eq:h-beta}
	\abs{\beta}_h^2 = \abs{f}^2 e^{-\varphi} 
		(\dx_1 \wedge \dy_1) \wedge \dotsm \wedge (\dx_n \wedge \dy_n),
\end{equation}
where $z_1 = x_1 + y_1 \sqrt{-1}, \dotsc, z_n = x_n + y_n \sqrt{-1}$ are local
holomorphic coordinates on $U$. Using this notation, we have
\begin{align*}
	H^0 \bigl( X, \omX &\tensor L \tensor \shI(h) \bigr) \\
		&= \menge{\beta \in H^0(X, \omX \tensor L)}%
			{\text{$\abs{\beta}_h^2$ is locally integrable}}.
\end{align*}
We also define the $L^2$-norm of the element $\beta \in H^0(X, \omX \tensor L)$ to be
\begin{equation} \label{eq:norm-h}
	\norm{\beta}_h^2 = \int_X \abs{\beta}_h^2
		\, \in \, [0, +\infty].
\end{equation}
When $X$ is compact, $\beta \in H^0 \bigl( X, \omX \tensor L \tensor \shI(h) \bigr)$
is equivalent to having $\norm{\beta}_h^2 < +\infty$; in general, finiteness of the
$L^2$-norm is a much stronger requirement.

Now suppose that $f \colon X \to B$ is a holomorphic mapping to the open unit ball $B
\subseteq \CC^r$. We assume that $f$ is projective and that $0 \in B$ is a regular
value of $f$; the central fiber $X_0 = f^{-1}(0)$ is therefore a projective complex
manifold of dimension $n - r = \dim X - \dim B$. We denote by $(L_0, h_0)$ the
restriction of $(L, h)$ to $X_0$. As long as $h_0$ is not identically equal to
$+\infty$, it defines a singular hermitian metric with semi-positive curvature on
$L_0$, and we have the space 
\begin{equation} \label{eq:sections-X0}
	H^0 \bigl( X_0, \omega_{X_0} \tensor L_0 \tensor \shI(h_0) \bigr)
\end{equation}
of holomorphic $(n-r)$-forms with coefficients in $L_0$ that are square-integrable
with respect to $h_0$; as before, the defining condition is that the integral
\[
	\norm{\alpha}_{h_0}^2 = \int_{X_0} \abs{\alpha}_{h_0}^2
\]
should be finite; note that the definition of $\abs{\alpha}_{h_0}^2$ involves the
constant $c_{n-r}$.

The Ohsawa-Takegoshi theorem says that every section of $\omega_{X_0} \tensor L_0
\tensor \shI(h_0)$ can be extended to a section of $\omX \tensor L \tensor \shI(h)$
with finite $L^2$-norm -- and, crucially, it provides a universal upper bound on
the $L^2$-norm of the extension. (If $h_0 \equiv +\infty$, then the space in
\eqref{eq:sections-X0} is trivial and the extension problem is not interesting.) Here
$\beta \in H^0(X, \omX \tensor L)$ is an extension of $\alpha \in H^0(X_0,
\omega_{X_0} \tensor L_0)$ if
\[
	\beta \restr{X_0} = \alpha \wedge df = \alpha \wedge (df_1 \wedge \dotsb \wedge df_r),
\]
where $f = (f_1, \dotsc, f_r)$. That said, the precise statement of the
Ohsawa-Takegoshi extension theorem is the following.

\begin{theorem} \label{thm:OT}
Let $f \colon X \to B$ be a projective morphism such that
$0 \in B$ is a regular value. Let $(L, h)$ be a holomorphic line bundle with a singular
hermitian metric of semi-positive curvature. Denote by $(L_0, h_0)$ the restriction
to the central fiber $X_0 = f^{-1}(0)$, and suppose that $h_0 \not\equiv +\infty$. 
Then for every $\alpha \in H^0 \bigl( X_0, \omega_{X_0} \tensor L_0 \tensor \shI(h_0)
\bigr)$, there exists at least one $\beta \in H^0 \bigl( X, \omX \tensor L \tensor
\shI(h) \bigr)$ with
\[
	\beta \restr{X_0} = \alpha \wedge df \quad \text{and} \quad
	\norm{\beta}_h^2 \leq \mu(B) \cdot \norm{\alpha}_{h_0}^2.
\]
\end{theorem}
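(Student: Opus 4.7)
The plan is to apply Hörmander's $L^2$-method for the $\dbar$-operator, using a twisted weight that enforces vanishing along the central fiber $X_0$, and then to sharpen the resulting estimate to the universal constant $\mu(B)$ via the method of Błocki and Guan--Zhou. By a standard regularization of $h$ and a limiting argument based on \propositionref{prop:Montel}, one may assume that $h$ is smooth with strictly positive curvature and that $f$ is proper over a relatively compact neighborhood of $0$. The problem then becomes producing a single holomorphic section of $\omX \tensor L$ extending $\alpha \wedge \df$ and satisfying the stated $L^2$-bound.

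First I would construct a smooth (but not holomorphic) extension. Fix $\varepsilon > 0$ and a cutoff $\chi_\varepsilon \in C_c^\infty(B)$ equal to $1$ near $0$. Extending $\alpha$ arbitrarily to a tubular neighborhood of $X_0$, multiplying by $\chi_\varepsilon \circ f$, and wedging with $\df$, one obtains a smooth compactly supported $L$-valued $(n,0)$-form $\tilde{\beta}$ on $X$ with $\tilde{\beta}|_{X_0} = \alpha \wedge \df$, whose $\dbar$ is supported on the shell where $\chi_\varepsilon$ is non-constant. The desired extension has the form $\beta = \tilde{\beta} - u$, where $u$ is an $L$-valued $(n,0)$-form satisfying $\dbar u = \dbar \tilde{\beta}$ together with $u|_{X_0} = 0$; the vanishing is automatic if $u$ is square-integrable against a weight that diverges along $\{f = 0\}$, e.g.\ one of the form $e^{-\varphi} \abs{f}^{-2r}$ with $r > 0$.

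The analytical heart of the argument is then a twisted Bochner--Kodaira--Nakano inequality: for carefully chosen positive smooth functions $\tau, A$ on $X$ and any smooth compactly supported $L$-valued $(n,1)$-form $v$,
\[
\int_X \bigl\langle \bigl( \tau\, i\Theta_h - i\del\dbar\tau - A^{-1}\, i\del\tau \wedge \dbar\tau \bigr) v, v \bigr\rangle_h \;\le\; \int_X (\tau + A)\bigl( \abs{\dbar v}_h^2 + \abs{\dbarst v}_h^2 \bigr).
\]
Combining this with an additional twist by $\psi(\log \abs{f}^2)$ for a suitable convex $\psi$, one obtains by duality a solution $u$ of $\dbar u = \dbar \tilde{\beta}$ together with an explicit $L^2$-bound depending on $\tau$, $A$, and $\psi$. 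Letting $\psi$ degenerate so that $e^{-\psi}$ concentrates along $X_0$, and applying Fatou's lemma, produces a holomorphic extension $\beta$ with some estimate for $\norm{\beta}_h^2$ in terms of $\norm{\alpha}_{h_0}^2$.

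The main obstacle is sharpening the constant in this estimate to precisely $\mu(B)$, rather than an unspecified dimensional factor. For this I would follow Błocki's strategy: introduce a one-parameter family $\psi_t$ together with corresponding $\tau_t, A_t$ linked by a carefully designed ODE chosen so that the curvature term on the left-hand side of the twisted inequality dominates $\abs{\dbar\tilde{\beta}}_h^2 / (\tau + A)$ with equality asymptotically as $t \to \infty$. Passing to the limit and extracting a convergent subsequence via \propositionref{prop:Montel} then delivers the extension $\beta$ with the sharp bound $\norm{\beta}_h^2 \le \mu(B) \cdot \norm{\alpha}_{h_0}^2$. The real-variable construction of $\psi_t$, and the verification that the limit of the right-hand side along $\{f = 0\}$ reproduces exactly the factor $\mu(B) = \pi^r/r!$, is the delicate technical point; once it is in hand, the rest of the argument is routine.
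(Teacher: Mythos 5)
Your proposal and the paper's proof are on entirely different footings. The paper does not reprove the Ohsawa--Takegoshi theorem with sharp estimates; it treats the theorem of Guan and Zhou as a black box, and the proof of \theoremref{thm:OT} is a short reduction to \cite[Theorem~2.2]{GZ}: embed $X$ into $B\times\PP^N$, remove the preimage $H$ of a general hyperplane so that $X\setminus H$ is Stein (with $X_0\setminus X_0\cap H$ a closed submanifold), and apply Guan--Zhou's theorem with $\Psi = r\log\abs{f}^2$ and the trivial choices $A=0$, $c_A(t)\equiv 1$; the constant $(2\pi)^r/r!$ in their statement becomes $\mu(B)=\pi^r/r!$ because the paper's definition of $\norm{\cdot}_h^2$ divides by $2^n$. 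Your proposal, by contrast, sketches a from-scratch proof of the sharp estimate via the twisted $\dbar$-method of B{\l}ocki and Guan--Zhou --- a much harder, and genuinely different, route.

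As a from-scratch sketch, the proposal has real gaps. First, the opening reduction ``one may assume that $h$ is smooth with strictly positive curvature'' is not available: plurisubharmonic local weights cannot in general be decreasingly regularized by \emph{smooth} plurisubharmonic weights on the total space of a projective family over a ball (which is neither Stein nor compact K\"ahler), and even where regularization is possible it yields at best smooth semi-positive weights; upgrading to strictly positive curvature would require twisting by an auxiliary positive line bundle, which changes the problem. The actual B{\l}ocki/Guan--Zhou argument works directly with the singular psh weight, and the multiplier ideal enters inside the Bochner--Kodaira--Nakano estimate, not as an a priori reduction. Second, you never address the lack of pseudoconvexity: the $L^2$-existence theorem for $\dbar$ that your twisted inequality is meant to feed requires a Stein or weakly pseudoconvex domain, and $X$ is neither; this is exactly why the paper deletes an ample divisor before running any estimate, a device your proposal would also need. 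Finally, you flag the ODE linking $\psi_t,\tau_t,A_t$ and the verification that it produces exactly $\mu(B)=\pi^r/r!$ as a ``delicate technical point'' to be filled in later --- but that ODE \emph{is} the theorem. Without it, your argument reproduces only the classical Ohsawa--Takegoshi bound with an unspecified constant $C_0$, which is precisely what the sharp version (and hence the minimal extension property that the paper builds on it) is designed to improve.
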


The special thing about this form of the extension theorem is that the constant
$\mu(B) = \pi^r/r!$ in the estimate is the volume of the unit ball $B \subseteq
\CC^r$; the example of a product $X = B \times X_0$ shows that this is optimal.
Earlier proofs of the Ohsawa-Takegoshi theorem, for example by Siu or P\u{a}un
\cite{Siu, Paun}, only gave a weaker estimate, in which $\mu(B)$ had
to be replaced by a certain constant $C_0 \leq 200$. The proof of the sharp estimate
is due to B{\l}ocki and Guan-Zhou \cite{Blocki,GZ}. There is also a
(weaker) version of the Ohsawa-Takegoshi theorem for the case where the fibers are
compact K\"ahler manifolds, proved by Cao \cite{Cao}.

\begin{proof}[Proof of \theoremref{thm:OT}]
In \cite[\S3.12]{GZ}, the result is stated only for ``projective families'', meaning
in the case where $f \colon X \to B$ is smooth and everywhere submersive, but the
same proof works as long as $0 \in B$ is a regular value. Guan and Zhou
have $(2 \pi)^r/r!$ as the constant, but the extra factor of $2^r$ goes away because our
definition of $\norm{\beta}_h^2$ and $\norm{\alpha}_h^2$ involves dividing by $2^n$
and $2^{n-r}$, respectively.

For the reader who wants to look up the result in \cite{GZ}, we briefly explain how to
deduce \theoremref{thm:OT} from Guan and Zhou's main theorem. Choose an embedding $X
\into B \times \PP^N$, and let $H \subseteq X$ be the preimage of a sufficiently
general hyperplane in $\PP^N$. Then $X
\setminus H$ is a Stein manifold and $X_0 \setminus X_0 \cap H$ a closed submanifold. 
We can now apply \cite[Theorem~2.2]{GZ} to the pair $(X, X_0)$, taking $A = 0$,
$c_A(t) \equiv 1$, and $\Psi = r \log \abs{f}^2 = r \log \bigl( \abs{f_1}^2 + \dotsb
+ \abs{f_r}^2 \bigr)$.
\end{proof}

\begin{note}
Observe that if we write the inequality in \theoremref{thm:OT} in the form
\[
	\frac{1}{\mu(B)} \norm{\beta}_h^2 \leq \norm{\alpha}_{h_0}^2,
\]
then it looks like a mean-value inequality; this fact will
play a crucial role later or, when we construct singular hermitian metrics on
pushforwards of adjoint bundles.
\end{note}

\subsection{Coherent sheaves and Fr\'echet spaces}

In this section, we briefly review some fundamental results about section spaces of coherent
sheaves on complex manifolds. Recall that a \define{Fr\'echet space} is a 
Hausdorff topological vector space, whose topology is induced by a countable
family of semi-norms, and which is complete with respect to this family of semi-norms.
Most of the familiar theorems about Banach spaces, such as the open mapping
theorem or the closed graph theorem, remain true for Fr\'echet spaces.

\begin{example}
On a complex manifold $X$, the vector space $H^0(X, \OX)$ of all holomorphic
functions on $X$ is a Fr\'echet space, under the topology of uniform convergence on
compact subsets. More precisely, each compact subset $K \subseteq X$ gives rise to 
a semi-norm
\[
	\norm{f}_K = \sup_{x \in K} \abs{f(x)}
\]
on the space $H^0(X, \OX)$; to get a countable family, write $X$ as a
countable union of compact subsets. The same construction works for any open subset
$U \subseteq X$, and when $U \subseteq V$, the
restriction mapping $H^0(V, \shO _X) \to H^0(U, \shO _X)$ is continuous.
\end{example}

In fact, the section spaces of \emph{all} coherent sheaves on a given complex manifold can
be made into  Fr\'echet spaces in a consistent way; the construction is explained for
example in \cite[Ch. VIII, \S{}A]{GR}. Let $\shF$ be a coherent sheaf on a complex
manifold $X$. Then for every open subset $U \subseteq X$, the space of sections $H^0(U,
\shF)$ has the structure of a Fr\'echet space, in such a way that if $U \subseteq V$,
the restriction mapping $H^0(V, \shF) \to H^0(U, \shF)$ is continuous.  Moreover, if
$\phi \colon \shF \to \shG$ is any morphism between two coherent sheaves, then
the induced mappings $\phi_U \colon H^0(U, \shF) \to H^0(U, \shG)$ are all continuous.
The Fr\'echet space topology has several other good properties, such as the following
\cite[Proposition~VIII.A.2]{GR}.

\begin{proposition}
If $\shF \subseteq \shG$, then $H^0(U, \shF)$ is a closed subspace of $H^0(U, \shG)$.
\end{proposition}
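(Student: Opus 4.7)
The plan is to exploit the quotient sheaf and the fact, already recorded in the excerpt, that morphisms of coherent sheaves induce continuous maps on section spaces.

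First I would form the quotient $\shG/\shF$, which is again a coherent sheaf since $\shF$ is a coherent subsheaf of the coherent sheaf $\shG$. From the short exact sequence
\[
0 \to \shF \to \shG \to \shG/\shF \to 0
\]
I would take sections over $U$, using only left exactness of $H^0(U, \argbl)$, to obtain an exact sequence
\[
0 \to H^0(U, \shF) \to H^0(U, \shG) \xrightarrow{\;\pi_U\;} H^0(U, \shG/\shF).
\]
In particular, $H^0(U, \shF)$ is identified with the kernel of $\pi_U$.

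Next I would appeal to the property of the Fr\'echet topology recalled just before the proposition: the morphism of coherent sheaves $\shG \to \shG/\shF$ induces a \emph{continuous} linear map $\pi_U \colon H^0(U, \shG) \to H^0(U, \shG/\shF)$. Since a Fr\'echet space is by definition Hausdorff, the singleton $\{0\}$ is closed in $H^0(U, \shG/\shF)$, so its preimage $\ker \pi_U = H^0(U, \shF)$ is closed in $H^0(U, \shG)$, which is exactly the conclusion.

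I do not foresee any real obstacle, since every ingredient (coherence of the quotient, left exactness of sections, continuity of induced maps, Hausdorffness of Fr\'echet spaces) is either standard or already cited in the preceding discussion. The only subtlety worth a line of comment is verifying that the identification of $H^0(U, \shF)$ with $\ker \pi_U$ really is an equality of topological subspaces, but this is immediate because $H^0(U, \shF) \to H^0(U, \shG)$ is itself a continuous injection induced by the inclusion $\shF \hookrightarrow \shG$, and a subspace of a Fr\'echet space carries a unique topology as a kernel.
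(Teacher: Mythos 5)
Your proof is correct. Note that the paper does not supply its own argument for this proposition---it merely cites Proposition~VIII.A.2 of Gunning and Rossi---so there is nothing in the text to compare against. Your derivation is the natural one given the two facts recalled immediately before the statement (that section spaces of coherent sheaves are Fr\'echet, and that morphisms of coherent sheaves induce continuous linear maps): the quotient $\shG/\shF$ is again coherent, left exactness of $H^0(U, -)$ identifies $H^0(U, \shF)$ with $\ker \pi_U$, and Hausdorffness of $H^0(U, \shG/\shF)$ makes that kernel closed. One remark, which is not a gap in your argument but a point about logical dependence worth being aware of: in a self-contained foundational treatment such as Gunning--Rossi's, this closedness must be established \emph{before} one can put the Fr\'echet topology on section spaces of arbitrary coherent sheaves (constructing the topology on $H^0(U, \shG/\shF)$ as a quotient presupposes that the relevant image is closed), so their proof necessarily proceeds by a more direct route, roughly via convergence of germs. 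Taking the Fr\'echet structure and the continuity of induced maps as given, as the paper does, your proof is entirely legitimate, and your closing observation that the intrinsic Fr\'echet topology on $H^0(U, \shF)$ agrees with the subspace topology then follows from the open mapping theorem for Fr\'echet spaces.
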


Let $f \colon X \to Y$ be a proper holomorphic mapping between complex manifolds, and
let $\shF$ be a coherent sheaf on $X$. By Grauert's coherence theorem, the
pushforward sheaf $\fl \shF$ is a coherent sheaf on $Y$. The vector space
\[
	H^0(Y, \fl \shF) = H^0(X, \shF)
\]
therefore has two (a priori different) Fr\'echet space topologies, one coming from
$Y$, the other from $X$. 

\begin{proposition} \label{prop:proper-mapping}
In the situation just described, the two Fr\'echet space topologies on $H^0(Y, \fl
\shF) = H^0(X, \shF)$ are equal.
\end{proposition}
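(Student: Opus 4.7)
My plan is to apply the open mapping theorem for Fr\'echet spaces. Both $\tau_X$ and $\tau_Y$ are Fr\'echet topologies on the same vector space $H = H^0(X,\shF) = H^0(Y,f_{\ast}\shF)$, so it suffices to show that one is finer than the other; equivalently, by the closed graph theorem, I will verify that the identity map $\mathrm{id}\colon (H,\tau_Y)\to (H,\tau_X)$ has closed graph.

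The main technical input is the following \emph{jet-continuity principle}: for any coherent sheaf $\shG$ on any complex manifold $Z$, any point $z \in Z$, and any integer $k \geq 0$, the jet-evaluation
\[
	\mathrm{ev}_z^k \colon H^0(Z, \shG) \to \shG_z / \mathfrak{m}_{Z,z}^{k+1}\shG_z
\]
(a finite-dimensional quotient of the stalk by Nakayama) is continuous in the Fr\'echet topology on $H^0(Z, \shG)$. I would prove this by reducing, via a local presentation $\shO_Z^a \twoheadrightarrow \shG|_U$, to the case of a free sheaf, where continuity of the jet map follows at once from Cauchy's inequalities for holomorphic functions on polydisks. Krull's intersection theorem applied to the Noetherian local ring $\shO_{Z,z}$ gives $\bigcap_k \mathfrak{m}_{Z,z}^{k+1}\shG_z = 0$, so the family of all such $\mathrm{ev}_z^k$ separates points of $H^0(Z,\shG)$.

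To verify the closed graph, I will suppose $s_n \to s$ in $\tau_Y$ and $s_n \to t$ in $\tau_X$, and deduce $s = t$. Fix $x \in X$, put $y = f(x)$, and let $k \geq 0$. The jet-continuity principle applied to $\shF$ on $X$ yields $\mathrm{ev}_x^k(s_n) \to \mathrm{ev}_x^k(t)$ in $\shF_x / \mathfrak{m}_{X,x}^{k+1}\shF_x$; applied to $f_{\ast}\shF$ on $Y$ it yields $\mathrm{ev}_y^k(s_n) \to \mathrm{ev}_y^k(s)$ in $(f_{\ast}\shF)_y / \mathfrak{m}_{Y,y}^{k+1}(f_{\ast}\shF)_y$. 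The natural stalk map $(f_{\ast}\shF)_y \to \shF_x$, sending the germ of a section defined over $f^{-1}(V)$ to its germ at $x$, is linear over the local-ring homomorphism $\shO_{Y,y}\to \shO_{X,x}$ induced by $f$, and therefore carries $\mathfrak{m}_{Y,y}^{k+1}(f_{\ast}\shF)_y$ into $\mathfrak{m}_{X,x}^{k+1}\shF_x$. It descends to a linear (automatically continuous) map between the two finite-dimensional quotients, and by construction sends $\mathrm{ev}_y^k(s)$ to $\mathrm{ev}_x^k(s)$. Uniqueness of limits in the Hausdorff target then forces $\mathrm{ev}_x^k(s) = \mathrm{ev}_x^k(t)$ for every $k$ and every $x$, so $s$ and $t$ have identical germs at every point of $X$, whence $s = t$.

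The main obstacle I anticipate is the jet-continuity principle itself: while intuitively clear, it requires one to confirm that local presentations of a coherent sheaf define compatible Fr\'echet topologies, both under restriction and under change of presentation. This is part of the foundational theory developed in \cite{GR}, and once granted, the rest of the argument is a routine application of the closed graph and open mapping theorems.
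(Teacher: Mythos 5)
Your argument is correct, but it takes a genuinely different route from the paper. The paper localizes to a Stein open $U \subseteq Y$, chooses a presentation $\OY^{\oplus m} \twoheadrightarrow \fl\shF$, observes that the induced surjection $H^0(Y,\OY)^{\oplus m} \to H^0(Y,\fl\shF)$ realizes the $Y$-topology as a quotient topology, and then factors this surjection through $H^0(X,\OX)^{\oplus m}$ using the continuity of pullback by $f$; properness of $f$ enters exactly here, because it guarantees that uniform convergence on compact subsets of $Y$ forces uniform convergence on compact subsets of $X$. The identity $H^0(Y,\fl\shF)\to H^0(X,\shF)$ is then continuous, hence a homeomorphism by the open mapping theorem. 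You instead invoke the closed graph theorem and reduce everything to the continuity of finite-order jet evaluations, a purely local, pointwise statement about the canonical Fr\'echet topology; the transport from $Y$-jets to $X$-jets through the local ring homomorphism $\shO_{Y,y}\to\shO_{X,x}$ plays the role that pullback continuity plays in the paper, and Krull's intersection theorem on the stalks of $\shF$ does the separation that the paper handles implicitly by working with free sheaves. One small thing worth noting: your approach never needs to localize on $Y$ or quote Stein surjectivity of global sections — the global Fr\'echet structure on both sides plus jet continuity suffices, which is a pleasantly minimal set of hypotheses — whereas the paper's argument implicitly relies on the easy fact that it is enough to check the topology on a basis of Steins. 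Both proofs are clean and correct; yours is more ``infinitesimal,'' the paper's more ``presentational.''
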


\begin{proof}
Since the problem is local, we may replace $Y$ by a Stein open subset and assume
that we have a surjective morphism 
\[
	\OY^{\oplus m} \to \fl \shF.
\]
The induced mapping $H^0(Y, \OY)^{\oplus m} \to H^0(Y, \fl \shF)$ is continuous and
surjective; by the open mapping theorem, the topology on $H^0(Y, \fl \shF)$ must be
the quotient topology. We also get a morphism $\OX^{\oplus m} \to \shF$, and
therefore a factorization
\[
	H^0(Y, \OY)^{\oplus m} \to H^0(X, \OX)^{\oplus m} \to H^0(X, \shF).
\]
Both mappings are continuous: the first because, $f$ being proper, uniform convergence
on compact subsets of $Y$ implies uniform convergence on compact subsets of $X$; the
second because $\OX^{\oplus m} \to \shF$ is a morphism. It follows that the identity
mapping
\[
	H^0(Y, \fl \shF) \to H^0(X, \shF)
\]
is continuous; by the open mapping theorem, it must be a homeomorphism.
\end{proof}

\subsection{Singular hermitian inner products}
\label{par:SHIP}

Before we can talk about singular hermitian metrics on vector bundles, we first have
to be clear about what we mean by a ``singular'' hermitian inner product on a vector
space. The purpose of this section is to define this notion with some care. 
Throughout, we let $V$ be a finite-dimensional complex vector space. There are two
ways in which a hermitian inner product can be singular: there may be vectors whose
length is $+\infty$, and others whose length is $0$. The best way to formalize this
is to work not with the inner product itself, but with the associated length
function \cite[\S3]{BP}.

\begin{definition}
A \define{singular hermitian inner product} on a finite-dimensional complex vector
space $V$ is a function $\abs{\argbl}_h \colon V \to [0, +\infty]$ with the following properties:
\begin{enumerate}
%\item $\abs{0}_h = 0$.
\item $\abs{\lambda v}_h = \abs{\lambda} \cdot \abs{v}_h$ for every $\lambda \in \CC
\setminus \{0\}$ and every $v \in V$, and $\abs{0}_h = 0$.
\item $\abs{v + w}_h \leq \abs{v}_h + \abs{w}_h$ for every $v, w \in V$.
\item $\abs{v+w}_h^2 + \abs{v-w}_h^2 = 2 \abs{v}_h^2 + 2 \abs{w}_h^2$ for every $v,w \in V$.
\end{enumerate}
\end{definition}

Our convention is that an inequality is satisfied if both sides are equal to
$+\infty$. It is easy to deduce from the axioms that both
\[
V_0 = \menge{v \in V}{\abs{v}_h = 0} \quad \text{and} \quad
\Vfin = \menge{v \in V}{\abs{v}_h < +\infty}
\]
are linear subspaces of $V$. We say that $h$ is \define{positive definite} if $V_0 =
0$; we say that $h$ is \define{finite} if $\Vfin = V$. Clearly, $\abs{\argbl}_h$ is a
semi-norm on $\Vfin$; it is a norm if and only if $V_0 = 0$. The third axiom is
the parallelogram law for this semi-norm. The formula
\[
	\inner{v}{w}_h = \frac{1}{4} \sum_{k=0}^3 
		(\sqrt{-1})^k \cdot \bigabs{v + (\sqrt{-1})^k w}_h
\]
therefore defines a semi-definite hermitian inner product on the subspace $\Vfin$; it
is positive definite if and only if $V_0 = 0$. We use the same notation for the
induced hermitian inner product on the quotient space $\Vfin / V_0$.

Given a singular hermitian inner product $h$ on $V$, we obtain a singular hermitian inner
product $\hd$ on the dual space $\Vd = \Hom_{\CC}(V, \CC)$ by setting
\[
	\abs{f}_{\hd} = \sup \MENGE{\frac{\abs{f(v)}}{\abs{v}_h}}{%
		\text{$v \in V$ with $\abs{v}_h \neq 0$}}
\]
for any linear functional $f \in \Vd$, with the understanding that a fraction with
denominator $+\infty$ is equal to $0$. (If $V_0 = V$, then we define $\abs{f}_{\hd} =
0$ for $f = 0$, and $\abs{f}_{\hd} = +\infty$ otherwise.) It is easy to see
that $\abs{f}_{\hd} = 0$ if and only if $f$ annihilates the subspace $\Vfin$, and
that $\abs{f}_{\hd} < +\infty$ if
and only if $f$ annihilates the subspace $V_0$. One then checks that $\hd$ is again a
singular hermitian inner product on $\Vd$, and that the resulting hermitian inner
product $\inner{\argbl}{\argbl}_{\hd}$ on 
\[
	\frac{\menge{f \in \Vd}{\abs{f}_{\hd} < +\infty}}%
		{\menge{f \in \Vd}{\abs{f}_{\hd} = 0}}
		\simeq \Hom_{\CC} \bigl( \Vfin / V_0, \CC \bigr)
\]	
agrees with the one naturally induced by $\inner{\argbl}{\argbl}_h$. Here is another
way to think about $\hd$. From a nonzero linear functional $f \colon V \to \CC$, we
get an induced singular hermitian inner product on $\CC$ by setting
\[
	\abs{\lambda}_{h, f} = \inf \menge{\abs{v}_h}%
		{\text{$v \in V$ satisfies $f(v) = \lambda$}}
\]
If $\lambda \neq 0$, this quantity is $+\infty$ unless the restriction of $f$ to
the subspace $\Vfin$ is nonzero; if $V_0 = V$, then $\abs{\lambda}_{h,f} = 0$ for
every $\lambda \in \CC$. Taking into account various special cases, the
following result is immediate from the definition.

\begin{lemma} \label{lem:induced}
Let $f \colon V \to \CC$ be a nonzero linear functional. Then
\[
	\abs{\lambda}_{h,f} = \frac{\abs{\lambda}}{\abs{f}_{\hd}}
\]
for every nonzero $\lambda \in \CC$.
\end{lemma}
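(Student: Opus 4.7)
The plan is to exploit homogeneity to reduce to the case $\lambda=1$, and then verify that the two quantities $\abs{1}_{h,f}$ and $1/\abs{f}_{\hd}$ satisfy opposite inequalities.

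For the reduction, I observe that for nonzero $\lambda$ the substitution $v \mapsto \lambda^{-1} v$ gives a bijection between $\menge{v \in V}{f(v)=\lambda}$ and $\menge{v \in V}{f(v)=1}$, and by axiom (1) this rescales $\abs{v}_h$ by the factor $\abs{\lambda}^{-1}$. Taking infima, this yields $\abs{\lambda}_{h,f}=\abs{\lambda}\cdot\abs{1}_{h,f}$, so it suffices to prove $\abs{1}_{h,f}\cdot\abs{f}_{\hd}=1$.

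Next I would establish the two directions. For the inequality $\abs{1}_{h,f}\geq 1/\abs{f}_{\hd}$, I take any $v\in V$ with $f(v)=1$ and $\abs{v}_h\neq 0$; by the definition of $\abs{f}_{\hd}$ as a supremum of ratios, $\abs{f}_{\hd}\geq 1/\abs{v}_h$, so $\abs{v}_h\geq 1/\abs{f}_{\hd}$, and infimizing over such $v$ gives the bound. Conversely, for the inequality $\abs{1}_{h,f}\leq 1/\abs{f}_{\hd}$, I take any $v\in V$ with $\abs{v}_h\neq 0$ and $f(v)\neq 0$, and form $v' = v/f(v)$; then $f(v')=1$ and $\abs{v'}_h=\abs{v}_h/\abs{f(v)}$, so $\abs{1}_{h,f}\leq \abs{v}_h/\abs{f(v)}$. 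Rearranging and taking the supremum over such $v$ produces $1/\abs{1}_{h,f}\geq \abs{f}_{\hd}$.

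The main obstacle is the bookkeeping around the conventions for $0$ and $+\infty$, since the two quantities in the identity could individually be $0$ or $+\infty$. I would handle these edge cases separately: if $f$ vanishes on $\Vfin$, then every $v$ with $f(v)=\lambda\neq 0$ satisfies $\abs{v}_h=+\infty$, so $\abs{\lambda}_{h,f}=+\infty$ and indeed $\abs{f}_{\hd}=0$ (using the sup convention that ratios with denominator $+\infty$ equal $0$ together with the explicit rule when $V_0=V$). Conversely, if $f$ does not annihilate $V_0$, one can find $v\in V_0$ with $f(v)\neq 0$, and by rescaling produce $v'$ with $f(v')=1$ and $\abs{v'}_h=0$, so $\abs{1}_{h,f}=0$, matching $\abs{f}_{\hd}=+\infty$. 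Outside these degenerate cases, both relevant sets of vectors are non-empty and the two inequalities above combine to give the stated equality.
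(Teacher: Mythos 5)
Your proof is correct. The paper gives no argument beyond the remark that the identity is ``immediate from the definition,'' and your write-up is a careful spelling-out of exactly that: a homogeneity reduction to $\lambda = 1$, the two inequalities obtained by rescaling representatives of $f^{-1}(1)$ back and forth, and the bookkeeping for the degenerate cases where $f$ fails to annihilate $V_0$ or vanishes on $\Vfin$.
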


Let $r = \dim V$. Since the product of $0$ and $+\infty$ is undefined, we do not get
a singular hermitian inner product on
\[
	\det V = \bigwedge^r V
\]
unless $V_0 = 0$ or $\Vfin = V$. But when $h$ is either positive definite or finite,
there is a well-defined singular hermitian inner product $\det h$ on the
one-dimensional vector space $\det V$. If $\Vfin = V$, we declare that
\[
	\abs{v_1 \wedge \dotsb \wedge v_r}_{\det h} = 
	\det \begin{pmatrix}
		\inner{v_1}{v_1}_h & \cdots & \inner{v_1}{v_r}_h \\
		\vdots & \ddots & \vdots \\
		\inner{v_r}{v_1}_h & \cdots & \inner{v_r}{v_r}_h
	\end{pmatrix}.
\]
If $\Vfin \neq V$ and $V_0 = 0$, we let $\abs{\argbl}_{\det h}$ equal $+\infty$ on
all nonzero elements of $\det V$.

\subsection{Singular hermitian metrics on vector bundles}

The purpose of this section is to extend the concept of singular hermitian metrics
from holomorphic line bundles to holomorphic vector bundles of arbitrary rank. Let
$X$ be a complex manifold, and let $E$ be a holomorphic vector bundle on $X$ of some
rank $r \geq 1$.

\begin{definition} \label{def:SHIP}
A \define{singular hermitian metric} on $E$ is a function $h$ that associates to every
point $x \in X$ a singular hermitian inner product $\abs{\argbl}_{h,x} \colon E_x \to
[0, +\infty]$ on the complex vector space $E_x$, subject to the following two
conditions:
\begin{enumerate}
\item $h$ is finite and positive definite almost everywhere, meaning that for all $x$
outside a set of measure zero, $\abs{\argbl}_{h,x}$ is a hermitian inner product on $E_x$.  
\item $h$ is measurable, meaning that the function 
\[
	\abs{s}_h \colon U \to [0, +\infty], 
		\quad x \mapsto \bigabs{s(x)}_{h,x},
\]
is measurable whenever $U \subseteq X$ is open and $s \in H^0(U, E)$.
% \item For every $s \in H^0(U, E)$, the function $\log h(s)$ is locally integrable.
\end{enumerate}
\end{definition}

In the case $r = 1$, this specializes to the definition of singular hermitian
metrics on holomorphic line bundles. The requirement that $h$ be measurable is
extremely weak: the singular hermitian metrics that we will actually encounter below
are at least semi-continuous. The advantage of the above definition is that it
behaves well under duality. By applying the general construction from the previous
section, we obtain on each fiber
\[
	\Ed_x = \Hom_{\CC}(E_x, \CC)
\]
of the dual bundle $\Ed$ a singular hermitian inner product $\abs{\argbl}_{\hd,x}$.
The following result shows that these form a singular hermitian metric on $\Ed$.

\begin{proposition}
A singular hermitian metric $h$ on a holomorphic vector bundle $E$ induces a singular
hermitian metric $\hd$ on the dual bundle $\Ed$.
\end{proposition}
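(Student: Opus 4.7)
The pointwise data of $\hd$ is already in hand: applying the construction from \parref{par:SHIP} fiber by fiber yields, for every $x \in X$, a singular hermitian inner product $\abs{\argbl}_{\hd,x}$ on $\Ed_x$. What remains is to verify the two conditions of \definitionref{def:SHIP}, namely that $\hd$ is finite and positive definite almost everywhere, and that it is measurable.

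For condition (1), by assumption there is a set $N \subseteq X$ of measure zero such that for every $x \in X \setminus N$, the inner product $\abs{\argbl}_{h,x}$ is a genuine hermitian inner product on $E_x$ (i.e.\ $(E_x)_0 = 0$ and $(E_x)_{\mathit{fin}} = E_x$). At such points, standard linear algebra gives that the induced norm on the dual space is the ordinary operator norm of a positive definite hermitian form, and so $\abs{\argbl}_{\hd,x}$ is likewise a genuine hermitian inner product on $\Ed_x$. Hence $\hd$ is finite and positive definite on $X \setminus N$.

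For condition (2), let $U \subseteq X$ be open and $\xi \in H^0(U, \Ed)$; we must show that $x \mapsto \bigabs{\xi(x)}_{\hd,x}$ is measurable. Cover $X$ by countably many open subsets on which $E$ is trivial, so we may assume $E\restr{U} \simeq \shO_U^{\oplus r}$. Choose a countable collection $\{s_i\}_{i \in \NN}$ of holomorphic sections of $E$ over $U$ whose values $s_i(x) \in E_x$ are dense in $E_x$ for every $x \in U$ (for instance, $\CC$-linear combinations of a trivializing frame with coefficients in $\QQ + \QQ\sqrt{-1}$). By hypothesis on $h$, each function $x \mapsto \abs{s_i(x)}_{h,x}$ is measurable, while $x \mapsto \abs{\xi(x)(s_i(x))}$ is continuous, so each ratio
\[
	\rho_i(x) = \frac{\abs{\xi(x)(s_i(x))}}{\abs{s_i(x)}_{h,x}}
\]
is a measurable function $U \to [0, +\infty]$ (with the conventions for $0$ and $+\infty$ adopted in \parref{par:SHIP}). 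At every point $x \in U \setminus N$, the inner product $\abs{\argbl}_{h,x}$ is a genuine norm, and density of $\{s_i(x)\}$ in $E_x$ together with the definition of the dual norm gives
\[
	\bigabs{\xi(x)}_{\hd,x} = \sup_{i \in \NN} \rho_i(x),
\]
which is measurable as a countable supremum of measurable functions. Since $N$ has measure zero, we conclude that $x \mapsto \bigabs{\xi(x)}_{\hd,x}$ is measurable on $U$.

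The only genuinely delicate point is the last step: on the exceptional set $N$, the pointwise formula for $\bigabs{\xi}_{\hd}$ need not coincide with $\sup_i \rho_i$, since the density argument relies on having an honest inner product on $E_x$. However, measurability is insensitive to modification on a set of measure zero, so this causes no problem provided one works with the completion of Lebesgue measure; alternatively, one can check directly from \lemmaref{lem:induced} that the supremum formula remains valid on all of $U$ once the conventions about $0$ and $+\infty$ are consistently applied. This is the only substantive obstacle, and it is resolved by the careful bookkeeping of the null and finite subspaces set up in \parref{par:SHIP}.
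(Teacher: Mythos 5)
Your proof is correct, and it takes a genuinely different route from the paper's. The paper works with the Gram matrix: after trivializing $E$ locally by a frame $s_1, \dotsc, s_r$, it sets $H_{i,j}(x) = \inner{s_i(x)}{s_j(x)}_{h,x}$ (measurable by polarization), observes that $H(x)$ is defined and invertible outside a null set, and notes that $\hd$ is represented by the transpose of $H^{-1}$, whose entries are measurable by the explicit cofactor formula. Your argument instead expresses $\abs{\xi}_{\hd}$ as a countable supremum of measurable ratios $\rho_i = \abs{\xi(s_i)}/\abs{s_i}_h$ over a $\QQ+\QQ\sqrt{-1}$-dense family of sections. The paper's approach has the advantage of producing an explicit local matrix representation of $\hd$ (convenient for later computations), while yours is more elementary and avoids division by the determinant; both require discarding the null set $N$ where $h$ is not finite and positive definite. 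One small correction to your closing remark: your proposed alternative — that \lemmaref{lem:induced} would rescue the supremum identity on all of $U$ — does not actually work. At a point $x \in N$ where $\Vfin$ is a proper complex subspace of $E_x$ that is not a rational subspace with respect to the chosen frame, the dense family $\{s_i(x)\}$ may meet $\Vfin$ only at the origin, so $\sup_i \rho_i(x) = 0$ while $\abs{\xi(x)}_{\hd,x}$ may well be positive; the identity genuinely fails there. The first escape you offer (modification on a null set is harmless for measurability, working with the completed $\sigma$-algebra) is the correct one, and matches the implicit convention of the paper, whose formula is likewise only literally valid off $N$.
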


\begin{proof}
If $\abs{\argbl}_{h,x}$ is finite and positive definite, then $\abs{\argbl}_{\hd,x}$
is also finite and positive
definite, and so the first condition in the definition is clearly satisfied. The
second condition is of a local nature, and so we may assume without loss of
generality that $E$ is the trivial bundle of rank $r$. Denote by $s_1, \dotsc, s_r
\in H^0(X, E)$ the global sections corresponding to a choice of trivialization. The
expression
\[
	H_{i,j}(x) = \inner{s_i(x)}{s_j(x)}_{h,x}
\]
is well-defined outside a set of measure zero, and the resulting function is
measurable. 
% By assumption, the functions $\log H_{i,i}$ are locally integrable.
% The Cauchy-Schwarz inequality gives
% \[
%	2 \log \abs{H_{i,j}} \leq \log H_{i,i} + \log H_{j,j},
% \]
% and therefore all the functions $\log \abs{H_{i,j}}$ are in fact locally integrable. 
Denote by $H \in \Mat_{r \times r}(\CC)$ the $r \times r$-matrix with entries
$H_{i,j}$.  Then $\hd$ is represented by the transpose of the matrix $H^{-1}$, in the
natural trivialization of $\Ed$; the usual formula for the inverse of a matrix shows
that all entries of this matrix are again measurable functions.
% and so it suffices to prove that their logarithms are locally integrable. Since all
% the functions $\log \abs{H_{i,j}}$ are locally integrable, this follows from the
% usual formula for the inverse matrix.
\end{proof}

\begin{note}
In more sheaf-theoretic terms, a singular hermitian metric on a holomorphic vector
bundle $E$ is a morphism of sheaves of sets
\[
	\abs{\argbl}_h \colon E \to \shMs_X
\]
from $E$ to the sheaf of measurable functions on $X$ with values in $[0, +\infty]$.
The following conditions need to be satisfied:
\begin{enumerate}
\item One has $\abs{fs}_h = \abs{f} \cdot \abs{s}_h$ for every $s \in H^0(U, E)$ and
every $f \in H^0(U, \shO)$.
\item If $s \in H^0(U, E)$ and $\abs{s}_h = 0$ almost everywhere, then $s = 0$.
% \item The function $\log h(s)$ is locally integrable for every $s \in H^0(U, E)$.
\item For almost every point $x \in X$, the function $\abs{\argbl}_{h,x} \colon E_x
\to [0, +\infty]$ is a singular hermitian inner product (in the sense of
\definitionref{def:SHIP}).
\end{enumerate}
Again, we use the convention that $\abs{f} \cdot \abs{s}_h = 0$ at points where $f$
is zero.
\end{note}

\subsection{Semi-positive curvature}

Let $h$ be a singular hermitian metric on a holomorphic vector bundle $E$, and denote
by $\hd$ the induced singular hermitian metric on the dual bundle $\Ed$. Suppose for
a moment that $h$ is smooth, and denote by $\Theta_h$ the curvature tensor of the
Chern connection; it is a $(1,1)$-form with coefficients in the bundle $\End(E)$.
One says that $(E, h)$ has \define{semi-positive curvature in the sense of
Griffiths} if, for every choice of holomorphic tangent vector $\xi \in T_x X$, the
matrix $\Theta_h(\xi, \wbar{\xi})$ is positive semi-definite \cite[VII.6]{Demailly}.
This is known to be equivalent to the condition that the function $\log
\abs{f}_{\hd}$ is plurisubharmonic for every local section $f \in H^0(U, \Ed)$. In
the singular case, we use this condition as the definition.

\begin{definition}
We say that the pair $(E, h)$ has \define{semi-positive curvature} if the function
$\log \abs{f}_{\hd}$ is plurisubharmonic for every $f \in H^0(U, \Ed)$.
\end{definition}

The point of this definition is that it allows us to talk about the curvature of a
singular hermitian metric without mentioning the curvature tensor: unlike in the case
of line bundles, the curvature tensor of $h$ does not in general make sense even as a
distribution \cite[Theorem~1.3]{Raufi}. The following lemma gives an equivalent
formulation of the definition.

\begin{lemma} \label{lem:quotient}
Let $h$ be a singular hermitian metric on $E$. Then $(E, h)$ has semi-positive
curvature if, and only if, for every open subset $U \subseteq X$ and every nonzero
morphism $E \restr{U} \to L$ to a line bundle, the induced singular hermitian metric
on $L$ has semi-positive curvature.
\end{lemma}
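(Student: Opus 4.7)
The plan is to translate both conditions into one and the same assertion, namely plurisubharmonicity of the function $\log \bigabs{f}_{\hd}$ for local sections $f$ of $\Ed$. Both properties are local on $X$, so I restrict to an open set $U \subseteq X$ over which the line bundle $L$ is trivialized by a nowhere-vanishing section $e$; then any morphism $\phi \colon E\restr{U} \to L$ corresponds uniquely to a section $f \in H^0(U, \Ed)$ via $\phi(v) = f(v) \cdot e$, and $\phi$ is nonzero if and only if $f$ is.

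The key step is the pointwise computation of the induced metric $h_L$ on $L$ in the trivialization $e$. By the construction from \parref{par:SHIP} applied to the linear functional $f(x) \colon E_x \to \CC$, the length of $e(x) \in L_x$ with respect to $h_L$ is $\bigabs{1}_{h_x, f(x)}$, which by \lemmaref{lem:induced} equals $1/\bigabs{f(x)}_{\hd}$ whenever $f(x) \neq 0$. Consequently, the local weight function of $h_L$ in this trivialization is $\varphi = 2\log \bigabs{f}_{\hd}$, so that $h_L$ has semi-positive curvature on $U$ if and only if $\log \bigabs{f}_{\hd}$ is plurisubharmonic on $U$.

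Granting this identification, both directions of the equivalence are immediate. Assuming $(E,h)$ has semi-positive curvature, any nonzero morphism $\phi$ yields, after shrinking $U$ to trivialize $L$, a nonzero $f \in H^0(U, \Ed)$ for which $\log\bigabs{f}_{\hd}$ is plurisubharmonic by definition; hence $h_L$ has semi-positive curvature. Conversely, any nonzero $f \in H^0(U, \Ed)$ is the same data as a nonzero morphism $E\restr{U} \to U \times \CC$, and applying condition (2) to this morphism returns plurisubharmonicity of $\log\bigabs{f}_{\hd}$.

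The main obstacle, modest as it is, lies entirely in the pointwise computation above, and specifically in verifying that the edge cases do not cause trouble: at points where $f(x) = 0$ one has $\bigabs{f(x)}_{\hd} = 0$, so $\varphi(x) = -\infty$, which is permitted for a plurisubharmonic function; and the almost-everywhere finiteness and positive-definiteness of $h_L$ are inherited from the corresponding properties of $h$ (using that a nonzero holomorphic section $f$ vanishes only on a proper analytic subset). Beyond this bookkeeping, the whole argument is a direct translation between the two formulations.
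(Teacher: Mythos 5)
Your proof is correct and follows essentially the same route as the paper's: both reduce, via \lemmaref{lem:induced}, to the observation that in a local trivialization of $L$ the weight function of the induced metric is $\varphi = 2\log\abs{f}_{\hd}$, so the two conditions are both equivalent to plurisubharmonicity of $\log\abs{f}_{\hd}$. You spell out the converse direction and the edge cases a bit more explicitly than the paper, but there is no substantive difference.
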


\begin{proof}
The construction of the induced singular hermitian metric on $L$ works as in
\lemmaref{lem:induced}. At each point $x \in U$, the linear mapping $E_x \to L_x$
between fibers induces a singular hermitian inner product on the one-dimensional
complex vector space $L_x$: the length of a vector $\lambda \in L_x$ is the infimum
of $\abs{e}_{h,x}$ over all $e \in E_x$ that map to $\lambda$. (If $E_x \to L_x$ is
zero, then the infimum equals $+\infty$ whenever $\lambda \neq 0$.)

Let us compute the curvature of the induced metric. After replacing $X$ by an open
neighborhood of a given point in $U$, we may assume that $L$ is trivial; our morphism
$E \to \OX$ is then given by a linear functional $f \in H^0(X, \Ed)$. Let
$e^{-\varphi}$ be the weight function of the induced metric. The formula in
\lemmaref{lem:induced} says that 
\[
	e^{-\varphi(x)} = \frac{1}{\abs{f(x)}_{\hd, x}^2}
\]
for every $x \in X$. Taking logarithms, we get $\varphi = 2 \log \abs{f}_{\hd}$, which
is plurisubharmonic because the pair $(E, h)$ has semi-positive curvature.
\end{proof}

Suppose that $(E, h)$ has semi-positive curvature. Since plurisubharmonic functions
are locally bounded from above, the singular hermitian inner product
$\abs{\argbl}_{\hd, x}$ on $\Ed_x$ must be finite for every $x \in X$; dually, every
$\abs{\argbl}_{h,x}$ must be positive definite. The determinant line bundle $\det E$
therefore has a well-defined singular hermitian metric that we denote by the
symbol $\det h$. We will prove later (in \propositionref{prop:determinant}) that the
pair $(\det E, \det h)$ again has semi-positive curvature.

When $(E, h)$ has semi-positive curvature, the pointwise length of any holomorphic
section of $\Ed$ is an upper semi-continuous function. Likewise, the pointwise length
of any holomorphic section of $E$ is a lower semi-continuous function.

\begin{lemma} \label{lem:lower-semi-continuous}
If $(E, h)$ has semi-positive curvature, then for any $s \in H^0(X, E)$, the function
$\abs{s}_h \colon X \to [0, +\infty]$ is lower semi-continuous.
\end{lemma}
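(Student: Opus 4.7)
The plan is to express $\abs{s(\cdot)}_h$ locally as a pointwise supremum of lower semi-continuous functions, reducing the statement for $E$ to essentially the line bundle case via \lemmaref{lem:quotient}. Since lower semi-continuity is a local property, I would fix $x_0 \in X$ and work on a small open neighborhood $U$ of $x_0$ on which the dual bundle $\Ed$ is trivial, so that the fiber $\Ed_x$ at each $x \in U$ is spanned by the values of local holomorphic sections.

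For every local holomorphic section $f$ of $\Ed$ over $U$, the morphism $E \restr{U} \to \shO_U$ defined by $f$ induces, by \lemmaref{lem:quotient}, a singular hermitian metric of semi-positive curvature on the trivial line bundle $\shO_U$. By \lemmaref{lem:induced}, this induced metric is $e^{-\varphi_f}$ with $\varphi_f = 2 \log \abs{f}_{\hd}$, so $\log \abs{f}_{\hd}$ is plurisubharmonic on $U$, in particular upper semi-continuous and locally bounded above. The function
\[
 g_f \colon U \to [0, +\infty], \qquad g_f(x) = \frac{\bigabs{f(s(x))}}{\abs{f(x)}_{\hd,x}},
\]
(with the conventions $0 \cdot (+\infty) = 0$ and $c / 0 = +\infty$ for $c > 0$) is the product of the continuous non-negative function $\bigabs{f(s(x))}$ and the lower semi-continuous non-negative function $e^{-\varphi_f(x)/2}$, and is therefore lower semi-continuous on $U$.

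To conclude, I would verify the pointwise identity
\[
 \abs{s(x)}_{h,x} \;=\; \sup_{f} \, g_f(x), \qquad x \in U,
\]
with $f$ ranging over all local holomorphic sections of $\Ed$ on $U$. This is the bidual formula from \parref{par:SHIP}, applied fiberwise; since $\Ed \restr{U}$ is trivial, every vector in $\Ed_x$ is the value at $x$ of a local section, so the fiber-wise and section-wise suprema agree. Because a pointwise supremum of lower semi-continuous functions is lower semi-continuous, the lemma then follows.

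The one subtle step is the bidual identity at points where the singular hermitian inner product is not finite, so that $\abs{s(x)}_{h,x} = +\infty$; there one must exhibit local sections $f$ of $\Ed$ with $\abs{f(x)}_{\hd,x}$ bounded and $\bigabs{f(s(x))}$ arbitrarily large. Here the semi-positive curvature hypothesis is essential: since the plurisubharmonic function $\log \abs{f}_{\hd}$ is locally bounded above, $\abs{f(x)}_{\hd,x}$ is finite at \emph{every} point of $X$, which in the notation of \parref{par:SHIP} forces $V_0 = 0$ at every point. A direct computation with functionals of the form $N f_0 + g$, where $f_0$ vanishes on $\Vfin$ but not at $s(x)$ and $g$ has fixed nonzero $\hd$-norm, then produces the required unbounded sequence and establishes the bidual formula.
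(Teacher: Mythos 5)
Your proof follows essentially the same route as the paper: expressing $\abs{s}_h$ locally as the pointwise supremum, over holomorphic sections $f$ of $\Ed$, of the lower semi-continuous functions $\abs{f(s)}/\abs{f}_{\hd}$, whose lower semi-continuity is exactly the upper semi-continuity of the plurisubharmonic functions $\log\abs{f}_{\hd}$. The paper dismisses the bidual identity $\abs{s(x)}_{h,x} = \sup_f \abs{f(s(x))}/\abs{f(x)}_{\hd,x}$ as ``easy to see''; your careful verification, in particular the treatment of the case $\abs{s(x)}_{h,x} = +\infty$ via the auxiliary functionals $N f_0 + g$, is a correct and welcome elaboration of that step.
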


\begin{proof}
Since the question is local, we may assume without loss of generality that $X$ is the
open unit ball in $\CC^n$, and $E$ the trivial bundle of rank $r \geq 1$. We have
\[
	\abs{s}_h \geq \frac{\abs{f(s)}}{\abs{f}_{\hd}}
\]
for every $f \in H^0(X, \Ed)$, and it is easy to see that $\abs{s}_h$ is the
pointwise supremum of the collection of functions on the right-hand side. Because
$\log \abs{f}_{\hd}$ is upper semi-continuous, each 
\[
	\frac{\abs{f(s)}}{\abs{f}_{\hd}} = \abs{f(s)} \cdot e^{-\log \abs{f}_{\hd}}
\]
is a lower semi-continuous function from $X$ to $[0, +\infty]$; their 
pointwise supremum is therefore also lower semi-continuous.
\end{proof}

\begin{example}
The following example, due to Raufi \cite[Theorem~1.3]{Raufi}, shows that the
function $\abs{s}_h$ can indeed have jumps. Let $E$ be the trivial bundle of rank $2$
on $\CC$. We first define a singular hermitian metric $\hd$ on the dual bundle $\Ed$:
at each point $z \in \CC$, it is represented by the matrix
\[
	\begin{pmatrix}
		1 + \abs{z}^2 & z \\
		\bar{z} & \abs{z}^2
	\end{pmatrix}.
\]	
From this, one computes that the singular hermitian metric $h$ on $E$ is given by
\[
	\frac{1}{\abs{z}^4} \begin{pmatrix}
		\abs{z}^2 & -z \\
		-\bar{z} & 1 + \abs{z}^2	
	\end{pmatrix}
\]
as long as $z \neq 0$. Contrary to what this formula might suggest, one has
\[
	\bigabs{(1,0)}_{h, 0} = 1;
\]
the length of the vector $(1,0)$ is thus $\abs{z}^{-2}$ for $z \neq 0$, but $1$ for
$z = 0$.
\end{example}

\subsection{Singular hermitian metrics on torsion-free sheaves}

Let $X$ be a complex manifold, and let $\shF$ be a torsion-free coherent sheaf on
$X$. Let $X(\shF) \subseteq X$ denote the maximal open subset where $\shF$ is locally free;
then $X \setminus X(\shF)$ is a closed analytic subset of codimension $\geq 2$. If $\shF
\neq 0$, then the restriction of $\shF$ to the open subset $X(\shF)$ is a holomorphic
vector bundle $E$ of some rank $r \geq 1$.

\begin{definition}
A \define{singular hermitian metric} on $\shF$ is a singular hermitian metric $h$ on
the holomorphic vector bundle $E$. We say that such a metric has
\define{semi-positive curvature} if the pair $(E, h)$ has semi-positive curvature.
\end{definition}

Suppose that $\shF$ has a singular hermitian metric with semi-positive curvature.
Since $X \setminus X(\shF)$ has codimension $\geq 2$, every holomorphic section of
the dual bundle $\Ed$ extends to a holomorphic section of the reflexive coherent sheaf
\[
	\shFd = \shHom(\shF, \OX),
\]
and every plurisubharmonic function on $X(\shF)$ extends to a plurisubharmonic
function on $X$ (see \lemmaref{lem:Riemann-Hartogs}). For every open subset $U
\subseteq X$ and every holomorphic section $f \in H^0(U, \shFd)$, we thus obtain
a well-defined plurisubharmonic function
\[
	\log \abs{f}_{\hd} \colon U \to [-\infty, +\infty).
\]
Note that the function $\abs{f}_{\hd}$ is upper semi-continuous.

What about holomorphic sections of the sheaf $\shF$ itself? For any $s \in H^0(U,
\shF)$, the function $\abs{s}_h$ is lower semi-continuous on $U \cap X(\shF)$. In a
suitable neighborhood of every point in $U$, we can imitate the proof of
\lemmaref{lem:lower-semi-continuous} and take the pointwise supremum of the functions
\[
	\abs{f(s)} \cdot e^{-\log \abs{f}_{\hd}},
\]
where $f$ runs over all sections of $\shFd$. Since the pointwise supremum of a
family of lower semi-continuous functions is again lower semi-continuous, we obtain
in this manner a distinguished extension 
\[
	\abs{s}_h \colon U \to [0, +\infty]
\]
to a lower semi-continuous function on $U$. 

\begin{definition}
We say that a singular hermitian metric on $\shF$
is \define{continuous} if, for every open subset $U \subseteq X$ and every
holomorphic section $s \in H^0(U, \shF)$, the function $\abs{s}_h \colon U \to [0,
+\infty]$ is continuous.  
\end{definition}

% \begin{note}
% The example of $\shO_{\PP^1}(-1)$ on $\PP^1$ shows that the metric needs to be
% defined at least outside a set of codimension $\geq 2$.
% \end{note}

\begin{proposition} \label{prop:shF-shG}
Let $\phi \colon \shF \to \shG$ be a morphism between two torsion-free coherent
sheaves that is generically an isomorphism. If $\shF$ has a singular hermitian metric 
with semi-positive curvature, then so does $\shG$.
\end{proposition}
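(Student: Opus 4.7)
The plan is to transfer the singular hermitian metric from $\shF$ to $\shG$ on the open locus where both sheaves are locally free, and then to extend the resulting metric to the full locally free locus $X(\shG)$ using a Riemann-type extension across codimension two.

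Let $V = X(\shF) \cap X(\shG)$, an open subset of $X(\shG)$. Since $X \setminus X(\shF)$ has codimension at least two in $X$, the complement $X(\shG) \setminus V$ has codimension at least two in $X(\shG)$. On $V$, both $\shF$ and $\shG$ restrict to holomorphic vector bundles of the same rank (because $\phi$ is generically an isomorphism), and $\phi|_V$ is a bundle homomorphism that is an isomorphism on a dense open subset $U \subseteq V$. I would first define a singular hermitian metric $h_\shG$ on $\shG|_V$ via the dual morphism $\phi^{\ast}\colon \shG^{\ast}|_V \to \shF^{\ast}|_V$: for any open $W \subseteq V$ and any local section $g \in H^0(W, \shG^{\ast})$, set
\[
	|g|_{h_\shG^{\ast}} := |\phi^{\ast}(g)|_{h^{\ast}}.
\]
On fibers over $U$, this amounts to transporting $h$ via $\phi_x^{-1}$; on the proper closed subset $V \setminus U$, some nonzero vectors in $\shG_x$ may acquire length $+\infty$ (dually, some nonzero elements of $\shG^{\ast}_x$ acquire length zero), which is permitted for a singular hermitian inner product.

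Then I would verify that $(\shG|_V, h_\shG)$ is a singular hermitian metric with semi-positive curvature. Finiteness and positive definiteness hold on the dense open $U$, hence almost everywhere; measurability of $|g|_{h_\shG^{\ast}}$ follows because the right-hand side is upper semi-continuous (as the length of a holomorphic section under a semi-positive metric). Semi-positive curvature is automatic: $\log |g|_{h_\shG^{\ast}} = \log |\phi^{\ast}(g)|_{h^{\ast}}$ is plurisubharmonic because $\phi^{\ast}(g)$ is a holomorphic section of $\shF^{\ast}$ on $W \subseteq V \subseteq X(\shF)$, and $(\shF|_{X(\shF)}, h)$ has semi-positive curvature by hypothesis.

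Finally, I would extend $h_\shG$ from $V$ to all of $X(\shG)$. For any open $W \subseteq X(\shG)$ and any $g \in H^0(W, \shG^{\ast})$, the plurisubharmonic function $\log |g|_{h_\shG^{\ast}}$ defined on $W \cap V$ extends uniquely to a plurisubharmonic function on $W$ by \lemmaref{lem:Riemann-Hartogs}(a), since $X(\shG) \setminus V$ has codimension at least two in $X(\shG)$. Via the duality formulas of \parref{par:SHIP}, these extended log-norms determine a pointwise singular hermitian inner product on each fiber of $\shG^{\ast}$ over $X(\shG) \setminus V$, and dualizing yields the desired singular hermitian metric on $\shG|_{X(\shG)}$. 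The main obstacle I anticipate is the bookkeeping in this last step: checking that the extended pointwise data really satisfies the axioms of a singular hermitian metric (measurability, the parallelogram law, almost-everywhere finiteness and positive definiteness) on the codimension-two locus $X(\shG) \setminus V$, rather than just assembling an ad hoc family of psh functions. This should reduce to the fact that a singular hermitian inner product with semi-positive curvature is fully determined by the norms of its dual sections, together with standard consistency checks inherited from $V$.
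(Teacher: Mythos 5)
Your proposal is correct and follows essentially the same route as the paper's proof. The key idea in both is to read the metric off from the dual: the dual morphism $\phi^{\ast} \colon \shG^{\ast} \to \shFd$ is injective (because $\phi$ is a generic isomorphism between torsion-free sheaves), so every local section $g$ of $\shG^{\ast}$ becomes a local section of $\shFd$, and $\log\abs{\phi^{\ast}(g)}_{\hd}$ is plurisubharmonic by hypothesis; these psh functions then determine the metric on $X(\shG)$. The only cosmetic difference is that the paper packages the extension across $X(\shG) \setminus X(\shF)$ into the discussion just before the proposition (where it is observed that a section of $\shFd$ over any open $U \subseteq X$ — not just over $U \cap X(\shF)$ — already yields a psh function $\log\abs{f}_{\hd}$ on all of $U$, via \lemmaref{lem:Riemann-Hartogs}), so the proof can simply note that sections of $\shG^{\ast}$ land inside sections of $\shFd$; you re-derive that same Riemann--Hartogs step by hand. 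Your closing concern about reassembling the extended psh dual norms into a pointwise singular hermitian inner product is a legitimate bookkeeping point, but it is not an obstruction: semi-positive curvature forces $\abs{\argbl}_{\hd,x}$ to be finite and $\abs{\argbl}_{h,x}$ to be positive definite at every $x$, and the parallelogram law and almost-everywhere finiteness are inherited from $U$ where $\phi$ is an isomorphism, exactly as you anticipate.
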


\begin{proof}
Let $h$ denote the singular hermitian metric on $\shF$. On the open subset of
$X(\shF) \cap X(\shG)$ where $\phi$ is an isomorphism, $\shG$ clearly acquires a
singular hermitian metric that we also denote by $h$ for simplicity. Because the dual
morphism $\phi^{\ast} \colon \shG^{\ast} \to \shFd$ is injective, the function $\log
\abs{f}_{h^*}$ is plurisubharmonic for every $f \in H^0(U, \shG^{\ast}) \subseteq H^0(U,
\shFd)$.  Consequently, $h$ extends to a singular hermitian metric with semi-positive
curvature on all of $X(\shG)$.
\end{proof}

\begin{example}
If $\shF$ has a singular hermitian metric of semi-positive curvature, then the same
is true for the reflexive hull $\shF^{\ast\ast}$.
\end{example}

\subsection{The minimal extension property}
\label{par:MEP}

The Ohsawa-Takegoshi theorem leads us to consider the following ``minimal extension
property'' for singular hermitian metrics. To keep the statement simple, let us
assume that $X$ is a connected complex manifold of dimension $n$, and denote by $B
\subseteq \CC^n$ the open unit ball.

\begin{definition}
We say that a singular hermitian metric on $\shF$ has the \define{minimal extension
property} if there exists a nowhere dense closed analytic subset $Z \subseteq X$ with
the following two properties:
\begin{enumerate}
\item $\shF$ is locally free on $X \setminus Z$, or equivalently, $X \setminus Z
\subseteq X(\shF)$.
\item For every embedding $\iota \colon B \into X$ with $x =  \iota(0) \in X \setminus
Z$, and every $v \in E_x$ with $\abs{v}_{h,x} = 1$, there is
a holomorphic section $s \in H^0 \bigl( B, \iota^{\ast} \shF \bigr)$ such that
\[
	s(0) = v \quad \text{and} \quad 
	\frac{1}{\mu(B)} \int_B \abs{s}_h^2 \, d\mu \leq 1;
\]
here $(E, h)$ denotes the restriction to the open subset $X(\shF)$.
\end{enumerate}
\end{definition}

The point of the minimal extension property is the ability to extend sections
over the ``bad'' locus $Z$, with good control on the norm of the extension. 
We will see later that pushforwards of adjoint line bundles always have this
property, as a consequence of the Ohsawa-Takegoshi theorem. 

\begin{example}
The minimal extension property rules out certain undesirable examples like the
following. Let $Z \subseteq X$ be a closed analytic subset of codimension $\geq 2$,
and let $\shI_Z \subseteq \OX$ denote the ideal sheaf of $Z$. Then $\shI_Z$ is 
trivial on $X \setminus Z$, and the constant hermitian metric on this trivial
bundle is a singular hermitian metric with semi-positive curvature on $\shI_Z$. But
this metric does not have the minimal extension property, because a holomorphic
function $f \colon B \to \CC$ with $f(0) = 1$ and 
\[
	\frac{1}{\mu(B)} \int_B \abs{f}^2 d\mu \leq 1
\]
must be constant.
\end{example}

\subsection{Pushforwards of adjoint line bundles}

Let $X$ be a complex manifold of dimension $n$, and let $(L, h)$ be a holomorphic
line bundle with a singular hermitian metric of semi-positive curvature. If $X$ is
compact, the space $H^0(X, \omX \tensor L)$ is finite-dimensional, and the formula
\[
	\norm{\beta}_h^2 = \int_X \abs{\beta}_h^2
\]
endows it with a positive definite singular hermitian inner product that is
finite on the subspace $H^0 \bigl( X, \omX \tensor L \tensor \shI(h) \bigr)$. We are
now going to analyze how this construction behaves in families. 

Suppose then that $f \colon X \to Y$ is a projective surjective morphism between two
connected complex manifolds, with $\dim X = n$ and $\dim Y = r$; the general fiber of
$f$ is a projective complex manifold of dimension $n-r$, but there may be singular
fibers. Let $(L, h)$ be a holomorphic line bundle with a singular hermitian metric of
semi-positive curvature on $X$. The following important theorem was essentially
proved by P\u{a}un and Takayama \cite[Theorem~3.3.5]{PT}, building on earlier results
for smooth morphisms by Berndtsson and P\u{a}un \cite{Berndtsson, BP}.

\begin{theorem} \label{thm:pushforward}
Let $f \colon X \to Y$ be a projective surjective morphism between two connected
complex manifolds. If $(L, h)$ is a holomorphic line bundle with a singular
hermitian metric of semi-positive curvature on $X$, then the pushforward sheaf
\[
	\shF = \fl \bigl( \omXY \tensor L \tensor \shI(h) \bigr)
\]
has a canonical singular hermitian metric $H$. This metric has semi-positive
curvature and satisfies the minimal extension property.
\end{theorem}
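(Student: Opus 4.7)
The plan is to construct the metric $H$ on $\shF$ fiberwise as the natural $L^2$-metric on the locus where $f$ is smooth and everything is well-behaved, and to derive both the minimal extension property (MEP) and semi-positive curvature directly from the sharp Ohsawa-Takegoshi theorem (\theoremref{thm:OT}). The key idea is that MEP is essentially an integrated form of Ohsawa-Takegoshi, and that semi-positive curvature will then follow from MEP via Jensen's inequality, bypassing any need to compute curvature tensors explicitly.

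First I would let $Y_0 \subseteq Y$ be the Zariski-open subset consisting of points $y$ over which $f$ is smooth, the restriction $h\restr{X_y}$ is not identically $+\infty$, and $\shF$ is locally free; its complement $Z \subseteq Y$ is a nowhere dense closed analytic subset. On $Y_0$, base change identifies the fiber $\shF_y$ with $H^0 \bigl( X_y, \omega_{X_y} \tensor L_y \tensor \shI(h_y) \bigr)$, and I would define
\[
	\abs{v}_{H,y}^2 = \int_{X_y} \abs{\alpha}_{h_y}^2 = \norm{\alpha}_{h_y}^2,
\]
where $\alpha$ corresponds to $v$. To verify MEP at a point $y_0 \in Y_0$, fix an embedding $\iota \colon B \into Y$ of the unit ball with $\iota(0) = y_0$, and a vector $v \in \shF_{y_0}$ of unit norm corresponding to $\alpha$ with $\norm{\alpha}_{h_{y_0}}^2 = 1$. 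Apply \theoremref{thm:OT} to the restriction of $f$ over $\iota(B) \cong B$ to obtain $\beta \in H^0 \bigl( f^{-1}(\iota(B)), \omX \tensor L \tensor \shI(h) \bigr)$ extending $\alpha \wedge df$ with $\norm{\beta}_h^2 \leq \mu(B)$. Writing $\omX \simeq \omXY \tensor \fu \omega_B$ on the preimage and pushing down produces a section $s \in H^0(B, \iota^{\ast} \shF)$ with $s(0) = v$, and Fubini's theorem yields $\int_B \abs{s}_H^2 \, d\mu = \norm{\beta}_h^2 \leq \mu(B)$, which is exactly the MEP estimate.

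For semi-positive curvature, one needs to show that for every local section $u$ of $\shFd$ on an open $U \subseteq Y$, the function $\log \abs{u}_{\hd}$ is plurisubharmonic. Since this function is automatically upper semi-continuous, it suffices to verify the sub-mean value inequality over balls embedded in $U \cap Y_0$. Given such a ball $\iota \colon B \into Y$ with $\iota(0) = y_0$, and $v \in \shF_{y_0}$ of unit $H$-norm, I would use MEP to produce $s \in H^0(B, \iota^{\ast} \shF)$ with $s(0) = v$ and $\frac{1}{\mu(B)} \int_B \abs{s}_H^2 \, d\mu \leq 1$. Then $u(s)$ is holomorphic on $B$, so $\log\abs{u(s)}$ is plurisubharmonic on $B$; combining the pointwise bound $\abs{u(s)} \leq \abs{u}_{\hd} \cdot \abs{s}_H$ with Jensen's inequality
\[
	\frac{1}{\mu(B)} \int_B \log\abs{s}_H^2 \, d\mu
		\leq \log\left( \frac{1}{\mu(B)} \int_B \abs{s}_H^2 \, d\mu \right) \leq 0
\]
yields $\log\abs{u(\iota(0))(v)} \leq \frac{1}{\mu(B)} \int_B \log\abs{u}_{\hd} \circ \iota \, d\mu$. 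Taking the supremum over $v$ of unit $H$-norm, and then rescaling the embedding, gives the sub-mean value inequality for $\log\abs{u}_{\hd}$ over all balls in $U$, hence plurisubharmonicity.

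The main obstacle I expect is the rigorous identification of $\shF_y$ with the space $H^0 \bigl( X_y, \omega_{X_y} \tensor L_y \tensor \shI(h_y) \bigr)$ for $y \in Y_0$: the multiplier ideal $\shI(h)$ need not be flat over $Y$, so base change for $\fl \bigl( \omXY \tensor L \tensor \shI(h) \bigr)$ is delicate, and one may have to shrink $Y_0$ further or use the Ohsawa-Takegoshi theorem fiberwise to produce the missing sections and certify surjectivity of the restriction map. A secondary subtlety is ensuring that the metric extends across $Z$ to define a bona fide singular hermitian metric on $X(\shF)$; this follows from \lemmaref{lem:Riemann-Hartogs} once the PSH functions $\log\abs{u}_{\hd}$ are known to be locally bounded above, which in turn follows from MEP by selecting finitely many test sections in an auxiliary larger ball to bound pointwise dual norms.
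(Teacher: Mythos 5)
Your overall architecture matches the paper's: construct $H$ fiberwise by $L^2$-integration over the locus where $f$ is submersive and $\shF$ locally free, derive the minimal extension property from the sharp Ohsawa--Takegoshi theorem, derive semi-positivity of curvature from MEP plus Jensen's inequality, and extend across $Z$ via boundedness and Riemann--Hartogs. The MEP step itself is correct and is essentially the paper's \propositionref{prop:minimal-extension}. Your flagged worry about identifying $\shF_y$ with $H^0(X_y,\omega_{X_y}\tensor L_y\tensor \shI(h_y))$ is real: the paper sidesteps it by defining $H$ on the possibly larger space $E_y = \shF\restr{y} \subseteq H^0(X_y,\omega_{X_y}\tensor L_y)$ as a \emph{singular} hermitian inner product, so that vectors outside the multiplier-ideal subspace simply get infinite norm, and notes the identification only holds almost everywhere.

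There are, however, two genuine gaps. First, you assert that $\log\abs{u}_{\hd}$ is ``automatically upper semi-continuous.'' It is not: each individual $\abs{v}_{H,y}$ is only \emph{lower} semi-continuous in $y$, and a supremum of upper semi-continuous quotients need not be upper semi-continuous. Establishing upper semi-continuity of $\psi = \log\abs{u}_{\hd}$ is the content of the paper's \propositionref{prop:USC}, which uses a Montel-type compactness argument together with Fatou's lemma; this is where roughly a third of the paper's proof lives.

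Second, and more seriously, you conclude plurisubharmonicity from the sub-mean-value inequality over balls $B\subseteq\CC^r$. That only gives \emph{subharmonicity}: the function $\Re(zw)$ on $\CC^2$ is harmonic (so satisfies the ball mean-value inequality with equality) but is not plurisubharmonic. To obtain PSH one must verify the mean-value inequality along holomorphic discs $\gamma\colon\Delta\to Y\setminus Z$. The MEP you invoke only produces extensions over $r$-dimensional balls, so it does not directly control restrictions of $\psi$ to one-dimensional discs. The paper's \propositionref{prop:mean-value} handles this by pulling the whole family $f\colon X\to Y$ back along $\gamma$ (legitimate because $f$ is submersive over $Y\setminus Z$), reducing to the case $\dim Y = 1$, and then applying Ohsawa--Takegoshi with one-dimensional base $\Delta$. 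Without this pullback, your argument proves only subharmonicity of $\log\abs{u}_{\hd}$, which is strictly weaker and insufficient for the theorem.
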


The metric in the theorem is uniquely characterized by a simple property that we now
describe. Recall from \eqref{eq:norm-h} and \eqref{eq:h-beta} that any $\beta
\in H^0 \bigl( X, \omX \tensor L \tensor \shI(h) \bigr)$ gives rise to a locally
integrable $(n,n)$-form $\abs{\beta}_h^2$. Any such form can be integrated against compactly
supported smooth functions, and therefore defines a current of bidegree
$(n,n)$ on $X$. If we use brackets to denote the evaluation pairing between
$(n,n)$-currents and compactly supported smooth functions, then
\[
	\bigl\langle \abs{\beta}_h^2, \phi \bigr\rangle 
		= \int_X \phi \cdot \abs{\beta}_h^2.
\]
By the same token, any section $\beta \in H^0(Y, \omY \tensor \shF)$ defines a
current of bidegree $(r,r)$ on $Y$ that we denote by the symbol $\abs{\beta}_H^2$.
Now suppose that 
\[
	\beta \in H^0(U, \omY \tensor \shF) \simeq
		H^0 \bigl( f^{-1}(U), \omX \tensor L \tensor \shI(h) \bigr).
\]
The singular hermitian metric $H$ is uniquely characterized by the condition that
\[
	\bigl\langle \abs{\beta}_H^2, \phi \bigr\rangle
	= \bigl\langle \abs{\beta}_h^2, \fu \phi \bigr\rangle
\]
for every compactly supported smooth function $\phi \in A_c(U)$. Said differently,
$\abs{\beta}_H^2$ is the pushforward of the current $\abs{\beta}_h^2$ under the
proper mapping $f$.

\begin{corollary} \label{cor:pushforward}
In the situation of \theoremref{thm:pushforward}, suppose that the inclusion
\[
	\fl \bigl( \omXY \tensor L \tensor \shI(h) \bigr) \into
		\fl(\omXY \tensor L)
\]
is generically an isomorphism. Then $\fl(\omXY \tensor L)$ also has a singular
hermitian metric with semi-positive curvature and the minimal extension
property.
\end{corollary}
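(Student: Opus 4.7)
The plan is to reduce this to \theoremref{thm:pushforward} together with \propositionref{prop:shF-shG}. Write $\shF = \fl(\omXY \tensor L \tensor \shI(h))$ and $\shG = \fl(\omXY \tensor L)$, and let $\phi \colon \shF \to \shG$ denote the given inclusion. By \theoremref{thm:pushforward}, $\shF$ carries a canonical singular hermitian metric $H_{\shF}$ with semi-positive curvature and the minimal extension property. Since $\phi$ is generically an isomorphism by hypothesis, \propositionref{prop:shF-shG} directly produces a singular hermitian metric $H_{\shG}$ with semi-positive curvature on $\shG$; moreover, on the open subset where $\phi$ is an isomorphism, $H_{\shG}$ agrees with $H_{\shF}$ via $\phi$ (this is how the metric is constructed in the proof of \propositionref{prop:shF-shG}).

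The real content is verifying the minimal extension property for $(\shG, H_{\shG})$. I would choose a nowhere dense closed analytic subset $Z \subseteq Y$ large enough to contain (i) the exceptional locus $Z_{\shF}$ coming from the minimal extension property for $\shF$, (ii) the non-locally-free locus of $\shG$, and (iii) the locus where $\phi$ fails to be an isomorphism. Given an embedding $\iota \colon B \into Y$ of the unit ball with $x = \iota(0) \in Y \setminus Z$, and a vector $v \in \shG_x$ with $\abs{v}_{H_{\shG}, x} = 1$, I would use the fact that $\phi$ is an isomorphism near $x$ to find the unique lift $v' \in \shF_x$ with $\phi(v') = v$, and note that $\abs{v'}_{H_{\shF}, x} = 1$ because the metrics agree there. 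Applying the minimal extension property of $(\shF, H_{\shF})$ produces a section $s \in H^0(B, \iota^{\ast} \shF)$ with $s(0) = v'$ and
\[
	\frac{1}{\mu(B)} \int_B \abs{s}_{H_{\shF}}^2 \, d\mu \leq 1.
\]

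Pushing $s$ forward through $\iota^{\ast} \phi$ yields $t = (\iota^{\ast} \phi)(s) \in H^0(B, \iota^{\ast} \shG)$ with $t(0) = v$. Since the metrics $H_{\shF}$ and $H_{\shG}$ coincide under $\phi$ away from $\iota^{-1}(Z)$, and since $\iota^{-1}(Z)$ is a proper closed analytic subset of $B$ (hence has Lebesgue measure zero), we obtain
\[
	\frac{1}{\mu(B)} \int_B \abs{t}_{H_{\shG}}^2 \, d\mu
	= \frac{1}{\mu(B)} \int_{B \setminus \iota^{-1}(Z)} \abs{s}_{H_{\shF}}^2 \, d\mu
	\leq 1,
\]
which is exactly the desired estimate.

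There is no serious obstacle here: the argument is essentially bookkeeping, once one observes that enlarging $Z$ to cover all three bad loci does no harm (the union is still a nowhere dense analytic subset) and that the difference between $\shF$ and $\shG$ is supported on a measure-zero subset of $Y$, so neither the pointwise comparison of metrics nor the integral estimate is affected. The only subtlety worth flagging is that one must invoke the construction in \propositionref{prop:shF-shG} to know that $H_{\shG}$ really is induced from $H_{\shF}$ on the good locus, rather than being some a priori unrelated metric; without this identification the transfer of the minimal extension property would not be automatic.
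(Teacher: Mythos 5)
Your argument is correct and follows exactly the route the paper takes: invoke \propositionref{prop:shF-shG} to obtain the metric on $\fl(\omXY \tensor L)$, then observe that sections of $\shF$ coming from the minimal extension property are in particular sections of $\shG$ with the same $L^2$-norm, since the metrics coincide outside the measure-zero bad locus. The paper compresses the second half to a single sentence, so your more careful bookkeeping (enlarging $Z$ to cover all three bad loci, lifting $v$ to $v'$, identifying the integrals up to measure zero) is simply a welcome expansion of the same idea.
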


\begin{proof}
The existence of the metric follows from \propositionref{prop:shF-shG}. The minimal
extension property continues to hold because every section of $\fl \bigl( \omXY
\tensor L \tensor \shI(h) \bigr)$ is of course also a section of $\fl(\omXY \tensor L)$.
\end{proof}

\begin{example}
If we apply \theoremref{thm:pushforward} to the identity morphism $\id \colon X \to
X$, we only get a singular hermitian metric on $L \tensor \shI(h)$. To recover the
singular hermitian metric on $L$ that we started from, we can use
\corollaryref{cor:pushforward}.
\end{example}

The proof of \theoremref{thm:pushforward} gives the following additional information
about the singular hermitian metric on $\shF = \fl \bigl( \omXY \tensor L \tensor
\shI(h) \bigr)$ (see the end of \parref{subsec:proof-III}).

\begin{corollary} \label{cor:continuous}
In the situation of \theoremref{thm:pushforward}, suppose that $f \colon X \to Y$ is
submersive and that the singular hermitian metric $h$ on the line bundle $L$ is
continuous. Then the singular hermitian metric $H$ on $\shF$ is also continuous.
\end{corollary}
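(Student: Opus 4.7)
The plan is to exploit the submersive structure of $f$ to rewrite $|s|_H^2$ as an honest fiber integral with a jointly continuous integrand, then pass to the limit by dominated convergence. The proof naturally splits into three steps.

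First, I would derive a pointwise fiber-integration formula for $H$. Fix $y_0 \in Y$ and a local section $s \in H^0(U, \shF)$ over a coordinate chart $U$ with coordinates $y_1, \dotsc, y_r$. Then $s \wedge f^*(dy_1 \wedge \dotsb \wedge dy_r)$ corresponds to a section $\tilde{s} \in H^0 \bigl( f^{-1}(U), \omega_X \tensor L \tensor \shI(h) \bigr)$. Since $f$ is proper and submersive, currents of the form $|\tilde{s}|_h^2$ can be fiber-integrated in the usual sense, so the characterizing relation
\[
\bigl\langle |s \wedge f^*dy|_H^2, \phi \bigr\rangle = \bigl\langle |\tilde{s}|_h^2, f^*\phi \bigr\rangle
\]
unravels into the pointwise formula $|s(y)|_H^2 = \int_{X_y} |\tilde{s}|_{h_y}^2$, where $\tilde{s}$ is regarded, along each fiber, as a holomorphic top-form on $X_y$ with values in $L \restr{X_y}$ (contracted against $df$).

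Second, I would use Ehresmann's fibration theorem: because $f$ is proper and submersive, there is a neighborhood $V$ of $y_0$ and a $C^\infty$-diffeomorphism $\Phi \colon V \times X_{y_0} \to f^{-1}(V)$ respecting the projection to $V$. Under $\Phi$, the fiber integral becomes $|s(y)|_H^2 = \int_{X_{y_0}} G(y, z)\, d\mu(z)$ for a measurable function $G \colon V \times X_{y_0} \to [0, +\infty]$ obtained by pulling back $|\tilde{s}|_h^2$. Continuity of the weight function $\varphi$ of $h$ (which is exactly the hypothesis that $h$ is continuous) together with holomorphicity of $\tilde{s}$ shows that $G$ is jointly continuous on $V \times X_{y_0}$ with values in $[0, +\infty]$.

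Third, I would pass to the limit $y \to y_0$ by a dominated convergence argument. Shrinking $V$ to a relatively compact $V' \Subset V$, the joint continuity of $G$ on $\overline{V'} \times X_{y_0}$ combined with compactness of $X_{y_0}$ gives continuity of $y \mapsto \int_{X_{y_0}} G(y, \cdot)\, d\mu$, provided $G$ is locally bounded. The main obstacle is the case in which the continuous weight $\varphi$ takes the value $-\infty$ at some points, so that $G$ may equal $+\infty$ on a (closed) polar set $P \subseteq X_{y_0}$. To handle this, I would exploit the assumption $\tilde{s} \in \shI(h)$: for each $\eps > 0$, choose an open neighborhood $W$ of $P$ in $X_{y_0}$ with $\int_W G(y_0, \cdot)\, d\mu < \eps$ (possible since $G(y_0, \cdot) \in L^1(X_{y_0})$), and use the joint upper semi-continuity of $G$ together with shrinking $V'$ to ensure $\int_W G(y, \cdot)\, d\mu < 2\eps$ for all $y$ near $y_0$; on the compact complement $X_{y_0} \setminus W$, $G$ is bounded and uniformly continuous, so standard dominated convergence applies. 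Combining these two estimates yields the desired continuity. The unboundedness arising from singular values of $\varphi$ is the only real technicality; the rest of the argument is a routine use of Ehresmann's theorem and fiber integration.
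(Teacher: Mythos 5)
Your proposal follows the same architecture as the paper's proof: localize on the base, pass through the Ehresmann trivialization $f^{-1}(B) \cong B \times X_0$, write $\abs{s}_H^2$ as a fiber integral $\int_{X_0} F(y,\cdot)$ of a jointly continuous integrand $F \colon B \times X_0 \to [0,+\infty]$, and conclude continuity. So far this matches the paper (the paper's \eqref{eq:Ehresmann} and \eqref{eq:fiber-integral} are exactly your first two steps). You go one step beyond the paper by noticing that the integrand may take the value $+\infty$ on a polar set, so that the continuity of the fiber integral is not automatic; that is the right thing to worry about.

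However, the resolution in your third step has a genuine gap. You claim that choosing $W \supseteq P$ with $\int_W G(y_0,\cdot)\,d\mu < \eps$, together with ``joint upper semi-continuity of $G$ and shrinking $V'$,'' ensures $\int_W G(y,\cdot)\,d\mu < 2\eps$ for $y$ near $y_0$. This does not follow: joint continuity of $G$ gives no uniform control on $G(y,\cdot)$ near the polar fiber $P$, precisely because $G(y_0,\cdot) = +\infty$ there, so upper semi-continuity imposes no bound. Mass of $G(y,\cdot)$ can concentrate near $P$ as $y \to y_0$. For instance, on $[0,1]^2$ the function $G(y,x) = x^{-1/2} + y\, x^{-1/2}/(x^2+y^2)$ is jointly continuous with values in $[0,+\infty]$, satisfies $\int_0^1 G(0,x)\,dx = 2 < \infty$, yet $\int_0^1 G(y,x)\,dx$ grows like $c/\sqrt{y}$ as $y \to 0^+$; and no choice of $W$ and $V'$ rescues the estimate. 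So continuity of the fiber integral genuinely needs more than joint continuity plus integrability of the central slice. Lower semi-continuity of $\abs{s}_H$ comes for free from Fatou (this is \propositionref{prop:LSC}); the missing half is upper semi-continuity, and for that one has to exploit the plurisubharmonicity of the weight of $h$ together with the Ohsawa--Takegoshi bound (in the spirit of the arguments in \propositionref{prop:USC} and \propositionref{prop:NS-continuous}), not dominated convergence. It is worth noting that the paper's own proof of \corollaryref{cor:continuous} is quite terse on this exact point and simply asserts continuity of the fiber integral; your proposal is more careful in flagging the issue, but the proposed fix does not close it.
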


The following three sections explain the proof of \theoremref{thm:pushforward}. In a
nutshell, it is an application of the Ohsawa-Takegoshi extension theorem. We
present the argument in three parts that rely on successively stronger versions of
the extension theorem: first the ability to extend sections from a fiber; then the
fact that there is a universal bound on the norm of the extension; and finally the
optimal bound in \theoremref{thm:OT}. 

\subsection{Proof of the pushforward theorem, Part I}
\label{par:pushforward-proof}

Our first goal is to define the singular hermitian metric on
\[
	\shF = \fl \bigl( \omXY \tensor L \tensor \shI(h) \bigr),
\]
and to establish a few basic facts about it. In this part of the proof, we only use
the weakest version of the Ohsawa-Takegoshi extension theorem, namely the ability to
extend sections from a fiber.

The idea is to construct the metric first over a Zariski-open subset $Y \setminus Z$
where everything is nice, and then to extend it over the bad locus $Z$. To begin
with, choose a nowhere dense closed analytic subset $Z \subseteq Y$ with the
following three properties:
\begin{enumerate}
\item The morphism $f$ is submersive over $Y \setminus Z$.
\item Both $\shF$ and the quotient sheaf $\fl(\omXY \tensor L) / \shF$ are locally free
on $Y \setminus Z$.
\item On $Y \setminus Z$, the locally free sheaf $\fl(\omXY \tensor L)$ has the base
change property.
\end{enumerate}

By the base change theorem, the third condition will hold as long as the coherent
sheaves $R^i \fl(\omXY \tensor L)$ are locally free on $Y \setminus Z$. The
restriction of $\shF$ to the open subset $Y \setminus Z$ is a holomorphic vector
bundle $E$ of some rank $r \geq 1$. The second and third condition together guarantee
that
\[
	E_y = \shF \restr{y} \subseteq \fl(\omXY \tensor L) \restr{y} 
	= H^0 \bigl( X_y, \omega_{X_y} \tensor L_y \bigr)
\]
whenever $y \in Y \setminus Z$. As before, $(L_y, h_y)$ denotes the restriction of $(L,
h)$ to the fiber $X_y = f^{-1}(y)$; it may happen that $h_y \equiv +\infty$. The
Ohsawa-Takegoshi theorem gives us the following additional information about $E_y$.

\begin{lemma} \label{lem:Fubini}
For any $y \in Y \setminus Z$, we have inclusions
\[
	H^0 \bigl( X_y, \omega_{X_y} \tensor L_y \tensor \shI(h_y) \bigr)
		\subseteq E_y \subseteq H^0 \bigl( X_y, \omega_{X_y} \tensor L_y \bigr).
\]
% The first inclusion is an equality for almost every $y \in Y \setminus Z$.
\end{lemma}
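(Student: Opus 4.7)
The plan is to split the proof into the two inclusions, which have very different flavors. The right-hand inclusion $E_y \subseteq H^0(X_y, \omega_{X_y} \otimes L_y)$ is essentially built into the choice of $Z$: by condition (3), the locally free sheaf $\fl(\omXY \otimes L)$ has the base change property on $Y \setminus Z$, which gives the identification $\fl(\omXY \otimes L) \restr{y} = H^0(X_y, \omega_{X_y} \otimes L_y)$; and by condition (2), the inclusion $\shF \hookrightarrow \fl(\omXY \otimes L)$ of locally free sheaves remains injective after restriction to $y$, so $E_y$ maps into the right-hand side.

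The substantive part is the left-hand inclusion, and here I would use the Ohsawa-Takegoshi theorem to extend and then descend via $df$. Fix $\alpha \in H^0(X_y, \omega_{X_y} \otimes L_y \otimes \shI(h_y))$; we may assume $h_y \not\equiv +\infty$, for otherwise $\shI(h_y) = 0$ and there is nothing to prove. Choose an open neighborhood $U$ of $y$ in $Y \setminus Z$ biholomorphic to the unit ball $B \subseteq \CC^r$, with $y$ corresponding to $0$; then $f \restr{f^{-1}(U)} \colon f^{-1}(U) \to U \simeq B$ is a projective morphism for which $0$ is a regular value. Apply \theoremref{thm:OT} to $(L,h)$ restricted to $f^{-1}(U)$: it produces a section
\[
	\beta \in H^0 \bigl( f^{-1}(U), \omX \tensor L \tensor \shI(h) \bigr)
\]
with $\beta \restr{X_y} = \alpha \wedge df$, where $df = df_1 \wedge \dotsb \wedge df_r$ for some coordinates on $U$.

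Dividing by $df$ (which is a nowhere-vanishing section of $\fu \omY$ on $f^{-1}(U)$) converts $\beta$ into a section of $\omXY \tensor L \tensor \shI(h)$ on $f^{-1}(U)$, that is, an element of $H^0(U, \shF)$; by construction its restriction to the fiber $X_y$ equals $\alpha$. Because $\shF$ is locally free at $y$, this shows that $\alpha$ lies in the image of $\shF_y \to E_y$, i.e.\ $\alpha \in E_y$. The main obstacle is purely bookkeeping: one has to be careful that the Ohsawa-Takegoshi theorem as stated applies after our shrinking (which it does, since $0$ is a regular value of $f \restr{f^{-1}(U)}$) and that the identification $\omX \simeq \fu \omY \tensor \omXY$, together with the wedge with $df$, matches the desired restriction of the relative form to the fiber; no analytic estimates beyond the mere existence of $\beta$ are needed at this stage.
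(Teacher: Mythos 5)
Your argument is correct and matches the paper's: the right-hand inclusion is just conditions (2) and (3) in the choice of $Z$ (which the paper records immediately before the lemma), and the left-hand inclusion is exactly the application of \theoremref{thm:OT} over a small ball $U \ni y$, identifying $H^0\bigl(f^{-1}(U), \omX \tensor L \tensor \shI(h)\bigr)$ with $H^0(U, \omY \tensor \shF) \simeq H^0(U, \shF)$ via the trivialization of $\omega_Y$ on $U$ (your ``dividing by $df$''). The handling of the degenerate case $h_y \equiv +\infty$ is likewise the same.
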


\begin{proof}
If $h_y \equiv +\infty$, then the subspace of the left is trivial, which means that
the asserted inclusion is true by default. If $h_y$ is not identically equal to
$+\infty$, then given $\alpha \in H^0 \bigl( X_y, \omega_{X_y} \tensor L_y
\tensor \shI(h_y) \bigr)$ and a suitable open neighborhood $U$ of the point $y$, 
there is by \theoremref{thm:OT} some 
\[
	\beta \in H^0 \bigl( U, \omY \tensor \shF \bigr)
		\simeq H^0 \bigl( f^{-1}(U), \omX \tensor L \tensor \shI(h) \bigr)
\]
such that $\beta \restr{X_y} = \alpha \wedge df$. Since $\omY$ is trivial on
$U$, this gives us a section of $\shF$ in a neighborhood of the fiber $X_y$ whose
restriction to $X_y$ agrees with $\alpha$.
\end{proof}

\begin{note}
We will see in a moment that the two subspaces
\[
	H^0 \bigl( X_y, \omega_{X_y} \tensor L_y \tensor \shI(h_y) \bigr) \subseteq E_y
\]
are equal for almost every $y \in Y \setminus Z$. But unless $\shF = 0$, the two
subspaces are different for example at points where $h_y$ is identically equal to
$+\infty$.
\end{note}

We can now define on each $E_y$ with $y \in Y \setminus Z$ a singular hermitian
inner product in the following manner. Given an element
\[
	\alpha \in E_y \subseteq H^0 \bigl( X_y, \omega_{X_y} \tensor L_y \bigr),
\]
we can integrate over the compact complex manifold $X_y$ and define
\[
	\abs{\alpha}_{H,y}^2 = \int_{X_y} \abs{\alpha}_{h_y}^2 
		\, \in \, [0, +\infty].
\]
It is easy to see that $\abs{\argbl}_{H,y}$ is a positive definite singular hermitian
inner product. Clearly $\abs{\alpha}_{H,y} < +\infty$ if and only if $\alpha \in H^0
\bigl( X_y, \omega_{X_y} \tensor L_y \tensor \shI(h_y) \bigr)$; in light of
\lemmaref{lem:Fubini}, our singular hermitian inner product $\abs{\argbl}_{H,y}$ is
therefore finite precisely on the subspace $H^0 \bigl( X_y, \omega_{X_y} \tensor L_y
\tensor \shI(h_y) \bigr) \subseteq E_y$. 

Let us now analyze how the individual singular hermitian inner products
$\abs{\argbl}_{H,y}$ fit together on $Y \setminus Z$. Fix a point $y \in Y \setminus
Z$ and an open neighborhood $U \subseteq Y \setminus Z$ biholomorphic to the open unit
ball $B \subseteq \CC^r$; after pulling everything back to $U$, we may assume without loss of
generality that $Y = B$ and $Z = \emptyset$ and $y = 0$. Denote by $t_1, \dotsc, t_r$
the standard coordinate system on $B$; then the canonical bundle $\omega_B$ is
trivialized by the global section $\dt_1 \wedge \dotsb \wedge \dt_r$, and the volume
form on $B$ is 
\[
	d\mu = c_r (\dt_1 \wedge \dotsb \wedge \dt_r) 
		\wedge (\dtb_1 \wedge \dotsb \wedge \dtb_r).
\]
Fix a holomorphic section $s \in H^0(B, E)$, and denote by 
\[
	\beta = s \wedge (\dt_1 \wedge \dotsb \wedge \dt_r) \in 
		H^0 \bigl( B, \omega_B \tensor E \bigr)
		\simeq H^0 \bigl( X, \omX \tensor L \tensor \shI(h) \bigr)
\]
the corresponding holomorphic $n$-form on $X$ with coefficients in $L$. Since $f
\colon X \to B$ is smooth, Ehresmann's fibration theorem shows that $X$ is
diffeomorphic to the product $B \times X_0$. After choosing a K\"ahler metric
$\omega_0$ on $X_0$, we can write
\begin{equation} \label{eq:Ehresmann}
	\abs{\beta}_h^2 = F \cdot d\mu \wedge \frac{\omega_0^{n-r}}{(n-r)!},
\end{equation}
where $F \colon B \times X_0 \to [0, +\infty]$ is lower semi-continuous and locally
integrable; the reason is of course that the local weight functions for $(L, h)$ are
upper semi-continuous functions. At every point $y \in B$, we then have by construction
\begin{equation} \label{eq:fiber-integral}
	\abs{s(y)}_{H,y}^2 = \int_{X_0} F(y, \argbl) \, \frac{\omega_0^{n-r}}{(n-r)!}.
\end{equation}
By Fubini's theorem, the function $y \mapsto \abs{s(y)}_{H,y}$ is measurable;
moreover, since $F$ is locally integrable and $X_0$ is compact, we must have
$\abs{s(y)}_{H,y} < +\infty$ for almost every $y \in B$. Being coherent, $E$ is
generated over $B$ by a finite number of global sections; the singular hermitian
inner product $\abs{\argbl}_{H,y}$ is therefore finite and positive-definite for
almost every $y \in B$, hence for almost every $y \in Y \setminus Z$. In particular,
the first inclusion in \lemmaref{lem:Fubini} is an equality for almost every $y \in Y
\setminus Z$. We may summarize the conclusion as follows.

\begin{proposition}
On $Y \setminus Z$, the singular hermitian inner products $\abs{\argbl}_{H,y}$
determine a singular hermitian metric on the holomorphic vector bundle $E$.
\end{proposition}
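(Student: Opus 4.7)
The plan is to verify the two conditions in the definition of a singular hermitian metric on the vector bundle $E$ over $Y \setminus Z$. Since both conditions are local on $Y$, I work over a coordinate ball $U \simeq B \subseteq \CC^r$ sitting inside $Y \setminus Z$, on which $E$ is generated by finitely many global sections $s_1, \dotsc, s_N$; this is exactly the local situation already set up in the paragraph preceding the proposition.

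I would first verify that each $\abs{\argbl}_{H,y}$ is indeed a singular hermitian inner product on $E_y$. Homogeneity is inherited directly from $\abs{\argbl}_{h_y}$. The parallelogram law transfers from $\abs{\argbl}_{h_y}$ to the fiber integral by linearity of integration, and subadditivity transfers by Minkowski's inequality in $L^2(X_y)$. Positive-definiteness follows because if $\abs{\alpha}_{H,y}^2 = \int_{X_y} \abs{\alpha}_{h_y}^2 = 0$ for $\alpha \in E_y$, then the nonnegative measurable form $\abs{\alpha}_{h_y}^2$ vanishes almost everywhere on $X_y$; since the local weight $e^{-\varphi}$ of $h$ is strictly positive almost everywhere (the plurisubharmonic $\varphi$ being $< +\infty$ throughout), the coefficient of $\alpha$ in any local trivialization vanishes almost everywhere, forcing $\alpha = 0$ by holomorphy.

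Next I would check measurability and almost-everywhere finiteness. For each generator $s_j$, form $\beta_j = s_j \wedge (\dt_1 \wedge \dotsb \wedge \dt_r)$, a section of $\omX \tensor L \tensor \shI(h)$ over $f^{-1}(B)$. Using the diffeomorphism $f^{-1}(B) \simeq B \times X_0$ from Ehresmann's theorem and a K\"ahler form $\omega_0$ on $X_0$, one writes
\[
	\abs{\beta_j}_h^2 = F_j \cdot d\mu \wedge \frac{\omega_0^{n-r}}{(n-r)!},
\]
where $F_j$ is lower semi-continuous and locally integrable on $B \times X_0$. Fubini then exhibits $\abs{s_j(y)}_{H,y}^2 = \int_{X_0} F_j(y, \argbl) \, \omega_0^{n-r}/(n-r)!$ as a measurable, almost-everywhere finite function of $y$. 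The polarization identity extends measurability from the $s_j$ to any local holomorphic section of $E$, and the a.e.\ finiteness on a finite generating set yields a.e.\ finiteness of $\abs{\argbl}_{H,y}$ on all of $E_y$.

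I expect no real obstacle in this plan. The substantive content — namely that elements of $E_y$ can be genuinely paired against $h_y$ to produce a meaningful $L^2$-quantity — has already been secured by the Ohsawa-Takegoshi argument of the preceding lemma together with the three properties imposed on the bad locus $Z$ (submersion, local freeness, base change). The remaining verifications are a bookkeeping application of Fubini, Minkowski, and standard properties of plurisubharmonic weights.
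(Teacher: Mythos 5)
Your argument follows the paper's own route: you verify the singular-hermitian-inner-product axioms fiberwise (a step the paper dismisses as ``easy to see''), then use Ehresmann's fibration theorem together with Fubini's theorem to obtain measurability and almost-everywhere finiteness, and finite generation of $E$ to conclude. The one small simplification available is that the Ehresmann--Fubini argument already applies verbatim to \emph{any} local holomorphic section $s \in H^0(U,E)$, not only the finitely many generators, so the polarization detour at the end is unnecessary.
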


While we are not yet ready to show that $(E, H)$ has semi-positive curvature, we can
already show that the function $\abs{s}_H$ is always lower semi-continuous.

\begin{proposition} \label{prop:LSC}
For any open subset $U \subseteq Y \setminus Z$ and any section $s \in H^0(U, E)$,
the function $\abs{s}_H \colon U \to [0, +\infty]$ is lower semi-continuous.
\end{proposition}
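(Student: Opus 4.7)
My plan is to reduce to the local picture already set up just before the statement, and then deduce lower semi-continuity from an application of Fatou's lemma to a jointly lower semi-continuous integrand.

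Since the assertion is local on $U$, I can shrink around any point and assume, as in the discussion preceding the proposition, that $U \simeq B$ is the open unit ball in $\CC^r$, that $f \colon X \to B$ is smooth, and that the Ehresmann trivialization $X \simeq B \times X_0$ together with a choice of K\"ahler form $\omega_0$ on $X_0$ yields a representation
\[
	\abs{\beta}_h^2 = F \cdot d\mu \wedge \frac{\omega_0^{n-r}}{(n-r)!}
\]
for the section $\beta = s \wedge (\dt_1 \wedge \dotsb \wedge \dt_r)$, with $\abs{s(y)}_{H,y}^2 = \int_{X_0} F(y, \argbl) \, \omega_0^{n-r}/(n-r)!$ by the fiber integration formula already recorded.

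The first key step is to upgrade the observation that $F$ is locally integrable and lower semi-continuous in $x$ for fixed $y$ to the statement that $F$ is lower semi-continuous \emph{jointly} on $B \times X_0$. In local trivializing charts for $L$, we have $\abs{\beta}_h^2 = \abs{f}^2 e^{-\varphi}$ times a smooth positive volume form, where $\varphi$ is a plurisubharmonic local weight; since $\varphi$ is upper semi-continuous with values in $[-\infty, +\infty)$, the function $e^{-\varphi}$ is lower semi-continuous with values in $[0, +\infty]$. Multiplying by the continuous nonnegative function $\abs{f}^2$ (and using that $\abs{f}^2 e^{-\varphi} = 0$ wherever $f$ vanishes, by the standing convention) gives a lower semi-continuous function on the chart; dividing by the smooth strictly positive factor $d\mu \wedge \omega_0^{n-r}/(n-r)!$ coming from the Ehresmann trivialization preserves lower semi-continuity, so $F$ is lower semi-continuous on $B \times X_0$.

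The second step is a straightforward application of Fatou's lemma. For any sequence $y_k \to y$ in $B$, lower semi-continuity of $F$ gives $\liminf_{k\to\infty} F(y_k, x) \geq F(y, x)$ for every $x \in X_0$. Because $F$ is nonnegative, Fatou's lemma yields
\[
	\abs{s(y)}_{H,y}^2 = \int_{X_0} F(y, \argbl) \frac{\omega_0^{n-r}}{(n-r)!}
		\leq \liminf_{k\to\infty} \int_{X_0} F(y_k, \argbl) \frac{\omega_0^{n-r}}{(n-r)!}
		= \liminf_{k\to\infty} \abs{s(y_k)}_{H,y_k}^2,
\]
which is the desired lower semi-continuity of $\abs{s}_H$.

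The only non-routine point is the joint lower semi-continuity of $F$; everything afterwards is a direct appeal to Fatou. This step is not really difficult, but it is worth writing out carefully, because naive ``semi-continuity in each variable separately'' would not suffice to pass to the limit under the integral sign.
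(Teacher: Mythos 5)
Your proof is correct and takes essentially the same route as the paper: both work in the Ehresmann trivialization, represent $\abs{\beta}_h^2$ via the function $F$ on $B \times X_0$, observe that $F$ is lower semi-continuous because the local weight $\varphi$ of $(L,h)$ is upper semi-continuous, and then apply Fatou's lemma to pass the $\liminf$ under the fiber integral. The only difference is cosmetic: the paper had already asserted joint lower semi-continuity of $F$ in \eqref{eq:Ehresmann}, whereas you spell out the short argument for why this holds — a reasonable thing to make explicit, since, as you note, semi-continuity in each variable separately would not suffice for the Fatou step.
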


\begin{proof}
As before, we may assume that $U = B$ is the open unit ball in $\CC^m$; it is clearly
sufficient to show that $\abs{s}_H$ is lower semi-continuous at the origin. In other
words, we need to argue that
\[
	\abs{s(0)}_{H, 0} \leq \liminf_{k \to +\infty} \, \abs{s(y_k)}_{H, y_k}
\]
holds for every sequence $y_0, y_1, y_2, \dotsc \in B$ that converges to the
origin. As in \eqref{eq:Ehresmann}, the given section $s \in H^0(B, E)$ determines a
lower semi-continuous function $F \colon B \times X_0 \to [0, +\infty]$ such that
\eqref{eq:fiber-integral} is satisfied. By the lower
semi-continuity of $F$ and  Fatou's lemma, we obtain
\begin{align*}
	\int_{X_0} F(0, \argbl) \frac{\omega_0^{n-r}}{(n-r)!}
		&\leq \int_{X_0} \liminf_{k \to +\infty} F(y_k, \argbl)
\frac{\omega_0^{n-r}}{(n-r)!} \\
		&\leq \liminf_{k \to +\infty} 
			\int_{X_0} F(y_k, \argbl) \frac{\omega_0^{n-r}}{(n-r)!},
\end{align*}
which is the desired inequality up to taking square roots.
\end{proof}

\subsection{Proof of the pushforward theorem, Part II}

Having defined $(E, H)$ on the open subset $Y \setminus Z$, our next task is to
say something about the induced singular hermitian metric $\Hd$ on the dual
vector bundle $\Ed$. In particular, we need to prove that the norm of any local
section of $\shFd$ is uniformly bounded in the neighborhood of any point in $Z$, and
that its logarithm is an upper semi-continuous function. This part of the argument
relies on the existence of a uniform bound in the Ohsawa-Takegoshi theorem, but not
on the precise value of the constant. Let us start by reformulating the statement of
the Ohsawa-Takegoshi in terms of the pair $(E, H)$.

\begin{lemma} \label{lem:extension}
For every embedding $\iota \colon B \into Y$ with $y = \iota(0) \in Y \setminus Z$,
and for every $\alpha \in E_y$ with $\abs{\alpha}_{H,y} = 1$, there is a holomorphic
section $s \in H^0(B, \iota^{\ast} \shF)$ with
\[
	s(0) = \alpha \quad \text{and} \quad 
	\int_B \abs{s}_H^2 \, d\mu \leq C_0,
\]
where $C_0$ is the same constant as in the Ohsawa-Takegoshi theorem.
\end{lemma}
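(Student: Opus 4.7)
The plan is to pull back the entire setup via $\iota$, apply the Ohsawa-Takegoshi extension theorem, and translate the resulting $L^2$-estimate on $X$ into one on $B$ by a Fubini argument.

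Since $\iota$ is an embedding between manifolds of equal dimension $r$, it identifies $B$ with an open neighborhood of $y$ in $Y$. Setting $X_B = f^{-1}(\iota(B))$, we obtain a projective morphism $f \colon X_B \to B$ for which $0 \in B$ is a regular value (because $y \in Y \setminus Z$), together with the restriction of $(L,h)$. The hypothesis $\abs{\alpha}_{H,y} = 1 < +\infty$ forces $\alpha$ to lie in the subspace $H^0 \bigl( X_y, \omega_{X_y} \tensor L_y \tensor \shI(h_y) \bigr) \subseteq E_y$ (compare \lemmaref{lem:Fubini}), and by the very definition of the metric $H$ on $Y \setminus Z$ we have $\norm{\alpha}_{h_y}^2 = \abs{\alpha}_{H,y}^2 = 1$.

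Now invoke the Ohsawa-Takegoshi theorem with its uniform constant $C_0$ to produce
\[
    \beta \in H^0 \bigl( X_B, \omX \tensor L \tensor \shI(h) \bigr)
\]
with $\beta \restr{X_y} = \alpha \wedge df$ and $\norm{\beta}_h^2 \leq C_0$. Via the splitting $\omega_{X_B} \simeq \omega_{X_B/B} \tensor f^{\ast} \omega_B$ and the trivialization of $\omega_B$ by $\dt_1 \wedge \dotsb \wedge \dt_r$, the form $\beta$ corresponds to a section $s \in H^0(B, \iota^{\ast} \shF)$, and the restriction condition $\beta \restr{X_y} = \alpha \wedge df$ translates directly into $s(0) = \alpha$. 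Finally, unwinding the definition of $\abs{s(y)}_{H,y}^2$ as a fiberwise integral and applying Fubini (exactly as in the computation after \eqref{eq:fiber-integral}), one obtains
\[
    \int_B \abs{s}_H^2 \, d\mu = \int_{X_B} \abs{\beta}_h^2 = \norm{\beta}_h^2 \leq C_0.
\]

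The substantive input is of course the Ohsawa-Takegoshi theorem; everything else is bookkeeping. The one minor subtlety is that the pointwise norm $\abs{s}_H$ is only defined on $\iota^{-1}(Y \setminus Z) \subseteq B$, but the complement is a proper analytic subset (hence of measure zero), so the Fubini identity above is unaffected.
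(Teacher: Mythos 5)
Your argument is correct and follows the same route as the paper: pull back to $B$, apply Ohsawa--Takegoshi to extend $\alpha$ to a section $\beta$ with $\norm{\beta}_h^2 \le C_0$, then translate $\beta$ into a section $s$ of $\shF$ via the trivialization $\dt_1 \wedge \dotsb \wedge \dt_r$ of $\omega_B$ and observe that $\int_B \abs{s}_H^2\,d\mu = \norm{\beta}_h^2$. The extra remarks about $\abs{\alpha}_{H,y}<+\infty$ forcing $\alpha$ into the $\shI(h_y)$-subspace and about the measure-zero bad set are correct but not strictly needed.
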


\begin{proof}
After pulling everything back to $B$, we may assume that $Y = B$ and $y = 0$. Since
$\abs{\alpha}_{H,0} = 1$, by \theoremref{thm:OT} there exists an element $\beta \in
H^0 \bigl( X, \omX \tensor L \tensor \shI(h) \bigr)$ with
\[
	\beta \restr{X_0} = \alpha \wedge df \quad \text{and} \quad
		\norm{\beta}_h^2 = \int_X \abs{\beta}_h^2 \leq C_0.
\]
In fact, one can take $C_0 = \mu(B)$, but the exact value of the constant is not
important here.  Using $\dt_1 \wedge \dotsb \wedge \dt_r$ as a trivialization of the
canonical bundle $\omega_B$, we may consider $\beta$ as a holomorphic section $s \in
H^0(B, \shF)$; the two conditions from above then turn into
\[
	s(0) = \alpha \quad \text{and} \quad
		\int_B \abs{s}_H^2 \, d\mu \leq C_0,
\]	
due to the fact that $d\mu = c_r (\dt_1 \wedge \dotsb \wedge \dt_r) \wedge (\dtb_1
\wedge \dotsb \wedge \dtb_r)$.
\end{proof}

Fix an open subset $U \subseteq Y$ and a holomorphic section $g \in H^0(U, \shFd)$;
after replacing $Y$ by the open subset $U$, we may assume without loss of generality
that $g \in H^0(Y, \shFd)$. Consider the measurable function
\begin{equation} \label{eq:psi}
	\psi = \log \abs{g}_{\Hd} \colon Y \setminus Z \to [-\infty, +\infty].
\end{equation}
Ultimately, our goal is to show that $\psi$ extends to a plurisubharmonic function on
all of $Y$. The following boundedness result is the crucial step in this direction.

\begin{proposition} \label{prop:boundedness}
Every point in $Y$ has an open neighborhood $U \subseteq Y$ such that $\psi = \log
\abs{g}_{\Hd}$ is bounded from above by a constant on $U \setminus U \cap Z$.
\end{proposition}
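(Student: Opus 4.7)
The plan is to combine the extension Lemma~\ref*{lem:extension} with the mean-value inequality for plurisubharmonic functions. I first fix $y_0 \in Y$ and shrink to a neighborhood $U$ of $y_0$ biholomorphic to a ball in $\CC^r$. Inside $U$ I choose a smaller neighborhood $U' \subset U$ so that for each $y \in U'$ the Euclidean ball $B_y$ of some fixed radius $r_0$ about $y$ is contained in $U$. The aim is to show that $\psi$ is bounded above on $U' \setminus (U' \cap Z)$.

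Given $y \in U' \setminus Z$ and a vector $\alpha \in E_y$ with $\abs{\alpha}_{H,y} = 1$, Lemma~\ref*{lem:extension} (applied to the ball $B_y$) produces a section $s \in H^0(B_y, \shF)$ with $s(y) = \alpha$ and a uniform $L^2$ bound $\int_{B_y} \abs{s}_H^2\, d\mu \leq C_1$, where $C_1$ depends only on $r_0$ and the Ohsawa-Takegoshi constant. The morphism $g$ then yields a holomorphic function $g(s) \in H^0(B_y, \shO_Y)$ with $g(s)(y) = g(y)(\alpha)$. Applying the mean-value inequality to the plurisubharmonic function $\log \abs{g(s)}^2$, using the pointwise bound $\abs{g(s)(w)} \leq \abs{g(w)}_{\Hd,w} \cdot \abs{s(w)}_{H,w}$ on $B_y \setminus Z$, and invoking Jensen's inequality on the $L^2$-bounded factor $\abs{s}_H^2$, I obtain, after taking the supremum over unit vectors $\alpha$, a sub-mean-value estimate
\[
	\psi(y) \leq \frac{1}{\mu(B_y)} \int_{B_y} \psi\, d\mu + C_2,
\]
uniformly in $y \in U' \setminus Z$.

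The main obstacle is to turn this self-referential inequality into an honest local upper bound for $\psi$, since a priori $\psi$ could fail to be locally integrable near $y_0$, making the right-hand side vacuous. To close this gap, I would exploit the coherent structure: near $y_0$ the dual sheaf $\shFd$ embeds into some $\shO_Y^{\oplus n}$ via the dual of a local free presentation $\shO_Y^{\oplus n} \twoheadrightarrow \shF$, so that $g$ is represented by a tuple of holomorphic functions $(g_1, \dotsc, g_n)$ bounded on a compact neighborhood of $y_0$ by some constant $M$. This lets one bypass the circular estimate $\abs{g(s)} \leq \abs{g}_{\Hd}\abs{s}_H$ by instead writing $g(s) = \sum_i g_i \tilde{s}_i$ for a holomorphic lift $\tilde s$ of $s$ to $\shO_Y^{\oplus n}$. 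Together with an auxiliary Ohsawa-Takegoshi estimate for the free sheaf $\shO_Y^{\oplus n}$ controlling the norm of the lift in terms of $\int\abs{s}_H^2 \leq C_1$, this provides a uniform pointwise bound on $g(s)$, and hence on $g(y)(\alpha)$. Taking the supremum over unit $\alpha \in E_y$ then yields the claimed local boundedness of $\psi$. The delicate point where the most care is needed is making the $L^2$ control on the lift $\tilde s$ rigorous; this is precisely where the uniform but not sharp Ohsawa-Takegoshi constant enters.
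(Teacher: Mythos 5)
Your initial setup (shrinking to a ball, applying Lemma~\ref{lem:extension} to produce a section $s$ with $s(y)=\alpha$, $\abs{\alpha}_{H,y}=1$, and a uniform $L^2$ bound $\int\abs{s}_H^2\,d\mu\leq C_1$) matches the paper's, and you correctly diagnose that the mean-value/Jensen estimate $\psi(y)\leq\frac{1}{\mu(B_y)}\int_{B_y}\psi\,d\mu+C_2$ is circular: it presupposes local integrability of $\psi$, which is essentially what you are trying to establish. The problem is that your proposed fix does not close the gap.

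The step that fails is the claim that one can control a lift $\tilde s\in H^0(B_y,\shO_Y^{\oplus n})$ of $s$ in terms of $\int\abs{s}_H^2\,d\mu$. The metric $H$ on $\shF$ is a \emph{singular} metric defined by fiber integration against $h$; there is no comparison between $H$ and any quotient metric induced from the Euclidean metric on $\shO_Y^{\oplus n}$ under a local presentation $\shO_Y^{\oplus n}\onto\shF$, so the $H$-$L^2$ bound on $s$ gives no control whatsoever on the sup norm (or any Euclidean norm) of a lift $\tilde s$. There is also no ``auxiliary Ohsawa-Takegoshi estimate for $\shO_Y^{\oplus n}$'' that would furnish such control: Ohsawa-Takegoshi is about extending sections from submanifolds with bounded weighted $L^2$ norm, not about lifting along surjections of sheaves while comparing two unrelated metrics. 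You flag this as the ``delicate point where the most care is needed''; in fact it is where the argument breaks down.

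The paper's actual proof avoids this by not trying to convert the $H$-$L^2$ bound into a pointwise bound directly on $Y$. Instead, in Lemma~\ref{lem:compactness} it unwinds the condition $\int_V\abs{s}_H^2\,d\mu\leq K$ into the weighted $L^2$ bound $\norm{\beta}_h^2\leq K$ upstairs on $f^{-1}(V)$, where the weight $e^{-\varphi}$ comes from the plurisubharmonic local potentials of $h$ on $L$. Plurisubharmonicity (local boundedness from above of $\varphi$) together with the mean-value inequality for holomorphic functions makes \propositionref{prop:Montel} applicable: the family of such $\beta$'s is normal. The convergence is then transferred back to $H^0(V,\shF)$ via \propositionref{prop:proper-mapping} (agreement of the Fr\'echet topologies), and the continuity of $g\colon\shF\to\OY$ in that topology yields uniform convergence of the $g(s_k)$ on compacts. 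A standard contradiction argument then gives the uniform bound. The essential ingredient your approach is missing is this passage through $X$, which is what converts the singular $H$-$L^2$ control into something exploitable by classical normal-family arguments.
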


\begin{proof}
Choose two sufficiently small open neighborhoods $U \subseteq V \subseteq Y$ of the
given point, such that $\wbar{V}$ is compact, $\wbar{U} \subseteq V$, and for every
point $y \in \wbar{U}$, there is an embedding $\iota \colon B \into Y$ of the unit ball $B
\subseteq \CC^r$ with $\iota(0) = y$ and $ \iota(B) \subseteq V$. We shall argue that
there is a constant $C \geq 0$ such that $\psi \leq C$ on $U \setminus U \cap Z$.

Fix a point $y \in \wbar{U} \setminus Z$. If $\psi(y) = -\infty$, there is nothing to
prove, so let us suppose from now on that $\psi(y) \neq -\infty$. By definition of
the metric on the dual bundle, we can then find a vector $\alpha \in E_y$ with
$\abs{\alpha}_{H,y} = 1$ such that 
\[
	\psi(y) = \log \bigabs{g(\alpha)}.
\]
Choose an embedding $\iota \colon B \into Y$ such that $\iota(0) = y$ and $ \iota(B) \subseteq V$. 
Using \lemmaref{lem:extension}, we obtain a holomorphic section
$s \in H^0(V, \shF)$ with $s(0) = \alpha$ and
\[
	\int_V \abs{s}_H^2 \, d\mu \leq C_0;
\]
the integrand is of course only defined on the subset $V \setminus V \cap Z$, but
this does not matter because $V \cap Z$ has measure zero. It follows that $\psi(y)$
is equal to the value of $\log \abs{g(s)}$ at the point $y$, and so the desired upper
bound for $\psi$ is a consequence of
\lemmaref{lem:compactness} below.
\end{proof}

\begin{lemma} \label{lem:compactness}
Fix $K \geq 0$, and consider the set
\[
	S_K = \MENGE{s \in H^0(V, \shF)}{\int_V \abs{s}_H^2 \, d\mu \leq K}.
\]
There is a constant $C \geq 0$ such that, for every section $s \in S_K$, the
holomorphic function $g(s)$ is uniformly bounded by $C$ on the compact set $\wbar{U}$.
\end{lemma}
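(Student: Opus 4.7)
The plan is to reduce the lemma to the Montel-type compactness result of Proposition~\ref{prop:Montel}, via the Fréchet-space machinery of \parref{par:SHIP} (really, \S on coherent sheaves and Fréchet spaces). Since $g \colon \shF \to \shO_Y$ is a morphism of coherent sheaves, it induces a continuous linear map $H^0(V, \shF) \to H^0(V, \shO_Y)$, and the Fréchet topology on $H^0(V, \shO_Y)$ is uniform convergence on compacts. So if I can show that $S_K$ is a bounded subset of the Fréchet space $H^0(V, \shF)$, then $\{g(s) : s \in S_K\}$ is bounded in $H^0(V, \shO_Y)$, hence uniformly bounded on the compact set $\wbar{U}$.

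To bound $S_K$ in the Fréchet topology, I use \propositionref{prop:proper-mapping} to identify $H^0(V, \shF)$ topologically with $H^0 \bigl( f^{-1}(V), \omXY \tensor L \tensor \shI(h) \bigr)$. After shrinking $V$ if needed I may assume $V$ is a coordinate ball with coordinates $(t_1, \dotsc, t_r)$, and then trivializing $\omega_V$ via $\dt_1 \wedge \dotsb \wedge \dt_r$ converts each $s \in S_K$ into a section $\beta \in H^0 \bigl( f^{-1}(V), \omX \tensor L \tensor \shI(h) \bigr)$ satisfying
\[
\int_{f^{-1}(V)} \abs{\beta}_h^2 \;=\; \int_V \abs{s}_H^2 \, d\mu \;\leq\; K
\]
by the Fubini computation already carried out in the construction of $H$.

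To see that such $\beta$ are uniformly bounded on compact subsets of $f^{-1}(V)$, cover $f^{-1}(V)$ by countably many open coordinate patches $W_\alpha \subseteq X$ on each of which $L$ is trivialized and local holomorphic coordinates $(z_1^\alpha, \dotsc, z_n^\alpha)$ are fixed. Under these trivializations, $\beta$ is represented on $W_\alpha$ by a holomorphic function $F_\alpha$, and $\abs{\beta}_h^2$ takes the local form $\abs{F_\alpha}^2 e^{-\varphi_\alpha}$ times the Euclidean volume form, where $\varphi_\alpha$ is the plurisubharmonic weight of $h$ in the chosen trivialization. The global bound gives
\[
\int_{W_\alpha} \abs{F_\alpha}^2 e^{-\varphi_\alpha} \, d\mu \;\leq\; K
\]
for every $\alpha$, so by \propositionref{prop:Montel} the functions $F_\alpha$ coming from elements of $S_K$ form a family that is uniformly bounded on every compact subset of $W_\alpha$. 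Patching the $W_\alpha$ together over a relatively compact exhaustion of $f^{-1}(V)$ yields boundedness of $\{\beta : s \in S_K\}$ in the Fréchet topology of $H^0 \bigl( f^{-1}(V), \omX \tensor L \tensor \shI(h) \bigr)$, which is exactly what we need.

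The main obstacle is essentially the bookkeeping in the last step: one has to choose the covers $\{W_\alpha\}$ and relate the $L^2$ bound coming from the metric $H$ on $\shF$ to weighted $L^2$ bounds for holomorphic functions in the sense of \propositionref{prop:Montel}. Once the trivializations and the identification via $\dt_1 \wedge \dotsb \wedge \dt_r$ are set up cleanly, everything reduces to the single-variable (rather, multi-variable Euclidean) Montel estimate, and no further analytic input --- in particular, no Ohsawa–Takegoshi theorem --- is required for this lemma.
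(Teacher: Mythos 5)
Your proof is correct and uses the same essential ingredients as the paper's: the translation from $s \in S_K$ to an $L^2$-bounded section $\beta$ of $\omX \tensor L \tensor \shI(h)$, the local Montel estimate of \propositionref{prop:Montel}, and the Fr\'echet topology identification from \propositionref{prop:proper-mapping} together with continuity of the map induced by $g$. The only difference is stylistic: you phrase the conclusion directly via the fact that a continuous linear map of Fr\'echet spaces sends bounded sets to bounded sets, whereas the paper runs a sequential proof by contradiction extracting a convergent subsequence; your version is a clean reformulation of the same argument.
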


\begin{proof}
Since $g(s)$ is holomorphic on $V$, it is clear that each individual function $g(s)$
is bounded on $\wbar{U}$. To get an upper bound that works for every $s \in
S_K$ at once, we use a compactness argument. 
Given a section $s \in H^0(V, \shF)$, we invert the process from above and define
\[
	\beta = s \tensor \bigl( \dt_1 \wedge \dotsb \wedge \dt_r \bigr)
		\in H^0(V, \omY \tensor \shF) 
		= H^0 \bigl( f^{-1}(V), \omX \tensor L \tensor \shI(h) \bigr).
\]
If $s \in S_K$, then one has
\[
	\norm{\beta}_h^2 = \int_V \abs{s}_H^2 \, d\mu \leq K.
\]
Because $\wbar{V}$ is compact and $f$ is proper, we can cover $f^{-1}(V)$ by
finitely many open sets $W$ that are biholomorphic to the open unit ball in $\CC^n$,
and on which $L$ is trivial. Let $z_1, \dotsc, z_n$ be a holomorphic coordinate
system on $W$, choose a nowhere vanishing holomorphic section $s_0 \in H^0(W, L)$,
and write $\abs{s_0}_h^2 = e^{-\varphi}$, with $\varphi$ plurisubharmonic on $W$. Then
$\beta \restr{W} = b s_0 \tensor \dz_1 \wedge \dotsb \wedge \dz_n$ for some
holomorphic function $b \in H^0(W, \shO_W)$, and 
\[
	\int_W \abs{b}^2 e^{-\varphi} (\dx_1 \wedge \dy_1) \wedge \dotsb \wedge (\dx_n \wedge \dy_n)
	= \int_W \abs{\beta}_h^2 \leq K.
\]	
As we are dealing with finitely many open sets,
\propositionref{prop:Montel} shows that every sequence in $S_K$ has a subsequence that
converges uniformly on compact subsets to some $\beta \in H^0 \bigl( f^{-1}(V), \omX
\tensor L \tensor \shI(h) \bigr)$. This is all that we need.

Indeed, suppose that the assertion was false. Then we could find a sequence $s_0, s_1,
s_2, \dotsc \in S_K$ such that the maximum value of $\abs{g(s_k)}$ on the compact set
$\wbar{U}$ was at least $k$. Let $\beta_0, \beta_1, \beta_2, \dotsc$ denote the
corresponding sequence of holomorphic sections of $\omX \tensor L \tensor \shI(h)$ on
the open set $f^{-1}(V)$; after passing to a subsequence, the $\beta_k$ will converge
uniformly on compact subsets to $\beta \in H^0 \bigl( f^{-1}(V), \omX \tensor L
\tensor \shI(h) \bigr)$. Let $s \in H^0(V, \shF)$ be the unique section of $\shF$
such that 
\[
	\beta = s \tensor \bigl( \dt_1 \wedge \dotsb \wedge \dt_r \bigr).
\]
By \propositionref{prop:proper-mapping}, the $s_k$ converge to $s$ in the
Fr\'echet space topology on $H^0(V, \shF)$. Since $g \colon \shF \to \OY$ is a
morphism, the holomorphic functions $g(s_k)$ therefore converge uniformly on
compact subsets to $g(s)$. But then $\abs{g(s_k)}$ must be
uniformly bounded on $\wbar{U}$, contradicting our initial choice.
\end{proof}

The next step is to show that the function $\psi = \log \abs{g}_{\Hd}$ is upper
semi-continuous on $Y \setminus Z$. The proof is similar to that of
\propositionref{prop:LSC}.

\begin{proposition} \label{prop:USC}
For every $g \in H^0(Y, \shFd)$, the function $\psi = \log \abs{g}_{\Hd}$ is upper
semi-continuous on $Y \setminus Z$.
\end{proposition}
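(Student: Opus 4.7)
The plan is to adapt the sequential compactness strategy from the proofs of \propositionref{prop:boundedness} and \propositionref{prop:LSC}, combining the minimal extension property (via \lemmaref{lem:extension}) with Fréchet-space compactness. Fix $y_0 \in Y \setminus Z$ and a sequence $y_k \to y_0$ in $Y \setminus Z$, and by passing to a subsequence assume that $\psi(y_k) \to L$. By \propositionref{prop:boundedness}, $L < +\infty$; if $L = -\infty$ there is nothing to prove, so assume $L \in \RR$. For each $k$, choose a witness $\alpha_k \in E_{y_k}$ with $\abs{\alpha_k}_{H, y_k} = 1$ and $\bigabs{g(\alpha_k)} \geq e^{\psi(y_k) - 1/k}$; such $\alpha_k$ exists by the definition of the dual norm, and $\bigabs{g(\alpha_k)} \to e^L$.

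Next, fix $\rho > 0$ small enough that $B_{2\rho}(y_0) \Subset Y \setminus Z$, and set $\eps_k = \abs{y_k - y_0}$. Applying \lemmaref{lem:extension} at $y_k$ on the ball $B_{\rho + \eps_k}(y_k)$ (which contains $B_\rho(y_0)$ for all $k$), we obtain sections $s_k \in H^0(B_{\rho + \eps_k}(y_k), \shF)$ with $s_k(y_k) = \alpha_k$ and $\int \abs{s_k}_H^2 \, d\mu \leq \mu(B_{\rho + \eps_k})$. Exactly as in \lemmaref{lem:compactness}, Fréchet-space compactness (via \propositionref{prop:Montel}) lets us extract a subsequence with $s_k \to s$ in $H^0(B_\rho(y_0), \shF)$; continuity of $g$ then gives $g(s_k)(y_k) = g(\alpha_k) \to g(s)(y_0)$, so $\bigabs{g(s)(y_0)} \geq e^L$. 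Fatou's lemma on $B_\rho(y_0)$ yields
\[
	\frac{1}{\mu(B_\rho)} \int_{B_\rho(y_0)} \abs{s}_H^2 \, d\mu \leq 1.
\]

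To close the argument one needs the pointwise bound $\abs{s(y_0)}_{H, y_0} \leq 1$, which combined with $\abs{g(s)(y_0)} \geq e^L$ and the inequality $\bigabs{g(s)(y_0)} \leq e^{\psi(y_0)} \abs{s(y_0)}_{H, y_0}$ gives $\psi(y_0) \geq L$ as desired. The lower semi-continuity of $\abs{s}_H^2$ from \propositionref{prop:LSC} converts the $L^2$ bound into a pointwise bound: for any $\delta > 0$ there is a neighborhood $U$ of $y_0$ on which $\abs{s(y)}_{H, y}^2 \geq \abs{s(y_0)}_{H, y_0}^2 - \delta$, and shrinking $\rho$ so that $B_\rho(y_0) \subseteq U$ produces $\abs{s(y_0)}_{H, y_0}^2 \leq 1 + \delta$. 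Carrying out this construction for a sequence of shrinking radii $\rho_m \to 0$ and invoking a diagonal extraction (the finite-dimensional vector space $E_{y_0}$ lets us pass to a limit of the values $s^{(\rho_m)}(y_0)$) produces a single vector $\alpha_\infty \in E_{y_0}$ with $\abs{g(\alpha_\infty)} \geq e^L$ and $\abs{\alpha_\infty}_{H, y_0} \leq 1$, completing the proof.

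The main obstacle is the pointwise bound in the final step: the $L^2$ bound $\frac{1}{\mu(B_\rho)}\int \abs{s}_H^2 \, d\mu \leq 1$ does not itself imply $\abs{s(y_0)}_{H, y_0} \leq 1$ because the limit section $s$ depends on $\rho$, and the lower semi-continuity neighborhood in which $\abs{s}_H^2$ stays close to its value at $y_0$ also depends on $s$. Executing the diagonal argument cleanly, and using crucially the sharp constant $\mu(B)$ in the Ohsawa-Takegoshi theorem (\theoremref{thm:OT}) rather than the weaker constant $C_0$, is what makes the reduction work without losing a multiplicative factor.
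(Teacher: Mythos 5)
The proposal shares the opening moves with the paper's proof (choose unit-norm witnesses $\alpha_k$ at $y_k$, extend via Ohsawa-Takegoshi, extract a locally uniform limit $s$ by Montel, and reduce to showing $\abs{s(y_0)}_{H,y_0}\le 1$), but it then departs and the final step has a genuine gap.

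The paper establishes the pointwise bound $\abs{s(y_0)}_{H,y_0}\le 1$ by applying Fatou's lemma \emph{fiberwise}, not over the base: it uses that $\abs{s_k(y_k)}_{H,y_k}^2 = \int_{X_0} F_k(y_k, \cdot)\,\omega_0^{n-r}/(n-r)! = 1$ and that $F(0,\cdot)\le\liminf_k F_k(y_k,\cdot)$ (from lower semi-continuity of the weight $e^{-\varphi}$ of $(L,h)$ together with locally uniform convergence $s_k\to s$), so Fatou on the fiber $X_0$ gives $\abs{s(y_0)}_{H,y_0}^2\le 1$ directly. You instead work over the base and obtain $\frac{1}{\mu(B_\rho)}\int_{B_\rho(y_0)}\abs{s}_H^2\,d\mu\le 1$, and then try to upgrade to a pointwise bound at $y_0$. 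That upgrade does not go through: lower semi-continuity of $\abs{s}_H^2$ gives a neighborhood $U_\delta$ (depending on $s$) where $\abs{s}_H^2\ge\abs{s(y_0)}_{H,y_0}^2-\delta$, but nothing forces $B_\rho(y_0)\subseteq U_\delta(s^{(\rho)})$ since the limit section $s^{(\rho)}$ changes with $\rho$. The proposed diagonal extraction on the vectors $s^{(\rho_m)}(y_0)\in E_{y_0}$ does not rescue this: you have no a priori bound of any kind on $\abs{s^{(\rho_m)}(y_0)}_{H,y_0}$, so extracting a convergent subsequence in the finite-dimensional space does not yield $\abs{\alpha_\infty}_{H,y_0}\le 1$ (nor even a convergent subsequence, since boundedness in $E_{y_0}$ is itself unestablished without a uniform $\abs{\cdot}_{H,y_0}$ bound or a separate argument). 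The missing idea is precisely the fiber integral representation of the pointwise norm and Fatou applied on the fiber.

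One further remark: your argument leans on the sharp Ohsawa-Takegoshi constant $\mu(B)$, and you say this is "crucial" here. It is not: the paper proves upper semi-continuity using only the existence of some uniform bound $K$ for $\int_B\abs{s_k}_H^2\,d\mu$ (the non-sharp constant $C_0$ from \lemmaref{lem:extension} suffices). The sharp constant is only needed later, for the mean-value inequality in \propositionref{prop:mean-value}.
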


\begin{proof}
After restricting everything to a suitable open neighborhood of any given point $y
\in Y \setminus Z$, we may assume without loss of generality that $Y = B$ and $Z =
\emptyset$ and $y = 0$. Then $g \in H^0(B, \Ed)$, and it will be enough to show that
$\psi = \log \abs{g}_{\Hd}$ is upper semi-continuous at the origin. In other
words, we need to argue that
\begin{equation} \label{eq:limsup}
	\limsup_{k \to +\infty} \psi(y_k) \leq \psi(0)
\end{equation}
for every sequence $y_0, y_1, y_2, \dotsc \in B$ that converges to the origin. We may assume that $\psi(y_k) \neq -\infty$ for all $k \in \NN$,
and that the sequence $\psi(y_k)$ actually has a limit. 

As we saw before, there is, for each $k \in \NN$, a holomorphic section $s_k \in
H^0(B, E)$ such that $\psi(y_k)$ equals the value of $\log \abs{g(s_k)}$ at the point
$y_k$; the Ohsawa-Takegoshi theorem allows us to choose these sections in such a way that
\[
	\abs{s_k(y_k)}_{H, y_k} = 1 \quad \text{and} \quad
		\int_B \abs{s_k}_H \, d\mu \leq K
\]
for some constant $K \geq 0$. Passing to a subsequence, if necessary, we can arrange
that the $s_k$ converge uniformly on compact subsets to some $s \in H^0(B, E)$. Then
the holomorphic functions $g(s_k)$ converge uniformly on compact subsets to $g(s)$,
and \eqref{eq:limsup} reduces to showing that the value at the origin of $\log
\abs{g(s)}$ is less or equal to $\psi(0)$. By definition of the dual metric $\Hd$, we
have
\[
	\psi \geq \log \abs{g(s)} - \log \abs{s}_H,
\]
and so this is equivalent to proving that $\abs{s(0)}_{H, 0} \leq 1$. 
As in \eqref{eq:Ehresmann} and \eqref{eq:fiber-integral}, each $s_k$ determines a lower
semi-continuous function $F_k \colon B \times X_0 \to [0, +\infty]$ with
\[
	1 = \abs{s_k(y_k)}_{H,y_k}^2 
		= \int_{X_0} F_k(y_k, \argbl) \frac{\omega_0^{n-r}}{(n-r)!}.
\]
Likewise, $s$ determines a lower semi-continuous function $F \colon B \times X_0 \to [0,
+\infty]$. Since the local weight functions $e^{-\varphi}$ of the pair
$(L, h)$ are lower semi-continuous, and since $s_k$ converges uniformly on compact
subsets to $s$, we get
\[
	F(0, \argbl) \leq \liminf_{k \to +\infty} F_k(y_k, \argbl).
\]	
We can now apply Fatou's lemma and conclude the proof in the same way as in
\propositionref{prop:LSC}.
\end{proof}

\subsection{Proof of the pushforward theorem, Part III}
\label{subsec:proof-III}

In this section, we complete the proof of \theoremref{thm:pushforward} by showing
that the pair $(E, H)$ has semi-positive curvature, and that $H$ extends to a
singular hermitian metric on $\shF$ with the minimal extension property. The key
point is that we can prove the required mean-value inequalities because the optimal
value of the constant in the Ohsawa-Takegoshi theorem is exactly the volume of the
unit ball. To illustrate how this works, let us first show that the singular
hermitian metric $H$ on $Y \setminus Z$ has the minimal extension property (see
\parref{par:MEP}). For the statement, recall that $r = \dim Y$, and that $B \subseteq
\CC^r$ is the open unit ball.

\begin{proposition} \label{prop:minimal-extension}
For every embedding $\iota \colon B \into Y$ with $y = \iota(0) \in Y \setminus Z$,
and for every $\alpha \in E_y$ with $\abs{\alpha}_{H,y} = 1$, there is a holomorphic
section $s \in H^0(B, \iota^{\ast} \shF)$ with
\[
	s(0) = \alpha \quad \text{and} \quad 
	\frac{1}{\mu(B)} \int_B \abs{s}_H^2 \, d\mu \leq 1.
\]
\end{proposition}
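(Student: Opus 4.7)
The plan is to realize this proposition as essentially a reformulation of the sharp Ohsawa-Takegoshi extension theorem (\theoremref{thm:OT}), using the same passage between sections of $\shF$ and holomorphic $n$-forms with values in $L$ that appeared in \lemmaref{lem:extension}. The key point is that the constant $\mu(B)$ in the sharp Ohsawa-Takegoshi theorem is exactly what is needed to get the mean-value inequality with constant $1$.

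First, I would pull everything back to $B$ along $\iota$ and reduce to the case $Y = B$, $y = 0$. Write $X_0 = f^{-1}(0)$ and let $\alpha \in E_0 \subseteq H^0(X_0, \omega_{X_0} \otimes L_0)$ be the given vector; by construction of $H$, the normalization $\abs{\alpha}_{H,0} = 1$ means that
\[
\norm{\alpha}_{h_0}^2 = \int_{X_0} \abs{\alpha}_{h_0}^2 = 1.
\]
In particular $h_0 \not\equiv +\infty$, and $\alpha \in H^0(X_0, \omega_{X_0} \otimes L_0 \otimes \shI(h_0))$, so \theoremref{thm:OT} applies.

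The theorem produces $\beta \in H^0(X, \omega_X \otimes L \otimes \shI(h))$ with $\beta\restr{X_0} = \alpha \wedge df$ and
\[
\norm{\beta}_h^2 \leq \mu(B) \cdot \norm{\alpha}_{h_0}^2 = \mu(B).
\]
Using the trivialization $\dt_1 \wedge \dotsb \wedge \dt_r$ of $\omega_B$, write $\beta = s \otimes (\dt_1 \wedge \dotsb \wedge \dt_r)$ for a unique $s \in H^0(B, \shF)$; the restriction condition becomes $s(0) = \alpha$, since $\beta\restr{X_0} = \alpha \wedge df = s(0) \wedge df$.

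Finally, I would compare the two norms. By the defining property of $H$ (recalled just before \corollaryref{cor:pushforward}), the current $\abs{\beta}_H^2$ on $B$ is the pushforward under $f$ of the current $\abs{\beta}_h^2$, which on $B \setminus Z$ just reproduces the fiberwise definition of $\abs{\argbl}_{H,y}$. Since $d\mu = c_r (\dt_1 \wedge \dotsb \wedge \dt_r) \wedge (\dtb_1 \wedge \dotsb \wedge \dtb_r)$, unwinding the definitions gives
\[
\int_B \abs{s}_H^2 \, d\mu = \int_X \abs{\beta}_h^2 = \norm{\beta}_h^2 \leq \mu(B),
\]
which is exactly the required estimate. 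There is essentially no main obstacle: the entire content has been front-loaded into the sharp constant in \theoremref{thm:OT} and into the fact that the pushforward metric $H$ was defined precisely as the fiber integral, so the argument is bookkeeping that matches norms on $X$ with norms on $B$ through the isomorphism $H^0(B, \omega_B \otimes \shF) \simeq H^0(X, \omega_X \otimes L \otimes \shI(h))$.
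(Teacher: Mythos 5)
Your proof is correct and follows exactly the same route as the paper: the paper simply says the argument is that of \lemmaref{lem:extension} with $C_0$ replaced by the sharp constant $\mu(B)$, which is precisely what you have written out. The extra bookkeeping you include (checking $\norm{\alpha}_{h_0}^2 = 1$, hence $h_0 \not\equiv +\infty$ and $\alpha \in H^0(X_0, \omega_{X_0} \otimes L_0 \otimes \shI(h_0))$ so that \theoremref{thm:OT} applies) is correct and only makes explicit what the paper leaves implicit.
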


\begin{proof}
The proof is the same as that of \lemmaref{lem:extension}; we only need to replace
the constant $C_0$ by its optimal value $\mu(B)$.
\end{proof}

Now let us prove that $H$ extends to a singular hermitian metric on $\shF$ with
semi-positive curvature. Keeping the notation from above, this amounts to proving
that the function $\psi \colon Y \setminus Z \to [-\infty, +\infty)$ in
\eqref{eq:psi} extends to a plurisubharmonic function on $Y$. We already know
that $\psi$ is upper semi-continuous (by \propositionref{prop:USC}) and bounded from
above in a neighborhood of every point in $Y$ (by \propositionref{prop:boundedness}).
What we need to prove is the mean-value inequality along holomorphic arcs in $Y
\setminus Z$.  The Ohsawa-Takegoshi theorem with sharp estimates
renders the proof of the mean-value inequality almost a triviality.

\begin{proposition} \label{prop:mean-value}
For every holomorphic mapping $\gamma \colon \Delta \to Y \setminus Z$, the function
$\psi = \log \abs{g}_{\Hd}$ satisfies the mean-value inequality
\[
	(\psi \circ \gamma)(0) 
		\leq \frac{1}{\pi} \int_{\Delta} (\psi \circ \gamma) \, d\mu.
\]
\end{proposition}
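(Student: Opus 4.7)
The plan is to reduce the mean-value inequality to the subharmonicity of a holomorphic function on the disk, using the sharp Ohsawa-Takegoshi extension theorem to produce, for each point $\gamma(0)$, a section of $\shF$ that ``achieves'' the length $\psi(\gamma(0))$ with controlled $L^2$-norm along $\gamma(\Delta)$.

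First I would dispense with the trivial case. If $(\psi \circ \gamma)(0) = -\infty$, there is nothing to prove, so assume $\psi(\gamma(0))$ is finite. Set $y_0 = \gamma(0) \in Y \setminus Z$. By \propositionref{prop:boundedness} the function $\psi \circ \gamma$ is bounded above on $\Delta$, so the right-hand side integral makes sense and is finite. Since $y_0 \in Y \setminus Z$, the hermitian inner product $\abs{\argbl}_{H, y_0}$ on the finite-dimensional space $E_{y_0}$ is genuine (finite and positive definite), so the supremum defining $\abs{g}_{\Hd, y_0}$ is attained by some $\alpha \in E_{y_0}$ with $\abs{\alpha}_{H, y_0} = 1$ and
\[
	\log \bigabs{g(\alpha)} = \psi(y_0).
\]

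Next I would extend $\alpha$ to a holomorphic section over $\Delta$ with sharp $L^2$-bound. Form the base change $f_{\Delta} \colon X_{\Delta} = X \times_Y \Delta \to \Delta$ via $\gamma$. Since $\gamma(\Delta) \subseteq Y \setminus Z$ and $f$ is submersive on $Y \setminus Z$, the morphism $f_{\Delta}$ is projective and smooth, with central fiber $X_{y_0}$. Pulling back $(L, h)$ along $X_{\Delta} \to X$ gives a line bundle with a singular hermitian metric of semi-positive curvature, and $\alpha \wedge df$ is a section of $\omega_{X_{y_0}} \tensor L_{y_0} \tensor \shI(h_{y_0})$ (by \lemmaref{lem:Fubini} applied at $y_0$). \theoremref{thm:OT} with sharp constant produces an extension to $X_{\Delta}$, which corresponds to a holomorphic section $s \in H^0(\Delta, \gamma^{\ast}\shF)$ with
\[
	s(0) = \alpha \quad \text{and} \quad
		\frac{1}{\pi} \int_{\Delta} \abs{s}_H^2 \, d\mu \leq \abs{\alpha}_{H, y_0}^2 = 1,
\]
since $\mu(\Delta) = \pi$. (This is exactly the content of the minimal extension property, \propositionref{prop:minimal-extension}, although $\gamma$ itself may fail to be an embedding.)

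Finally, I would combine three ingredients to deduce the inequality. The holomorphic function $g(s)$ on $\Delta$ satisfies $g(s)(0) = g(\alpha)$, and $\log \abs{g(s)}$ is subharmonic, so
\[
	\psi(y_0) = \log\bigabs{g(s)(0)}
		\leq \frac{1}{\pi} \int_{\Delta} \log\abs{g(s)} \, d\mu.
\]
Pointwise on $\Delta \setminus \{g(s) = 0\}$ the defining inequality $\abs{g(s)} \leq \abs{g}_{\Hd} \cdot \abs{s}_H$ gives
\[
	\log\abs{g(s)} \leq (\psi \circ \gamma) + \log \abs{s}_H,
\]
and by the concavity of $\log$ (Jensen's inequality applied to $\tfrac{1}{2}\log(\abs{s}_H^2)$),
\[
	\frac{1}{\pi} \int_{\Delta} \log \abs{s}_H \, d\mu
		\leq \frac{1}{2} \log\!\left( \frac{1}{\pi} \int_{\Delta} \abs{s}_H^2 \, d\mu \right)
		\leq 0.
\]
Adding these inequalities yields exactly the mean-value bound $(\psi \circ \gamma)(0) \leq \tfrac{1}{\pi} \int_{\Delta} (\psi \circ \gamma) \, d\mu$.

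The main conceptual point, rather than any technical obstacle, is that the optimal constant $\mu(\Delta) = \pi$ in the Ohsawa-Takegoshi theorem is precisely what converts the $L^2$-extension into a mean-value inequality on the nose; any worse constant (as in earlier proofs) would leave an additive error term $\log C_0 > 0$ on the right that would prevent $\psi$ from being plurisubharmonic. The only minor care is ensuring that the extension is produced over all of $\Delta$ and not just a neighborhood of $0$, which is why we perform base change along $\gamma$ before invoking \theoremref{thm:OT}.
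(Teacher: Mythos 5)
Your proof is correct and follows essentially the same route as the paper: pick $\alpha \in E_{\gamma(0)}$ of unit $H$-length computing $\psi(\gamma(0))$, use the sharp Ohsawa--Takegoshi bound (i.e.\ the minimal extension property after base-changing along $\gamma$) to extend it to $s$ with $\frac{1}{\pi}\int_{\Delta}\abs{s}_H^2\,d\mu \leq 1$, then combine the subharmonicity of $\log\abs{g(s)}$, the pointwise bound $\abs{g(s)} \leq \abs{g}_{\Hd}\cdot\abs{s}_H$, and Jensen's inequality. The paper additionally notes that it first dispenses with the degenerate case where $h\equiv +\infty$ on $f^{-1}(\gamma(\Delta))$, but this is subsumed by your $\psi(\gamma(0)) = -\infty$ case.
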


\begin{proof}
If $h$ is identically equal to $+\infty$ on the preimage of $\gamma(\Delta)$, the
inequality is clear, so we may assume that this is not the case.
Since $f \colon X \to Y$ is submersive over $Y \setminus Z$, we may then pull
everything back to $\Delta$ and reduce the problem to the case $Y = \Delta$. 
If $\psi(0) = -\infty$, then the mean-value inequality holds by
default. Assuming from now on that $\psi(0) \neq -\infty$, we choose an element
$\alpha \in E_0$ with $\abs{\alpha}_{H,0} = 1$, such that
\[
	\psi(0) = \log \abs{g}_{\Hd,0} = \log \abs{g(\alpha)}.
\]
Using the minimal extension property (in \propositionref{prop:minimal-extension},
with $m = 1$), there is a holomorphic section $s \in H^0(\Delta, E)$ such that
\[
	s(0) = \alpha \quad \text{and} \quad 
	\frac{1}{\pi} \int_{\Delta} \abs{s}_H^2 \, d\mu \leq 1.
\]
The existence of this section is all that we need to prove the mean-value inequality.
By definition of the metric $\Hd$ on the dual bundle, we have the pointwise
inequality
\[
	\abs{g}_{\Hd} \geq \frac{\abs{g(s)}}{\abs{s}_H}
\]
and therefore $2\psi \geq \log \abs{g(s)}^2 - \log \abs{s}_H^2$; here $g(s)$ is a
holomorphic function on $\Delta$, whose value at the origin equals $g(\alpha)$.
Integrating, we get
\[
	\frac{1}{\pi} \int_{\Delta} 2\psi \, d\mu
	\geq \frac{1}{\pi} \int_{\Delta} \log \abs{g(s)}^2 \, d\mu
		- \frac{1}{\pi} \int_{\Delta} \log \abs{s}_H^2 \, d\mu
\]
Now $\log \abs{g(s)}^2$ satisfies the mean-value inequality, and so the first term on
the right-hand side is at least $\log \abs{g(\alpha)}^2 = 2\psi(0)$. Since the
function $x \mapsto -\log x$ is convex, and since the function $\abs{s}_H^2$ is
integrable, the second term can be estimated by Jensen's inequality to be at least
\[
	- \log \left( \frac{1}{\pi} \int_{\Delta} \abs{s}_H^2 \, d\mu \right) 
		\geq - \log 1 = 0.
\]
Putting everything together, we obtain
\[
	\frac{1}{\pi} \int_{\Delta} 2\psi \, d\mu \geq 2\psi(0),
\]
which is the mean-value inequality (up to a factor of $2$).
\end{proof}

We have verified that $\psi$ is plurisubharmonic on $Y \setminus Z$. We already know from
\propositionref{prop:boundedness} that $\psi$ is locally bounded from above in a
neighborhood of every point in $Y$;
consequently, it extends uniquely to a plurisubharmonic function on all of $Y$, using
\lemmaref{lem:Riemann-Hartogs}. By duality, the singular hermitian metric $H$ is
therefore well-defined on the entire open set $Y(\shF)$ where the sheaf
$\shF = \fl \bigl( \omX \tensor L \tensor \shI(h) \bigr)$ is locally free. We have
already shown that $H$ has the minimal extension property. This
finishes the proof of \theoremref{thm:pushforward}. \qed

\begin{proof}[Proof of \corollaryref{cor:continuous}]
Suppose that $f \colon X \to Y$ is submersive and that the singular hermitian metric
$h$ on the line bundle $L$ is continuous. To prove that $H$ is continuous, it
suffices to show that for every locally defined section $s \in H^0(U, \shF)$, the
function $\abs{s}_H^2$ on $U \setminus U \cap Z$ admits a continuous extension to all
of $U$. This is a local problem, and so we may assume that $Y = B$ is the open unit
ball in $\CC^r$, with coordinates $t_1, \dotsc, t_r$, and that $s \in H^0(B, \shF)$. Define 
\[
	\beta = s \wedge (\dt_1 \wedge \dotsb \wedge \dt_r) 
		\in H^0(B, \omega_B \tensor \shF) 
		= H^0 \bigl( X, \omX \tensor L \tensor \shI(h) \bigr).
\]
By Ehresmann's fibration theorem, $X$ is diffeomorphic to the product $B \times X_0$,
and as in \eqref{eq:Ehresmann}, we can write
\[
	\abs{\beta}_h^2 = F \cdot d\mu \wedge \frac{\omega_0^{n-r}}{(n-r)!}
\]
with $F \colon B \times X_0 \to [0, +\infty]$ continuous. Now
\[
	y \mapsto \int_{X_0} F(y, \argbl) \frac{\omega_0^{n-r}}{(n-r)!}
\]
defines a continuous function on $B$ that agrees with $\abs{s}_H^2$ on the complement
of the bad set $Z$, due to \eqref{eq:fiber-integral}.
\end{proof}

\subsection{Positivity of the determinant line bundle}
\label{subsec:determinant}

In this section, we show that if a holomorphic vector bundle $E$ has a singular
hermitian metric with semi-positive curvature, then the determinant line bundle $\det
E$ has the same property. The proof in \cite[Proposition~1.1]{Raufi} relies on
locally approximating a given singular hermitian metric from below by smooth hermitian metrics
\cite[Proposition~3.1]{BP}.  

\begin{proposition} \label{prop:determinant}
If $(E, h)$ has semi-positive curvature, so does $(\det E, \det h)$.
\end{proposition}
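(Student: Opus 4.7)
The plan is to reduce to the smooth Griffiths semi-positive case via the local approximation theorem of Berndtsson and P\u{a}un, and then to deduce the general case by a monotone convergence argument for plurisubharmonic functions. The assertion is local on $X$, so we may work on a small open ball $B \subseteq X$ where $E$ is trivial.

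First, by \cite[Proposition~3.1]{BP}, on a slightly smaller ball $B' \subseteq B$ there is a sequence of smooth hermitian metrics $h_\nu$ on $E$ with Griffiths semi-positive curvature such that $h_\nu$ increases monotonically to $h$ (pointwise, as inner products on each fiber, at every point where $h$ is finite and positive definite). Fix a holomorphic frame $e_1, \dotsc, e_r$ of $E$ on $B'$, and let $H_\nu$ and $H$ denote the matrix-valued functions with entries $\inner{e_i}{e_j}_{h_\nu}$ and $\inner{e_i}{e_j}_h$ respectively. Then $H_\nu$ increases to $H$ in the Loewner order on positive-definite matrices, and by the monotonicity of the determinant, $\det H_\nu$ increases pointwise to $\det H$.

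For each $\nu$, since $h_\nu$ is a smooth hermitian metric on $E$ with Griffiths semi-positive curvature, the induced smooth metric $\det h_\nu$ on the line bundle $\det E$ has semi-positive curvature. Indeed, in the holomorphic frame $e_1 \wedge \dotsb \wedge e_r$ of $\det E$, the local weight of $\det h_\nu$ is $\psi_\nu = -\log \det H_\nu$, and the classical formula gives
\[
	\tfrac{\sqrt{-1}}{2\pi} \del \dbar \psi_\nu = \tr \Theta_{h_\nu},
\]
which is a semi-positive $(1,1)$-form: for every tangent vector $\xi$, Griffiths semi-positivity of $h_\nu$ asserts that $\Theta_{h_\nu}(\xi, \bar{\xi})$ is a positive-semidefinite endomorphism of $E$, so its trace is nonnegative. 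Hence each $\psi_\nu$ is plurisubharmonic; and by the monotonicity established above, $\psi_\nu$ decreases pointwise to $\psi = -\log \det H$, the local weight of $\det h$ in the same frame. Since $h$ is finite and positive definite almost everywhere, $\det H$ is finite and strictly positive almost everywhere, so $\psi \not\equiv -\infty$. By the standard fact that a decreasing limit of plurisubharmonic functions is either identically $-\infty$ or again plurisubharmonic, we conclude that $\psi$ is plurisubharmonic. This is exactly the statement that $(\det E, \det h)$ has semi-positive curvature.

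The main obstacle is the local smooth approximation of a singular semi-positively curved hermitian metric on a vector bundle by smooth Griffiths semi-positively curved metrics; this is the substantive content of the Berndtsson-P\u{a}un result, obtained by a careful convolution construction on the dual side. Once that approximation is in hand, everything else is routine linear algebra, together with the monotone-limit property of plurisubharmonic functions.
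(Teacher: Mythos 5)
Your proof is correct, but it follows a genuinely different route from the paper's. You reduce to the smooth case via the local approximation theorem of Berndtsson and P\u{a}un \cite[Proposition~3.1]{BP} and then take a decreasing limit of plurisubharmonic weights. This is essentially Raufi's argument for \cite[Proposition~1.1]{Raufi}, which the paper explicitly acknowledges and then deliberately avoids: one of the stated goals of \chapref{analytic} is to dispense with smooth approximation and instead derive everything from the Ohsawa--Takegoshi extension theorem with sharp estimates. The paper's own proof realizes $\det E$ as the pushforward of an adjoint line bundle on the projectivization, $\det E \simeq p_{\ast}\bigl(\omega_{\PP(E)/X}\otimes\shO_E(r)\bigr)$, equips $\shO_E(1)$ with the quotient metric coming from the surjection $p^{\ast}E\to\shO_E(1)$ (using \lemmaref{lem:quotient}), and then invokes \theoremref{thm:pushforward}/\corollaryref{cor:pushforward} together with an explicit fiberwise computation showing that the resulting pushforward metric agrees with $\det h$ up to the universal constant $\pi^{r-1}/(r-1)!$. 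The payoff of the paper's approach is twofold: it stays within the self-contained Ohsawa--Takegoshi framework (no appeal to the BP regularization, which is a nontrivial input), and it automatically gives the minimal extension property for $\det h$, not just semi-positivity. Your approach is shorter and more classical, and is certainly valid; one should only be careful that the monotone convergence $H_\nu\uparrow H$ (hence $\psi_\nu\downarrow\psi$) holds only almost everywhere --- namely on the set where $h$ is finite --- so the decreasing limit of the $\psi_\nu$ is a psh function that agrees with the weight of $\det h$ a.e., which is exactly what the paper's notion of semi-positive curvature requires after the standard modification on a null set.
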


Let us first analyze what happens over a point. Let $V$ be a complex vector space of
dimension $r$, and $\abs{\argbl}_h$ a positive definite singular hermitian inner
product on $V$; in the notation of \parref{par:SHIP}, we have $V_0 = 0$. 
Let $\PP(V)$ be the projective space parametrizing
one-dimensional quotient spaces of $V$, and denote by $\shO(1)$ the universal line
bundle on $\PP(V)$. We have a surjective morphism $V \tensor \shO \to \shO(1)$, and
so $h$ induces a singular hermitian metric on $\shO(1)$, with singularities along
the subspace $\PP(V/\Vfin) \subseteq \PP(V)$. To see this, choose a basis $e_1,
\dotsc, e_r \in V$ such that $e_1, \dotsc, e_k$ form an orthonormal basis of $\Vfin$
with respect to the inner product $\inner{\argbl}{\argbl}_h$, and denote by $[z_1,
\dotsc, z_r]$ the resulting homogeneous coordinates on $\PP(V)$. Then the local
weight functions of the metric on $\shO(1)$ are given by the formula
\[
	\log \bigl( \abs{z_1}^2 + \dotsb + \abs{z_k}^2 \bigr),
\]
with the convention that $z_i = 1$ on the $i$-th standard affine open subset. 

Now the one-dimensional complex vector space $\det V = \bigwedge^r V$ is naturally
the space of global sections of an adjoint bundle on $\PP(V)$, because
\[
	\det V \simeq H^0 \bigl( \PP(V), \omega_{\PP(V)} \tensor \shO(r) \bigr).
\]
The isomorphism works as follows. The element $e_1 \wedge \dotsb \wedge e_r \in \det
V$ determines a holomorphic $r$-form $\dz_1 \wedge \dotsb \wedge \dz_r$ on the dual
vector space $\Vd$; after contraction with the Euler vector field $z_1
\partial/\partial z_1 + \dotsb + z_r \partial/\partial z_r$, we
get a holomorphic $(r-1)$-form 
\[
	\Omega = \sum_{i=1}^r (-1)^{i-1} z_i \dz_1 \wedge \dotsb \wedge 
		\widehat{\dz_i} \wedge \dotsb \wedge \dz_r
\]
on $\PP(V)$ that is homogeneous of degree $r$, hence a global section of the
holomorphic line bundle $\omega_{\PP(V)} \tensor \shO(r)$. Integration over $\PP(V)$
therefore defines a positive definite singular hermitian inner product $H$ on $\det
V$. We have
\[
	\abs{e_1 \wedge \dotsb \wedge e_r}_H^2 = 
		\int_{\PP(V)} \frac{c_{r-1} \cdot \Omega \wedge \wbar{\Omega}}%
		{\bigl( \abs{z_1}^2 + \dotsb + \abs{z_k}^2 \bigr)^r},
\]
which simplifies in the affine chart $z_1 = 1$ to
\[
	\abs{e_1 \wedge \dotsb \wedge e_r}_H^2 = 
	\int_{\CC^{r-1}} \frac{d\mu}{\bigl( 1 + \abs{z_2}^2 + \dotsb + \abs{z_k}^2 \bigr)^r}.
\]
Now there are two cases. If $\Vfin \neq V$, then $k < r$, and the integral is easily
seen to be $+\infty$. If $\Vfin = V$, then $k = r$, and the integral evaluates to
$\pi^{r-1}/(r-1)!$, the volume of the open unit ball in $\CC^{r-1}$. In conclusion,
we always have
\[
	\abs{e_1 \wedge \dotsb \wedge e_r}_H^2 = \frac{\pi^{r-1}}{(r-1)!}
		\cdot \abs{e_1 \wedge \dotsb \wedge e_r}_{\det h}^2.
\]
With this result in hand, we can now prove \propositionref{prop:determinant}.

\begin{proof}
Let $p \colon \PP(E) \to X$ denote the associated $\PP^{r-1}$-bundle, and
let $\shO_E(1)$ be the universal line bundle on $\PP(E)$. We have a surjective
morphism $\pu E \to \shO_E(1)$, and by \lemmaref{lem:quotient}, the singular
hermitian metric on $E$ induces a singular hermitian metric on the line bundle
$\shO_E(1)$, still with semi-positive curvature. We have
\[
	\omega_{\PP(E)/X} \simeq \pu \det E \tensor \shO_E(-r),
\]
and therefore $\det E \simeq \pl \bigl( \omega_{\PP(E)/X} \tensor \shO_E(r) \bigr)$
is the pushforward of an adjoint bundle. The calculation above shows that,
up to a factor of $\pi^{r-1}/(r-1)!$, the resulting singular hermitian metric on
$\det E$ agrees with $\det h$ pointwise. The assertion about the curvature of $(\det
E, \det h)$ is therefore a consequence of \corollaryref{cor:pushforward}.
\end{proof}

\subsection{Consequences of the minimal extension property}

In this section, we derive a few interesting consequences from the minimal extension
property. All of the results below are true for smooth hermitian metrics with Griffiths
semi-positive curvature on holomorphic vector bundles; the minimal extension property
is what makes them work even in the presence of singularities.

Let $\shF$ be a torsion-free coherent sheaf on $X$, of generic rank $r \geq
1$, and suppose that $\shF$ has a singular hermitian metric with semi-positive
curvature and the minimal extension property. 
Let $E$ be the holomorphic vector bundle of rank $r$ obtained by
restricting $\shF$ to the open subset $X(\shF)$; by assumption, the pair $(E, h)$ has
semi-positive curvature.  \propositionref{prop:determinant} shows that $(\det E, \det h)$
also has semi-positive curvature. Let $\det \shF$ be the holomorphic line bundle obtained as
the double dual of $\bigwedge^r \! \shF$; its restriction to $X(\shF)$ agrees with $\det
E$. Since $X \setminus X(\shF)$ has codimension $\geq 2$, the singular hermitian
metric on $\det E$ extends uniquely to a singular hermitian metric on $\det \shF$.
The following result is due to Cao and P\u{a}un \cite[Theorem 5.23]{CP}, who proved
it using results by Raufi \cite{Raufi}.

\begin{theorem} \label{thm:det-trivial}
Suppose that $X$ is compact and that $c_1(\det \shF) = 0$ in $H^2(X, \RR)$. Then 
$\shF$ is locally free, and $(E, h)$ is a hermitian flat bundle on $X = X(\shF)$.
\end{theorem}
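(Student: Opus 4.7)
The plan is to apply \lemmaref{lem:trivial} to the determinant line bundle $\det \shF$ to conclude that $\det h$ is smooth and flat, and then to combine the minimal extension property with Hadamard's and Jensen's inequalities to produce, around any point, a holomorphic frame for $\shF$ in which $h$ is almost-everywhere the standard flat metric. Both the local freeness of $\shF$ and the flatness of $h$ will follow from the existence of such frames.

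By \propositionref{prop:determinant}, the pair $(\det E, \det h)$ has semi-positive curvature on $X(\shF)$; extending the plurisubharmonic local weights across $X \setminus X(\shF)$ using \lemmaref{lem:Riemann-Hartogs} (codimension $\geq 2$) gives a singular hermitian metric with semi-positive curvature on the line bundle $\det \shF$ over all of $X$, as noted just before the theorem statement. Since $c_1(\det \shF) = 0$ and $X$ is compact, \lemmaref{lem:trivial} forces $\det h$ to be smooth and flat. Now fix $x \in X(\shF)$ and an embedding $\iota \colon B \into X$ with $\iota(0) = x$; for any orthonormal basis $v_1, \dotsc, v_r$ of $E_x$, the minimal extension property yields holomorphic sections $s_i \in H^0(B, \iota^{\ast}\shF)$ with $s_i(0) = v_i$ and $\tfrac{1}{\mu(B)} \int_B \abs{s_i}_h^2 \, d\mu \leq 1$. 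The holomorphic section $\sigma = s_1 \wedge \dotsb \wedge s_r$ of $\iota^{\ast} \det \shF$ satisfies $\abs{\sigma(0)}_{\det h, 0}^2 = 1$, and because $\det h$ is flat, $\log \abs{\sigma}_{\det h}^2$ is plurisubharmonic on $B$.

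Applying in turn the sub-mean-value inequality for the plurisubharmonic function $\log \abs{\sigma}_{\det h}^2$, Hadamard's inequality $\abs{\sigma}_{\det h}^2 \leq \prod_i \abs{s_i}_h^2$ for the positive semi-definite Gram matrix of $(s_i)$, and Jensen's inequality for the concave function $\log$, we obtain
\begin{align*}
0 = \log \abs{\sigma(0)}_{\det h, 0}^2
    &\leq \tfrac{1}{\mu(B)} \int_B \log \abs{\sigma}_{\det h}^2 \, d\mu \\
    &\leq \sum_i \tfrac{1}{\mu(B)} \int_B \log \abs{s_i}_h^2 \, d\mu \\
    &\leq \sum_i \log \tfrac{1}{\mu(B)} \int_B \abs{s_i}_h^2 \, d\mu \leq 0.
\end{align*}
Equality throughout forces, by the equality case of Jensen, that each $\abs{s_i}_h^2$ is almost everywhere equal to the constant $1$, and, by the equality case of Hadamard, that $\inner{s_i}{s_j}_h = 0$ almost everywhere for $i \neq j$. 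After modifying on a null set, the $s_i$ thus form an orthonormal holomorphic frame of $E$ on $B \cap X(\shF)$ in which $h$ is the standard flat metric.

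Finally, I would obtain local freeness as follows. Fix any $y_0 \in X$; since $X \setminus X(\shF)$ has codimension $\geq 2$, choose an open subset $U \subseteq X$ biholomorphic to the unit ball and containing both $y_0$ and some $x \in U \cap X(\shF)$, and run the previous construction centered at $x$ to produce sections $s_1, \dotsc, s_r \in H^0(U, \shF)$. The resulting morphism $\shO_U^{\oplus r} \to \shF\restr{U}$ is injective (since $\shF\restr{U}$ is torsion-free and the $s_i(x)$ are linearly independent) and an isomorphism on $U \cap X(\shF)$. Composing with the inclusion $\shF\restr{U} \hookrightarrow \shF^{\ast\ast}\restr{U}$ yields a morphism $\shO_U^{\oplus r} \to \shF^{\ast\ast}\restr{U}$ of reflexive sheaves that is an isomorphism outside a subset of codimension $\geq 2$, hence an isomorphism everywhere. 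The sandwich $\shO_U^{\oplus r} \hookrightarrow \shF\restr{U} \hookrightarrow \shF^{\ast\ast}\restr{U} \cong \shO_U^{\oplus r}$ collapses to $\shF\restr{U} \cong \shO_U^{\oplus r}$, so $\shF$ is locally free near $y_0$; as $y_0$ was arbitrary, $X = X(\shF)$, and the local orthonormal frames from the previous paragraph then exhibit $(E, h) = (\shF, h)$ as a hermitian flat bundle. The main obstacle is the telescoping inequality chain above: it closes up to force equality only because the $L^2$-constant $\mu(B)$ in the minimal extension property (from the sharp Ohsawa--Takegoshi estimate) exactly matches the sub-mean-value constant, and the simultaneous equality cases of Hadamard and Jensen then deliver both the orthogonality and the norm-constancy of the frame.
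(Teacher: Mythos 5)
Your inequality chain, its equality analysis, and the use of the minimal extension property all match the paper's proof; the reflexive-sheaf sandwich you use for local freeness is a valid alternative to the paper's route, which instead extends the inverse of $\shO_B^{\oplus r}\to\shF$ across the codimension-$\geq 2$ bad locus by Hartogs to get a left inverse $\tau$ with $\tau\circ\sigma=\id$, and then uses torsion-freeness of $\shF$ to conclude that $\sigma$ is onto.

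The genuine gap is the sentence ``After modifying on a null set, the $s_i$ thus form an orthonormal holomorphic frame of $E$ on $B\cap X(\shF)$ in which $h$ is the standard flat metric.'' The $s_i$ are fixed holomorphic sections and $h$ is the given datum that the theorem makes a claim about, so neither can be ``modified''; you must actually upgrade the a.e. equalities coming from Jensen and Hadamard to everywhere equalities, and this is needed twice. First, to justify that $\shO_U^{\oplus r}\to\shF\restr{U}$ is an isomorphism on all of $U\cap X(\shF)$ (not merely a.e.\ there --- the exceptional set could a priori be a hypersurface, which would defeat the reflexive-sheaf argument), you should observe that $\abs{\sigma}_{\det h}$ is \emph{continuous} because $\det h$ is smooth, and a.e.\ equal to $1$, hence $\equiv 1$; so $\sigma = s_1\wedge\dotsb\wedge s_r$ is nowhere zero and the $s_i$ are pointwise linearly independent. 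Second, to conclude that $h$ itself is smooth and flat (rather than a.e.\ equal to a flat metric), the paper records, for every $f\in H^0(B,\shFd)$, the identity $\abs{f}_{\hd}^2 = \sum_i \abs{f\circ s_i}^2$, which holds a.e.; because the logarithms of both sides are plurisubharmonic, the identity holds everywhere, so $\hd$ --- and hence $h$, since a finite positive-definite singular hermitian inner product is recovered from its dual --- is genuinely a smooth flat metric. Without this plurisubharmonic rigidity step the ``almost everywhere'' cannot be dropped, and the theorem is not actually established.
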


\begin{proof}
% We first reduce the problem to the case where $\det \shF \simeq \OX$. Let $\alpha \in
% \Pic^0(X)$ be any line bundle with $\alpha^{\tensor r} \simeq \det \shF$. Then
% $\alpha$ has a smooth hermitian metric with zero curvature, and the induced singular
% hermitian metric on the tensor product $\shF \tensor \alpha^{-1}$ still has
% semi-positive curvature and the minimal extension property. Since $\det \bigl( \shF
% \tensor \alpha^{-1} \bigr) \simeq \OX$, we can replace $\shF$ by $\shF \tensor
% \alpha^{-1}$ and assume for the remainder of the argument that $\det \shF$ is the
% trivial line bundle.
Since $X$ is compact, the singular hermitian metric on $\det \shF$ is smooth and has 
zero curvature (by \lemmaref{lem:trivial}). Restricting to the open subset $X(\shF)$,
we see that the same is true for $(\det E, \det h)$. Now the idea is to use the
minimal extension property to construct, locally on $X$, a collection of $r$ sections
of $\shF$ that are orthonormal with respect to $h$.

We can certainly cover $X$ by open subsets that are isomorphic to the open unit ball
$B \subseteq \CC^n$ and are centered at points $x \in X \setminus Z$ where the
singular hermitian inner product $\abs{\argbl}_{h,x}$ is finite and positive definite. 
After restricting everything to
an open subset of this kind, we may assume that $X = B$, that the point $0 \in B$
lies in the subset $B \setminus Z$, and that $\abs{\argbl}_{h,0}$ is a genuine
hermitian inner product on the $r$-dimensional complex vector space $E_0$. Choose an
orthonormal basis $e_1, \dotsc, e_r \in E_0$. By the minimal extension property for
$\shF$, we can find $r$ holomorphic sections $s_1, \dotsc, s_r \in H^0(B, \shF)$ such that 
\[
	s_i(0) = e_i \quad \text{and} \quad
	\frac{1}{\mu(B)} \int_B \abs{s_i}_h^2 \, d\mu \leq 1.
\]
Since the logarithm function is strictly concave, Jensen's inequality shows that
\begin{equation} \label{eq:inequality-1}
	\frac{1}{\mu(B)} \int_B \log \abs{s_i}_h^2 \, d\mu \leq
	\log \left( \frac{1}{\mu(B)} \int_B \abs{s_i}_h^2 \, d\mu \right) \leq 0,
\end{equation}
with equality if and only if $\abs{s_i}_h = 1$ almost everywhere.

Now let us analyze the singular hermitian metric on $\det E$. The expression
\[
	H_{i,j}(x) = \inner{s_i(x)}{s_j(x)}_{h, x}
\]
is well-defined outside a set of measure zero, and the resulting function is
locally integrable. Denote by $H(x)$ the $r \times r$-matrix with these entries; it
is almost everywhere positive definite, and we have
\[
	\bigabs{s_1 \wedge \dotsb \wedge s_r}_{\det h}^2 = \det H.
\]
Since $\det h$ is actually smooth and flat, we can choose a nowhere vanishing section
$\delta \in H^0(B, \det \shF)$ such that $\abs{\delta}_{\det h} \equiv 1$. We then
have $s_1 \wedge \dotsb \wedge s_r = g \cdot \delta$ for a holomorphic function $g
\in H^0(B, \shO_B)$ with $g(0) = 1$, and 
\[
	\abs{g}^2 = \bigabs{s_1 \wedge \dotsb \wedge s_r}_{\det h}^2 = \det H.
\]
From Hadamard's inequality for semi-positive definite matrices, we obtain
\[
	\abs{g(x)}^2 = \det H(x) \leq \prod_{i=1}^r H_{i,i}(x)
		= \prod_{i=1}^r \bigabs{s_i(x)}_{h,x}^2,
\]
with equality if and only if the matrix $H(x)$ is diagonal. Taking logarithms, we get
\[
	\log \abs{g(x)}^2 \leq \sum_{i=1}^r \log \bigabs{s_i(x)}_{h,x}^2.
\]
This inequality is valid almost everywhere; integrating, we find that
\begin{equation} \label{eq:inequality-2}
	\frac{1}{\mu(B)} \int_B \log \abs{g}^2 \, d\mu \leq
		\sum_{i=1}^r \frac{1}{\mu(B)} \int_B \abs{s_i}_h^2 \, d\mu.
\end{equation}
Now $\log \abs{g}^2$ is plurisubharmonic, and so the mean-value inequality shows that
the left-hand side in \eqref{eq:inequality-2} is greater or equal to $\log
\abs{g(0)}^2 = 0$. At the same time, the right-hand side is less or equal to $0$ by
\eqref{eq:inequality-1}. The conclusion is that all our inequalities are actually
equalities, and so $H(x)$ is almost everywhere equal to the identity matrix of size
$r \times r$. In other words, the sections $s_1, \dotsc, s_r \in H^0(B, \shF)$ are
almost everywhere orthonormal with respect to $h$.

For any holomorphic section $f \in H^0(B, \shFd)$, we therefore have 
\[
	\abs{f}_{\hd}^2 = \sum_{i=1}^r \abs{f \circ s_i}^2
\]
almost everywhere on $B$; because the logarithms of both sides are plurisubharmonic
functions on $B$, the identity actually holds everywhere. The singular hermitian
metric $\hd$ is therefore smooth; but then $h$ is also smooth, and the pair $(E, h)$
is a hermitian flat bundle.

To conclude the proof, we need to argue that $\shF$ is locally free on all of $B$.
The sections $s_1, \dotsc, s_r \in H^0(B, \shF)$ give rise to a morphism of sheaves
\[
	\sigma \colon \shO_B^{\oplus r} \to \shF.
\]
We already know that $\sigma$ is an isomorphism on the open subset $B(\shF)$; by
Hartog's theorem, its inverse extends to a morphism of sheaves
\[
	\tau \colon \shF \to \shO_B^{\oplus r}
\]
with $\tau \circ \sigma = \id$. Because $\shF$ is torsion-free, this forces $\sigma$
to be an isomorphism.
\end{proof}

\begin{note}
Our proof gives a different interpretation for the fact that $(\det E, \det h)$ has
semi-positive curvature. Indeed, without assuming that $\det h$ is smooth and flat,
we have $\det H = \abs{g}^2 e^{-\varphi}$, where $\varphi \colon B \to [-\infty,
+\infty)$ is locally integrable and $\varphi(0) = 0$. The various inequalities above
then combine to give
\[
	0 \leq \frac{1}{\mu(B)} \int_B \varphi \, d\mu,
\]
which is exactly the mean-value inequality for $\varphi$. 
\end{note}

The next theorem is a new result. It says that when $X$ is
compact, all global sections of the dual coherent sheaf $\shFd$ arise from trivial
summands in $\shF$.  Equivalently, every nonzero morphism $\shF \to \OX$ has a
section, which means that $\shF$ splits off a direct summand isomorphic to $\OX$.

\begin{theorem} \label{thm:quotient-split}
Suppose that $X$ is compact and connected. Then for every nonzero $f \in H^0(X,
\shFd)$, there exists a unique global section $s \in H^0(X, \shF)$ such that
$\abs{s}_h$ is a.e.~constant and $f \circ s \equiv 1$.
\end{theorem}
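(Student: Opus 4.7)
The plan is to combine the minimal extension property with the equality case of the Cauchy--Schwarz inequality, constructing $s$ locally and then gluing. First, $\log \abs{f}_{\hd}$ is plurisubharmonic on $X(\shF)$ and extends across the codimension-$\geq 2$ set $X \setminus X(\shF)$ by \lemmaref{lem:Riemann-Hartogs} to a plurisubharmonic function on all of $X$; since $X$ is compact and connected, \lemmaref{lem:constant} forces $\abs{f}_{\hd} \equiv c$ for some positive constant $c$ (positivity comes from $f \not\equiv 0$). At each point $x \in X(\shF)$ where $\abs{\argbl}_{h,x}$ is finite and positive definite (a full-measure condition), Riesz duality produces a unique vector $v_x \in E_x$ characterized by $f(v_x) = 1$ and $\abs{v_x}_{h,x} = 1/c$; equivalently, $v_x$ is the unique vector achieving equality in the pointwise Cauchy--Schwarz bound $\abs{f(v)} \leq \abs{f}_{\hd,x} \cdot \abs{v}_{h,x}$ subject to $f(v) = 1$.

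Next, I would fix such a point $x_0 \in X \setminus Z$, choose a ball embedding $\iota \colon B \into X$ with $\iota(0) = x_0$, and apply the minimal extension property to the unit vector $c v_{x_0}$; rescaling by $1/c$ yields $s \in H^0(B, \shF)$ with $s(0) = v_{x_0}$ and $\tfrac{1}{\mu(B)} \int_B \abs{s}_h^2 \, d\mu \leq 1/c^2$. The holomorphic function $g = f \circ s$ on $B$ satisfies $g(0) = 1$ and the almost-everywhere bound $\abs{g} \leq c \abs{s}_h$. Using the mean-value identity $g(0) = \tfrac{1}{\mu(B)} \int_B g \, d\mu$ for the holomorphic function $g$, one computes
\[
	\tfrac{1}{\mu(B)} \int_B \abs{g - 1}^2 \, d\mu
	= \tfrac{1}{\mu(B)} \int_B \abs{g}^2 \, d\mu - 1
	\leq c^2 \cdot \tfrac{1}{c^2} - 1 = 0,
\]
so that $g \equiv 1$ on $B$. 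Tracing back through the chain of inequalities forces $\abs{s}_h \equiv 1/c$ almost everywhere on $B$, and then lower semi-continuity of $\abs{s}_h$ together with the pointwise Cauchy--Schwarz equality pins down $s(x) = v_x$ at every $x \in B$ where $h$ is finite and positive definite.

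Finally, balls of this form cover $X$, and on any overlap two local sections constructed this way agree on the dense subset $X^{\circ} \subseteq X$ where $h$ is finite and positive definite; by the torsion-freeness of $\shF$ they agree everywhere, so they glue to a global section $s \in H^0(X, \shF)$ with $f \circ s \equiv 1$ and $\abs{s}_h \equiv 1/c$ almost everywhere. Uniqueness follows from the same pointwise characterization: any other candidate must agree with $v_x$ on $X^{\circ}$, hence coincide with $s$ by torsion-freeness.

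The main technical obstacle is that the whole argument hinges on the saturation of the Jensen-type inequality above; replacing $\mu(B)$ by any larger constant (as in earlier versions of the Ohsawa--Takegoshi theorem) would destroy the equality that forces $g \equiv 1$, and the proof would collapse. The optimal constant from \theoremref{thm:OT}, transmitted to $H$ via the minimal extension property, is therefore genuinely essential. A secondary point of care is the interplay between a.e. equalities and pointwise equalities on $X^{\circ}$, which is handled uniformly by the lower semi-continuity of $\abs{s}_h$.
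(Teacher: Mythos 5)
Your argument is correct, and while it uses the same key input (the minimal extension property with the sharp Ohsawa--Takegoshi constant), the mechanics of the main estimate and of the uniqueness differ from the paper's proof in an instructive way. After rescaling so that $\abs{f}_{\hd}\equiv 1$, the paper sets $g=f\circ s$ and uses the pointwise bound $\abs{g}\le \abs{s}_h$, takes logarithms, and combines the mean-value \emph{inequality} for the plurisubharmonic function $\log\abs{g}^2$ with Jensen's inequality for the concave logarithm to force all inequalities to be equalities; this is precisely the same pattern used in the proof of \theoremref{thm:det-trivial}. You instead use the mean-value \emph{identity} $\frac{1}{\mu(B)}\int_B g\, d\mu = g(0)$ for the holomorphic function $g$ and expand $\abs{g-1}^2 = \abs{g}^2 - g - \bar g + 1$ to get $\frac{1}{\mu(B)}\int_B\abs{g-1}^2\,d\mu = \frac{1}{\mu(B)}\int_B\abs{g}^2\,d\mu - 1 \le 0$ directly, bypassing Jensen entirely -- a slightly cleaner Hilbert-space argument that makes the sharpness of the constant $\mu(B)$ visibly load-bearing. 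For uniqueness, the paper applies the parallelogram law to $s$ and $s'$ directly, whereas you appeal to the Riesz representation of $f\restr{E_x}$ to pin down the pointwise minimizer $v_x$ and then glue; these are two packagings of the same strict convexity of the fiberwise Hilbert norm. One small caveat that your proof shares with the paper's: both uniqueness arguments implicitly assume the competing section $s'$ has a.e.\ norm equal to the \emph{specific} constant $1/\abs{f}_{\hd}$, not merely an arbitrary constant $k\ge 1/\abs{f}_{\hd}$; with the weaker hypothesis as literally stated, neither the parallelogram law nor your Riesz argument rules out $k > 1/\abs{f}_{\hd}$ on its own, so the statement is best read with this normalization understood.
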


\begin{proof}
Because the singular hermitian metric on $\shF$ has semi-positive curvature, the
function $\log \abs{f}_{\hd}$ is plurisubharmonic on
$X$, hence equal to a nonzero constant. After rescaling the metric, we may assume
without loss of generality that $\abs{f}_{\hd} \equiv 1$. As in the proof of
\theoremref{thm:det-trivial}, we cover $X$ by open subsets that are isomorphic to $B
\subseteq \CC^n$ and are centered at points $x \in X \setminus Z$ where
$\abs{\argbl}_{h,x}$ is finite and positive definite. We shall
argue that there is a unique section of $\shF$ with the desired properties on each
open set of this type; by uniqueness, these sections will then glue together to give
us the global section $s \in H^0(X, \shF)$ that we are looking for.

We may therefore assume without loss of generality that $X = B$, that the origin
belongs to the subset $B \setminus Z$, and that $\abs{\argbl}_{h,0}$ is a hermitian
inner product on the vector space $E_0$. It is easy to see from
\[
	\sup \MENGE{\frac{\abs{f(v)}}{\abs{v}_{h,0}}}{%
		\text{$v \in E_0$ with $\abs{v}_{h,0} \neq 0$}}
	= \abs{f}_{\hd,0} = 1
\]
that there exists a vector $v \in E_0$ with $f(v) = 1$ and $\abs{v}_{h,0} = 1$.
By the minimal extension property, there is a section $s \in H^0(B, \shF)$ such that
\[
	s(0) = v \quad \text{and} \quad
	\frac{1}{\mu(B)} \int_B \abs{s}_h^2 \, d\mu \leq 1.
\]
Now $f \circ s$ is a holomorphic function on $B$, and by definition of $\hd$, we have
\[
	\frac{\abs{f \circ s}}{\abs{s}_h} \leq \abs{f}_{\hd} = 1.
\]
Taking logarithms and integrating, we get
\[
	\frac{1}{\mu(B)} \int_B \log \abs{f \circ s}^2 \, d\mu
		\leq \frac{1}{\mu(B)} \int_B \log \abs{s}_h^2 \, d\mu
		\leq \log \left( \frac{1}{\mu(B)} \int_B \abs{s}_h^2 \, d\mu \right) \leq 0,
\]
using Jensen's inequality along the way. By the mean-value inequality, the left-hand
side is greater or equal to $\log (f \circ s)(0) = 0$, and so once again, all
inequalities must be equalities. It follows that $f \circ s \equiv 1$, and that the
measurable function $\abs{s}_h$ is equal to $1$ almost everywhere. 

It remains to prove the uniqueness statement. Suppose that $s' \in H^0(B, \shF)$ is
another holomorphic section with the property that $f \circ s' \equiv 1$ and
$\abs{s'}_h = 1$ almost everywhere. Outside a set of measure zero, we have
\[
	\abs{s'-s}_h^2 + \abs{s'+s}_h^2 = 2 \abs{s}_h^2 + 2 \abs{s'}_h^2 = 4,
\]
and since $f(s'+s) = 2$, we must have $\abs{s'+s}_h^2 \geq 4$. This implies that
$\abs{s'-s}_h^2 = 0$ almost everywhere, and hence that $s' = s$.
\end{proof}

\section{Pushforwards of relative pluricanonical bundles}
\label{chap:NS}

\subsection{Introduction}

In the previous chapter, we presented a general formalism for constructing
singular hermitian metrics with semi-positive curvature on sheaves of the form
$\fl(\omXY \tensor L)$.  The applications to algebraic geometry come from the fact
that the sheaves $\fl \omXYm$ with $m \geq 2$ naturally fit into this framework. The
main result is the following; see \cite[Corollary~4.2]{BP}, and also \cite[Theorem~1.12]{Tsuji}, \cite[Theorem~4.2.2]{PT}.

\begin{theorem} \label{thm:pluricanonical}
Let $f \colon X \to Y$ be a surjective projective morphism with connected fibers between two
complex manifolds. Suppose that $\fl \omXYm \neq 0$ for some $m \geq 2$.
\begin{aenumerate}
\item The line bundle $\omXY$ has a canonical singular hermitian metric with
semi-positive curvature, called the \define{$m$-th Narasimhan-Simha metric}. 
This metric is continuous on the preimage of the smooth locus of $f$.
\item If $h$ denotes the induced singular hermitian metric on $L =
\omega_{X/Y}^{\otimes(m-1)}$, then
\[
	\fl \bigl( \omXY \tensor L \tensor \shI(h) \bigr) \into \fl \omXYm
\]
is an isomorphism over the smooth locus of $f$.
\end{aenumerate}
\end{theorem}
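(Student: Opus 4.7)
\medskip

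\noindent\textbf{Proof plan for \theoremref{thm:pluricanonical}.}

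The plan is to construct the $m$-th Narasimhan–Simha metric fiber by fiber on the smooth locus of $f$, verify continuity there, and then invoke the pushforward formalism of the previous chapter to deduce semi-positive curvature. Let $Y_0 \subseteq Y$ be the Zariski-open locus over which $f$ is submersive, and write $X_0 = f^{-1}(Y_0)$. Over $Y_0$, the sheaf $\fl \omXYm$ is locally free with fiber $H^0(X_y, \omega_{X_y}^{\tensor m})$ at $y$, by invariance of plurigenera together with base change. For a section $\sigma \in H^0(X_y, \omega_{X_y}^{\tensor m})$, the expression $\abs{\sigma}^{2/m}$ is a well-defined positive measure on $X_y$ (it transforms correctly under changes of local frame of $\omega_{X_y}$), and this gives a function
\[
	\|\sigma\|_m^2 = \int_{X_y} \abs{\sigma}^{2/m}
\]
that is homogeneous of degree $2/m$ in $\sigma$. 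I would then define the NS metric on $\omega_{X/Y}$ at $x \in X_0$ by the dual Bergman-type formula
\[
	\abs{\xi}_{NS,m,x}^{-2m} \;=\; \sup\MENGE{\frac{\abs{\sigma(x)/\xi^{\tensor m}}^2}{\|\sigma\|_m^{2m}}}
		{\sigma \in H^0(X_{f(x)},\omega_{X_{f(x)}}^{\tensor m}) \setminus \{0\}}
\]
for any $\xi \in (\omega_{X/Y})_x \setminus \{0\}$, and extend it to $X \setminus X_0$ by setting the weight to $+\infty$ (so that the length is zero there). The numerator is the modulus-squared of a holomorphic function once we fix a local frame $\xi$ of $\omXY$, and the denominator depends only on $y = f(x)$.

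Next I would verify continuity of $h_{NS,m}$ on $X_0$. Pick local holomorphic sections $s_1,\dots,s_N$ of $\fl\omXYm$ trivializing it over a small ball in $Y_0$; by Ehresmann's theorem we can diffeomorphically identify the nearby fibers, so Lebesgue's dominated convergence (applied to $\abs{s_i}^{2/m}$) shows that the Gram-type quantities $\int_{X_y}\abs{s_i}^{2/m}$ depend continuously on $y$. The supremum in the definition of $\abs{\xi}_{NS,m,x}$ is achieved on the compact unit sphere of the finite-dimensional space $H^0(X_y,\omega_{X_y}^{\tensor m})$ in any auxiliary norm, so standard compactness arguments yield continuity of the metric at each point of $X_0$. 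Once continuity is in hand, part (b) becomes immediate: on $X_0$ the local weight of $h = h_{NS,m}^{\tensor (m-1)}$ on $L$ is continuous and therefore locally bounded, so $\shI(h) = \shO_X$ there, and the natural inclusion $\fl(\omXY \tensor L \tensor \shI(h)) \into \fl\omXYm$ is an isomorphism over $Y_0$.

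The main obstacle is (a), namely semi-positivity of the curvature. The approach I would take is to exhibit the local weight of $h_{NS,m}$ as a supremum of plurisubharmonic functions on $X_0$, and then appeal to \lemmaref{lem:Riemann-Hartogs} to extend across $X \setminus X_0$ (this extension is permitted because the weight function is $-\infty$ outside $X_0$, i.e.\ the metric is trivially dominated). Fixing a local frame $\xi$ of $\omXY$ and a local frame $t$ of $\omY$ on $U \subseteq Y_0$, every holomorphic section of $\fl\omXYm$ over $U$ corresponds to $\sigma = g \cdot \xi^{\tensor m} \wedge \fu t$ on $f^{-1}(U)$ with $g$ holomorphic; then $\log\abs{g}^2$ is plurisubharmonic in the total variable $x$. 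The crucial input is that $y \mapsto \log\|\sigma|_{X_y}\|_m^2$ is plurisubharmonic on $U$ — this is precisely the Berndtsson–P\u{a}un positivity theorem for the $L^{2/m}$-norm on pushforwards of pluricanonical bundles (the version proved in \cite{BP, Berndtsson, PT, Tsuji}), and it is the analytic heart of the argument. Granting this, the log of the right-hand side in the definition of $h_{NS,m}$ is a supremum of plurisubharmonic functions (indexed by local sections of $\fl\omXYm$), its upper semi-continuous envelope is plurisubharmonic, and this envelope coincides with the local weight of $h_{NS,m}^{\tensor m}$ because continuity of the metric on $X_0$ was already established. Taking the $m$-th root yields the semi-positive singular hermitian metric on $\omXY$. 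As a sanity check, I would then apply \theoremref{thm:pushforward} to $(L,h)=(\omega_{X/Y}^{\tensor(m-1)},h_{NS,m}^{\tensor(m-1)})$ to recover the expected pushforward metric on $\fl\omXYm$, confirming that the two constructions are compatible.
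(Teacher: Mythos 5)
Your overall plan --- construct the fiberwise Narasimhan--Simha metric, prove continuity on the submersive locus, deduce semi-positive curvature, then read off part~(b) --- matches the paper's outline. However, there are three genuine gaps, of which the first is the most serious.

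\textbf{The ``crucial input'' for semi-positivity is misstated, and the corrected version is false.} You want to exhibit the weight of $h_{NS,m}^{\otimes m}$ as a supremum of plurisubharmonic terms of the form $\log\abs{g_s(x)}^2 - m\log\|s|_{X_{f(x)}}\|_m^2$, indexed by local holomorphic sections $s$ of $\fl\omXYm$. For this to work you would need $-\log\|s|_{X_y}\|_m^2$ to be plurisubharmonic in $y$. (You wrote $\log\|\sigma\|_m^2$ psh, which has the wrong sign and would make the individual terms differences of psh functions.) But even with the sign corrected, the claim is false for higher-rank $\fl\omXYm$: Griffiths semi-positivity --- what Berndtsson--P\u{a}un and Tsuji give --- says that $\log\abs{f}_{\hd}$ is psh for sections $f$ of the \emph{dual} sheaf; for sections $s$ of the sheaf itself, $\log\|s\|$ is only lower semi-continuous, and neither $\pm\log\|s\|$ need be psh when the rank is at least $2$. (A two-dimensional counterexample: $E=\shO^{\oplus 2}$ with metric $\mathrm{diag}(1,e^{-|z|^2})$ is Griffiths semi-positive, but $-\log(1+e^{-|z|^2})$ fails to be subharmonic for $|z|$ large.) The plurisubharmonicity of the resulting Bergman-type supremum \emph{is} true, but it is a theorem in its own right --- it is not obtained by showing each term is psh, and citing it here is essentially citing the conclusion. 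The paper avoids this entirely: it proves the mean-value inequality for the local weight $\varphi_m$ along disks directly, using the Ohsawa--Takegoshi theorem for pluricanonical forms with the sharp constant $\mu(B)$ plus Jensen's inequality (Propositions~\ref{prop:NS-curvature} and~\ref{prop:mean-value}), following the same scheme that yields Theorem~\ref{thm:pushforward}.

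\textbf{Extension across $X\setminus X_0$ is not handled.} You propose to ``set the weight to $+\infty$'' (and later say $-\infty$) on $X\setminus X_0$, but neither makes sense: the local weight $\varphi_m$ must extend as a plurisubharmonic function, and by \lemmaref{lem:Riemann-Hartogs}(b) this requires that $\varphi_m$ be locally bounded from above near every point of $X\setminus X_0$. That bound is a nontrivial fact; the paper establishes it with another application of the Ohsawa--Takegoshi theorem for pluricanonical forms (end of the proof of \theoremref{thm:pluricanonical}), which controls the local weight in terms of the length of a global extension.

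\textbf{Part (b): the multiplier ideal is not trivial on $X_0$.} You argue that continuity of $\varphi_m$ on $X_0$ gives local boundedness and hence $\shI(h)=\OX$ there. But the Narasimhan--Simha weight is $-\infty$ along the relative base locus of the $m$-canonical system, which typically lies inside $X_0$, so $\varphi_m$ is continuous as a map into $[-\infty,+\infty)$ but \emph{not} locally bounded, and $\shI(h)$ is in general strictly contained in $\OX$. The correct argument (Lemma~\ref{lem:NS-shI} and the identity \eqref{eq:identity}) is a pointwise comparison $\abs{v}_h^2 \leq \abs{g_v}^{2/m}\,c_n\,dz\wedge d\bar z$ that shows every $m$-canonical form on a fiber is nonetheless $h$-square-integrable, despite the nontriviality of $\shI(h)$; combined with Ohsawa--Takegoshi it gives the isomorphism over $Y\setminus Z$. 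Your shortcut skips the step that is actually doing the work.
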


We can therefore apply \corollaryref{cor:pushforward} and conclude that for any $m
\geq 1$, the torsion-free sheaf $\fl \omXYm$ has a singular hermitian metric with
semi-positive curvature and the minimal extension property. Over the smooth locus of
$f$, this metric is finite and continuous. The minimal extension property
has the following remarkable consequences.

\begin{corollary}
Suppose that $Y$ is compact. 
\begin{aenumerate}
\item If $c_1 \bigl( \det \fl \omXYm \bigr) = 0$ in $H^2(Y, \RR)$, then $\fl \omXYm$
is locally free and the singular hermitian metric on it is smooth and flat.
\item Any nonzero morphism $\fl \omXYm \to \OY$ is split surjective.
\end{aenumerate}
\end{corollary}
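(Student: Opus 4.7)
The plan is to derive this corollary as a direct combination of results already proved in the excerpt. By \theoremref{thm:pluricanonical}(a), the line bundle $\omXY$ carries the $m$-th Narasimhan-Simha metric, which is a singular hermitian metric with semi-positive curvature. Its $(m-1)$-st tensor power is a singular hermitian metric $h$ with semi-positive curvature on $L = \omega_{X/Y}^{\otimes(m-1)}$. \theoremref{thm:pushforward} then produces a canonical singular hermitian metric with semi-positive curvature and the minimal extension property on
\[
\fl \bigl( \omXY \tensor L \tensor \shI(h) \bigr).
\]
By \theoremref{thm:pluricanonical}(b), the canonical inclusion $\fl(\omXY \tensor L \tensor \shI(h)) \into \fl \omXYm$ is an isomorphism over the smooth locus of $f$, hence in particular a generic isomorphism. \corollaryref{cor:pushforward} therefore transfers both the semi-positive curvature metric and the minimal extension property to the sheaf $\shF = \fl \omXYm$.

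With this equipment in hand, the two parts of the corollary become immediate specializations of the general results proved at the end of \chapref{analytic}. For part (a), all hypotheses of \theoremref{thm:det-trivial} are met: $Y$ is compact, $\shF$ carries a singular hermitian metric of semi-positive curvature with the minimal extension property, and $c_1(\det \shF) = 0$ in $H^2(Y, \RR)$; the conclusion is precisely that $\shF$ is locally free and that the metric is smooth and flat. For part (b), any nonzero morphism $\fl \omXYm \to \OY$ is a nonzero global section of $\shFd$, and \theoremref{thm:quotient-split} produces a section $s \in H^0(Y, \shF)$ with $f \circ s \equiv 1$, exhibiting the morphism as split surjective.

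There is essentially no genuine obstacle here; the corollary is a bookkeeping step that repackages the analytic machinery into the form used in \corollaryref{metric_summary} and in the proof of \theoremref{main}. The only point worth verifying with some care is the transfer of the minimal extension property along the generic isomorphism provided by \corollaryref{cor:pushforward}. This is immediate from the definition: local sections of the subsheaf $\fl(\omXY \tensor L \tensor \shI(h))$ are also sections of the larger sheaf $\fl \omXYm$, their pointwise norms agree on the locus where the inclusion is an isomorphism, and the required extension property is stated precisely on such a complement of a nowhere dense analytic subset.
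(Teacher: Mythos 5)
Your proposal is correct and follows the paper's route exactly: equip $\fl\omXYm$ with the semi-positive metric and the minimal extension property via Theorem~\ref{thm:pluricanonical} and Corollary~\ref{cor:pushforward}, then invoke Theorem~\ref{thm:det-trivial} for part (a) and Theorem~\ref{thm:quotient-split} for part (b), which is all the paper's one-line proof does after the preceding discussion. The only cosmetic slip is reusing $f$ for both the fibration and the sheaf morphism $\shF \to \OY$ in the last step.
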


\begin{proof}
This follows from \theoremref{thm:det-trivial} and \theoremref{thm:quotient-split}.
\end{proof}

\begin{note}
There are two or three points in the proof where we need to use invariance of
plurigenera. This means that \theoremref{thm:pluricanonical} cannot be used to give a
new proof for the invariance of plurigenera.
\end{note}

\subsection{The absolute case}
\label{par:absolute}

Let us start by discussing the absolute case. Take $X$ to be a smooth projective
variety of dimension $n$. Fix an integer $m \geq 1$ for which the vector space 
\[
	V_m = H^0(X, \omXm)
\] 
of all $m$-canonical forms is nontrivial. Our goal is to construct a singular
hermitian metric on the line bundle $\omX$, with singularities along the base locus
of $V_m$, such that all elements of $V_m$ have bounded norm. We can measure the
\define{length} of an $m$-canonical form $v \in V_m$ by a
real number $\ell(v) \in [0, +\infty)$, defined by the formula
\begin{equation} \label{eq:length}
	\ell(v) = \left( \int_X (c_n^m v \wedge \wbar{v})^{1/m} \right)^{m/2}.
\end{equation}
The constant $c_n = 2^{-n} (-1)^{n^2/2}$ is there to make the expression in
parentheses positive. A more concrete definition is as follows. In local coordinates
$z_1, \dotsc, z_n$, we have an expression
\[
	v = g(z_1, \dotsc, z_n) (\dz_1 \wedge \dotsb \wedge \dz_n)^{\tensor m},
\]
with $g$ holomorphic; the integrand in \eqref{eq:length} is then locally given by
\begin{equation} \label{eq:length-concrete}
	\abs{g}^{2/m} c_n (\dz_1 \wedge \dotsb \wedge \dz_n) 
		\wedge (\dzb_1 \wedge \dotsb \wedge \dzb_n).
\end{equation}
For $m \geq 2$, the length function $\ell$ is not a norm, because the triangle
inequality fails to hold. On the other hand, $\ell$ is continuous on
$V_m$, with $\ell(v) = 0$ iff $v = 0$; we also have $\ell(\lambda v) =
\abs{\lambda} \cdot \ell(v)$ for every $\lambda \in \CC$. 

We can now construct a singular hermitian metric $h_m$ on the line bundle $\omX$
by using the length function $\ell$. Given an element $\xi$ in the fiber of $\omX$
at a point $x \in X$, we define
\[
	\abs{\xi}_{h_m, x}
		= \inf \menge{\ell(v)^{1/m}}{\text{$v \in V_m$ satisfies $v(x) = \xi^{\tensor m}$}}
		\, \in \, [0, +\infty].
\]
In other words, we look for the $m$-canonical form of minimal length whose
value at the point $x$ is equal to the $m$-th power of $\xi$; if $x$ belongs to the
base locus of $V_m$, then $\abs{\xi}_{h_m,x} = +\infty$ for $\xi \neq 0$. We obtain
in this way a singular hermitian metric $h_m$ on the line bundle $\omX$, with
singularities precisely along the base locus of the linear system $V_m$. The
advantage of this construction is that it is completely canonical: there is no need
to choose a basis for $V_m$. 

\begin{note}
Following P\u{a}un and Takayama, we may call $h_m$ the \define{$m$-th Narasimhan-Simha
metric} on the line bundle $\omX$, because Narasimhan and Simha \cite{NS} used this metric in the special case
$\omX$ ample. A similar construction also appears in Kawamata's proof of Iitaka's
conjecture over curves \cite[\S2]{Ka-curve}.
\end{note}

\begin{proposition} \label{prop:NS-absolute}
The Narasimhan-Simha metric $h_m$ on $\omX$ is continuous, has 
singularities exactly along the base locus of $V_m$, and has semi-positive curvature.
\end{proposition}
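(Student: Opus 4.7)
The plan is to compute the local weight function of $h_m$ explicitly in a coordinate chart and then read off the three conclusions directly from that formula. Fix a chart $U \subseteq X$ with holomorphic coordinates $z_1, \dotsc, z_n$, and trivialize $\omX\vert_U$ by $s_0 = \dz_1 \wedge \dotsb \wedge \dz_n$, so that every $v \in V_m$ has a local expression $v\vert_U = g_v \cdot s_0^{\otimes m}$ with $g_v$ holomorphic. A direct computation from \eqref{eq:length-concrete} gives the positive homogeneity $\ell(\lambda v) = \lvert \lambda \rvert \ell(v)$ for $\lambda \in \CC$, so replacing the competitor $v$ in the defining infimum by $g_v(x)^{-1} v$ rewrites it as
\[
	\bigl\lvert s_0 \bigr\rvert_{h_m, x}^m = \inf \MENGE{\frac{\ell(v)}{\lvert g_v(x) \rvert}}{v \in V_m,\ g_v(x) \neq 0}.
\]
Writing the metric in this trivialization as $\lvert s_0 \rvert_{h_m}^2 = e^{-\varphi}$ for a local weight $\varphi$, this is the same as
\[
	\frac{m}{2} \varphi(x) = \sup \menge{\log \lvert g_v(x) \rvert}{v \in V_m,\ \ell(v) = 1}.
\]

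The whole argument now hinges on the compactness of the ``unit sphere'' $K = \menge{v \in V_m}{\ell(v) = 1}$. The function $\ell$ is continuous on the finite-dimensional space $V_m$; it is positively $1$-homogeneous; and it is strictly positive on $V_m \setminus \{0\}$, because for $v \neq 0$ the integrand $(c_n^m v \wedge \bar v)^{1/m}$ is a non-trivial nonnegative measure on $X$. Comparing $\ell$ with any hermitian norm on $V_m$ using these two properties forces $K$ to be a closed bounded, hence compact, subset of $V_m$. This step is where a little care is needed, since for $m \geq 2$ the length $\ell$ fails the triangle inequality and is not a norm, so one cannot simply quote the compactness of spheres in a normed space.

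Given compactness of $K$, the three conclusions follow essentially formally. For semi-positive curvature, $(2/m) \varphi$ is the upper envelope over $K$ of the plurisubharmonic functions $\log \lvert g_v \rvert$; joint continuity of $(v, x) \mapsto \lvert g_v(x) \rvert$ combined with compactness of $K$ makes the envelope $u(x) = \sup_{v \in K} \lvert g_v(x) \rvert$ continuous on $U$, so $\log u$ is upper semi-continuous, and the upper envelope of a family of plurisubharmonic functions that is already upper semi-continuous is itself plurisubharmonic. For the singularity locus, $u(x) = 0$ occurs exactly when $g_v(x) = 0$ for every $v \in V_m$, i.e., precisely at the base locus of $V_m$; off the base locus $u$ is positive and continuous, so $\varphi$ is finite and continuous, while along the base locus $\varphi = -\infty$. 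Thus $h_m$ is a continuous singular hermitian metric with values in $[0, +\infty]$, and its singular set equals the base locus of $V_m$.

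The single real obstacle is the compactness step for $K$ noted above; once that is secured, the proof is a routine application of the standard principle that the upper envelope of a compact family of plurisubharmonic functions, when upper semi-continuous, is plurisubharmonic.
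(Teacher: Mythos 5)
Your proof is correct and reaches all three conclusions via the same weight-function reformulation the paper uses (compare \eqref{eq:NS-weight-2}), but you diverge at the crucial compactness step. You establish compactness of the ``sphere'' $K = \menge{v \in V_m}{\ell(v) = 1}$ directly from finite-dimensionality of $V_m$ together with continuity, $1$-homogeneity, and strict positivity of $\ell$, and then conclude by taking the upper envelope of $\log\abs{g_v}$ over the compact index set $K$. The paper instead argues analytically: the bound $\ell(v) \leq 1$ gives a uniform $L^{2/m}$-bound on the holomorphic functions $g_v$, the sub-mean-value inequality converts this into a uniform sup-bound on compact subsets, and Montel's theorem then yields normality/equicontinuity of the family $G_m$; plurisubharmonicity of the envelope $\varphi_m$ follows from \cite[Theorem~I.5.7]{Demailly}. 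Both routes are valid here, since $V_m$ really is finite-dimensional. The paper's choice is deliberate, though: in the proof of \propositionref{prop:NS-continuous} the corresponding space $V_m = H^0(X, \omXm)$ is infinite-dimensional (the base $Y$ is replaced by a non-compact ball), so your finite-dimensional compactness argument has no analogue there, whereas the $L^{2/m}$--Montel mechanism (isolated as \lemmaref{lem:Lp}) carries over unchanged. Your proof is therefore a clean, somewhat more elementary treatment of the absolute case, but it does not set up the tool the paper needs for the relative one.
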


\begin{proof}
We compute the local weights of $h_m$. Let $z_1, \dotsc, z_n$ be local holomorphic
coordinates on a suitable open subset $U \subseteq X$, and set $s_0 = \dz_1 \wedge
\dotsb \wedge \dz_n$, which is a nowhere vanishing section of $\omX$ on the subset $U$.
Consider the function
\[
	\varphi_m = - \log \abs{s_0}_{h_m}^2 \colon U \to [-\infty, +\infty).
\]
The definition of $h_m$ shows that, for every $x \in U$, 
\begin{equation} \label{eq:NS-weight-1}
	\varphi_m(x) 
		= \frac{2}{m} \sup \MENGE{\log \frac{1}{\ell(v)}}{%
			\text{$v \in V_m$ satisfies $v(x) = s_0(x)^{\tensor m}$}}.
\end{equation}
For each $v \in V_m$, there is a holomorphic function $g_v \colon U \to \CC$ with
$v \restr{U} = g_v \cdot s_0^{\tensor m}$. If $g_v(x) \neq 0$, then the $m$-canonical
form $v/g_v(x)$ contributes to the right-hand side of \eqref{eq:NS-weight-1}, and so
we obtain
\begin{equation} \label{eq:NS-weight-2}
	\varphi_m(x) = \frac{2}{m} \sup \Menge{\log \abs{g_v(x)}}{%
		\text{$v \in V_m$ satisfies $\ell(v) \leq 1$}}.
\end{equation}
We will see in a moment that the supremum is actually a maximum, because the set of
$m$-canonical forms $v \in V_m$ with $\ell(v) \leq 1$ is compact. Evidently,
$\varphi_m(x) = -\infty$ if and
only if $x \in U$ belongs to the base locus of $V_m$.

Now observe that the family of holomorphic functions
\[
	G_m = \menge{g_v \in H^0(U, \OX)}{\text{$v \in V_m$ satisfies $\ell(v) \leq 1$}}
\]
is uniformly bounded on compact subsets. Indeed, the fact that $\ell(v) \leq 1$ gives us
a uniform bound on the $L^{2/m}$-norm of each $g_v$, and then we can argue as in the
proof of \propositionref{prop:Montel}, using the mean-value inequality. By the
$n$-dimensional version of Montel's theorem, the family $G_m$ is equicontinuous;
due to \eqref{eq:NS-weight-2}, our $\varphi_m$ is therefore continuous, as a
function from $U$ into $[-\infty, +\infty)$.

From \eqref{eq:NS-weight-2}, we can also determine the curvature properties of
$h_m$. For each fixed $v \in V_m$, the function $\log \abs{g_v}^{2/m}$ is continuous and
plurisubharmonic, and equal to $-\infty$ precisely on the zero locus of $g_v$. As the
upper envelope of an equicontinuous family of plurisubharmonic functions, $\varphi_m$
is itself plurisubharmonic \cite[Theorem~I.5.7]{Demailly}. This shows that the 
Narasimhan-Simha metric on $\omX$ has semi-positive curvature.
\end{proof}

Another good feature of the Narasimhan-Simha metric is that all $m$-canonical forms
are bounded with respect to this metric. Indeed, if we also use $h_m$ to denote the
induced singular hermitian metric on $\omXm$, then by construction, we have the
pointwise inequality $\abs{v}_{h_m} \leq \ell(v)$ for every $v \in V_m$. In order to
fit the Narasimhan-Simha metric into the framework of \chapref{analytic}, we write
\[
	\omXm = \omX \tensor \omX^{\tensor(m-1)},
\]
and endow the line bundle $L = \omX^{\tensor(m-1)}$ with the singular hermitian
metric $h$ induced by $h_m$. This metric is continuous and has semi-positive
curvature. 

\begin{lemma} \label{lem:NS-shI}
For every $v \in V_m$, we have $\norm{v}_h \leq \ell(v)$.
\end{lemma}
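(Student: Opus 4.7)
The plan is to unpack both sides of the asserted inequality in a local trivialization and then compare them via the defining property of the Narasimhan-Simha metric.

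First I will work in local holomorphic coordinates $z_1,\dotsc,z_n$ on an open set $U \subseteq X$ and set $s_0 = \dz_1 \wedge \dotsb \wedge \dz_n$. Writing $\varphi_m = -\log \abs{s_0}_{h_m}^2$ for the local weight of the Narasimhan-Simha metric on $\omX$, the induced metric $h$ on $L = \omX^{\tensor(m-1)}$ has weight $e^{-(m-1)\varphi_m}$ relative to $s_0^{\tensor(m-1)}$. Any $v \in V_m$ can be written as $v = g\cdot s_0^{\tensor m}$ locally, so viewing $v$ as a section of $\omX \tensor L$, the local formula \eqref{eq:h-beta} gives
\[
\abs{v}_h^2 = \abs{g}^2 e^{-(m-1)\varphi_m}(\dx_1 \wedge \dy_1) \wedge \dotsb \wedge (\dx_n \wedge \dy_n).
\]
At the same time, the concrete expression \eqref{eq:length-concrete} for the integrand of $\ell(v)$ reads $\abs{g}^{2/m}(\dx_1\wedge\dy_1)\wedge\dotsb\wedge(\dx_n\wedge\dy_n)$, so $\ell(v)^{2/m} = \int_X \abs{g}^{2/m}\, dV$.

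Next I will invoke the defining property of the Narasimhan-Simha metric: since $v(x)$ is an element of the fibre of $\omX^{\tensor m}$ at $x$, the very definition of $h_m$ gives pointwise the bound $\abs{v(x)}_{h_m^{\tensor m},x} \leq \ell(v)$, which in our trivialization becomes
\[
\abs{g(x)}^2 e^{-m\varphi_m(x)} \leq \ell(v)^2, \qquad \text{i.e.}\qquad \abs{g(x)}^{2(m-1)/m} \leq \ell(v)^{2(m-1)/m} e^{(m-1)\varphi_m(x)}.
\]

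Finally I will split $\abs{g}^2 = \abs{g}^{2/m} \cdot \abs{g}^{2(m-1)/m}$, use the pointwise estimate on the second factor to cancel $e^{-(m-1)\varphi_m}$, and integrate. This yields
\[
\norm{v}_h^2 = \int_X \abs{g}^2 e^{-(m-1)\varphi_m}\, dV \leq \ell(v)^{2(m-1)/m} \int_X \abs{g}^{2/m}\, dV = \ell(v)^{2(m-1)/m}\cdot \ell(v)^{2/m} = \ell(v)^2,
\]
after which taking square roots finishes the proof. No serious obstacle arises; the only thing to be careful about is to keep the weights of $h_m$, $h$, and $h_m^{\tensor m}$ straight and to verify that the splitting of $\abs{g}^2$ is just an arithmetic identity rather than an appeal to a Hölder-type inequality.
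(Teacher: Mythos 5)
Your proposal is correct and follows essentially the same route as the paper: both obtain the pointwise inequality $\varphi_m \geq \log\abs{g}^{2/m} - \tfrac{2}{m}\log\ell(v)$ (the paper states this via \eqref{eq:NS-weight-2} after normalizing $\ell(v)=1$, you state it directly from the defining infimum for $h_m$), then cancel $e^{-(m-1)\varphi_m}$ against $\abs{g}^{2(m-1)/m}$ and integrate. The only cosmetic difference is that the paper normalizes $\ell(v)=1$ at the outset, while you carry $\ell(v)$ through the computation; the substance is identical.
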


\begin{proof}
We keep the notation introduced during the proof of
\propositionref{prop:NS-absolute}. The weight of $h$ with respect to the section
$s_0^{\tensor(m-1)}$ of the line bundle $\omX^{\tensor(m-1)}$ is
\[
	e^{-(m-1) \cdot \varphi_m},
\]
where $\varphi_m$ is the function defined in \eqref{eq:NS-weight-2}. Now fix an
$m$-canonical form $v \in V_m$ with $\ell(v) = 1$. On the open set
$U$, the integrand in the definition of $\norm{v}_h$ is
\[
	\abs{g_v}^2 e^{-(m-1) \cdot \varphi_m} 
		\cdot c_n (\dz_1 \wedge \dotsb \wedge \dz_n) 
			\wedge (\dzb_1 \wedge \dotsb \wedge \dzb_n).
\]
Because of \eqref{eq:NS-weight-2}, we have $\varphi_m \geq \log \abs{g_v}^{2/m}$, and
therefore
\[
	\abs{g_v}^2 e^{-(m-1) \cdot \varphi_m}
		\leq \abs{g_v}^2 \cdot \abs{g_v}^{-2(m-1)/m} = \abs{g_v}^{2/m}.
\]
Looking back at the definition of $\ell(v)$ in \eqref{eq:length-concrete}, this shows
that $\norm{v}_h \leq \ell(v)$.
\end{proof}

Since $\ell$ is not itself a norm, the inequality will in general be strict.
One useful consequence of \lemmaref{lem:NS-shI} is the identity
\begin{equation} \label{eq:identity}
	H^0 \bigl( X, \omX \tensor L \tensor \shI(h) \bigr) = H^0(X, \omX \tensor L)
		= H^0 \bigl( X, \omXm \bigr).
\end{equation}
Note that the multiplier ideal $\shI(h)$ may well be nontrivial; nevertheless, it
imposes no extra conditions on global sections of $\omXm$.

\subsection{The Ohsawa-Takegoshi theorem for pluricanonical forms}

To analyze how the Narasimhan-Simha metric behaves in families, we will need a
version of the Ohsawa-Takegoshi theorem for $m$-canonical forms. 
% P\u{a}un and Takayama actually prove a more general result, but I will restrict
% myself to the case that we need for Iitaka's conjecture. 
Suppose that $f \colon X \to B$ is a holomorphic mapping to the open unit ball $B
\subseteq \CC^r$, with $f$ projective and $f(X) = B$, and such that the central fiber
$X_0 = f^{-1}(0)$ is nonsingular. To simplify the discussion, let us also assume that
$f$ is the restriction of a holomorphic family over a ball of slightly larger radius.
As in \eqref{eq:length}, we have length functions $\ell$ and $\ell_0$ on $X$
respectively $X_0$; because $X$ is not compact, it may happen that $\ell(v) =
+\infty$ for certain $v \in H^0(X, \omXm)$.

\begin{theorem} \label{thm:OTm}
For each $u \in H^0 \bigl( X_0, \omega_{X_0}^{\tensor m} \bigr)$, there is some
$v \in H^0 \bigl( X, \omXm \bigr)$ with 
\[
	\ell(v) \leq \mu(B)^{m/2} \cdot \ell_0(u),
\]
such that the restriction of $v$ to $X_0$ is equal to $u \wedge (df_1 \wedge \dotsb
\wedge df_r)^{\tensor m}$.
\end{theorem}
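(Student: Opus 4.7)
The plan is to deduce Theorem \ref{thm:OTm} from the sharp Ohsawa--Takegoshi theorem (Theorem \ref{thm:OT}) applied to the line bundle $L = \omX^{\otimes(m-1)}$ equipped with a singular Hermitian metric of Narasimhan--Simha type, built iteratively out of candidate extensions of $u$. A single extension step, combined with a Hölder estimate relating the $L^2$-norm against this metric to the length function $\ell$, will produce an operator on approximate extensions which contracts toward a section realizing the optimal length bound $\mu(B)^{m/2}\ell_0(u)$.

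Concretely, given any $v \in H^0(X, \omX^{\otimes m})$ with $v|_{X_0} = u \wedge df^{\otimes m}$ and $\ell(v) < +\infty$, I would endow $L$ with the singular Hermitian metric $h_v$ whose local weight with respect to a trivialization $s_0^{\otimes(m-1)}$ is $\tfrac{m-1}{m}\log \abs{g_v}^2$, where $v = g_v \cdot s_0^{\otimes m}$; this is plurisubharmonic with coefficient $(m-1)/m > 0$, so $(L, h_v)$ has semi-positive curvature. In coordinates $(z_0, f)$ adapted to the submersion $f$, with $s_0 = dz_0 \wedge df$ and $u = \sigma \cdot dz_0^{\otimes m}$, one has $g_v|_{X_0} = \sigma$, and a direct computation yields
\[
    \norm{u}_{h_{v,0}}^2
    = \int_{X_0} \abs{\sigma}^2 \cdot \abs{\sigma}^{-2(m-1)/m} \,
        c_{n-r}\, dz_0 \wedge d\bar z_0
    = \ell_0(u)^{2/m}.
\]
Theorem \ref{thm:OT} then produces $v' \in H^0(X, \omX^{\otimes m})$ extending $u \wedge df^{\otimes m}$ with $\norm{v'}_{h_v}^2 \leq \mu(B)\, \ell_0(u)^{2/m}$. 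Hölder's inequality with exponents $(m, m/(m-1))$, applied to the factorization $\abs{g_{v'}}^{2/m} = \bigl(\abs{g_{v'}}^2 \abs{g_v}^{-2(m-1)/m}\bigr)^{1/m} \cdot \abs{g_v}^{2(m-1)/m^2}$, gives
\[
    \ell(v')^{2/m} \leq \norm{v'}_{h_v}^{2/m} \cdot \ell(v)^{2(m-1)/m^2},
\]
which rearranges to $\ell(v') \leq \mu(B)^{1/2}\,\ell_0(u)^{1/m}\,\ell(v)^{(m-1)/m}$.

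To conclude, I would iterate, letting $v_{k+1}$ be produced from $v_k$ as above. Writing $a_k = \log \ell(v_k)$, one obtains the affine recursion $a_{k+1} \leq C + \tfrac{m-1}{m} a_k$ with $C = \tfrac{1}{2}\log\mu(B) + \tfrac{1}{m}\log\ell_0(u)$; its unique fixed point is $a_\ast = \tfrac{m}{2}\log\mu(B) + \log\ell_0(u)$, whence $\limsup_k \ell(v_k) \leq \mu(B)^{m/2}\ell_0(u)$. An initial $v_0$ with $\ell(v_0) < +\infty$ is produced by one application of Theorem \ref{thm:OT} with a smooth bounded reference metric on $L$, using the assumption that $f$ extends across $\overline{B}$ so that a uniform $L^2$-bound translates to a finite length over $B$. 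Extracting a subsequence converging uniformly on compact subsets by applying Proposition \ref{prop:Montel} in local trivializations, one obtains the desired section $v \in H^0(X, \omX^{\otimes m})$; Fatou's lemma preserves the length bound, and continuity of restriction preserves the boundary condition $v|_{X_0} = u \wedge df^{\otimes m}$. The main obstacle I expect is the adjunction bookkeeping at the central fiber, namely identifying $u$ as a section of $\omega_{X_0} \otimes L_0$ via the isomorphism $\omX|_{X_0} \simeq \omega_{X_0}$ induced by $\wedge df$ and verifying that the local computation of $\norm{u}_{h_{v,0}}^2$ reproduces $\ell_0(u)^{2/m}$ with no stray factors; once this is airtight, the sharp exponent $m/2$ in the final estimate emerges automatically from the interaction between the optimal constant $\mu(B)$ in Theorem \ref{thm:OT} and the Hölder exponent $m/(m-1)$.
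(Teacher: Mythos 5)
The core of your argument — equip $L = \omX^{\otimes(m-1)}$ with the singular metric $h_v$ built from a candidate extension $v$, apply the sharp Ohsawa--Takegoshi theorem, deduce $\ell(v') \leq \mu(B)^{1/2}\,\ell_0(u)^{1/m}\,\ell(v)^{(m-1)/m}$ via H\"older, iterate, and extract a limit — is exactly the paper's strategy, and your computations (the identity $\norm{u}_{h_{v,0}}^2 = \ell_0(u)^{2/m}$, the H\"older exponents, the affine recursion on $\log\ell(v_k)$ with fixed point $\mu(B)^{m/2}\ell_0(u)$) are all correct. Your phrasing of the recursion in terms of a fixed point is a slightly cleaner bookkeeping device than the paper's explicit case analysis, and avoiding the auxiliary function $F = v'/v$ by factoring $\abs{g_{v'}}^{2/m}$ directly is a cosmetic change; otherwise the two proofs coincide.

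There is, however, a genuine gap at the base of the induction: you propose producing the initial extension $v_0$ with $\ell(v_0) < +\infty$ by ``one application of Theorem~\ref{thm:OT} with a smooth bounded reference metric on $L$.'' Theorem~\ref{thm:OT} requires the pair $(L,h)$ to have semi-positive curvature, and for a \emph{smooth} metric this means the Chern curvature form is semi-positive. There is no reason whatsoever for $L = \omX^{\otimes(m-1)}$ to admit such a metric (even locally over $B$ the fibers are arbitrary projective manifolds, so $\omega_{X_0}$ can be very negative), so the hypothesis of the extension theorem is simply not satisfied, and the initial extension cannot be produced this way. The paper circumvents this entirely and never invokes Ohsawa--Takegoshi for the initial step: by invariance of plurigenera the coherent sheaf $\fl\omXm$ has the base-change property at $0 \in B$, so its fiber is $H^0(X_0,\omega_{X_0}^{\otimes m})$, and since $B$ is Stein the evaluation $H^0(B, \fl\omXm) \to \fl\omXm \otimes k(0)$ is surjective; this gives \emph{some} extension $v_0 \in H^0(X, \omXm)$, and the hypothesis that $f$ comes from a family over a slightly larger ball makes $\ell(v_0)$ finite by compactness of $\overline{B}$. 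Once this repair is made, the rest of your proposal goes through.
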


\begin{proof}
Without loss of generality, we may assume that $\ell_0(u) = 1$. Since $X_0$ is
a projective complex manifold, invariance of plurigenera tells us that the fiber of
the coherent sheaf $\fl \omXm$ at the point $0 \in B$ is equal to $H^0 \bigl( X_0,
\omega_{X_0}^{\tensor m} \bigr)$. Because $B$ is a Stein manifold, we can then
certainly find a section 
\[
	v \in H^0 \bigl( B, \fl \omXm \bigr) = H^0(X, \omXm)
\]
with the correct restriction to $X_0$. By assuming that $f$ comes from a 
morphism to a ball of slightly larger radius, we can also arrange that the quantity
\[
	\ell(v) = \left( \int_X (c_n^m v \wedge \wbar{v})^{1/m} \right)^{m/2}
\]
is finite. Of course, $v$ will not in general satisfy the desired inequality.

The way to deal with this problem is to consider $\omXm = \omX \tensor
\omX^{\tensor(m-1)}$ as an adjoint bundle and to apply the Ohsawa-Takegoshi theorem
to get another extension of smaller length. The section $v \in H^0(X, \omXm)$ induces
a singular hermitian metric on the line bundle $\omXm$, whose curvature is
semi-positive. With respect to a local trivialization 
\[
	\varphi \colon \omXm \restr{U} \to \shO_U,
\]
the weight of this metric is given by $\log\abs{\varphi \circ v}^2$. Endow the line
bundle $\omX^{\tensor(m-1)}$ with the singular hermitian metric whose local weight is 
\[
	\frac{m-1}{m} \log \abs{\varphi \circ v}^2.
\]
It is easy to see that the norm of $u$ with respect to this metric is still equal to
$\ell_0(u) = 1$. \theoremref{thm:OT} says that there exists another
section $v' \in H^0(X, \omXm)$, with the same restriction to $X_0$, whose norm
squared is bounded by $\mu(B)$. To get a useful expression for the norm squared, write 
\[
	v' = F v,
\]
with $F$ meromorphic on $X$ and identically equal to $1$ on $X_0$; then the
inequality in the Ohsawa-Takegoshi theorem takes the form
\[
	\int_X \abs{F}^2 (c_n^m v \wedge \wbar{v})^{1/m} \leq \mu(B).
\]
We can use this to get an upper bound for the quantity
\[
	\ell(v') = \left( \int_X \abs{F}^{2/m} (c_n^m v \wedge \wbar{v})^{1/m} \right)^{m/2}.
\]
To begin with, let us write $(c_n^m v \wedge \wbar{v})^{1/m} = L \, d\mu$, where $L$ is
a nonnegative real-analytic function on $X$, and $d\mu$ is some choice of volume form.
Using H\"older's inequality with exponents $1/m$ and $(m-1)/m$, we have
\[
	\ell(v')^{2/m} = \int_X \abs{F}^{2/m} L \, d\mu 
	\leq \left( \int_X \abs{F}^2 L \, d\mu\right)^{1/m}
		\left( \int_X L \, d\mu\right)^{(m-1)/m},
\]
and therefore $\ell(v') \leq \mu(B)^{1/2} \cdot \ell(v)^{(m-1)/m}$, which we may
rewrite as
\[
	\frac{\ell(v')}{\mu(B)^{m/2}} \leq 
		\left( \frac{\ell(v)}{\mu(B)^{m/2}} \right)^{(m-1)/m}.
\]
Now we iterate this construction to produce an infinite sequence of $m$-canonical
forms $v_0, v_1, v_2, \dotsc \in H^0(X, \omXm)$, all with the correct restriction to
$X_0$. The inequality from above shows that one of two things happens: either
$\ell(v_k) \leq \mu(B)^{m/2}$ for some $k \geq 0$; or $\ell(v_k) > \mu(B)^{m/2}$ for
every $k \in \NN$, and
\[
	\lim_{k \to +\infty} \ell(v_k) = \mu(B)^{m/2}.
\]
If the former happens, we are done. If the latter happens, we apply
\lemmaref{lem:Lp}: it says that a subsequence converges uniformly on compact subsets
to an $m$-canonical form $v \in H^0(X, \omXm)$. Now $v$ satisfies $\ell(v) \leq 
\mu(B)^{m/2}$ (by Fatou's lemma), and its restriction to $X_0$ is of course still
equal to $u \wedge (df_1 \wedge \dotsb \wedge df_r)^{\tensor m}$.
\end{proof}

\begin{lemma} \label{lem:Lp}
Let $X$ be a complex manifold, and let $v_0, v_1, v_2, \dotsc \in H^0(X, \omXm)$ be a 
sequence of $m$-canonical forms such that $\ell(v_k) \leq C$ for every $k \in \NN$.
Then a subsequence converges uniformly on compact subsets to a limit $v \in H^0(X,
\omXm)$.
\end{lemma}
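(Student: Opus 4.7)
The plan is to reduce the claim to a local $L^{2/m}$-boundedness statement for the holomorphic coefficients of the $v_k$ in coordinate charts, and then invoke the classical Montel theorem in several variables together with a diagonal argument. The key point is that although $\ell$ is not a norm when $m \geq 2$, its $2/m$-th power is still given by a genuine $L^{2/m}$-integral of $|g|^{2/m}$, where $g$ is the local coefficient of the $m$-canonical form; and $|g|^{2/m}$ is plurisubharmonic, so we can feed it into the sub-mean-value inequality just as in the proof of \propositionref{prop:Montel}.

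First, cover $X$ by countably many open sets $U_j \subseteq X$, each equipped with holomorphic coordinates $z_1, \dotsc, z_n$, and each biholomorphic to a relatively compact neighborhood of a closed ball on which $\omX$ is trivialized by $s_0 = \dz_1 \wedge \dotsb \wedge \dz_n$. On $U_j$ write $v_k = g_{k,j} \cdot s_0^{\tensor m}$. Using \eqref{eq:length-concrete}, the hypothesis $\ell(v_k) \leq C$ gives, for each fixed $j$, a uniform bound
\[
    \int_{U_j} \abs{g_{k,j}}^{2/m}\, (\dx_1 \wedge \dy_1) \wedge \dotsb \wedge (\dx_n \wedge \dy_n) \,\leq\, C^{2/m}
\]
valid for every $k \in \NN$.

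Next, exploit the fact that since $g_{k,j}$ is holomorphic, the function $\abs{g_{k,j}}^{2/m}$ is plurisubharmonic and hence subharmonic on $U_j$. The mean-value inequality then bounds $\abs{g_{k,j}(z)}^{2/m}$ at any point $z$ in a compact subset $K \subseteq U_j$ by a constant multiple of the $L^1$-norm of $\abs{g_{k,j}}^{2/m}$ on a slightly larger compact set, and this latter quantity is uniformly bounded by the displayed inequality. Consequently the family $\{g_{k,j}\}_{k \in \NN}$ is uniformly bounded on every compact subset of $U_j$. By the $n$-dimensional Montel theorem \cite[Theorem I.A.12]{GR}, a subsequence of $\{g_{k,j}\}$ converges uniformly on compact subsets of $U_j$ to a holomorphic function $g_j \in H^0(U_j, \shO_{U_j})$.

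Finally, apply a standard diagonal argument over the countably many charts $U_j$ to extract a single subsequence of $\{v_k\}$ whose local coefficients converge uniformly on compact subsets in every chart. Because the transition relations between charts are linear and holomorphic, the limits $g_j \cdot s_0^{\tensor m}$ agree on overlaps and glue to a global section $v \in H^0(X, \omXm)$, and the convergence $v_k \to v$ is uniform on compact subsets of $X$. (If one wants the corresponding bound $\ell(v) \leq C$, it follows at once from Fatou's lemma applied on each chart and summing, but this is not required by the statement.) The only substantive step is the local uniform bound, and it drops out immediately from the plurisubharmonicity of $\abs{g}^{2/m}$; the rest is standard bookkeeping.
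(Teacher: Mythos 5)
Your proof is correct and takes essentially the same route as the paper's: local trivialization to get holomorphic coefficients with uniformly bounded $L^{2/m}$-norm, subharmonicity of $\abs{g}^{2/m}$ feeding into the mean-value inequality to get uniform bounds on compacta, Montel, then diagonalization and gluing. You have simply filled in the details (in particular, the observation that $\abs{g}^{2/m}$ is plurisubharmonic, which is the right substitute for the usual $\abs{g}^2$ appearing in \propositionref{prop:Montel}) that the paper's one-sentence argument leaves implicit.
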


\begin{proof}
With respect to a local trivialization of $\omXm$, we have a sequence of holomorphic
functions whose $L^{2/m}$-norm is uniformly bounded. Using the mean-value inequality,
this implies that the sequence of functions is uniformly bounded on compact subsets;
now apply Montel's theorem to get the desired conclusion.
\end{proof}

\begin{note}
One interesting thing about the proof of \theoremref{thm:OTm} is that it looks very
similar to Viehweg's covering trick (which we used for example in the proof of
\propositionref{summand}).
% Indeed, the singular hermitian metric induced by
% the initial choice of $v \in H^0(X, \omXm)$ is the same as the one that one gets 
% from the covering trick: resolve the divisor of $v$, take a resolution of the
% branched covering, and then realize $\omXm$ times a multiple of the divisor of $v$ as
% a summand in the pushforward of the canonical bundle. 
The advantage of the metric approach is that one can take a limit to obtain a
solution with the same properties as in the case $m = 1$.
\end{note}

\subsection{The relative case}

With the help of \theoremref{thm:OTm}, it is quite easy to analyze the behavior of the
Narasimhan-Simha metric in families. Let us first consider the case of a smooth
morphism $f \colon X \to Y$; as in the statement of \theoremref{thm:pluricanonical},
we assume that $f$ is projective with connected fibers, and that $f(X) = Y$.
Recall that by invariance of plurigenera, the dimension of the space of $m$-canonical forms on
the fiber $X_y = f^{-1}(y)$ is the same for every $y \in Y$. 

The restriction of the relative canonical bundle $\omXY$ to the fiber $X_y$ 
identifies to the canonical bundle $\omega_{X_y}$ of the fiber. We can therefore
apply the construction in \parref{par:absolute} fiber by fiber to produce a singular
hermitian metric $h_m$ on $\omXY$, called the \define{$m$-th relative
Narasimhan-Simha metric}; we shall give a more careful definition of $h_m$ in a
moment. The first result is that $h_m$ is continuous.

\begin{proposition} \label{prop:NS-continuous}
Under the assumptions above, the relative Narasimhan-Simha metric on $\omXY$ is
continuous.
\end{proposition}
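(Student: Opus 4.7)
The plan is to adapt the proof in the absolute case (Proposition \ref{prop:NS-absolute}), where continuity of $\varphi_m = -\log\abs{s_0}_{h_m}^2$ was shown by writing it as the upper envelope of logarithms of holomorphic functions in an equicontinuous family, via the mean-value inequality and Montel's theorem. In the relative setting I must handle the fact that the supremum at a point $x$ now runs over fiber sections $v \in V_m^{f(x)} = H^0(X_{f(x)}, \omega_{X_{f(x)}}^{\tensor m})$ with $\ell_{f(x)}(v) \leq 1$, a family that varies with $f(x)$. The bridge between fiber and ambient data is the pluricanonical Ohsawa-Takegoshi theorem (Theorem \ref{thm:OTm}), together with Lemma \ref{lem:Lp} for compactness.

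I reduce to a local statement by assuming $Y = B \subseteq \CC^r$ is the open unit ball, fixing $x_0 \in X$ with $y_0 = f(x_0)$, and choosing a nowhere vanishing local section $s_0$ of $\omXY$ near $x_0$; the fiberwise construction of $h_m$ gives, exactly as in \eqref{eq:NS-weight-2},
\[
    \varphi_m(x) = \frac{2}{m} \sup\bigl\{\log\abs{g_v(x)} : v \in V_m^{f(x)},\ \ell_{f(x)}(v) \leq 1\bigr\},
\]
where $g_v = v / s_0^{\tensor m}\vert_{X_{f(x)}}$. By Theorem \ref{thm:OTm}, any such $v \in V_m^y$ extends to $\tilde v \in H^0(f^{-1}(B_\delta(y)), \omXm)$ with $\ell(\tilde v) \leq \mu(B_\delta)^{m/2}$, and after removing the base direction (dividing by $(dt_1 \wedge \dotsb \wedge dt_r)^{\tensor m}$ in local coordinates on $B$) one obtains a section of $\omXYm$. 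Two properties of such extensions drive the argument: for a fixed holomorphic $\tilde v$, the fiberwise length $y \mapsto \ell_y(\tilde v\vert_{X_y})^{2/m}$ is continuous (by Fubini applied to the continuous density $(c_n^m \tilde v \wedge \wbar{\tilde v})^{1/m}$ together with Ehresmann's smooth trivialization of $f$); and any family of extensions with uniformly bounded total length is normal in the sense of Lemma \ref{lem:Lp}.

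For lower semi-continuity at $x_0$, I pick $\epsilon > 0$ and a near-optimizer $v_0 \in V_m^{y_0}$ with $\ell_{y_0}(v_0) \leq 1$ and $\tfrac{2}{m}\log\abs{g_{v_0}(x_0)} \geq \varphi_m(x_0) - \epsilon$; extending to $\tilde v_0$ and shrinking the base, continuity of $y \mapsto \ell_y(\tilde v_0\vert_{X_y})$ forces $\leq 1+\epsilon$ on some smaller ball $B_{\delta'}(y_0)$, so after rescaling by $(1+\epsilon)^{-1}$ the restricted sections become feasible competitors at nearby $x$, and continuity of $g_{\tilde v_0}$ yields $\liminf_{x \to x_0} \varphi_m(x) \geq \varphi_m(x_0) - O(\epsilon)$. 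For upper semi-continuity, given $x_k \to x_0$ and near-optimizers $v_k \in V_m^{y_k}$ with $\ell_{y_k}(v_k) \leq 1$, I apply Theorem \ref{thm:OTm} at each $y_k = f(x_k)$ with a fixed small ball $B_\delta$, producing extensions $\tilde v_k$ of uniformly bounded total length defined on $f^{-1}(B_{\delta/2}(y_0))$ for large $k$. Lemma \ref{lem:Lp} yields a subsequence converging uniformly on compacts to some $\tilde v$; then $g_{v_k}(x_k) = g_{\tilde v_k}(x_k) \to g_{\tilde v}(x_0)$, and (by uniform convergence combined with Ehresmann's identification of fibers) $\ell_{y_k}(v_k) = \ell_{y_k}(\tilde v_k\vert_{X_{y_k}}) \to \ell_{y_0}(\tilde v\vert_{X_{y_0}})$, so the limit $\tilde v\vert_{X_{y_0}}$ lies in the feasible set for $\varphi_m(x_0)$ and $\limsup_k \varphi_m(x_k) \leq \varphi_m(x_0)$.

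The main obstacle is the last comparison in the upper semi-continuity argument: the hypothesis $\ell_{y_k}(v_k) \leq 1$ refers to the varying fiber $X_{y_k}$, while one needs to bound $\ell_{y_0}(\tilde v\vert_{X_{y_0}})$ on the central fiber, and Theorem \ref{thm:OTm} directly controls only the total length $\ell(\tilde v_k)$, which by Fubini corresponds to a \emph{base-averaged} fiber length. What saves the argument is uniform convergence $\tilde v_k \to \tilde v$ on a fixed compact neighborhood of $X_{y_0}$ combined with joint continuity in $(y,z)$ of the length density, so that $\ell_{y_k}(\tilde v_k\vert_{X_{y_k}}) \to \ell_{y_0}(\tilde v\vert_{X_{y_0}})$ as $k \to \infty$ and the bound survives the limit. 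Executing this convergence carefully, using Ehresmann's theorem to compare integrals over $X_{y_k}$ and $X_{y_0}$ uniformly in $k$, is the technical heart of the proof.
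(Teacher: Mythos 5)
Your proposal is correct and follows essentially the same route as the paper: reduce to a local weight $\varphi_m$, express it as a supremum of $\tfrac{2}{m}\log\abs{g_v}$ over fiberwise length-$\leq 1$ competitors, use \theoremref{thm:OTm} to extend competitors to the total space with uniformly bounded total length, and then use the normal-family compactness of \lemmaref{lem:Lp} to handle upper semi-continuity and the continuity of $y \mapsto \ell_y(\tilde v \vert_{X_y})$ to handle lower semi-continuity. The only deviations from the paper's proof are cosmetic: you use $\epsilon$-near-optimizers rather than exact maximizers (thereby avoiding any appeal to compactness of the fiberwise competitor set), you rescale by $(1+\epsilon)^{-1}$ rather than dividing by the exact fiberwise length, and in the upper semi-continuity step you argue for a genuine limit $\ell_{y_k}(\tilde v_k\vert_{X_{y_k}}) \to \ell_{y_0}(\tilde v\vert_{X_{y_0}})$, whereas the paper settles for the one-sided bound $\ell_{y_0}(\tilde v\vert_{X_{y_0}}) \leq 1$ via Fatou's lemma (which is all that is needed to show the limit is a feasible competitor). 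You also apply \theoremref{thm:OTm} on small balls around each $y_k$ rather than shrinking $Y$ once at the start so that a single uniform constant $C$ governs all extensions over the entire (shrunk) base, but the effect is the same. Your identification of the "main obstacle" --- comparing lengths across varying fibers --- and its resolution via uniform convergence on compacts plus joint continuity of the fiber-integrated length density is exactly the content of the Fatou step in the published argument.
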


\begin{proof}
Once again, this is an application of the Ohsawa-Takegoshi theorem for
pluricanonical forms, which allows us to extend $m$-canonical forms from the fibers
of $f$,  with a uniform upper bound on the length of the extension. After shrinking
$Y$, we can assume that $Y = B$ is the open unit ball in $\CC^r$, with coordinates
$t_1, \dotsc, t_r$. We denote by $V_m = H^0(X, \omXm)$ the (typically
infinite-dimensional) vector space of all $m$-canonical forms on $X$. Given $v \in
V_m$ and a point $y \in Y$, we have
\[
	v \restr{X_y} = v_y \tensor (\dt_1 \wedge \dotsb \wedge \dt_r)^{\tensor m}
\]
for a unique $m$-canonical form $v_y \in H^0(X_y, \omega_{X_y}^{\tensor m})$. We
denote by $\ell(v)$ the length of $v$ on $X$, and by $\ell_y(v_y)$ the length of
$v_y$ on $X_y$. The Ohsawa-Takegoshi theorem for pluricanonical forms (in
\theoremref{thm:OTm}) implies that, possibly after shrinking $Y$, there is a constant
$C \geq 0$ with the following property: 
\begin{equation} \label{eq:property}
\parbox{11cm}{For every $y \in Y$ and every $m$-canonical form $u$ on $X_y$ of length $\leq 1$, \\
there is an $m$-canonical form $v \in V_m$ such that $v_y = u$ and $\ell(v) \leq C$.}
\end{equation}
Now let $n = \dim X$. As the morphism $f$ is smooth, every point in $X$ has an
open neighborhood $U$ with coordinates $z_1, \dotsc, z_{n-r}, t_1,
\dotsc, t_r$. Then $s_0 = \dz_1 \wedge \dotsb \wedge \dz_{n-r}$ gives a local
trivialization of $\omXY$, and we consider the weight function
\[
	\varphi_m = - \log \abs{s_0}_{h_m}^2 \colon U \to [-\infty, +\infty)
\]
of the relative Narasimhan-Simha metric $h_m$. On each fiber, $\varphi_m$ is
given by the formula in \eqref{eq:NS-weight-2}; we can use the Ohsawa-Takegoshi
theorem to obtain a more uniform description. For each $v \in V_m$, we
have
\[
	v \restr{U} = g_v \cdot (s_0 \wedge \dt_1 \wedge \dotsb \wedge \dt_r)^{\tensor m}
\]
for a unique holomorphic function $g_v \colon U \to \CC$. By \eqref{eq:NS-weight-2}
and \eqref{eq:property}, we have
\[
	\varphi_m(x) = \frac{2}{m} \sup \Menge{\log \abs{g_v(x)}}{%
		\text{$v \in V_m$ satisfies $\ell(v) \leq C$ and
		$\ell_y(v_y) \leq 1$}};
\]
where $y = f(x)$. We are going to prove that this defines a
continuous function on $U$.

Fix a point $x \in U$, and let $x_0, x_1, x_2, \dotsc$ be any 
sequence in $U$ with limit $x$. Set $y_k = f(x_k)$ and $y = f(x)$.
For every $k \in \NN$, choose an $m$-canonical form $u_k$ of length $\ell_{y_k}(u_k) =
1$ on the fiber $X_{y_k}$, such that $u_k$ computes $\varphi_m(x_k)$.
Extend $u_k$ to an $m$-canonical form $v_k$ of length $\ell(v_k) \leq C$ on $X$ by
using \eqref{eq:property}; then 
\[
	\varphi_m(x_k) = \frac{2}{m} \log \bigabs{g_{v_k}(x_k)}.
\]
After passing to a subsequence, $v_0, v_1, v_2, \dotsc$ converges uniformly on
compact subsets to an $m$-canonical form $v \in H^0(X, \omXm)$. Since
$\ell_{y_k}(v_{n, y_k}) = 1$, Fatou's lemma shows that $\ell_y(v_y) \leq 1$.
Moreover, the holomorphic functions $g_{v_k}$ converge uniformly on compact subsets
to $g_v$, and therefore
\begin{equation} \label{eq:lim-USC}
	\lim_{k \to +\infty} \varphi_m(x_k) = \frac{2}{m} \log \bigabs{g_v(x)}
		\leq \varphi_m(x).
\end{equation}
On the other hand, we can choose an $m$-canonical form $u'$ of length $\ell_y(u') =
1$ on the fiber $X_y$, such that $u'$ computes $\varphi_m(x)$. Extend $u'$ to an
$m$-canonical form $v'$ of length $\ell(v') \leq C$ on $X$ by using
\eqref{eq:property}; then
\[
	\varphi_m(x) = \frac{2}{m} \log \abs{g_{v'}(x)}. 
\]
Now it is easy to see from the definition of the length function that
$\ell_{y_k}(v_{y_k}')$ tends to $\ell_y(v_y')$ as $k \to +\infty$. In particular, the
$m$-canonical form $v_{y_k}'$ on $X_{y_k}$ has nonzero length for $k \gg 0$, which
means that
\[
	\frac{2}{m} \Bigl( \log \abs{g_{v'}(x_k)} - \log \ell_{y_k}(v_{y_k}') \Bigr)
		\leq \varphi_m(x_k).
\]
Since the left-hand side tends to $\varphi_m(x)$, we obtain
\begin{equation} \label{eq:lim-LSC}
	\varphi_m(x) \leq \liminf_{k \to +\infty} \varphi_m(x_k).
\end{equation}
The two inequalities in \eqref{eq:lim-USC} and \eqref{eq:lim-LSC} together say
that $\varphi_m$ is continuous.
\end{proof}

Next, we prove that $h_m$ has semi-positive curvature -- just as in the case of
adjoint bundles, the proof of this fact is very short, because we know the optimal
value of the constant in \theoremref{thm:OTm}.

\begin{proposition} \label{prop:NS-curvature}
Under the assumptions above, the relative Narasimhan-Simha metric on $\omXY$
has semi-positive curvature.
\end{proposition}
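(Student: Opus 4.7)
Since $\varphi_m = -\log|s_0|^2_{h_m}$ is continuous on $X$ by \propositionref{prop:NS-continuous}, it is plurisubharmonic if and only if it satisfies the mean-value inequality along every holomorphic disc $\gamma\colon\Delta\to X$. The Narasimhan--Simha construction is fiberwise and therefore invariant under base change, so I would first pull $f\colon X\to Y$ back along $f\circ\gamma\colon\Delta\to Y$, replace $X$ by the (still smooth) family $X\times_Y\Delta\to\Delta$, and replace $\gamma$ by its canonical lift to a section $\sigma\colon\Delta\to X\times_Y\Delta$. This reduces the claim to the following: if $f\colon X\to\Delta$ is a smooth projective family with connected fibers and $\sigma$ is a holomorphic section, then $w\mapsto\varphi_m(\sigma(w))$ is subharmonic on $\Delta$.

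For this, I would mimic the proof of \propositionref{prop:mean-value}. Choose an $m$-canonical form $u_0\in H^0(X_0,\omega_{X_0}^{\otimes m})$ of length $\ell_0(u_0)=1$ computing $\varphi_m(\sigma(0))$, and apply the sharp Ohsawa--Takegoshi theorem for pluricanonical forms (\theoremref{thm:OTm}) to extend $u_0\wedge\, dt^{\otimes m}$ to $v_0\in H^0(X,\omega_X^{\otimes m})$ with the optimal bound $\ell(v_0)\leq\mu(\Delta)^{m/2}=\pi^{m/2}$. Writing $v_0=g_{v_0}(s_0\wedge dt)^{\otimes m}$ in local coordinates, the function $\log|g_{v_0}|^{2/m}$ is plurisubharmonic and equals $\varphi_m$ at $\sigma(0)$, while the fiberwise defining property of $\varphi_m$ gives the pointwise lower bound
\[
	\varphi_m(\sigma(w))\,\geq\,\tfrac{2}{m}\log|g_{v_0}(\sigma(w))|\,-\,\tfrac{2}{m}\log\ell_w(v_{0,w}).
\]

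Averaging this inequality over $\Delta$, the mean-value inequality for the subharmonic function $\log|g_{v_0}\circ\sigma|^{2/m}$ handles the first term and reduces matters to showing $\int_\Delta\log\ell_w(v_{0,w})\,d\mu(w)\leq 0$. Here the sharp constant earns its keep: Fubini's theorem yields $\int_\Delta\ell_w(v_{0,w})^{2/m}\,d\mu(w)=\ell(v_0)^{2/m}\leq\pi$, so Jensen's inequality applied to the concave function $\log$ gives
\[
	\tfrac{1}{\pi}\!\int_\Delta\!\log\ell_w(v_{0,w})^{2/m}\,d\mu(w)\,\leq\,\log\!\left(\tfrac{1}{\pi}\!\int_\Delta\!\ell_w(v_{0,w})^{2/m}\,d\mu(w)\right)\,\leq\,0,
\]
which is precisely what we need. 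The main obstacle is not conceptual but one of precision: as in \propositionref{prop:mean-value}, the argument hinges on the \emph{sharp} constant $\mu(\Delta)^{m/2}$ of \theoremref{thm:OTm}, since the final $0$ on the right-hand side above comes from the exact match between the area of the disc, the Fubini identity for the $L^{2/m}$-mass of $v_0$, and the Ohsawa--Takegoshi estimate; any larger constant would leave a stubborn positive defect.
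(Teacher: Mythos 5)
Your proof is correct and follows essentially the same path as the paper's: reduce via continuity to a mean-value inequality along discs, base-change to a family over $\Delta$ with a section, extend the optimal fiberwise $m$-canonical form by the sharp Ohsawa--Takegoshi theorem (\theoremref{thm:OTm}), and then combine the pointwise lower bound with Fubini and Jensen. The only cosmetic difference is that the paper separately disposes of the case where $\gamma$ lands in a single fiber by citing \propositionref{prop:NS-absolute}, whereas your base-change formulation subsumes it as the product family $X_{y_0}\times\Delta\to\Delta$.
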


\begin{proof}
Keep the notation introduced during the proof of \propositionref{prop:NS-continuous}.
Because the local weight function $\varphi_m$ is continuous, it
suffices to prove that $\varphi_m$ satisfies the mean-value inequality for
mappings from the one-dimensional unit disk $\Delta$ into $U$. If the image of
$\Delta$ lies in a single fiber, this is okay, because we already know from
\propositionref{prop:NS-absolute} that $\varphi_m$ is plurisubharmonic on each fiber.
So assume from now on that the mapping
from $\Delta$ to $Y$ is non-constant. Since the morphism $f$ is smooth, we can then
make a base change and reduce the problem to the case where $Y = \Delta$ and 
where $i \colon \Delta \into X$ is a section of $f \colon
X \to \Delta$. 

Now let $x_0 = i(0)$ and $X_0 = f^{-1}(0)$, and choose some $u \in
H^0(X_0, \omega_{X_0}^{\tensor m})$ with $\ell_0(u) = 1$ that computes
$\varphi_m(x_0)$. By \theoremref{thm:OTm}, there exists an $m$-canonical form $v \in
H^0(X, \omXm)$ with $v \restr{X_0} = u \wedge df^{\tensor m}$, whose length
satisfies the inequality
\[
	\ell(v) \leq \mu(\Delta)^{m/2} \cdot \ell_0(u) = \pi^{m/2}.
\]
In the notation introduced during the proof of \propositionref{prop:NS-continuous},
we then have
\[
	\varphi_m(x_0) = \frac{2}{m} \log \bigabs{g_v(x_0)}.
\]
If we define $v_y \in H^0(X_y, \omega_{X_y}^{\tensor m})$ by the formula $v \restr{X_y} =
v_y \wedge df^{\tensor m}$, then we have
\[
	\ell(v)^{2/m} = \int_X (c_n^m v \wedge \wbar{v})^{1/m}
		= \int_{\Delta} \ell_y(v_y)^{2/m} \, d\mu.
\]
Now we observe that for almost every $y \in \Delta$, the ratio $v_y / \ell_y(v_y)$ is
an $m$-canonical form on $X_y$ of unit length; by definition of the weight function
$\varphi_m$, we have
\[
	\varphi_m(x) \geq 
		\frac{2}{m} \Bigl( \log \abs{g_v(x)} - \log \ell_y(v_y) \Bigr)
		= \frac{2}{m} \log \abs{g_v(x)} - \log \ell_y(v_y)^{2/m}.
\]
If we now compute the mean value of $\varphi_m \circ i$ over $\Delta$, we find that
\[
	\frac{1}{\pi} \int_{\Delta} \varphi_m(i(y)) \, d\mu \geq 
		\frac{1}{\pi} \int_{\Delta} \frac{2}{m} \log \bigabs{g_v(i(y))} \, d\mu
		- \frac{1}{\pi} \int_{\Delta} \log \ell_y(v_y)^{2/m} \, d\mu.
\]
The first term on the right is greater or equal to $2/m \log \abs{g_v(x_0)} =
\varphi_m(x_0)$, because the function $g_v \circ i$ is holomorphic. To
estimate the remaining integral, note that
\[
	\frac{1}{\pi} \int_{\Delta} \log \ell_y(v_y)^{2/m} \, d\mu
	\leq \log \left( \frac{1}{\pi} \int_{\Delta} \ell_y(v_y)^{2/m} \, d\mu \right)
	= \log \left( \frac{1}{\pi} \cdot \ell(v)^{2/m} \right) \leq 0,
\]
by Jensen's inequality and the fact that $\ell(v) \leq \pi^{m/2}$. 
Consequently, $\varphi_m$ does satisfy the required mean-value inequality, and 
$h_m$ has semi-positive curvature.
\end{proof}

\begin{note}
Compare also Lemma~7 and Lemma~8 in \cite{Ka-curve}.
\end{note}

After these preparations, we can now prove \theoremref{thm:pluricanonical} in
general.

\begin{proof}[Proof of \theoremref{thm:pluricanonical}]
Suppose that $f \colon X \to Y$ is a projective morphism between two complex
manifolds with $f(X) = Y$. Let $Z \subseteq Y$ denote the closed analytic subset
where $f$ fails to be submersive. We already know that the restriction of the line
bundle $\omXY$ to $f^{-1}(Y \setminus Z)$ has a well-defined singular
hermitian metric $h_m$ that is continuous and has semi-positive curvature. To show that $h_m$
extends to a singular hermitian metric with semi-positive curvature on all of
$X$, all we need to prove is that the local weights of $h_m$ remain bounded near
$f^{-1}(Z)$; this is justified by \lemmaref{lem:Riemann-Hartogs}.
P\u{a}un and Takayama \cite[Theorem~4.2.7]{PT} observed that this local boundedness
again follows very easily from the Ohsawa-Takegoshi theorem for pluricanonical forms.

Fix a point $x_0 \in X$ with $f(x_0) \in Z$. Since the problem is local on $Y$, we
may assume that $Y = B$ is the open unit ball in $\CC^r$, with coordinates $t_1,
\dotsc, t_r$, and that $f(x_0) = 0$. On a suitable neighborhood $U$ of the point $x_0$,
we have coordinates $z_1, \dotsc, z_n$; note that because $f$ is most likely not
submersive at $x_0$, we cannot assert that $t_1, \dotsc, t_r$ are part of this
coordinate system. Let $s_0 \in H^0(U, \omXY)$ be a local trivialization of $\omXY$,
chosen so that
\[
	\dz_1 \wedge \dotsb \wedge \dz_n = s_0 \wedge (\dt_1 \wedge \dotsb \wedge \dt_r).
\]
Denote by $\varphi_m$ the weight function of $h_m$ with respect to this
local trivialization:
\[
	\varphi_m(x) = - \log \abs{s_0}_{h_m}^2 \colon U \to [-\infty, +\infty)
\]
For $v \in H^0(X, \omXm)$, we have $v \restr{U} = g_v \cdot (\dz_1 \wedge \dotsb
\wedge \dz_n)^{\tensor m}$ for a holomorphic function $g_v \colon U \to \CC$. As
explained during the proof of \propositionref{prop:NS-continuous}, the
Ohsawa-Takegoshi theorem for pluricanonical forms implies that there is a constant $C
\geq 0$ with the following property: for every $x \in U$, there is some $v \in H^0(X,
\omXm)$ of length $\ell(v) \leq C$ such that
\[
	\varphi_m(x) = \frac{2}{m} \log \abs{g_v(x)}.
\]
For $x$ sufficiently close to $x_0$, there is a positive number $R > 0$ such that $U$
contains the closed ball of radius $R$ centered at $x$. The mean-value inequality and
the fact that $\ell(v) \leq C$ now combine to give us an upper bound for
$\varphi_m(x)$ that depends only on $C$ and $R$, but is independent of the point $x$.
In particular, $\varphi_m$ is uniformly bounded in a neighborhood of the point $x_0
\in f^{-1}(Z)$, and therefore extends uniquely to a plurisubharmonic function on
all of $U$.

The Narasimhan-Simha metric on each fiber $X_y$ with $y \not\in Z$ satisfies
\eqref{eq:identity}; by the Ohsawa-Takegoshi theorem, this means that the inclusion
\[
	\fl \bigl( \omXY \tensor L \tensor \shI(h) \bigr) \into
		\fl \bigl( \omXY \tensor L \bigr) = \fl \omXYm
\]
is an isomorphism over $Y \setminus Z$. Due to \corollaryref{cor:continuous}, the
singular hermitian metric on $\fl \omXYm$ is therefore finite and continuous on $Y
\setminus Z$.
\end{proof}

%\addcontentsline{toc}{section}{\refname}
%\bibliographystyle{amsalpha}
%\bibliography{bibliography}
\providecommand{\bysame}{\leavevmode\hbox to3em{\hrulefill}\thinspace}
\providecommand{\MR}{\relax\ifhmode\unskip\space\fi MR }
% \MRhref is called by the amsart/book/proc definition of \MR.
\providecommand{\MRhref}[2]{%
  \href{http://www.ams.org/mathscinet-getitem?mr=#1}{#2}
}
\providecommand{\href}[2]{#2}

\end{document}